\numberwithin{equation}{section}
\newtheorem{theorem}{Theorem}[section]
\newtheorem{lemma}[theorem]{Lemma}
\newtheorem{proposition}[theorem]{Proposition}
\newtheorem{corollary}[theorem]{Corollary}
\newtheorem{remark}[theorem]{Remark}
\newtheorem{example}[theorem]{Example}
\def\R{{\mathbb R}}
\def\e{\varepsilon}
\def\w{\omega}
\def\W{\Omega}
\def\om{\omega}
\def\1{\left(}
\def\2{\right)}
\def\3{\left\{}
\def\4{\right\}}
\def\8{\infty}
\def\grad{\nabla}
\def\tor{\mathrm{tor}}
\def\vpar{\eta}
\def\fv{\mathscr{V}_{\vpar}}
\def\fvbar{\mathscr{V}}
\def\vo{\mathscr{V}}
\def\err{\tau}
\def\Ep{\mathcal{F}_{\err}}
\def\pa{{\partial}}
 \def\nl{\mathscr{h}_f}
\def\na{\nabla}
\def\baryEps{\bar{\e}}
\def\TBF{q}
\def\ccdel{\bar{\delta}}
\begin{document}

\setcounter{secnumdepth}{4}
\setcounter{tocdepth}{1}
\begin{abstract}
For a bounded open set $\Omega \subset \mathbb{R}^n$ with the same volume as the unit ball, the classical Faber-Krahn inequality says that the first Dirichlet eigenvalue $\lambda_1(\Omega)$ of the Laplacian is at least that of the unit ball $B$. We prove that the deficit $\lambda_1(\W)- \lambda_1(B)$ in the Faber-Krahn inequality controls the square of the distance between the resolvent operator $(-\Delta_\Omega)^{-1}$ for the Dirichlet Laplacian on $\Omega$ and the resolvent operator on the nearest unit ball $B(x_\Omega)$. The distance is measured by the operator norm from $L^{\infty}$ to $L^2$. As a main application, we show that the Faber-Krahn deficit  $\lambda_1(\Omega)- \lambda_1(B)$ controls the squared $L^2$ norm between $k$th eigenfunctions on $\W$ and $B(x_\Omega)$ for every $k \in \mathbb{N}.$ In both of these main theorems, the quadratic power is optimal.
\end{abstract}
	\title[Resolvent and Eigenfunction Stability for the Faber-Krahn Inequality]{Quantitative Resolvent  and Eigenfunction Stability for the Faber-Krahn Inequality}
    \author{Mark Allen, Dennis Kriventsov, and Robin Neumayer}
\maketitle

\section{Introduction}
Let  $\Omega \subset \R^n$ be an open bounded set. The Laplace equation with Dirichlet boundary data has  discrete spectrum $0 < \lambda_1(\W) <  \lambda_2(\W) \leq \dots$ of eigenvalues, whose corresponding eigenfunctions $\{ u_{\W, k} \}_{k \in \mathbb{N}}$ solve the eigenvalue problem
\[
\begin{cases}
    -\Delta u_{\W,k} = \lambda_k(\W) \, u_{\W,k} & \text {in } \W\\
  \quad \ \   u_{\W,k} = 0 & \text{on } \partial \W
\end{cases}
\]
and, after normalizing so that $\| u_{\W,k}\|_{L^2(\W)} = 1$ which we shall always do,  form an orthonormal basis for $L^2(\W)$. 

A classical shape optimization problem is the minimization of the first eigenvalue $\lambda_1(\W)$ among domains $\W$ of a fixed volume. Following Lord Rayleigh's 1894 conjecture, in the 1920s Faber \cite{Faber23} and Krahn \cite{Krahn25} independently showed that balls are the only solutions to this variational problem. This leads to the Faber-Krahn inequality:
\begin{equation}\label{eqn: FK intro}
    \lambda_1(\W ) \geq \lambda_1(B)  \quad \text{ if } \quad|\W| = \omega_n\,,
\end{equation}
with equality if and only if $\W = B(x)$ for some $x\in \R^n$ (up to capacity zero sets).
Here $B(x) = \{|y-x|<1\}$ is the unit ball centered at $x$,  $B= B(0)$, and $\om_n = |B|$. The proof of the Faber-Krahn inequality is based on rearrangement and the isoperimetric inequality and  is nowadays classical.

A fundamental question related to the Faber-Krahn inequality is that of stability: if a domain $\W$ with $|\W| = \omega_n$ {\it almost} achieves equality in \eqref{eqn: FK intro}, then is $\W$ {\it almost} a unit ball? The first ``almost'' is naturally measured by the deficit in \eqref{eqn: FK intro}: $\lambda_1(\W)  - \lambda_1(B)$. As for the second,  there are various ways in which one can quantify the proximity of $\W$ to the space of unit balls,  with the most suitable  notion dependent on context.

One classical geometric notion is the Fraenkel asymmetry, $\alpha(\W) = \inf_{x \in \R^n} |\W\Delta B(x)|$, measuring the volume of the symmetric difference between $\W$ and the nearest unit ball. Sharp quantitative stability for the Faber-Krahn inequality with respect to the Fraenkel asymmetry was shown  in the paper \cite{BDV15} of Brasco, De Philippis, and Velichkov;  see also \cite{Bhat01, FMP09} for earlier results.

A different way to quantify how close $\W$ is to the space of unit balls
is {\it spectrally}: Are the eigenvalues and eigenfunctions of $\W$ close to the corresponding eigenvalues and eigenfunctions of the nearest unit ball? 
This notion of distance and corresponding quantitative stability estimates  turn out to be particularly important in applications to harmonic analysis and free boundary problems.

These applications motivated the authors' previous work in \cite{AKN1, AKN2}, where it was  shown that for any open bounded set $\W \subset \R^n$ with $|\W|  =\omega_n$,
\begin{equation}\label{eqn: first efn stability}
\lambda_1(\W ) - \lambda_1(B) \geq c_n \inf_{x \in\R^n} \int |u_{\W,1 } - u_{B(x), 1}|^2 \,.
\end{equation}
Here the eigenfunctions are extended by zero to be defined on $\R^n$.
In the same work, an analogue of \eqref{eqn: first efn stability} was shown on the round sphere $\mathbb{S}^n$. This was used in \cite{AKN3} to establish a rectifiability and uniqueness-of-blowups theorem in terms of the Alt-Caffarelli-Friedman monotonicity formula, with applications to free boundary regularity for certain (e.g. vectorial) free boundary problems. In \cite{FTV1, FTV2}, Fleschler, Tolsa, and Villa applied the analogue of \eqref{eqn: first efn stability} on $\mathbb{S}^n$ as a main tool, alongside  some deep covering arguments, to establish a higher-dimensional version of Carleson's famed $\e^2$-conjecture in harmonic analysis \cite{JTV21}. In \cite{FTV1}, the authors also apply \eqref{eqn: first efn stability} to prove a different form of quantitative stability for the Faber-Krahn inequality with respect to a geometric distance involving the capacity.

The first main result of this paper substantially extends \eqref{eqn: first efn stability}: the Faber-Krahn deficit quantitatively controls not only the distance between first eigenfunctions,  but between  {\it all eigenfunctions}, of $\W$ and the nearest unit ball:

\begin{theorem}\label{thm: eigenfunction estimates} Fix $n \geq 2$ and $k \in \mathbb{N}$. There is a positive constant $c_{n,k}$ depending only on $n$ and $k$ such that 
for any open bounded set $\Omega \subset \R^n$ with $|\Omega| = \omega_n$, there is a point $x_\W \in \R^n$ such that
\begin{equation}\label{eqn: simple efn stability}
\lambda_1(\W ) - \lambda_1(B)  \geq c_{k, n} \int_{\mathbb{R}^n}\left|u_{\Omega, k}-u_{B({x_\W}),k}\right|^2\,.
 \end{equation}
\end{theorem}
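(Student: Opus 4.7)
The plan is to deduce this theorem from the resolvent stability result stated in the abstract, namely
\[
\big\| (-\Delta_\Omega)^{-1} - (-\Delta_{B(x_\Omega)})^{-1} \big\|_{C^{0,\alpha} \to L^2}^{2} \leq C_n\, \delta, \qquad \delta := \lambda_1(\Omega) - \lambda_1(B),
\]
with both resolvents extended by zero outside their respective domains so as to act on $L^2(\R^n)$. Since trivially $\|u_{\Omega,k} - u_{B(x_\Omega),k}\|_{L^2}^2 \leq 4$, I may assume $\delta \leq \delta_0(n,k)$ for any convenient threshold. The key observation is that $u_{\Omega,k}$ is itself an eigenfunction of $(-\Delta_\Omega)^{-1}$ with eigenvalue $1/\lambda_k(\Omega)$, so testing the resolvent estimate against $u_{\Omega,k}$ forces its expansion in the eigenbasis of the nearest ball to concentrate near the $\lambda_k(B)$-eigenspace.

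Set $B' := B(x_\Omega)$. A first step is to control $u_{\Omega,k}$, extended by zero, uniformly in $C^{0,\alpha}(\R^n)$ with constants depending only on $n$ and $k$: Moser iteration applied to $-\Delta u = \lambda_k(\Omega) u$ yields an $L^\infty$ bound in terms of $\lambda_k(\Omega)$ and $|\Omega|$, and interior Schauder estimates then give interior Hölder control; after replacing $\Omega$ by a capacity-zero modification to ensure Wiener regularity, this extends to the zero extension on all of $\R^n$. Inserting $u_{\Omega,k}$ into the resolvent estimate and writing $u_{\Omega,k} = \sum_j c_j u_{B',j}$ in the $L^2$-orthonormal eigenbasis of $-\Delta_{B'}$ (so $\sum_j c_j^2 = 1$), I obtain
\[
\sum_j c_j^2 \Big( \tfrac{1}{\lambda_k(\Omega)} - \tfrac{1}{\lambda_j(B)} \Big)^{2} \leq C_{k,n}\, \delta.
\]
Qualitative continuity of Dirichlet eigenvalues in the small-deficit regime (via sharp Faber-Krahn in Fraenkel asymmetry combined with standard variational arguments) gives $\lambda_k(\Omega) \to \lambda_k(B)$ as $\delta \to 0$. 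Together with the spectral gap between distinct eigenvalues of $B$, the prefactor is bounded below by a positive constant $c_{k,n}$ for every index $j$ with $\lambda_j(B) \neq \lambda_k(B)$, so
\[
\sum_{j:\, \lambda_j(B) \neq \lambda_k(B)} c_j^{2} \leq C_{k,n}\, \delta.
\]

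To finish, let $E_k \subset L^2(B')$ denote the eigenspace of $-\Delta_{B'}$ associated with $\lambda_k(B)$. The previous inequality reads $\|P_{E_k^\perp} u_{\Omega,k}\|_{L^2}^2 \leq C_{k,n}\, \delta$. When $\lambda_k(B)$ is simple (as for $k=1$) the conclusion follows by choosing the sign of $u_{B',k}$. In general, I use the freedom to label eigenfunctions within a multiplicity cluster: select an orthonormal basis of $E_k$ whose first element $u_{B',k}$ is the unit vector in the direction of $P_{E_k} u_{\Omega,k}$; then $\langle u_{\Omega,k}, u_{B',k} \rangle = \sqrt{1 - \|P_{E_k^\perp} u_{\Omega,k}\|_{L^2}^2}$, and
\[
\|u_{\Omega,k} - u_{B',k}\|_{L^2}^{2} = 2 - 2\sqrt{1 - \|P_{E_k^\perp} u_{\Omega,k}\|_{L^2}^{2}} \leq 2\|P_{E_k^\perp} u_{\Omega,k}\|_{L^2}^{2} \leq C_{k,n}\, \delta,
\]
as required.

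The main obstacle I anticipate is not the spectral argument above, which is essentially linear algebra once the resolvent estimate is available, but the regularity step: obtaining a uniform $C^{0,\alpha}(\R^n)$ bound on the zero extension of $u_{\Omega,k}$ with constants that do not deteriorate as $\delta \to 0$, for general bounded open $\Omega$ whose Wiener regularity may fail. This must be circumvented either via capacity-zero representatives or by extracting quantitative geometric regularity of $\Omega$ from the small-deficit hypothesis (likely developed in the course of proving the resolvent theorem itself). A secondary concern is ensuring that the qualitative eigenvalue continuity used for the spectral gap argument is established independently of the resolvent theorem, so as to avoid circularity.
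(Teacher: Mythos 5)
The paper proves Theorem~\ref{thm: eigenfunction estimates} by a strategy that is \emph{dual} to yours, and the difference is not cosmetic. The paper sets $f = \lambda_{B,j} u_{B,j}$ --- a (rescaled) eigenfunction of the \emph{ball} --- so that $u^f_{B(x_\W)} = u_{B,j}$, and then expands both $u_\W^f$ and $u_{B,j}|_\W$ in the eigenbasis of $\W$, using $-\Delta u_\W^f = \lambda_{B,j} u_{B,j}\chi_\W$ to relate the Fourier coefficients by $\lambda_{B,j} b_k = \lambda_{\W,k} a_k$. You instead set $f = u_{\W,k}$ --- an eigenfunction of the \emph{arbitrary} domain $\W$ --- and expand in the eigenbasis of the ball. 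The spectral-gap linear algebra in the two versions is essentially equivalent, and your accounting of the mass of $u_{\W,k}$ outside $B(x_\W)$ is a minor wrinkle that the estimate itself handles (the $L^2$ norm of the difference includes that piece with a bounded prefactor $1/\lambda_k(\W)^2$).

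The genuine gap is the step you yourself flag, and your proposed remedy does not close it. To apply the resolvent estimate you need $\|u_{\W,k}\|_{C^{0,\alpha}(\R^n)} \leq C_{n,k}$ for the zero extension, uniformly over \emph{all} bounded open $\W$ of the given volume. This is false. Wiener regularity (even after a capacity-zero modification) only guarantees that the zero extension is \emph{continuous}, not H\"older continuous, and certainly not with a modulus independent of $\W$; interior Schauder estimates degenerate as one approaches $\partial\W$. For an $\W$ with, say, outward cusps or a boundary that is merely a set of positive capacity with no thickness, there is no uniform $C^{0,\alpha}$ bound on the extended eigenfunction --- indeed there is no uniform modulus of continuity at all. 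Nor can one extract such regularity ``from the small-deficit hypothesis'': the theorem is stated for every bounded open $\W$ with $|\W| = \omega_n$, including ones with pathological boundary, and small Faber--Krahn deficit controls $\W$ only in measure, not in any boundary-regularity sense. This is exactly why the paper chooses $f$ to be an eigenfunction of the \emph{fixed ball} $B$: that function is Lipschitz on $\R^n$ with explicit norm depending only on $n$ and $k$, so the hypothesis $\|f\|_{C^{0,\alpha}} \leq 1$ (after a harmless scaling absorbed into the constant) is trivially satisfied. Your approach would work if the open question posed in the introduction ($L^\infty$ in place of $C^{0,\alpha}$) had an affirmative answer \emph{and} one had a uniform $L^\infty$ bound on eigenfunctions (which Moser iteration does give), but as it stands the $C^{0,\alpha}$ requirement on $f$ is precisely the obstruction.

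Your secondary concern about circularity is not an issue: the qualitative eigenvalue continuity (Proposition~\ref{p:standard} in the paper) is established by standard concentration-compactness arguments independent of the resolvent theorem.
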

As in \eqref{eqn: first efn stability} above, in the statement of Theorem~\ref{thm: eigenfunction estimates}, we extend each $u_{\W,k} \in H^1_0(\W)$ by zero to be in $H^1_0(\R^n)$, and likewise for eigenfunctions on the ball.  When $\lambda_k(B)$ has multiplicity greater than $1$, $u_{B({x_\W}),k}$ denotes {\it some} eigenfunction in the $\lambda_k(B)$-eigenspace with $L^2$ norm equal to $1$, since of course any choice of ordered basis for this eigenspace is not canonical. The quadratic power on the $L^2$-norm on the right-hand side of \eqref{eqn: simple efn stability} is optimal; see section~\ref{ssec: examples}.

One can view Theorem~\ref{thm: eigenfunction estimates} as the counterpart for eigenfunctions of a recent result of Bucur, Lamboley, Nahon, and Prunier \cite{blnp23} for eigenvalues. There, the authors show  that
for each $\Omega \subset \R^n$ with $|\Omega| = \omega_n$ and  $k \in \mathbb{N}$, the following sharp quantitative estimates hold: if $\lambda_k(B)$ is simple, then 
\begin{align}\label{eqn: Bucur simple}
 \lambda_1(\W ) - \lambda_1(B)   & \geq c_{n,k}  |\lambda_k(\W)-\lambda_k(B)|,   \\
 \intertext{
while if $\lambda_k(B)$ has multiplicity---say $\lambda_{k-1}(B) <  \lambda_k(B)=\ldots=\lambda_{\ell}(B) < \lambda_{\ell+1}(B)$---then 
 }
 \label{eqn: Bucur multiplicity}
 \lambda_1(\W ) - \lambda_1(B)    &\geq c_{n,k}    \Big| \sum_{i=k}^\ell ( \lambda_i (\W) - \lambda_i(B) \Big| \,.
\end{align}
The linear powers in \eqref{eqn: Bucur simple} and \eqref{eqn: Bucur multiplicity} are stronger than one typically expects in stability estimates (and would be false for Theorem~\ref{thm: eigenfunction estimates}), and have to do with the fact that the ball is critical for the functionals on the right-hand sides, as well as the left-hand sides, of \eqref{eqn: Bucur simple} and \eqref{eqn: Bucur multiplicity}. 

Theorem~\ref{thm: eigenfunction estimates} is proven as an application of the other main theorem of the paper: sharp  quantitative estimates for the resolvent operator. To state the result we first introduce a bit of terminology. For an open bounded set $\W \subset \R^n$ and a function $f \in H^{-1}(\R^n)$, let $u_{\W}^f  \in H^{1}_0(\W)$ be the unique (weak) solution to the Poisson equation 
\begin{equation}\label{eqn: PDE with f RHS}
	\begin{cases}
	\hfill	-\Delta u^f_{\W}  = f & \text{ in } \W,\\
	\hfill	u_{\W}^f =0 & \text{ on } \partial \W.
	\end{cases}
\end{equation}
As above for eigenfunctions, we extend $u_\W^f$ by zero to be defined as a function in $H^1_0(\R^n)$. The {\it resolvent operator}
$$(-\Delta)_\W^{-1} : H^{-1}(\R^n) \to H^1_0(\R^n)$$ associates to $f \in H^{-1}(\R^n)$ the unique solution to \eqref{eqn: PDE with f RHS}. As an operator from $L^{\infty}$ to $L^2$, the resolvent operator of $\W$ is close to the resolvent operator of the nearest unit ball, quantitatively in terms of the Faber-Krahn deficit:
\begin{theorem}\label{thm: resolvent}\label{thm: main}
Fix $n\geq 2$. There is a constant $c_{n}>0$ depending on $n$ such that  for any bounded open set $\W \subset \R^n$ with $|\W| = \omega_n$,
\begin{align}
    \label{eqn: resolvent estimate}
  \lambda_1(\W ) - \lambda_1(B)  & \geq c_{n} \left\| (-\Delta)_{\W}^{-1} -(-\Delta)_{B(x_\W)}^{-1}\right\|^2_{L^{\infty}\to L^2}\,\\
\intertext{for a point $x_\W \in \R^n$. In other words, for any $f:\R^n \to \R$ with $\|f\|_{L^{\infty}(\R^n)}\leq 1,$ we have}
\label{eqn: quant stability main}
 \lambda_1(\W ) - \lambda_1(B)  & \geq c_{n}\int_{\R^n} \big|u_{\W}^f - u_{B(x_\W)}^f\big|^2 \,.   
\end{align}
\end{theorem}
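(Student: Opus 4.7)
The strategy I would pursue combines a selection principle in the spirit of Cicalese–Leonardi with a Fuglede-type second-order expansion in the nearly spherical regime. As a preliminary dichotomy, the quantitative Faber–Krahn inequality of Brasco–De Philippis–Velichkov \cite{BDV15} controls the Fraenkel asymmetry $\alpha(\Omega)$ by $C\sqrt{\lambda_1(\Omega)-\lambda_1(B)}$, so the desired estimate is automatic whenever $\alpha(\Omega)$ is bounded below: for such $\Omega$, the resolvent differences are uniformly bounded in $L^2$ via global Schauder and $L^\infty$ estimates on $u^f_\Omega$ and $u^f_B$ coming from $\|f\|_{C^{0,\alpha}}\le 1$, while the deficit is bounded below by a positive constant. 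It thus suffices to treat the case that $\Omega$ has arbitrarily small Fraenkel asymmetry with respect to some $B(x_\Omega)$.

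In this regime I would argue by contradiction, assuming sequences $\Omega_j$ and $f_j$ with $\|f_j\|_{C^{0,\alpha}}\le 1$ that violate \eqref{eqn: quant stability main} with blow-up ratio $R_j\to\infty$ and deficit $\varepsilon_j\to 0$. Following the template in \cite{BDV15}, I would then replace each $\Omega_j$ with a minimizer $\tilde\Omega_j$ of a penalized shape functional of the form
\begin{equation*}
\Omega \mapsto \lambda_1(\Omega) + \Lambda(|\Omega|-\omega_n)_+ + \mu_j\Big(R_j\varepsilon_j - \textstyle\int |u^{f_j}_\Omega - u^{f_j}_B|^2\Big)_+,
\end{equation*}
that still violates the estimate but, via the Alt–Caffarelli free boundary regularity theory applied to the Euler–Lagrange equation, is a $C^{2,\alpha}$ normal graph $(1+h_j(\theta))\theta$ over $\partial B$ with $\|h_j\|_{C^{2,\alpha}}\to 0$. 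The classical Fuglede computation then yields
\begin{equation*}
\lambda_1(\tilde\Omega_j)-\lambda_1(B) = Q_1(h_j) + o(\|h_j\|_{H^{1/2}}^2),
\end{equation*}
where $Q_1$ is positive-definite on the spherical harmonics of degree $\geq 2$ (modes $0$ and $1$ are killed by the volume and barycenter constraints), while a Hadamard domain-variation expansion gives
\begin{equation*}
\int |u^{f_j}_{\tilde\Omega_j} - u^{f_j}_B|^2 = Q_{2,f_j}(h_j) + o(\|h_j\|_{H^{1/2}}^2),
\end{equation*}
whose coefficients involve the boundary traces of $\partial_\nu u^{f_j}_B$ on $\partial B$.

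The main obstacle is then the uniform comparison $Q_{2,f}(h)\le C\,Q_1(h)$ across all $f$ with $\|f\|_{C^{0,\alpha}}\le 1$. Schauder theory on the ball gives uniform $C^{2,\alpha}$ bounds on $u^f_B$, hence uniformly controlled spherical-harmonic coefficients for $\partial_\nu u^f_B|_{\partial B}$; the task is to verify mode-by-mode that these are dominated by the Fuglede weights of $Q_1$, which grow linearly in the spherical-harmonic degree (consistent with the $H^{1/2}$ character of $Q_1$). Combining everything forces $R_j\le C$, contradicting $R_j\to\infty$. A secondary technical point, likely the most delicate part of the selection-principle step, is ensuring that the resolvent-defect term in the penalized functional is of sufficiently lower order not to disturb the nondegeneracy of the Euler–Lagrange equation that drives the $C^{2,\alpha}$ regularity of $\tilde\Omega_j$; this should follow from the Lipschitz dependence of the resolvent on the domain in $L^2$, together with the $L^\infty$ bound on $u^f_B$.
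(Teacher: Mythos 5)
Your overall architecture matches the paper's: reduce to the small-deficit regime, argue by contradiction, replace the contradicting sequence by minimizers of a penalized functional, prove $C^{2,\alpha}$ regularity via free boundary theory, and close the loop with a Fuglede-type quadratic expansion. However, there are two places where your plan understates or misstates the genuine difficulty, and one where your formulation would actually fail.

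First, and most seriously, you claim the resolvent-defect term in the penalized functional is ``of sufficiently lower order not to disturb the nondegeneracy of the Euler–Lagrange equation,'' so that regularity should follow routinely from Lipschitz domain-dependence and $L^\infty$ bounds. This is not correct: the term $\int |u_\Omega^f - u_{B(x_\Omega)}^f|^2$ scales like $\tor(\Omega)$ itself, so in the Euler–Lagrange equation it contributes a \emph{same-order} term $\tau\,|\nabla p_\Omega|\,|\nabla u_\Omega^f|$ on the free boundary without a favorable sign. One cannot invoke almost-minimizer theory, and the paper devotes sections~\ref{sec: a priori and existence}--\ref{sec: FB2} to this: existence requires passing to outward minimizers to recover lower semicontinuity (the functional is not decreasing under inclusion, so \cite{BuDalMaso} is unavailable), and the Euler--Lagrange equation contains a genuinely nonlocal auxiliary potential $p_\Omega$ that must be handled by regularization, NTA estimates, the divergence theorem for rough domains, and the inhomogeneous boundary Harnack principle of \cite{aks23}. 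Treating this as a ``secondary technical point'' misses what the paper explicitly flags as its main analytical novelty.

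Second, your penalized functional writes the defect as $\int |u_\Omega^{f_j} - u_B^{f_j}|^2$ with a fixed ball $B$, and separately invokes ``the barycenter constraint'' to kill modes $0$ and $1$ in the Fuglede expansion. But these two don't fit together: if the ball does not move with the competitor, the translation-invariance modes in $Q_{2,f}$ are not canceled, and the minimizer of your penalized functional has no reason to be a nearly spherical graph over the \emph{right} ball. The paper resolves this by replacing $B$ with $B(x_U)$ where $x_U$ is the truncated barycenter of the competitor $U$ (defined in section~\ref{sec: truncated barycenter}); the moving center is what couples the defect term to the volume/barycenter constraints, and also what makes the selection principle compatible with Theorem~\ref{thm: eigenfunction estimates} (the center must not depend on $f$). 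Making the barycenter part of the functional in turn forces the Lipschitz-in-domain estimates of Lemma~\ref{lem: lip bary} and the differentiation of $t \mapsto x_{\Omega_t}$ in the first-variation computation.

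Two smaller divergences worth noting. You keep $\lambda_1$ in the penalized functional, whereas the paper passes to $\tor$ via the Kohler--Jobin inequality (Lemma~\ref{lem: KJ consequence}); the torsion functional is more regularizing and yields a cleaner one-phase Bernoulli structure, which meaningfully eases the regularity argument you are already underestimating. And your Fuglede step proposes a mode-by-mode comparison of $Q_{2,f}$ against $Q_1$ in spherical harmonics, which is plausible but delicate to make uniform in $f$; the paper's Lemma~\ref{lem: beta on NSS} sidesteps this with a diffeomorphism pullback $\Psi(x) = (1+h(x))x$ and the $H^{-1}$ estimate $\|f - f\circ\Psi\|_{H^{-1}(B)} \le 2n\|\phi\|_{H^{1/2}(\partial B)}$ of Lemma~\ref{l:LNbound}, which requires only $\|f\|_{L^\infty}\le 1$ and avoids any spectral decomposition of the boundary trace. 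This is not just cleaner; it is also why the $C^{0,\alpha}$ assumption enters only once (the hodograph step in section~\ref{s:hodograph}), leaving open the $L^\infty$ question the paper highlights.
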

To establish \eqref{eqn: resolvent estimate} (and in turn Theorem~\ref{thm: eigenfunction estimates}) from \eqref{eqn: quant stability main}, it is essential that the constant $c_{n}$ and the ball center  $x_\W$ in \eqref{eqn: quant stability main} do not depend on the function $f$. The point $x_\W$  is a truncated version of the barycenter of $\W$ defined  in section~\ref{sec: truncated barycenter}.

A {\it  qualitative} version of \eqref{eqn: quant stability main}, with the modulus of continuity depending on $f$, can be deduced by combining a theorem of \v{S}ver\'{a}k  \cite[Theorem 3.2.5]{hp18}, the Kohler-Jobin inequality (see Lemma~\ref{lem: KJ consequence}), and a standard concentration compactness argument. To our knowledge, Theorem~\ref{thm: main} is the first {\it quantitative} estimate for the  resolvent operator.

The quadratic powers in \eqref{eqn: resolvent estimate} and \eqref{eqn: quant stability main} are optimal and cannot be replaced by a smaller exponent, as shown in section~\ref{ssec: examples}. In this sense Theorem~\ref{thm: main} is sharp. A related question is whether in \eqref{eqn: resolvent estimate} the operator norm from $L^{\infty}$ to $L^2$ can be replaced with  the operator norm from $X$ to $Y$ for other function spaces $X\supset L^{\infty}$ and $Y \subset L^2$. We construct examples in section~\ref{ssec: examples} exploring the extent to which one might expect to prove a ``strong form stability'' improvement in this direction. For instance, we show $L^2$ cannot be replaced by $H^1$, and $L^{\infty}$ cannot be replaced by $L^p_{loc}$ for $p<n/2.$ However, we have no examples ruling out the $X= L^p$ with $p>n$, which leaves the following natural open question:
\smallskip

\noindent{\bf Open Question.} {\it Does Theorem~\ref{thm: main} hold with the norm $L^p$ in place of the norm  $L^{\infty}$?
}

\smallskip
 We expect the answer to this open question to be affirmative. Many of the arguments work with $f \in L^p$ with $p>n$.

As mentioned above, we also showed \eqref{eqn: first efn stability} on the round sphere and hyperbolic space in \cite{AKN1, AKN2}. Similarly, Theorems~\ref{thm: eigenfunction estimates} and \ref{thm: main} can be shown to hold on these spaces. Nearly  all of the arguments in this paper can be adapted to these settings, with the only nontrivial modifications being (a) the spectral gap of Lemma~\ref{thm: spectral gap} below should be replaced by a spectral gap argument by adapting the proof of \cite[Theorem 3.11]{AKN1} as well as the arguments in \cite{p24}, and (b) a minimizer of the penalized variational problem in Theorem~\ref{thm: summary for minimizers} below should be shown to satisfy the volume constraint by using the Euler-Lagrange equation as in \cite[Proposition 9.5]{AKN2} instead of the simpler scaling argument of Lemma~\ref{lem: volume constraint lemma}. To aid in the clarity of presentation, we have opted to present all the proofs here for domains on Euclidean space.

\label{thm: spectral gap}

Beyond the works mentioned above, there is a rich literature on quantitative stability for geometric inequalities involving PDE-driven shape functionals like $\lambda_1$, with a number of exciting developments in the past decade or two. 
Following the work of Brasco-De Philippis-Velichkov \cite{BDV15}, similar methods were used to establish sharp quantitative stability for the $p$-Faber-Krahn \cite{FuscoZhang}, isocapacitary  \cite{DPMMCapacity}, and $p$-isocapacitary \cite{Mukoseeva+2021} inequalities.
Through quite different methods, quantitative stability has been established for spectral shape optimization problems with Neumann \cite{BrascoPratelli} and Robin \cite{BucurRobinLaplacian} boundary conditions, and on surfaces \cite{MR4859580} (both globally and within a fixed conformal class). The recent papers \cite{acampora2025sharpquantitativetalentisinequality, amato2024talenticomparisonresultquantitative} address stability for Talenti's rearrangement inequality. This sampling of results is by no means exhaustive, and we refer the reader to \cite{BrascoDePSurvey} for an overview of spectral inequalities in quantitative form.

\subsection*{Discussion of the Proofs}
Theorem~\ref{thm: eigenfunction estimates} will be established as a consequence of Theorem~\ref{thm: main}. The basic idea in the case when $k\in \mathbb{N}$ corresponds to a simple eigenvalue $\lambda_k(B)$ of the ball (and $x_\W = 0$ for simplicity) is as follows. We apply Theorem~\ref{thm: main} with a normalization of $f = \lambda_k(B) u_{B,k}$ to find $u_B^f = u_{B, k}$ is quantitatively close to $u_\W^f$. Then, expanding both functions in the basis of eigenfunctions on $\W$, we are able to quantitatively estimate their Fourier coefficients in order to show $u_{B,k}$ is quantitatively close to $u_{\W ,k}$.

Although the Faber-Krahn inequality is a consequence of the isoperimetric inequality, it is thus far unclear how to deduce sharp quantitative stability for the Faber-Krahn inequality (even with respect to the Fraenkel asymmetry) from quantitative stability for the isoperimetric inequality \cite{FuMaPr, FiMaPr, CiLe12}; see \cite{Melas, HansenNadir94, FMP09} for non-sharp or conditional results and the survey \cite{BrascoDePSurvey} for a discussion of the limitations of this approach.

Instead, we use the selection principle framework introduced by Cicalese and Leonardi in \cite{CiLe12}. This robust tool, which has found application in myriad settings, e.g. \cite{FuJu, BDF, BDS15, N16, DPMMCapacity, Mukoseeva+2021, FuscoZhang}, allows one to prove quantitative stability for a given geometric inequality from {\it linear stability} together with {\it regularity estimates} for a suitable penalized functional.

Brasco-De Philippis-Velichkov's quantitative Faber-Krahn inequality \cite{BDV15} also used a selection principle scheme. While the linear stability portion of our proof follows in a straightforward manner from theirs, the penalized functional and corresponding regularity estimates that are needed to obtain the resolvent estimates \eqref{eqn: resolvent estimate}  differ fundamentally  from those needed to control the Fraenkel asymmetry in \cite{BDV15}. 
In this respect, a major new feature of this paper is the development of techniques introduced in our earlier paper \cite{AKN2} to study the regularity of minimizers of Alt-Caffarelli-type functionals involving a type of a-priori same-order perturbation described below.

First, several initial reductions simplify the proof of Theorem~\ref{thm: main} before running the selection principle. It suffices to show the estimate \eqref{eqn: quant stability main} in the case when
 \begin{equation}\label{eqn: f hyp after reductions}
     f \in C_0^\infty(\R^n), \qquad f\geq 0 , \qquad \|f\|_{L^{\infty}(\R^n)}\leq 1,
 \end{equation}
 as shown in section~\ref{sec: simplify}. Moreover, by the Kohler-Jobin inequality, it suffices to show Theorem~\ref{thm: main} with  the Faber-Krahn deficit $\lambda_1(\W) -\lambda_1(B)$ replaced by the Saint-Venant deficit $\tor(\W)-\tor(B)$. Here $\tor(\W)$ is
the torsional rigidity of $\W$,  defined as
	\begin{equation}\label{e: torsion def}
 \tor(\W) =\inf  \left\{ \int \frac{1}{2} |\grad w|^2 - w \, dx \ : \ w \in H^1_0(\W)\right\}.
	\end{equation}
The sign convention adopted here differs from the one used in some other references (and is minus the physical quantity). The torsional rigidity is  a non-positive quantity, and among sets of a fixed volume, it is uniquely (up to sets of capacity zero) minimized by balls. The resulting inequality is the {\it  Saint-Venant inequality}:
	\begin{equation}\label{e: saint venant}
    \tor(\W)\geq \tor(B) \qquad \text{ if } \quad|\W| = \omega_n\,.
    \end{equation}
The trick of replacing the Faber-Krahn deficit with the Saint-Venant deficit (and more generally, replacing $\lambda_1$ with the more-regularizing shape functional $\tor$) is well known and was used, for instance, in \cite{BDV15, blnp23}.

In view of these reductions, to prove Theorem~\ref{thm: main} it suffices to show there is a constant $c_{n}>0$ such that 
\begin{align}
\label{eqn: quant stability main 3}
\tor(\W ) - \tor_1(B)  & \geq c_{n}\int_{\R^n} \big|u_{\W}^f - u_{B(x_\W)}^f\big|^2 \,
\end{align}
for any open bounded set $\W \subset \R^n$ with $|\W| = \om_n$ and for any  $f:\R^n \to \R$ satisfying \eqref{eqn: f hyp after reductions}. Here, as in Theorems~\ref{thm: eigenfunction estimates} and \ref{thm: main} above, $x_\W$  denotes the truncated barycenter, defined in section~\ref{sec: truncated barycenter}. For sets of diameter at most $2$, like the minimizer in Theorem~\ref{thm: summary for minimizers} below, the truncated barycenter agrees with classical barycenter $\fint_\W x\,dx$. Of course, it is essential that  the constant $c_{n}$ in \eqref{eqn: quant stability main 3} does not depend on the smoothness of $f$ beyond its $L^{\infty}$ norm or on its compact support.

By using a selection principle reduction (carried out in section~\ref{sec: proofs of main theorems}), the main analytical challenge to prove \eqref{eqn: quant stability main 3} is to establish the existence and regularity of minimizers of the penalized functional 
\begin{equation}\label{eqn: energy functional intro}
		 \mathcal{E}(\W) = \tor(\W) + \tau \, 
        h\Big(\int |u_\W^f - u_{B(x_\W)}^f|^2  \Big)  
	\end{equation} 
among sets of volume $\om_n$. Here $h(t) =\sqrt{a^2 +(a - t )^2}$ for a given number $a\in(0,1]$ is (up to a constant) a regularization of the function $|t-a|$, and $\tau>0$ is a parameter that will be chosen to be small.
We show the following result for the volume-constrained minimization of the functional $\mathcal{E}$ in \eqref{eqn: energy functional intro}.

\begin{theorem}\label{thm: summary for minimizers} \label{summary ex and reg theorem}
Fix $n\geq 2$, $\alpha\in (0,1)$, and $\delta_0\in (0,1]$. There exists $\bar{\err}_0= \bar{\err}_0(n,\alpha, \delta_0)>0$
such that the following holds. Fix $a\in (0,1]$, $\tau \in (0,\bar{\tau}]$, and $f$ as in \eqref{eqn: f hyp after reductions}. The variational problem 
\begin{equation}
	\label{eqn: main min prob}
\inf \{\mathcal{E}(\W) : \W \subset \R^n \text{ open, bounded, } |\W|= \omega_n\}
\end{equation}
admits a minimizer $U$. There is a $C^{2, \alpha}$ function $\xi: \partial B(x_\W) \to \R$, with  $\| \xi \|_{C^{1,\alpha}(\partial B(x_\W))} \leq \delta_0$ such that 
 \[
 \partial U = \{ (1+\xi(x))x : x \in \partial B(x_\W) \}\,. 
 \]
\end{theorem}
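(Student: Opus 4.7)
The argument splits into existence of a minimizer via the direct method, and regularity via a free-boundary analysis of Alt--Caffarelli type adapted to the same-order nonlocal perturbation $\tau h\bigl(\int|u_\W^f - u_{B(x_\W)}^f|^2\bigr)$.

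For existence, I would take a minimizing sequence $\{\W_k\}$. Since $h\ge 0$ and Saint--Venant bounds $\tor$ from below, the torsion is uniformly controlled. Together with the volume constraint $|\W_k|=\om_n$, this input rules out vanishing, and provided $\bar\tau_0$ is chosen small relative to the gap between $\tor(B)$ and the torsion of a set splitting into two pieces of comparable volume, it also rules out splitting. A standard concentration-compactness argument, with the truncated barycenter absorbing translations, then gives $\W_k \to U$ in $L^1$ up to a subsequence, with $|U|=\om_n$. Lower semicontinuity of $\tor$ under $L^1$/$\gamma$-convergence and continuity of the perturbation term (which follows from strong $L^2$ convergence of the resolvents $u_{\W_k}^f$ on $L^1$-convergent, uniformly bounded sequences, together with continuity of $\W\mapsto x_\W$) show that $U$ is a minimizer.

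For regularity, comparing with $B(0)$ gives $\mathcal{E}(U)\le\tor(B)+\tau\, h(0)$, hence $\tor(U)-\tor(B)\le \tau\, h(0)$. The quantitative Saint--Venant inequality (equivalent, via Kohler--Jobin, to \cite{BDV15}) then bounds the Fraenkel asymmetry of $U$ by $O(\sqrt{\tau})$, so $U$ is $L^1$-close to a unit ball, which by definition of the truncated barycenter is $B(x_U)$ up to controlled error. To upgrade this to the $C^{2,\alpha}$ graph statement, I would show that the torsion function $w_U$ is an almost-minimizer of a one-phase Alt--Caffarelli-type functional, in which the perturbation enters as a nonlocal correction of the same order as the bulk term. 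Following the framework of \cite{AKN2}, first-variation computations show that small set-perturbations produce correspondingly small changes in $u_\W^f - u_{B(x_\W)}^f$ (here the $C^{0,\alpha}$ norm of $f$ provides the relevant modulus), and these can be absorbed into the Alt--Caffarelli estimates to give Lipschitz regularity, nondegeneracy, two-sided density bounds, and flatness decay on $\partial U$. A flatness-implies-$C^{1,\alpha}$ theorem then writes $\partial U$ as a $C^{1,\alpha}$ graph $\xi$ over $\partial B(x_U)$, and the hodograph transformation of section~\ref{s:hodograph}, where the H\"older regularity of $f$ is essential, bootstraps $\xi$ to $C^{2,\alpha}$. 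The smallness $\|\xi\|_{C^{2,\alpha}}\le\delta_0$ for $\tau\le\bar\tau_0$ then follows by a contradiction-compactness argument based on the uniformity of the preceding estimates and the rigidity of the ball as the unique Saint--Venant minimizer.

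\textbf{Main obstacle.} The same-order, nonlocal nature of the perturbation $h\bigl(\int|u_\W^f - u_{B(x_\W)}^f|^2\bigr)$: unlike a classical lower-order correction, it does not vanish under blow-up, and it couples different spatial scales through the resolvent operators and the (nonlocal) truncated barycenter map. Controlling this term consistently at every stage of the regularity theory---almost-minimality, flatness improvement, and hodograph---is precisely the difficulty the techniques of \cite{AKN2} were designed to address, and transplanting them to the present penalization is the core of the proof.
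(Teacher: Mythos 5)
Your direct-method sketch rests on the claim that, for an $L^1$-convergent minimizing sequence of uniformly bounded sets, the resolvents $u^f_{\W_k}$ converge strongly in $L^2$, which would make the perturbation term continuous. This is false in general: $L^1$ convergence of sets does not control the resolvent (imagine a dense net of tiny exterior obstacles whose measure vanishes but whose capacity does not). The paper explicitly flags this: the nonlinear term $\nl(\cdot)$ has ``poor (lower semi)continuity properties, and an arbitrary minimizing sequence ... will not be compact in a sufficiently strong topology.'' The paper's entire architecture — proving Lipschitz estimates for \emph{outward} minimizers (Theorem~\ref{thm: Lip}), the replacement Lemma~\ref{lemma: replace outer} converting an arbitrary minimizing sequence into one of outward minimizers, and the lower-semicontinuity Lemma~\ref{lem: LSC lemma} which uses uniform convergence of the torsion functions plus a quantitative estimate on the LSC defect absorbed by the volume penalization — exists precisely to bridge this gap. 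Also, the paper does not impose the hard volume constraint; it relaxes to a penalized unconstrained problem and recovers $|U|=\om_n$ only at the end, via the scaling argument of Lemma~\ref{lem: volume constraint lemma}, once $\partial U$ is already known to be $C^{1,\alpha}$ (so that $r\W$ is comparable by inclusion, making Lemma~\ref{lemma: key estimate} applicable). Your sketch silently keeps the hard constraint, which complicates the competitor constructions you would need.

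\textbf{On regularity.} You propose to ``show that the torsion function $w_U$ is an almost-minimizer of a one-phase Alt--Caffarelli-type functional.'' The paper explicitly rules this out: ``since the term $\nl(\W)$ scales at the same order as $\tor(\W)$, one cannot apply the theory of almost-minimizers of Alt--Caffarelli-type functionals \cite{DET}.'' You actually name this obstacle in your closing paragraph (``it does not vanish under blow-up''), but then do not resolve the resulting contradiction with the almost-minimizer proposal earlier in the same sketch. What the paper actually does is: establish NTA and Ahlfors-regularity structure for $\partial U$; compute the first variation rigorously via non-tangential limits and a Poisson-kernel representation (Lemmas~\ref{l:NTlimits}--\ref{lem: square variation}), carefully handling the auxiliary potential $p$ that arises from the nonlocal term; derive a pointwise stationarity condition (Corollary~\ref{c:pointeulerlagrange}); invoke the inhomogeneous boundary Harnack theorem of \cite{aks23} to rewrite the free boundary condition as one with a $C^{0,\alpha}$ right-hand side; and only then apply the classical Alt--Caffarelli flatness machinery. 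The hodograph bootstrap to $C^{2,\alpha}$ at the end is the one point where your outline matches the paper. Your ``main obstacle'' paragraph correctly identifies the difficulty but defers its resolution to ``transplanting techniques from \cite{AKN2}'' without supplying the content; the Euler--Lagrange + boundary Harnack route is precisely that content, and it is not an almost-minimality argument.
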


{Since we assume in \eqref{eqn: f hyp after reductions} that $f$ is smooth, we obtain $\xi \in C^{2,\alpha}$. However, since we do not assume bounds on the derivative of $f$ we only obtain the first order bound $\|\xi\|_{C^{1,\alpha}}\leq \delta_0$, but utilizing the recent ideas in \cite{p24} to obtain Lemma \ref{thm: spectral gap} this will be sufficient for our methods.} 

Minimization of the first term in the energy $\mathcal{E}$, the torsional rigidity, behaves much like the minimization of the Alt-Caffarelli (also called one-phase Bernoulli) functional. Regularity theory for minimizers of this functional was first developed by Alt and Caffarelli in \cite{AC81} and its variants have since been a central topic in free boundary problems. 

What introduces new challenges to the analysis of the functional $\mathcal{E}$ is the term $\nl(\W):= h(\int |u_\W^f - u_{B(x_\W)}^f|^2)$, which is the same order as $\tor(\W)$, has poor lower semicontinuity properties, and in the Euler-Lagrange equation contributes a same-order term to the one coming from $\tor(\W)$ without a favorable sign.

Standard methods of proving existence fall short for the minimization problem \eqref{eqn: main min prob}; for instance 
\cite{BuDalMaso} cannot be applied as the functional $\mathcal{E}$ is not decreasing under set inclusions. Instead we follow the approach of our earlier paper \cite{AKN2}: we prove a priori estimates for inward and outward minimizers and construct a ``good'' minimizing sequence consisting of outward minimizers along which the functional actually is lower semicontinuous. This approach has the additional advantage that we work entirely in the framework of open sets and  there is no  need to pass to quasi-open sets.

In terms of regularity, since the term $\nl(\W)$ scales at the same order as $\tor(\W)$, one cannot apply the theory of almost-minimizers  of Alt-Caffarelli-type functionals \cite{DET}. A formal  computation  assuming $\partial \W$ is sufficiently regular shows that the Euler Lagrange equation is the vectorial Alt-Caffarelli problem
\begin{equation}
    \label{eqn: intro formal EL}
    \begin{cases}
\ \        -\Delta w_\W = 1, \ \  -\Delta u_\W^f = f, \ \ -\Delta p_\W = u_\W^f -u_{B(x_\W)}^f & \text{ on } \Omega\\
    \ \  \ \ \ \  \  w_\W = u_\W^f = p_\W = 0 & \text{ on } \partial \W,\\
       \ \  -|\na w_\W|^2 + C_\W \tau ( |\nabla p_\W| \, |\nabla u_\W^f| - g_\W)  = \text{const} & \text{ on }\partial \W\,.
    \end{cases}
\end{equation}
Here $g_\W:\R^n \to \R$ is a smooth bounded function depending on $\W$ (through the solution of another auxiliary PDE) and  $C_\W = h'(\int |u_\W^f - u_{B(x_\W)}^f|^2)>0$.
Significant difficulties arise in rigorously deriving the stationarity condition \eqref{eqn: intro formal EL}. Stemming from the genuinely nonlocal nature of the functional $\nl(\W)$---which leads, in particular, to the presence of the auxiliary potential $p_\W$ in \eqref{eqn: intro formal EL} that solves a Poisson equation with right-hand side depending on $\W$---it is not clear how to run the type of blowup argument used in \cite{blnp23} to show $\W$ is a viscosity solution of \eqref{eqn: intro formal EL}. 

Instead, we use careful regularization arguments to establish a distributional form of \eqref{eqn: intro formal EL} and then derive a pointwise form by plugging in suitable test functions (with the final line of \eqref{eqn: intro formal EL} understood in the sense of nontangential limits). On a broad level, this approach follows the one of our earlier paper \cite{AKN2}, but we are able to refine a number of our earlier arguments, leading to a significantly simpler proof.

Once we are able to rigorously establish that \eqref{eqn: intro formal EL} holds (and along the way, prove the $\W$ is an NTA domain), we can apply the inhomogeneous boundary Harnack theorem \cite{aks23} shown by the first two authors and Shahgholian. This allows us to recast \eqref{eqn: intro formal EL} as a standard one phase problem and apply the classical theory of Alt and Caffarelli \cite{AC81}.

\subsection*{Organization of the Paper}

The paper  essentially splits into two disjoint parts. After recalling notation and classical facts from the literature in section~\ref{sec: prelim},  we dedicate the first part of the paper, sections~\ref{sec: more preliminaries}--\ref{sec: proofs of main theorems}, to proving Theorems~\ref{thm: eigenfunction estimates} and \ref{thm: main} assuming Theorem~\ref{thm: summary for minimizers}. Specifically, we introduce the truncated barycenter and establish several reductions for our main theorems in section~\ref{sec: more preliminaries}, show  Theorem~\ref{thm: main} in the class of nearly spherical sets in section~\ref{sec: linear stability}, and in section~\ref{sec: proofs of main theorems} we conclude the proofs of Theorems~\ref{thm: eigenfunction estimates} and \ref{thm: main} and present examples demonstrating the sharpness of the theorems. 

The second part of the paper, sections~\ref{sec: a priori and existence} and \ref{sec: FB2},  is dedicated to proving Theorem~\ref{thm: summary for minimizers} using techniques from  free boundary theory. In section~\ref{sec: a priori and existence} we establish the existence of minimizers of a relaxed version of the minimization problem \eqref{eqn: main min prob}, as well as basic properties of minimizers like Lipschitz regularity and nondegeneracy of the torsion function. In section~\ref{sec: FB2}, we compute the Euler-Lagrange equation satisfied by minimizers and use this to show boundary regularity and that the minimizer of the relaxed problem satisfies the volume constraint $|U| =\om_n$, making it a minimizer of the original minimization problem \eqref{eqn: main min prob}.  
\medskip

\noindent{\it Acknowledgments.} MA is supported by Simons award 637757.
DK is supported by NSF grant DMS-2247096.
RN is supported by NSF grants DMS-2340195, DMS-2155054, and DMS-2342349. The authors are very grateful to Jimmy Lamboley for showing us Prunier's paper \cite{p24}; adapting an argument therein allowed us to substantially weaken the hypotheses of our main theorems.

\section{Preliminaries}\label{sec: prelim}
We introduce some known preliminary facts and notation that will be used throughout the paper. For the remainder of the paper, we fix $n$ to be an integer with $n\geq 2.$
 We write $B_r(x)$ to denote a ball of radius $r$ centered at $x$, and when $r=1$ or $x=0$ we often omit the dependence in the notation, writing, e.g. $B_r = B_r(0)$ and $B(x) = B_1(x).$

\subsection{Basic facts about the torsional rigidity}
Let $\W\subset \R^n$ be an open bounded domain. The infimum in the variational problem \eqref{e: torsion def} is uniquely achieved by the {\it torsion function} $w_\W \in H^1_0(\W)$ solving
\begin{equation}
	\label{e: torsion eqn}
\begin{cases}
\hfill	-\Delta w_\W = 1 & \text{ in }\W\\
\hfill	w_\W = 0 & \text{ on } \partial \W.
\end{cases}
\end{equation}
Of course, \eqref{e: torsion eqn} is a special case of  \eqref{eqn: PDE with f RHS} with $f \equiv 1$, so that $w_\W = u_\W^1 $ in the notation introduced there. As for eigenfunctions and solutions to \eqref{eqn: PDE with f RHS}, we extend $w_\W$ by zero to be defined in $H^1_0(\R^n)$, using the same notation $w_\W$ to denote the extended function. The distributional Laplacian on $w_\W$ (defined by acting on $\phi \in C^\infty_c(\R^n)$ by $\Delta w_\W(\phi) = \int w_\W \Delta \phi$) satisfies
\begin{equation*}
    -\Delta w_\W \leq 1 \quad\text{ on } \R^n\, .
\end{equation*}
The same is true for any solution of \eqref{eqn: PDE with f RHS} with $\| f\|_{L^\infty(\R^n)}\leq 1.$
Multiplying \eqref{e: torsion eqn} by $w_\W$ and integrating over $\R^n$, we obtain the following expressions for the torsional rigidity:
\begin{equation}\label{e: torsion and dir energy}
\tor(\W) = -\frac{1}{2} \int |\na w_\W|^2 = -\frac{1}{2} \int w_\W.
\end{equation}
If $\W'\subset \W,$ then any competitor $w $ for $\tor(\W')$ is also a competitor for $\tor (\W)$ and thus $\tor(\W) \leq \tor(\W')$. Direct computation shows $\tor\left(r \W \right) = r^{n+2} \tor (\Omega)$ and that 
the torsion function on a ball is 
\begin{equation}
 	\label{eqn: tor function on a ball}
 w_{B_r(x_0)} = \frac{1}{2n}(r^2 - |x-x_0|^2 ) \chi_{B_r(x_0)}. 
  \end{equation}
In particular, $-\tor(B_1)  = \frac{\omega_n}{2n(n+2)}$ and $\| w_{B_r(x_0)}\|_{L^\infty(\R^n)} =\frac{r^2}{2n}$.

By the maximum principle, $\| u_\W^f\|_{L^\infty}  \leq \| w_\W\|_{L^\infty}$, while
 Talenti's theorem\footnote{if $u=u_\W^f$ solves \eqref{eqn: PDE with f RHS} 
and $v= u_{B_r}^{f^*}$ solves \eqref{eqn: PDE with f RHS} on $B_r$ 
 where $r>0$ is again chosen to that $|\W| = |B_r|$ and $f^*$ is the symmetric decreasing rearrangement of $f$, then $u^* \leq v$ pointwise on $B_r$. } \cite{Talenti1976} guarantees that $\| w_\W\|_{L^\infty} \leq \|w_{B_r}\|_{L^\infty}$, where $r$ is such that $|B_r|=|\W|$. 
Thus, for any bounded open set $\W \subset \R^n$ with $|\W| \leq 2 \omega_n$, 
\begin{equation}
    \label{eqn: bound for tor fn}
\| w_\W\|_{L^\infty(\R^n)} \leq \frac{2}{n}, \qquad \| w_\W\|_{L^2(\R^n)}^2 \leq \frac{4\om_n}{n^2} \leq 1
\end{equation}

The Kohler-Jobin inequality \cite{KJ1, KJ2, KJ3}
says that among sets $\Omega \subset \mathbb{R}^n$ of a fixed torsional rigidity $\operatorname{tor}(\Omega)$, balls minimize the first Dirichlet eigenvalue $\lambda_1(\Omega)$. 
A well-known consequence of the Kohler Jobin inequality (see \cite[Cor. 3.18]{AKN1} or \cite[Prop. 2.1]{BDV15}) is that the deficit in the Faber-Krahn inequality linearly controls the definit in the Saint-Venant inequality:
\begin{lemma}\label{lem: KJ consequence}
 There exists a constant $C_n$ so that for any open set $\W\subset \R^n$ with $|\W|=\om_n$, 
\[
C_n\left(\lambda_1(\Omega)-\lambda_1(B)\right) \geq \operatorname{tor}(\Omega)-\tor(B)
\]
\end{lemma}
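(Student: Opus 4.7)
The plan is to derive the stated linear control from the Kohler-Jobin inequality by comparing $\W$ to the ball with the same torsional rigidity and then linearizing, using the scaling behavior of $\lambda_1$ and $\tor$ under dilations.

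First, I would let $\rho > 0$ be chosen so that $\tor(B_\rho) = \tor(\W)$; by the scaling $\tor(rE) = r^{n+2}\tor(E)$ recalled in Section~\ref{sec: prelim}, this forces $\rho^{n+2} = \tor(\W)/\tor(B)$. Since $|\W| = \omega_n = |B|$, the Saint-Venant inequality \eqref{e: saint venant} gives $\tor(\W) \geq \tor(B)$ (equivalently $|\tor(\W)| \leq |\tor(B)|$), so $\rho \in (0,1]$. The Kohler-Jobin inequality asserts that among open sets of a fixed torsional rigidity, balls uniquely minimize the first Dirichlet eigenvalue; applying it to $\W$ and using the scaling $\lambda_1(rE) = r^{-2}\lambda_1(E)$ yields
\[
\lambda_1(\W) \;\geq\; \lambda_1(B_\rho) \;=\; \rho^{-2}\lambda_1(B) \;=\; \lambda_1(B)\left(\frac{\tor(B)}{\tor(\W)}\right)^{2/(n+2)}.
\]

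Second, I would linearize the right-hand side near $\rho = 1$. Writing $\eta := \tor(\W) - \tor(B) \geq 0$ and $T := |\tor(B)| > 0$, one has $\tor(B)/\tor(\W) = T/(T - \eta) = (1 - \eta/T)^{-1}$, with $\eta/T \in [0,1)$ since $\tor(\W) < 0$ for any bounded open set of positive measure. Setting $s := 2/(n+2) \in (0,1)$ and applying the elementary inequality $(1-x)^{-s} \geq 1 + s x$ for $x \in [0,1)$ (immediate from the fact that $\tfrac{d}{dx}(1-x)^{-s} = s(1-x)^{-s-1} \geq s$), one obtains
\[
\lambda_1(\W) - \lambda_1(B) \;\geq\; \lambda_1(B)\Big[(1-\eta/T)^{-s} - 1\Big] \;\geq\; \frac{s\,\lambda_1(B)}{|\tor(B)|}\,\big(\tor(\W) - \tor(B)\big).
\]
Rearranging gives the claim with $C_n = (n+2)\,|\tor(B)|/(2\,\lambda_1(B))$, a purely dimensional constant (both $\tor(B)$ and $\lambda_1(B)$ depend only on $n$).

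There is no substantive obstacle here: Kohler-Jobin performs all the geometric work, and the only remaining step is the elementary Bernoulli-type inequality above. One might worry about the regime $\tor(\W) \to 0^-$ (equivalently $\rho \to 0^+$), but in that limit $\lambda_1(\W) \to \infty$ while $\tor(\W) - \tor(B)$ remains bounded above by $|\tor(B)|$, so the stated inequality continues to hold with the same constant.
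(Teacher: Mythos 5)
Your proof is correct and is essentially the standard argument behind the cited references (the paper itself omits the proof, pointing instead to \cite{AKN1} and \cite{BDV15}): apply Kohler-Jobin to the ball $B_\rho$ of the same torsional rigidity, use the scalings $\tor(rE)=r^{n+2}\tor(E)$ and $\lambda_1(rE)=r^{-2}\lambda_1(E)$, and linearize via a Bernoulli-type estimate. The sign bookkeeping and the elementary inequality $(1-x)^{-s}\geq 1+sx$ are all handled correctly, and the resulting constant $C_n=(n+2)|\tor(B)|/(2\lambda_1(B))$ is dimensional as required.
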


\subsection{Qualitative stability and relaxing the volume constraint}\label{ssec: qual stab}
As is typical in geometric variational problems, it is easier to replace a volume constraint with a volume penalization term. 
Following \cite{aac86} and \cite{BDV15}, we remove the volume constraint in \eqref{eqn: main min prob} in favor of adding to the energy a volume penalization term
\begin{equation} \label{eqn: def vol penalization fixed param}
	\mathscr{V}(t)=\fv(t) = \begin{cases}
		\vpar (t - \om_n) & t \leq \om_n\\
		\frac{1}{\vpar}(t - \om_n) & t > \om_n\,,
	\end{cases}
\end{equation}
where $\vpar>0$ is a parameter to be fixed later on.
By choosing  $\vpar$ sufficiently small, a set $\W$ whose volume exceeds the volume $\om_n$ of a unit ball will have a large energy contribution coming from the term $\fv(|\W|)$, and minimizing $\tor(\cdot)$ among sets of volume $\w_n$ is equivalent to the unconstrained problem of minimizing $\tor(\cdot)  + \fv(|\cdot|)$ among open bounded sets of volume at most $2\omega_n$:
\begin{lemma}
    \label{prop: base summary}
 There exists $\vpar_0 = \vpar_0(n)$ such that for $\vpar \leq \vpar_0$,  unit balls are the unique minimizers of the energy $\tor(\W)  + \fv(|\W|)$ among all open bounded sets $ \W \subset \R^n$ with $|\W | \leq 2 \omega_n$.
 Moreover, if $\W$ is an open bounded set with $|\W | \leq 2 \omega_n$ and  $\tor(\W)  + \fv(|\W|) \leq \frac{5}{4}\tor(B_1)$, then $|\W| \geq \omega_n/2$.
\end{lemma}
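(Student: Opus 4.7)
The plan is to combine the Saint-Venant inequality with the scaling identity $\tor(r\W) = r^{n+2}\tor(\W)$ to reduce both assertions to one-variable inequalities in $t := |\W|/\omega_n$, and then to fix $\vpar$ small enough (depending only on $n$) that the volume penalty dominates any possible gain in $\tor$.

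For the first statement, I would let $B_r$ be the ball of the same volume as $\W$, so $r = t^{1/n}$, and apply Saint-Venant together with scaling to obtain
\[
\tor(\W) \geq \tor(B_r) = r^{n+2}\tor(B) = t^{(n+2)/n}\tor(B),
\]
with equality (in the $t = 1$ case) only when $\W$ is a unit ball, up to capacity zero. Substituting this into $\tor(\W) + \fv(|\W|) \geq \tor(B)$ and recalling $-\tor(B) = \omega_n/(2n(n+2))$, the required inequality splits into two sub-cases. For $t \leq 1$ it reduces to $(1 - t^{(n+2)/n})(-\tor(B)) \geq \vpar \omega_n (1-t)$; since $(n+2)/n > 1$ forces $t^{(n+2)/n} \leq t$ on $[0,1]$, the ratio $(1 - t^{(n+2)/n})/(1-t)$ is at least $1$ on $[0,1)$, so any $\vpar \leq 1/(2n(n+2))$ suffices, with strict inequality whenever $t < 1$. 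For $1 < t \leq 2$ the penalty has coefficient $1/\vpar$, and the convexity of $s \mapsto s^{(n+2)/n}$ bounds $(t^{(n+2)/n}-1)/(t-1)$ above by $2^{(n+2)/n} - 1$, yielding a much weaker upper bound on $\vpar$. Combining the two sub-cases with the rigidity in Saint-Venant forces $t = 1$ and $\W = B(x)$ for some $x$ at equality, proving uniqueness of unit balls as minimizers.

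For the second statement, I would argue by contrapositive: if $|\W| < \omega_n/2$, then $t < 1/2$ together with $\tor(\W) \geq t^{(n+2)/n}\tor(B)$ and the trivial bound $\fv(|\W|) \geq -\vpar \omega_n$ yield
\[
\tor(\W) + \fv(|\W|) > (1/2)^{(n+2)/n}\tor(B) - \vpar \omega_n.
\]
Since $(1/2)^{(n+2)/n} < 5/4$, the right-hand side is at least $\tfrac{5}{4}\tor(B)$ whenever $\vpar \omega_n \leq \bigl(\tfrac{5}{4} - (1/2)^{(n+2)/n}\bigr)(-\tor(B))$, imposing another threshold that depends only on $n$. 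Setting $\vpar_0$ to be the minimum of the positive thresholds produced above then delivers both conclusions.

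There is no substantive analytic obstacle in this argument; the whole thing is essentially bookkeeping, and the only care required is tracking the sign conventions (that $\tor$ is non-positive and that $\fv$ has different coefficients on either side of $\omega_n$) together with the explicit constants $-\tor(B)$ and $\omega_n$ in the one-variable reductions above.
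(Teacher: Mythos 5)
Your argument is correct and is essentially the standard one the paper defers to (the authors omit the proof, citing Lemma 4.5 of \cite{BDV15}, which is exactly this reduction to the Saint-Venant inequality, the scaling identity $\tor(r\W) = r^{n+2}\tor(\W)$, and a one-variable comparison in $t = |\W|/\omega_n$). All three sub-estimates check out: for $t\le1$ the bound $t^{(n+2)/n}\le t$ gives the ratio $\ge 1$; for $1<t\le 2$ convexity of $s\mapsto s^{(n+2)/n}$ caps the difference quotient at $2^{(n+2)/n}-1$; and the contrapositive for the second claim uses only $\fv(|\W|)\ge -\vpar\omega_n$ and $(1/2)^{(n+2)/n}<5/4$. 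The only thing worth tightening: to get genuine uniqueness of the minimizer you should take $\vpar_0$ \emph{strictly} below $1/(2n(n+2))$, since at $\vpar=1/(2n(n+2))$ and $t\to 0$ your ratio $\bigl(1-t^{(n+2)/n}\bigr)/(1-t)\to 1$ is no longer strict and the degenerate small-volume limit ties the ball; choosing, say, $\vpar_0 = 1/(4n(n+2))$ (together with the other thresholds you derived) makes every inequality strict for $t\neq 1$, and the Saint-Venant rigidity then pins $\W$ to be a unit ball at $t=1$.
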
 
The proof of Lemma~\ref{prop: base summary} is elementary and nearly identical to the proof of \cite[Lemma 4.5]{BDV15}, so we omit it.
The assumed hard volume constraint $|\W| \leq2 \om_n$ in Lemma~\ref{prop: base summary} is necessary, since for any fixed $\eta >0$,  a scaling computation shows that  $\tor(B_R) + \fv(|\W|) \to -\infty$ as $R \to \infty$ .  An alternative remedy, adopted in \cite{BDV15}, would be to minimize among sets contained in $B_R$, but this has the drawback of introducing a dependence of $\eta$ on $R$.
 
Next, a standard concentration compactness argument establishes the following qualitative stability result for the energy $\tor(\cdot) +  \mathscr{V}_\eta(|\cdot|)$: as $\tor(\W) +  \mathscr{V}_\eta(|\W|)$ tends to its infimum $\tor(B_1)$, $\Omega$ converges in $L^1$ to a ball:
 \begin{proposition}[Qualitative stability]
 	\label{lemma: qual stability}
 	Fix $\e>0$. There exist $\eta_1={\eta}_1(n)\in (0,\eta_0]$ and $\ccdel = \ccdel(n,\e)>0$ such that the following holds. Suppose  $\eta \in (0, \eta_1]$ and $\W\subset\R^n$ is an open bounded set with $|\W|\leq 2\omega_n$ and 
 	\begin{equation}
 		\label{eqn: small base energy}
 	\tor(\W) +\fv(|\W|) \leq \tor(B_1) + \ccdel.
 	 	\end{equation}
 	There exists $x_1$ such that
 	\begin{equation}
 		\label{eqn: qual stab}
 	|\W \Delta B(x_1) | \leq \frac{\omega_n\e}{2}.
 		\end{equation}
 \end{proposition}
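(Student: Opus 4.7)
The plan is a contradiction argument via concentration-compactness. Suppose the conclusion fails: there exist $\e > 0$, sequences $\eta_k \to 0$ (with $\eta_k \leq \eta_0$), and open bounded $\W_k \subset \R^n$ with $|\W_k| \leq 2\omega_n$ satisfying
$$\tor(\W_k) + \mathscr{V}_{\eta_k}(|\W_k|) \leq \tor(B_1) + 1/k, \qquad \inf_{x \in \R^n} |\W_k \Delta B(x)| > \omega_n\e/2.$$
A preliminary step upgrades the near-minimality to $|\W_k| \to \omega_n$ and $\tor(\W_k) \to \tor(B_1)$: the Saint-Venant inequality \eqref{e: saint venant} gives $\tor(\W_k) \geq (|\W_k|/\omega_n)^{(n+2)/n}\tor(B_1)$ (by scaling to unit volume), and substituting into the near-minimality and splitting into the cases $|\W_k| \leq \omega_n$ (where $\mathscr{V}_{\eta_k}$ is $O(\eta_k)$) and $|\W_k| > \omega_n$ (where $\mathscr{V}_{\eta_k}(|\W_k|) = \eta_k^{-1}(|\W_k| - \omega_n)$) forces $|\W_k|/\omega_n \to 1$, and hence also $\mathscr{V}_{\eta_k}(|\W_k|) \to 0$.

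The torsion functions $w_k := w_{\W_k}$ are then uniformly bounded by $2/n$ in $L^\infty$ via Talenti (cf.\ \eqref{eqn: bound for tor fn}) and in $H^1(\R^n)$ since $\int|\nabla w_k|^2 = -2\tor(\W_k)$ is bounded. I expect the technical heart of the proof to be a tightness-up-to-translation step: for every $\delta > 0$ there should exist $x_k \in \R^n$ and $R = R(\delta, n)$ with $|\W_k \setminus B_R(x_k)| \leq \delta$ for all large $k$. The driving mechanism is strict subadditivity of torsion under well-separated disjoint unions: if $\W_k = A_k \sqcup B_k$ with $\dist(A_k, B_k) \to \infty$ and $\min\{|A_k|, |B_k|\} \geq \delta$, then $\tor(\W_k) = \tor(A_k) + \tor(B_k)$, and Saint-Venant on each piece combined with strict convexity of $t \mapsto t^{(n+2)/n}$ gives $\tor(A_k) + \tor(B_k) \geq \tor(B_1) + c(\delta, n)$, contradicting $\tor(\W_k) \to \tor(B_1)$. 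Making this dichotomy quantitative, for instance via level-set truncation of $w_k$ in the spirit of Lions's concentration-compactness lemmas, yields the desired tightness.

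Once tightness is in hand, translate so that each $w_k$ is essentially supported in a fixed ball $B_R$, and extract a subsequence with $w_k \rightharpoonup w_\infty$ weakly in $H^1(\R^n)$, strongly in $L^2(\R^n)$, and pointwise a.e. Setting $\W_\infty := \{w_\infty > 0\}$, one has $|\W_\infty| \leq \liminf |\W_k| = \omega_n$, and $w_\infty \in H^1_0(\W_\infty)$ is admissible for $\tor(\W_\infty)$, so
$$\tor(\W_\infty) \leq \tfrac12 \int |\nabla w_\infty|^2 - \int w_\infty \leq \liminf \big(\tfrac12 \int |\nabla w_k|^2 - \int w_k\big) = \lim \tor(\W_k) = \tor(B_1).$$
Combined with Saint-Venant $\tor(\W_\infty) \geq (|\W_\infty|/\omega_n)^{(n+2)/n} \tor(B_1) \geq \tor(B_1)$, all inequalities are equalities: $|\W_\infty| = \omega_n$, $\W_\infty = B(x_\infty)$ by rigidity in Saint-Venant, and $\int |\nabla w_k|^2 \to \int |\nabla w_\infty|^2$ upgrades weak $H^1$-convergence to strong. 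Interior elliptic regularity for $-\Delta w_k = 1$ gives $w_k \to w_{B(x_\infty)}$ uniformly on compact subsets of $B(x_\infty)$, where $w_{B(x_\infty)}$ is bounded below by a positive constant; hence $|B(x_\infty) \setminus \W_k| \to 0$. Combined with $|\W_k| \to \omega_n = |B(x_\infty)|$, this yields $|\W_k \Delta B(x_\infty)| \to 0$, contradicting the assumption and completing the proof.
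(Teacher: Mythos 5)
The paper offers no proof of this proposition; it merely invokes ``a standard concentration compactness argument.'' Your sketch is a correct rendering of exactly such an argument, and the overall strategy (preliminary reduction to $|\W_k|\to\omega_n$ and $\tor(\W_k)\to\tor(B_1)$ via the scaling form of Saint-Venant and the structure of $\fv$; tightness via strict superadditivity of $t\mapsto t^{(n+2)/n}$ under decomposition; extraction of a strong $H^1$ limit; identification of the limit as a ball by rigidity in Saint-Venant; upgrade to $L^1$ convergence of the sets) is the expected one.

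Two steps are stated loosely. First, writing $w_\infty \in H^1_0(\W_\infty)$ with $\W_\infty := \{w_\infty>0\}$ is not automatic: $\W_\infty$ is only quasi-open in general, and for a general open set, $\{u \in H^1(\R^n): u=0 \text{ a.e.\ off } \W_\infty\}$ can be strictly larger than $H^1_0(\W_\infty)$. The standard repair is to bypass $\tor(\W_\infty)$ and apply P\'olya--Szeg\H{o} directly to $w_\infty\ge 0$: $J(w_\infty)\ge J(w_\infty^*)\ge \tor(B_r)\ge\tor(B_1)$ where $|B_r|=|\{w_\infty>0\}|\le\omega_n$, with Brothers--Ziemer giving the rigidity. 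Second, the final invocation of ``interior elliptic regularity for $-\Delta w_k=1$'' on compact subsets of $B(x_\infty)$ is not justified as written: $w_k$ only satisfies that PDE inside $\W_k$, and nothing yet guarantees $K\Subset\W_k$ for $K\Subset B(x_\infty)$ (that is essentially the content of what you are trying to prove). The conclusion $|B(x_\infty)\setminus\W_k|\to 0$ follows more directly from the strong $L^2$ convergence you already have: for $\delta>0$, on $\{w_{B(x_\infty)}>\delta\}\cap\{w_k=0\}$ one has $\delta^2\,|\{w_{B(x_\infty)}>\delta\}\setminus\W_k|\le\|w_k-w_{B(x_\infty)}\|_{L^2}^2\to 0$, and then send $\delta\to 0$. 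With these two repairs the argument is complete.
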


\subsection{Geometric measure theoretic preliminaries} Let us recall some geometric measure theoretic definitions and facts that will be used to establish initial regularity and to compute the Euler-Lagrange equation of minimizers of the (relaxed version of the) minimization problem \eqref{eqn: main min prob}.

For $K>0$,
A bounded open set $\W\subset \R^n$ is called a {\it non-tangentially accessible (NTA) domain}  with NTA constant $K$ if the following properties hold: 
\begin{enumerate}
    \item  For each $x \in \partial \W$ and $r \in (0,1)$ there are points $y_{\textrm{i}}$ and $y_{\textrm{o}}$ such that 
    \[
    B_{r/K}(y_{\textrm{i}}) \subset B_r(x) \cap \W  \qquad \text{ and } \qquad B_{r/K}(y_{\textrm{o}}) \subset B_r(x) \setminus \W .
    \]
    \item  For each  $x,y \in  \W$, there is a curve $\gamma : [0,1] \to \W$ with $\gamma(0)=x$ and $\gamma(1)=y$ such that $\ell(\gamma([0,1]) \leq K |x-y|$ and 
    $$\text{dist}(\gamma(t),\partial \W) \geq \frac{1}{K} \min\{ \ell(\gamma([0,t])), \ell(\gamma([t,1]).
    $$
\end{enumerate}
\begin{remark}
    {\rm 
    Jerison-Kenig's original definition of NTA domain  \cite{JKNTA} takes a slightly different form but is equivalent to the one given here; see \cite[Theorem 2.15]{ahmnt17}.
    Some references, e.g., the classical texts \cite{k94, CaffSalsa}, define NTA domains with the exterior corkscrew condition $B_{r/K}(y_{\textrm{o}}) \subset B_r(x) \setminus \W$ replaced by the weaker condition that $\Omega$ satisfies uniform upper volume density estimates at each $x\in \partial \W$. Since this definition is strictly weaker, the results cited below from \cite{k94}  hold for NTA domains as defined above. }
\end{remark}

For   a open bounded set $\W \subset \R^n$,  $x \in \partial \W$, and $\beta>0$, define the non-tangential approach region
\[
\Gamma_\beta(x)  = \{ y \in \W : |y-x| \leq (1+\beta) \text{dist}(y, \partial \W)\}.
\]
Given $u \in C(\Omega)$, the associated non-tangential maximal function $u^*:\pa \W \to \R$ (which depends on $\beta$) is defined by $u^*(y) = \sup_{x \in \Gamma_\beta(y)}|u(x)|$. The function $u$  is said to {\it converge non-tangentially to $a \in \R$} at $x \in \partial \Omega$ if, for every $\beta >0$, 
\begin{equation}\label{def: NTlimit 1}
\lim_{\substack{y \to x ,\\ 
y \in \Gamma_\beta(x)  }} u(y) = a\,.
\end{equation}

The next lemma gives us a Poission kernel representation for certain solutions of the Dirichlet problem when $\W$ is sufficiently regular. The statement is not the most general one possible, but will be sufficient for our needs.
\begin{lemma}\label{lem: poisson kernel}
    Let $\W \subset \R^n$ be an NTA domain and assume there are  constants $c,r_0 >0$ such that for each $x \in \partial \W$ and $r\in (0,r_0]$,
\begin{equation}\label{eqn: prelim ahlfors}
   c \, r^{n-1} \leq   \mathcal{H}^{n-1} (\partial \W \cap B_r(x) ) \leq \frac{1}{c} r^{n-1}\,.
\end{equation}
Then for each $x \in \W$, there is a function $K(x,y)$ defined  for $\mathcal{H}^{n-1}$-a.e. $y \in \partial \W$ such that the following holds. 
For $g\in L^\infty(\partial \W,  \mathcal{H}^{n-1})$, there is a unique harmonic function $u \in C^\infty(\W)$ with $u^* \in L^\infty(\partial \W, d\mathcal{H}^{n-1})$ for some $\beta>0$ such that $u$ converges non-tangentially to $g$ for $\mathcal{H}^{n-1}$-a.e. $y \in \partial \W$. For each $x \in \W$, $u(x)$ is given by
$$
u(x) = \int_{\partial \W} g(y){K}(x,y) \, d \mathcal{H}^{n-1}(y)\,.
$$
\end{lemma}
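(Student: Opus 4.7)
The plan is to build the Poisson kernel and the associated integral representation in stages, leveraging the classical harmonic-measure theory of NTA (and chord-arc) domains. First, for each $x \in \Omega$ I would define the harmonic measure $\omega^x$ on $\partial \Omega$ via the Perron-Wiener-Brelot method; the interior corkscrew condition guarantees Wiener regularity at every boundary point, so for continuous $g$ the function $u_g(x) = \int g \, d\omega^x$ is the unique harmonic function in $\Omega$ continuously attaining $g$ on $\partial \Omega$.

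The central analytic input is to show that $\omega^x$ is mutually absolutely continuous with $\mathcal{H}^{n-1} \mres \partial \Omega$, so that the Radon-Nikodym derivative $K(x,y) := d\omega^x/d\mathcal{H}^{n-1}(y)$ is defined for $\mathcal{H}^{n-1}$-a.e.\ $y$. This is the $A_\infty$ property of harmonic measure on chord-arc domains (NTA plus $(n-1)$-Ahlfors regular boundary), a Dahlberg-type theorem extended to the NTA setting; I would cite it from \cite{k94} and the references therein. With $K$ in hand, I would define $u(x) = \int g(y) K(x,y) \, d\mathcal{H}^{n-1}(y)$ for $g \in L^\infty$, which is automatically harmonic in $\Omega$ since $K(\cdot,y)$ is, and satisfies $\|u\|_{L^\infty(\Omega)} \leq \|g\|_{L^\infty}$ from $\omega^x(\partial \Omega) = 1$. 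To control the non-tangential maximal function $u^*$, I would use the standard comparison $\omega^x \sim \omega^{x_0}$ on boundary balls when $x$ lies in a non-tangential cone at a point, which bounds $|u|$ in non-tangential cones by the Hardy-Littlewood maximal function of $g$ with respect to $\mathcal{H}^{n-1}$ and hence yields $u^* \in L^\infty(\partial \Omega)$.

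For the non-tangential convergence $u \to g$ at $\mathcal{H}^{n-1}$-a.e.\ boundary point, I would approximate $g$ by continuous $g_k$ in $L^1(\mathcal{H}^{n-1})$ with $\|g_k\|_\infty \leq \|g\|_\infty$, apply the Fatou-type theorem for bounded harmonic functions on NTA domains from \cite{JKNTA} to the differences, and invoke a Lebesgue-Besicovitch differentiation step with respect to the doubling measure $\omega^x$. Uniqueness then reduces to showing that a bounded harmonic $v$ with $v^* \in L^\infty$ and zero non-tangential limit $\mathcal{H}^{n-1}$-a.e.\ must vanish identically: by absolute continuity the limit is also zero $\omega^x$-a.e., and the maximum principle on interior approximating domains together with dominated convergence (enabled by $v^* \in L^\infty$) forces $v \equiv 0$. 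The main obstacle is the mutual absolute continuity of $\omega^x$ and $\mathcal{H}^{n-1}$, which is deep in full chord-arc generality but well documented; in the setting where this lemma is actually applied, the boundary is $C^{1,\alpha}$ (indeed $C^{2,\alpha}$ by Theorem~\ref{thm: summary for minimizers}), so one can alternatively build $K$ directly from the classical Poisson kernel on smooth domains and bypass the chord-arc machinery entirely.
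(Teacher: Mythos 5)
Your proposal is correct and takes essentially the same route as the paper: solve the $L^\infty$ Dirichlet problem with non-tangential boundary values via harmonic measure on the NTA domain (the Fatou-type theorem and uniqueness from \cite{k94}), then transfer from $d\omega$ to $d\mathcal{H}^{n-1}$ using the mutual absolute continuity on chord-arc domains, which the paper attributes specifically to \cite[Theorem 2]{DavidJerison}. The paper simply cites the relevant results from Kenig's book (\cite[Lemma 1.2.7, Corollary 1.4.3, Theorem 1.4.4]{k94}) rather than re-deriving the approximation, Fatou, and uniqueness steps you sketch, but the underlying content is the same.
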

\begin{proof}
Suppose $\W$ is an NTA domain. Let $\{\w^x\}_{x\in \W}$ be the harmonic measure for $\W$, and for a fixed point $x_0 \in \W$, set $\w= \w^{x_0}$. By [Lemma 1.2.7]\cite{k94}, the measures $\w^x$ and $\w$ are mutually absolutely continuous for any $x \in \W$, so 
the Radon-Nykodym  derivative $\tilde{K}(x,y)$ of $\w^x$ with respect to $\w$
 exists for $\w$-a.e. $y$. By \cite[Corollary 1.4.3, Theorem 1.4.4]{k94}, for any $g \in  L^\infty(\pa \W, d \om)$, the function 
$u(x) = \int_{\partial \W} g(y)\tilde{K}(x,y) \, d\om(y)$
is the unique harmonic function on $\W$ with $u^* \in L^\infty(\partial \W, d\om)$ such that  $u(x)$ converges non-tangentially to $g$ for $\om$-a.e. $y \in \partial \W$. 

Now additionally assume that $\W$ satisfies \eqref{eqn: prelim ahlfors}. By \cite[Theorem 2]{DavidJerison}, the harmonic measure $\w$ and surface measure $\sigma:= \mathcal{H}^{n-1}\llcorner \partial \W$ are mutually absolutely continuous (in fact, quantitatively so). Thus, in each statement above, we may replace $\om$ by $\sigma$. This proves the lemma.
\end{proof}

A Borel set $\W\subset \R^n$ is a {\it set of finite perimeter} in $\R^n$ if 
\[\sup\Big\{ \int_\W \text{div}\,  T  : T\in C^1_c(\R^n; \R^n) , |T|\leq 1\Big\}<\infty.\]
If the topological boundary $\partial \W$ of measurable set $\W$ has  $\mathcal{H}^{n-1}(\partial \Omega) <\infty$, then $\W$ is a set of finite perimeter.
The reduced boundary $\partial^*\W \subset \partial \W$ of a set of finite perimeter $\W$ is the set of points such that 
\[
\nu_\W(x) := \lim_{r\to 0^+} \frac{D\chi_\W(B_r(x))}{|D\chi_\W| (B_r(x))} \qquad \text{ exists and has modulus }1.
\]
For every $x\in \partial^*\W$, the rescaled sets $\W_{x,r}:=(\W-x)/r$ converge in $L^1_{loc}$ to the half-space $H_x:= \{y \in \R^n : y \cdot \nu_\W(x)\} \leq 0$ as $r \to 0^+$; see \cite[Theorem 15.5]{MaggiBook}.
 If additionally there are positive constants $c, r_0>0$ such that
\[
  c\, \om_nr^n  \leq |B_r(x) \cap \W| \leq (1-c)\om_n r^n
\]
  for all $x \in \partial \W$ and $r\in (0,r_0]$, then $\mathcal{H}^{n-1}(\partial \W \setminus \partial^*\W) =0$ by \cite[Theorem 16.2]{MaggiBook}, and for any $x \in \partial^*\W$ and $\e>0$, we have $d_H(\partial \W \cap B_r(x) , \partial H_{x} \cap B_r(x)) < \e r$ for $r$ sufficiently small, and thus for $r$ small enough,
\begin{equation}\label{eqn: cone}
    \partial\W \cap B_r(x)\subset \{ y \in B_r(x): |y\cdot \nu_\W(x)| <\e|y|\} 
\end{equation}

The divergence theorem holds for vector fields $Z \in C^1_c(\R^n ; \R^n)$ on a set of finite perimeter $\W$. In section~\ref{sec: FB2}, we will need to apply a different form of the divergence theorem, which holds assuming more regularity on $\W$ but less regularity for the vector fields. The following is a special case of \cite[Theorem 2.3.1]{HMT}. 
\begin{lemma}\label{l:fancy div thm}
Let $\Omega \subset \mathbb{R}^{n}$ be a bounded open set of finite perimeter with $\mathcal{H}^{n-1}(\partial \W \setminus \partial^*\W)=0$ and assume there are constants $c, r_0>0 $ such that
\eqref{eqn: prelim ahlfors} holds
for all  $r \in (0,r_0]$ and $x \in \partial \W$.
Then
\[
\int_{\Omega} \operatorname{div} Z d x =\int_{\partial \Omega} Z \cdot \nu_\W \,  d\mathcal{H}^{n-1}
\]
for any vector field $Z \in C(\Omega)$ with $\operatorname{div} v \in L^1(\Omega)$ and $Z^* \in L^\infty( \partial \W, \mathcal{H}^{n-1})$ whose pointwise nontangential limit exists $\mathcal{H}^{n-1}\llcorner \partial \W$-a.e.
\end{lemma}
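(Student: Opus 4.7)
The plan is to reduce to the classical divergence theorem via an inner exhaustion of $\W$, and then pass to the limit exploiting the nontangential bounds on $Z$. Since the lemma is quoted as a special case of \cite[Theorem 2.3.1]{HMT}, what I outline is essentially a verification that the geometric assumptions (NTA plus Ahlfors regularity) together with the assumptions on $Z$ are what enable this limiting procedure.

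First, I would construct a sequence of smooth approximating subdomains $\W_\e \Subset \W$ with $\W_\e \uparrow \W$, e.g. $\W_\e = \{x \in \W : \dist(x,\pa \W) > \e\}$ after slightly mollifying the distance function, and invoke Sard's theorem to ensure $\pa \W_\e$ is a smooth hypersurface for a.e.~$\e>0$. The classical divergence theorem on $\W_\e$ gives
\[
\int_{\W_\e} \dvg Z \, dx \;=\; \int_{\pa \W_\e} Z \cdot \nu_{\W_\e} \, d\cH^{n-1}.
\]
Since $\dvg Z \in L^1(\W)$, dominated convergence identifies the limit of the left-hand side with $\int_\W \dvg Z \, dx$.

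For the right-hand side, I would parameterize $\pa \W_\e$ by the nearest-point projection $\pi_\e : \pa \W_\e \to \pa \W$ (well-defined $\cH^{n-1}$-a.e.~for small $\e$). The Ahlfors regularity assumption \eqref{eqn: prelim ahlfors} together with the two-sided corkscrew condition built into the NTA definition should yield that $(\pi_\e)_\#(\cH^{n-1} \llcorner \pa \W_\e)$ is uniformly comparable to $\cH^{n-1} \llcorner \pa \W$, while the interior corkscrew ensures that for $\cH^{n-1}$-a.e.~$y \in \pa^* \W$, any selection $x_\e \in \pi_\e^{-1}(y)$ stays inside a nontangential approach region $\Gamma_\beta(y)$ for some fixed $\beta$. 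Thus $Z(x_\e)$ converges to the assumed nontangential limit of $Z$ at such $y$. The main obstacle is showing $\nu_{\W_\e}(x_\e) \to \nu_\W(y)$ for $\cH^{n-1}$-a.e.~$y \in \pa^* \W$: this compatibility of the approximating normals with the reduced-boundary orientation is the delicate geometric input, and is proved using the half-space blowup \eqref{eqn: cone} together with $\cH^{n-1}(\pa \W \sm \pa^* \W) = 0$. Once this is granted, the uniform domination $|Z \cdot \nu_{\W_\e}| \leq Z^* \in L^\infty(\pa \W, \cH^{n-1})$ lets dominated convergence pass $\int_{\pa \W_\e} Z \cdot \nu_{\W_\e}\,d\cH^{n-1}$ to $\int_{\pa^*\W} Z \cdot \nu_\W\,d\cH^{n-1}$, as required.
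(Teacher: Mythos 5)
The paper does not prove this lemma at all; it simply states that the lemma ``is a special case of \cite[Theorem 2.3.1]{HMT}'' and cites Hofmann--Mitrea--Taylor. So your proposal of an inner-exhaustion argument is, by definition, a different route, and the fair comparison is whether your sketch could be upgraded to a self-contained proof.

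As written, it cannot, because the three geometric claims in the last paragraph are asserted rather than proved, and they carry essentially all of the content of the theorem. Specifically: (i) that $(\pi_\e)_\#(\mathcal H^{n-1}\llcorner\partial\W_\e)$ is \emph{uniformly comparable} to $\mathcal H^{n-1}\llcorner\partial\W$; (ii) that the image of $\pi_\e$ covers $\mathcal H^{n-1}$-a.e.\ of $\partial^*\W$ (without this, dominated convergence does not identify the limit with the full boundary integral); and (iii) that $\nu_{\W_\e}(x_\e)\to\nu_\W(y)$ for $\mathcal H^{n-1}$-a.e.\ $y$. None of these is a formal consequence of Ahlfors regularity plus $\mathcal H^{n-1}(\partial\W\setminus\partial^*\W)=0$ (note that NTA is not even in the lemma's hypotheses). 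For a rough, merely rectifiable boundary, the nearest-point projection need not be single-valued on a set of full measure, its Jacobian need not be controlled, and the level sets $\{\dist(\cdot,\partial\W)=\e\}$ can have $\mathcal H^{n-1}$-measure much larger than that of $\partial\W$ for individual $\e$. The coarea formula only gives an integral-in-$\e$ bound, so one would have to choose a good sequence $\e_j$, which interacts badly with the a.e.\ normal-convergence claim. The a.e.\ normal convergence (iii) in particular is the kind of statement that, for sets of finite perimeter, is usually obtained via mollification of $\chi_\W$ and De Giorgi's structure theorem rather than via distance-function level sets, precisely because $\nabla\dist(\cdot,\partial\W)$ has no good a.e.\ behavior near a rectifiable but nonsmooth boundary. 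This is why the HMT proof does not use a nearest-point projection at all: they mollify the vector field, apply the Gauss--Green formula for sets of finite perimeter, and pass to the limit by a Carleson-measure argument for the nontangential maximal function. Your sketch correctly identifies which facts are needed, but does not indicate how any of them would be established, and the route you propose for them is not the one that is known to work.
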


\section{The truncated barycenter and initial simplifications}\label{sec: more preliminaries}
We define the truncated barycenter and prove a key Lipschitz property in Lemma~\ref{lem: lip bary}, then prove three lemmas that reduce the proof of  Theorem~\ref{thm: main} to $f$ that are smooth, nonnegative, and compactly supported.

 \subsection{Truncated barycenters and a key Lipschitz property}\label{sec: truncated barycenter}
\label{ssec: lip bary}
Simple examples, for instance, the sequence of sets $\Omega_j = B_{1-1/j}(0) \cup B_{r_j}(2^j\,e_1)$ with $r_j$  chosen so $|\Omega_j|= \om_n$,  show that even a qualitative form of Theorem~\ref{thm: main} is false if $x_\W$ is chosen to be the barycenter $ \tfrac{1}{|\W|}\int_\W x \,dx$ of $\W$.
This is a familiar challenge in stability problems, and the usual solution is to take the infimum over all $x \in \R^n$ of the given distance $d(\W, B_1(x))$, rather than choosing an explicit ball center. This solution is unsuitable in the present context for two reasons: first, it leads to serious challenges in establishing the necessary regularity for minimizers. 
Second, to prove Theorem~\ref{thm: eigenfunction estimates}, it is essential that the ball center in \eqref{eqn: quant stability main} is independent of the function $f$.\footnote{The first of these two issues already arises in \cite{BDV15}. 
Their solution is to prove a quantitative stability for sets contained in  $B_R$ using the barycenter $\text{bar}(\W)$, then pass $R$ to infinity using a previously-known non-sharp quantitative stability estimate \cite{FMP09} for the Faber-Krahn inequality with respect to the asymmetry. No analogous non-sharp result exists in the present context.}

In view of this, we define a suitable {truncated barycenter} $x_\W$ that agrees with $\text{bar}(\W)$ for sufficiently well-behaved sets but de-emphasizes small pieces of $\W$ at infinity, so that, for instance, $x_{\Omega_j} \to 0$ for the sequence $\W_{j}$ above. In view of the characterization  of the classical barycenter as $ \text{argmin}\{ \int_\Omega |x-y|^2 \,dy : x \in \R^n\}$, we define the {\it truncated barycenter} $x_\W$ of a measurable set $\W$ with $|\W|>0$ by 
\begin{equation}
	\label{eqn: barycenter definition}
x_\W = \text{argmin}\Big\{ \int_\W \TBF(|x-y|)\,dy : x \in\R^n \Big\},
\end{equation}
where  $\TBF: \R_+\to \R_+$ is a fixed $C^2$ function satisfying 
\begin{equation}
    \label{eqn: conditions on q}
\TBF(t) = t^2\text{ for }t\leq 100, 
\qquad \TBF'(t)<250 \text{ for }t\in \R, \quad \text{ and } \quad  0 < \TBF'' (t)<3 \text{ for }t\in \R.
\end{equation}
A simple way to construct such a function is to let $\ddot{q}(\rho)=2$ for $\rho\leq 100$ and $\ddot{q}(\rho) =\tfrac{1}{\rho^2 -100^2+1/2}$ for $\rho \geq 100$ and take $q(t) = \int_0^t\int_0^s \ddot{q}(\rho)\,d\rho\,ds$.  
It is easy to check from \eqref{eqn: conditions on q} that $x\mapsto \int_\W \TBF(|x-y|)\,dy$ is a strictly convex function and thus $x_\W$ is well-defined, and that $x_\W$ agrees with the classical barycenter for those sets $\W$ with diameter at most $100$.
An important feature of the truncated barycenter is the following Lipschitz continuity property:
 \begin{lemma}\label{lem: lip bary}
  There are dimensional constants $\baryEps >0$ and $C>0$ such that the following holds. Let $\W, \W'\subset \R^n$ be bounded open sets with $|\W \Delta B| \leq \baryEps \omega_n$ and $|\W' \Delta B| \leq \baryEps \w_n$.  Then $x_\W, x_{\W'} \in B_3$ and  
  \begin{equation}
      \label{eqn: bary lip}
  |x_\W -x_{\W'}|\leq C |\W\Delta \W'|.
   \end{equation}
  Moreover, for any $f \in L^\infty(\R^n)$  with $\|f\|_{L^\infty(\R^n)} \leq1$,
\begin{equation}\label{eqn: beta lip}
\Big| \int   |u_\W^f - u_{B(x_\W)}^f|^2 -  \int  |u_{\W'}^f-u_{B(x_{\W'})}^f|^2   \Big| \leq C\,\Big( |\W\Delta \W'| +\int |u_{\W}^f - u_{\W'}^f|\Big)\,. 
\end{equation}

\end{lemma}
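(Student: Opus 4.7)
The plan is to exploit the strict convexity of the auxiliary function $F_\W(x):=\int_\W \TBF(|x-y|)\,dy$ in the $x$-variable. Since $\TBF''>0$ and $\TBF(t)=t^2$ near $0$ imply $\TBF$ is convex and nondecreasing on $[0,\infty)$, the map $x\mapsto \TBF(|x-y|)$ is convex and globally $C^2$ (being $|x-y|^2$ near $x=y$). A direct calculation gives the Hessian lower bound
\begin{equation}\label{e: hess lower plan}
\nabla^2 F_\W(x)\ \geq\ 2\,\bigl|\W\cap B_{100}(x)\bigr|\,I,
\end{equation}
because $\TBF(t)=t^2$ on $[0,100]$ contributes $2I$ pointwise. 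Since $|\W|<\infty$ and $\TBF(|x-y|)\to\infty$ as $|x|\to\infty$, the minimum is uniquely attained at the point $x_\W$ characterized by $\nabla F_\W(x_\W)=0$.

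For containment in $B_3$ and the Lipschitz estimate \eqref{eqn: bary lip}, the main tool is the uniform bound
\begin{equation}\label{e: grad diff plan}
|\nabla F_\W(x)-\nabla F_{\W'}(x)|\ \leq\ 250\,|\W\Delta \W'|\qquad\text{for all }x\in \R^n,
\end{equation}
which follows from $|\nabla_x\TBF(|x-y|)|=|\TBF'(|x-y|)|<250$. For $|x|=3$ and $y\in B$ one has $|x-y|\in[2,4]\leq 100$, so $\nabla F_B(x)=2\int_B(x-y)\,dy=2\w_n x$; applying \eqref{e: grad diff plan} with $\W'=B$ yields $\nabla F_\W(x)\cdot x/|x|\geq 6\w_n-250\baryEps\w_n>0$ provided $\baryEps$ is sufficiently small. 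Strict convexity of $F_\W$ combined with this outward-pointing gradient on $\partial B_3$ forces $x_\W\in B_3$. To establish \eqref{eqn: bary lip}, note that $B\subset B_{100}(z)$ for any $z\in B_3$, so \eqref{e: hess lower plan} upgrades to $\nabla^2 F_\W(z)\geq c_n I$ uniformly on $B_3$. Combining with the Euler-Lagrange equations $\nabla F_\W(x_\W)=0=\nabla F_{\W'}(x_{\W'})$, the mean value theorem and \eqref{e: grad diff plan} give
\begin{align*}
c_n|x_\W-x_{\W'}|^2 &\leq \bigl(\nabla F_\W(x_{\W'})-\nabla F_\W(x_\W)\bigr)\cdot(x_{\W'}-x_\W)\\
&=\bigl(\nabla F_\W(x_{\W'})-\nabla F_{\W'}(x_{\W'})\bigr)\cdot(x_{\W'}-x_\W)\leq 250\,|\W\Delta\W'|\,|x_\W-x_{\W'}|,
\end{align*}
so \eqref{eqn: bary lip} follows.

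For \eqref{eqn: beta lip}, set $A:=u_\W^f-u_{B(x_\W)}^f$ and $A':=u_{\W'}^f-u_{B(x_{\W'})}^f$. The maximum principle combined with \eqref{eqn: bound for tor fn} gives $\|u_\W^f\|_{L^\infty}\leq\|w_\W\|_{L^\infty}\leq 2/n$, so $\|A\|_{L^\infty}+\|A'\|_{L^\infty}\leq C_n$. The algebraic identity $\|A\|_{L^2}^2-\|A'\|_{L^2}^2=\int(A-A')(A+A')$ then reduces matters to estimating
\[
\int|A-A'|\ \leq\ \int|u_\W^f-u_{\W'}^f|+\int|u_{B(x_\W)}^f-u_{B(x_{\W'})}^f|.
\]
The principal obstacle, in my view, is the translational continuity bound $\int|u_{B(x)}^f-u_{B(x')}^f|\leq C_n|x-x'|$, with constant depending only on $n$ and $\|f\|_{L^\infty}\leq 1$. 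I would prove this by splitting $\R^n=(B(x)\Delta B(x'))\cup(B(x)\cap B(x'))$: the symmetric difference has measure $O(|x-x'|)$ with pointwise bounded integrand, while on the intersection the harmonic function $u_{B(x)}^f-u_{B(x')}^f$ has boundary data of order $|x-x'|$—a consequence of the Lipschitz regularity of $u_{B(x')}^f$ up to $\partial B(x')$ coming from $W^{2,p}$-theory for $f\in L^\infty$, with Lipschitz constant depending only on $n$—so that the maximum principle controls the difference pointwise by $C_n|x-x'|$ on $B(x)\cap B(x')$. Applying this at $x=x_\W$, $x'=x_{\W'}$ and invoking \eqref{eqn: bary lip} yields \eqref{eqn: beta lip}.
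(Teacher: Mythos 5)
Your proof is correct and, for the containment statement and \eqref{eqn: bary lip}, takes a genuinely different route from the paper's. The paper proves $x_\W,x_{\W'}\in B_3$ by taking the origin as a competitor and combining the value bound with the linear lower bound $\TBF(t)\geq 2t-1$; it then proves \eqref{eqn: bary lip} by rewriting the Euler--Lagrange equation $0=\int_\W \TBF'(|x_\W-y|)\tfrac{x_\W-y}{|x_\W-y|}\,dy$ as an explicit near-identity for $x_\W$ over $\W\cap B_{50}$, decomposing the difference $x_\W-x_{\W'}$ into three error terms $I^{(1)},I^{(2)},I^{(3)}$, and absorbing a small multiple of $|x_\W-x_{\W'}|$ at the end. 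You instead work directly with $F_\W(x)=\int_\W\TBF(|x-y|)\,dy$: the key observations are the uniform Hessian lower bound $\nabla^2F_\W\geq 2|\W\cap B_{100}(\cdot)|I\geq \om_n I$ on $B_3$ (using $\TBF''>0$ to discard the far part and $\TBF=|\cdot|^2$ near the diagonal), the gradient perturbation bound $|\nabla F_\W-\nabla F_{\W'}|\leq 250|\W\Delta\W'|$, and the outward-pointing gradient on $\partial B_3$. The containment then follows from convexity (the minimum cannot lie outside $B_3$ because along each ray through a point of $\partial B_3$ the function increases, by $F_\W(z)\geq F_\W(x)+\nabla F_\W(x)\cdot(z-x)$ with $z-x$ a positive multiple of $x$), and \eqref{eqn: bary lip} follows from the strong monotonicity inequality $c_n|x_\W-x_{\W'}|^2\leq (\nabla F_\W(x_{\W'})-\nabla F_\W(x_\W))\cdot(x_{\W'}-x_\W)$ and the first-order conditions. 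This is cleaner and more conceptual than the paper's explicit decomposition, and it avoids the need to absorb an $\baryEps|x_\W-x_{\W'}|$ error term at the end; both proofs require $\baryEps$ small, yours through the gradient sign on $\partial B_3$ and the paper's through the absorption step and the denominator $|\W'\cap B_{50}|$.

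For \eqref{eqn: beta lip} your argument is essentially the paper's: the algebraic identity $a^2-b^2=(a-b)(a+b)$ with $L^\infty$ bounds, the triangle inequality to reduce to the ball-translation estimate $\int|u_{B(x)}^f-u_{B(x')}^f|\leq C_n|x-x'|$, and then the symmetric-difference/intersection split with the maximum principle on the intersection. The one small difference is that you invoke $W^{2,p}$ boundary regularity to get the Lipschitz bound on $u_{B(x')}^f$ near $\partial B(x')$, whereas the paper instead dominates $u_{B(x')}^f$ by the explicit torsion function $w_{B(x')}$ of the ball via the maximum principle and reads off the linear bound from \eqref{eqn: tor function on a ball}; the latter is more elementary and self-contained, but both give a constant depending only on $n$ and $\|f\|_{L^\infty}\leq 1$, which is all that is needed.
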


\begin{proof}
{\it Step 1:}	Let $\baryEps=\baryEps\in(0,1/5)$ be a fixed constant to be specified later in the proof. We first show $x_\W, x_\W' \subset B_3$. Taking the origin as a competitor in the minimization problem \eqref{eqn: barycenter definition} defining $x_\W$ shows 
	\[
	\int_{\W \cap B} \TBF(|x_\W-y|)\,dy \leq \int_{\W \cap B} \TBF(|y|)\,dy +  \int_{\W \setminus B} [ \TBF(|y|) - \TBF(|x_\W-y|) ] \,dy\leq \w_n + 250 \baryEps \omega_n |x_\W|.
	\]
Since $\TBF(t) \geq \TBF(1)+ \TBF'(1)(t-1) = 2t-1$ by convexity, the left-hand side has the lower bound
\[
\int_{\W \cap B} \TBF(|x_\W-y|)\,dy \geq 2|x_\W|(1-\baryEps) \omega_n - \int_{B} 2|y|+1 \, dy \geq 2|x_\W|(1-\baryEps) \omega_n - 3\omega_n\,.
\]
Choosing $\baryEps$ small enough, these two inequalities together show that $|x_\W| \leq 4/(2(1-\baryEps) -250\baryEps) \leq 3.$\\

{\it Step 2: } Next we show \eqref{eqn: bary lip}. The Euler Lagrange equation for $x_\W$ is 
$0 = \int_\W \TBF'(|x_\W-y|) \frac{x_\W - y}{|x_\W-y|} \,dy,$ so splitting the domain of integration into $\W \cap B_{50}$ and its complement yields the following expression for $x_\W$:
\[
x_\W = \frac{1}{|\W \cap B_{50}|}\Big( \int_{\W\cap B_{50}} y\,dy + \int_{\W \setminus B_{50}} \TBF'(|x_\W -y|) \tfrac{ x_\W -y}{|x_\W -y|} \,dy \Big)
\]
So, $
|x_\W -x_{\W'}| \leq I^{(1)} + I^{(2)} + I^{(3)}$
where 
\begin{align*}
	I^{(1)} &:=  \left| \frac{1}{|\W \cap B_{50}|} -  \frac{1}{|\W' \cap B_{50}|}\right| \, 
	 \bigg| \int_{\W\cap B_{50}} y\,dy + \int_{\W \setminus B_{50}} \TBF'(|x_\W -y|) \tfrac{ x_\W -y}{|x_\W -y|} \,dy \bigg|\\
	 I^{(2)} & : = \frac{1}{|\W' \cap B_{50}|}
	 \left|  \int_{\W' \cap B_{50}} \Big( y + \TBF'(|x_\W - y|) \tfrac{x_\W - y}{|x_{\W }-y|} \Big) \,dy -   \int_{\W \cap B_{50}} \Big( y + \TBF'(|x_\W - y|) \tfrac{x_\W - y}{|x_{\W }-y|} \Big) \,dy\right|\\
	 I^{(3)} &: =\frac{1}{|\W' \cap B_{50} |} \left| \int_{\W' \setminus B_{50}} \Big(\TBF'(|x_\W - y|) \tfrac{x_\W - y}{|x_{\W }-y|}  -\TBF'(|x_{\W'} - y|) \tfrac{x_{\W'} - y}{|x_{\W' }-y|} \Big)\,dy  \right|  \,.
\end{align*}
We have 
\begin{align*}
	I^{(1)} 
	 &\leq  \left( 51  \omega_n +250 \baryEps \omega_n\right) \left| \frac{1}{|\W \cap B_{50}|} -  \frac{1}{|\W' \cap B_{50}|}\right| 
	  \leq 200\omega_n^{-1} \big| |\W \cap B_{50}| - |\W' \cap B_{50}|\big| \leq 200\omega_n^{-1}|\W \Delta \W'|.
\end{align*}
Moreover, the modulus of the integrand in $I^{(2)}$ is bounded by $300$ and thus  
\[
I^{(2)}  \leq \frac{300 |\W\Delta \W'| }{|\W' \cap B_{50}|} \leq 600 \omega_n  |\W\Delta \W'|.
\]
 For the final term $I^{(3)}$, direct computation shows that  the function ${g}_y(x):= \TBF'(|x-y|) \frac{x-y}{|x-y|}$ has $|\nabla_x {g}_y(x)|\in C_n$ uniformly bounded for all $x,y$ with $|x-y| \geq 10$. In particular, since $x_\W, x_{\W'}$ lie in $B_3$, by the fundamental theorem of calculus,
\begin{align*}
		I^{(3)} \leq \frac{1}{|\W' \cap B_{50}|} 
		 \int_{\W' \setminus B_{50}} 
		 |{g}_y(x_\W) - {g}_y(x_\W')| \leq C_n \baryEps  |x_\W - x_\W'| \,. 
\end{align*}
Putting these estimates together we have $|x_\W -x_{\W'}| \leq C_n ( |\W\Delta \W'| + \baryEps |x_\W -x_{\W'}|)$. We conclude the proof of \eqref{eqn: bary lip} by choosing $\baryEps$ small enough to absorb the second term into the left-hand side.\\

{\it Step 3:} Now we prove \eqref{eqn: beta lip}. Fix $f \in L^\infty$ with $\|f \|_{L^\infty(\R^n)}\leq 1$. For brevity we write $u_\W, x,$ and $x'$ in place of $u_{\W}^f, x_{\W},$ and $ x_{\W'}$ respectively. 
 Since $u_\W, u_{\W'}, u_{B(x)}$ and $u_{B(x')}$  are each pointwise bounded above by $C_n$, we use the identity $a^2-b^2 =(a+b)(a-b)$ to find
	\begin{equation}\label{eqn: beta first}
    \Big|  \int \left( |u_\W - u_{B(x)}|^2 - |u_{\W'} -u_{B(x')}|^2   \right) \Big|
		 \leq C_n \Big(  \int\left|u_\W - u_{\W'} \right| + \int \left| u_{B(x)} - u_{B(x')}\right| \Big).
	\end{equation}
 By the triangle inequality we have 
	\begin{equation}
		\label{e: step 1}
		\begin{split}
	\int |u_{B(x)} - u_{B(x')}| 
	&= \int_{B(x) \setminus B(x')} 	|u_{B(x)}|  + \int_{B(x') \setminus B(x)} 	|u_{B(x')} | \\
	&+ \int_{B(x)\cap B(x')} |u_{B(x)} - u_{B(x')}|  
	 =: I^{(1)} + I^{(2)} +I^{(3)} 
	 		\end{split}
		\end{equation}
By the maximum principle and the expression \eqref{eqn: tor function on a ball} for the torsion function on a ball,  \begin{equation}\label{eqn: I1 and I2}
I^{(1)} + I^{(2)} \leq C_n |B(x) \Delta B(x')| \leq C_n  |x-x'|.
\end{equation}
Next, $v= u_{B(x)} - u_{B(x')}$ is harmonic on $E:=B(x)\cap B(x')$ so by the maximum principle and \eqref{eqn: tor function on a ball},
\[
\sup_{E} |v|= \sup_{ \partial E}|v| \leq \sup_{ \partial B(x)} |u_{B(x')}| + \sup_{  \partial B(x')} |u_{B(x)}| \leq 2 \sup_{\partial B(x)} w_{B(x')} \leq C_n |x-x'|.
\]
Integrating this bound over $E$ shows $I^{(3)} \leq C_n  |x-x'|.$
Combining this estimate with \eqref{eqn: beta first}, \eqref{e: step 1}, and \eqref{eqn: I1 and I2}, we see that the left-hand side of \eqref{eqn: beta first} is bounded above by $ C( \int |u_\W - u_{\W'}| + |x - x'|).$
Finally,  choosing $\baryEps$  according to Lemma~\ref{lem: lip bary} so that $
|x-x'| \leq C_n |\W \Delta \W'| $ completes the proof.
 \end{proof}

\subsection{Simplifying Assumptions}\label{sec: simplify}
The next three lemmas show that to prove Theorem \ref{thm: main} it will be sufficient to assume that $f\geq 0$ and $f \in C^{\infty}$, and $f$ is compactly supported. The only quantitative assumption on $f$ will be that {$\| f \|_{L^{\infty}(\mathbb{R}^n)}\leq 1$.}

 \begin{lemma} \label{l:fnonneg}
   It is sufficient to prove the estimate \eqref{eqn: quant stability main} of  Theorem \ref{thm: main} in the case when  $f \geq 0$. 
 \end{lemma}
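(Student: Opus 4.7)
The plan is to reduce the general case to the nonnegative case via an affine shift that preserves the $C^{0,\alpha}$ bound, exploiting the linearity of the resolvent in $f$. Given $f$ with $\|f\|_{C^{0,\alpha}(\R^n)}\le 1$, note that $\|f\|_{L^\infty}\le\|f\|_{C^{0,\alpha}}\le 1$, so $f+1\ge 0$ pointwise. Set
\[
\tilde f \;:=\; \tfrac{1}{2}(f+1).
\]
Then $\tilde f\ge 0$ and, since $\|1\|_{C^{0,\alpha}}=1$, we have $\|\tilde f\|_{C^{0,\alpha}}\le \tfrac12(\|f\|_{C^{0,\alpha}}+1)\le 1$. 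Likewise the constant function $1$ is nonnegative with $\|1\|_{C^{0,\alpha}}\le 1$, and $u_\W^{1}=w_\W$ in the notation of section~\ref{sec: prelim}.

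By linearity of the Poisson problem \eqref{eqn: PDE with f RHS}, $u_\W^{\tilde f}=\tfrac12(u_\W^f+w_\W)$, and the analogous identity holds on $B(x_\W)$. (Crucially, $x_\W$ is the truncated barycenter from section~\ref{sec: truncated barycenter}, which depends only on $\W$ and not on the right-hand side, so the same ball center serves all three functions $f$, $\tilde f$, and $1$.) Rearranging,
\[
u_\W^f-u_{B(x_\W)}^f \;=\; 2\bigl(u_\W^{\tilde f}-u_{B(x_\W)}^{\tilde f}\bigr)\;-\;\bigl(w_\W-w_{B(x_\W)}\bigr).
\]

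Applying the inequality $(a-b)^2\le 2a^2+2b^2$ and integrating,
\[
\int_{\R^n} \bigl|u_\W^f-u_{B(x_\W)}^f\bigr|^2
\;\le\; 8\int_{\R^n}\bigl|u_\W^{\tilde f}-u_{B(x_\W)}^{\tilde f}\bigr|^2
\;+\;2\int_{\R^n}\bigl|w_\W-w_{B(x_\W)}\bigr|^2.
\]
If we assume \eqref{eqn: quant stability main} for nonnegative right-hand sides, both terms on the right are bounded by a constant (depending only on $n,\alpha$) times $\lambda_1(\W)-\lambda_1(B)$: the first from applying the nonnegative case to $\tilde f$, the second from applying it to $f\equiv 1$. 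Absorbing these constants yields \eqref{eqn: quant stability main} for the original $f$, completing the reduction.

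There is no real obstacle here; the only subtle point is ensuring the $C^{0,\alpha}$ bound is preserved by the affine shift, which forces the normalization $\tfrac12(f+1)$ rather than $f+\|f\|_{L^\infty}$, and noting that the point $x_\W$ in \eqref{eqn: quant stability main} is supplied independently of $f$ so the same center may be used for $f$, $\tilde f$, and the constant function $1$ simultaneously.
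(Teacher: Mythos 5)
Your proof is correct but takes a genuinely different route from the paper's. The paper decomposes $f=f^+ - f^-$ into positive and negative parts, notes that $\|f^\pm\|_{C^{0,\alpha}}\le\|f\|_{C^{0,\alpha}}$ (since $t\mapsto t^\pm$ is $1$-Lipschitz and decreases the sup norm), applies the nonnegative estimate to $f^+$ and $f^-$ separately, and combines via $(a-b)^2\le 2a^2+2b^2$. You instead use the affine shift $f=2\tilde f-1$ with $\tilde f=\tfrac12(f+1)$, applying the nonnegative case to $\tilde f$ and to the constant function $1$ (using $u_\W^1=w_\W$). Both arguments are correct and elementary, and both hinge on the same crucial observation — which you state explicitly — that the truncated barycenter $x_\W$ in \eqref{eqn: quant stability main} depends only on $\W$ and not on the right-hand side, so a single ball center serves every right-hand side in the decomposition. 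Your version has the small cosmetic advantage that the affine shift preserves smoothness of $f$, whereas $f^\pm$ does not (immaterial here since this reduction is applied before the mollification in Lemma~\ref{l:fsmooth}), and degrades the constant a bit more ($8+2$ versus the paper's $2+2$), which is also immaterial since $c_{n,\alpha}$ is only required to depend on $n$ and $\alpha$.
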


 \begin{proof}
 Choose any $f$ with $\| f\|_{L^{\infty}} \leq 1$ and  write $f = f^+ - f^-$. Note that $\| f_+\|_{L^{\infty}}, \| f_-\|_{L^{\infty}} \leq 1$
   \[
   u_{\W}^f = u_{\W}^{f^+} - u_{\W}^{f^-} \quad \text{ and } \quad 
   u_{B(x_{\W})}^f = u_{B(x_{\W})}^{f^+} - u_{B(x_{\W})}^{f^-}. 
   \]
   So, if the estimate \eqref{eqn: quant stability main}  is true whenever the right hand side is nonnegative with $L^{\infty}$ norm at most $1$, then applying it to $\W$ with $f_-$ and $f_+$,
   \[
   \begin{aligned}
   \int \big|u_{\W}^f- u_{B(x_{\W})}^f \big|^2
   &= \int \big|u_{\W}^{f^+} - u_{B(x_{\W})}^{f^+} -u_{\W}^{f^-} + u_{B(x_{\W})}^{f^-}  \big|^2\\
   &\leq 2\int \big|u_{\W}^{f^+} - u_{B(x_{\W})}^{f^+} \big|^2
   + 2\int \big|u_{\W}^{f^-} - u_{B(x_{\W})}^{f^-} \big|^2\leq 4 C_{n}\, (\lambda_1(\W) - \lambda_1(B)). 
   \end{aligned}
   \]
 \end{proof}

\begin{lemma} \label{l:fsmooth}
 It is sufficient to prove  the estimate \eqref{eqn: quant stability main} of  Theorem \ref{thm: main} in the case when  $f \in C^{\infty}(\R^n)$. 
\end{lemma}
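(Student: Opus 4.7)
The plan is a straightforward mollification argument. Given $f \in C^{0,\alpha}(\R^n)$ with $\|f\|_{C^{0,\alpha}(\R^n)} \leq 1$, I would fix a nonnegative radial mollifier $\rho \in C^\infty_c(B_1)$ with $\int \rho = 1$, set $\rho_k(x) = k^n \rho(kx)$, and define $f_k := f * \rho_k \in C^\infty(\R^n)$. A short computation using $\rho_k \geq 0$ and $\int \rho_k = 1$ gives $\|f_k\|_{L^\infty} \leq \|f\|_{L^\infty}$ and $[f_k]_{C^{0,\alpha}} \leq [f]_{C^{0,\alpha}}$, so $\|f_k\|_{C^{0,\alpha}(\R^n)} \leq 1$ for every $k$. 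Since $f$ is uniformly continuous (from its H\"older bound), standard mollifier theory gives $f_k \to f$ locally uniformly on $\R^n$.

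The next step is to transfer this convergence to the resolvents. Because $\W$ is bounded, $f_k \to f$ uniformly on $\W$ and therefore in $L^2(\W)$. Testing the equation $-\Delta(u_\W^{f_k} - u_\W^f) = f_k - f$ in $\W$ against $u_\W^{f_k} - u_\W^f \in H^1_0(\W)$ and applying Poincar\'e's inequality yields $u_\W^{f_k} \to u_\W^f$ in $L^2(\R^n)$, where we use the zero-extension convention for both functions. The identical argument on $B(x_\W)$ gives $u_{B(x_\W)}^{f_k} \to u_{B(x_\W)}^f$ in $L^2(\R^n)$, so by the triangle inequality $\int_{\R^n} |u_\W^{f_k} - u_{B(x_\W)}^{f_k}|^2 \to \int_{\R^n} |u_\W^f - u_{B(x_\W)}^f|^2$.

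The observation that makes the reduction close cleanly is that the truncated barycenter $x_\W$ depends only on $\W$, not on the right-hand side $f$, so a single ball $B(x_\W)$ serves simultaneously for $f$ and for every approximant $f_k$. Applying \eqref{eqn: quant stability main} — assumed to hold for each smooth $f_k$ with the same constant $c_{n,\alpha}$ — and passing to the limit $k \to \infty$ yields \eqref{eqn: quant stability main} for the original $f$. There is no substantive obstacle: the argument depends only on the fact that mollification does not enlarge the $C^{0,\alpha}$ norm and on the continuity of the map $f \mapsto u_\W^f$ from $L^2(\W)$ into $L^2(\R^n)$.
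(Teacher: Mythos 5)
Your proof is correct and follows essentially the same mollification argument as the paper: both mollify $f$, observe that the $C^{0,\alpha}$ norm is not increased, bound $\|u_\W^f - u_\W^{f_\epsilon}\|_{L^2}$ via the Poisson equation and a Poincar\'e/Faber--Krahn type inequality, and pass to the limit (with the same implicit use of the fact that $x_\W$ is independent of $f$). The only cosmetic difference is that you argue convergence of $f_k \to f$ locally uniformly and hence in $L^2$ of the bounded domains, whereas the paper writes ``$f_\epsilon \to f$ in $L^2(\R^n)$'' but then, as it must, only ever invokes $\|f-f^\epsilon\|_{L^2(\W)}$; your phrasing is the slightly more careful one.
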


 \begin{proof}
 Choose any $f$ with $\| f\|_{L^{\infty}} \leq 1$.
  Utilizing the standard mollification, if $f_{\epsilon} := f \ast \eta_{\epsilon}$, then 
  $\| f_{\epsilon} \|_{L^{\infty}(\mathbb{R}^n)} \leq \| f \|_{L^{\infty}(\mathbb{R}^n)} \leq 1$. Furthermore, $f_{\epsilon} \to f$ in $L^2(\mathbb{R}^n)$. Now 
  \begin{equation} \label{e:fepsilon}
  \begin{split}
    \int \big|u_{\W}^f- u_{B(x_{\W})}^f \big|^2
    &\leq \int \big|u_{\W}^f- u_{\W}^{f_{\epsilon}} \big|^2 + \int \big|u_{\W}^{f_{\epsilon}}- u_{B(x_{\W})}^{f_{\epsilon}} \big|^2  
    + \int \big|u_{B(x_{\W})}^{f_{\epsilon}}- u_{B(x_{\W})}^f \big|^2 
    \end{split}
  \end{equation}
To estimate the first term on the right-hand side of \eqref{e:fepsilon}, note that
  \[
   \begin{aligned}
       \| \nabla (u_{\W}^f- u_{\W}^{f_{\epsilon}}) \|^2_{L^2(\W)}
       &= \int_{\W} (f-f^{\e})(u_{\W}^f- u_{\W}^{f_{\epsilon}}) \\
       &\leq \| f - f^{\e} \|_{L^2(\W)} \| u_{\W}^f- u_{\W}^{f_{\epsilon}}\|_{L^2(\W)}. 
   \end{aligned}
  \]
On the other hand, from the Faber-Krahn inequality we have 
  \[
  \| u_{\W}^f - u_{\W}^{f_{\e}} \|^2_{L^2(\W)} \leq \frac{1}{\lambda_1(B)} \|\nabla (u_{\W}^f - u_{\W}^{f_{\e}}) \|^2_{L^2(\W)}.
  \]
Putting these together gives 
  \[
  \| u_{\W}^f - u_{\W}^{f_{\e}} \|_{L^2(\W)} \leq \frac{1}{\lambda_1(B)} \| f - f^{\e} \|_{L^2(\W)}. 
  \]
  The same estimate will also apply to the third term in \eqref{e:fepsilon}. 
  So, if the estimate \eqref{eqn: quant stability main} of Theorem~\ref{thm: main} holds for smooth right-hand sides, then applying this estimate to $f^\e$ shows the second term in \eqref{e:fepsilon} is bounded above by $C_{n}(\lambda_1(\W) - \lambda_1(B))$.
  Letting $\e \to 0$ in \eqref{e:fepsilon}, we conclude the result.  
 \end{proof}

\begin{lemma}\label{lem:fcptspt}
    It is sufficient to prove  the estimate \eqref{eqn: quant stability main} of  Theorem \ref{thm: main} in the case when  $f$ has compact support.
\end{lemma}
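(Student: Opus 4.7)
The idea is to truncate $f$ by a smooth cutoff whose $C^{0,\alpha}$ norm approaches $1$ as the truncation scale grows. By Lemmas~\ref{l:fnonneg} and \ref{l:fsmooth} we may assume $f \in C^\infty(\R^n)$, $f \geq 0$, and $\|f\|_{C^{0,\alpha}(\R^n)} \leq 1$. Fix a smooth radial profile $\psi \in C^\infty(\R)$ with $\psi \equiv 1$ on $[0,1]$ and $\psi \equiv 0$ on $[2,\infty)$, and for each $R > 0$ set $\chi_R(x) = \psi(|x|/R)$. A direct computation using the fundamental theorem of calculus on the Lipschitz bound $\|\na \chi_R\|_{L^\infty} \leq \|\psi'\|_\infty/R$ gives $[\chi_R]_{C^{0,\alpha}(\R^n)} \leq C R^{-\alpha}$ for a constant $C = C(n,\alpha,\psi)$. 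The standard product estimate for H\"older norms then yields
\begin{equation*}
\|f \chi_R\|_{C^{0,\alpha}(\R^n)} \;\leq\; \|f\|_{L^\infty}\|\chi_R\|_{L^\infty} + \|f\|_{L^\infty}[\chi_R]_{C^{0,\alpha}} + \|\chi_R\|_{L^\infty}[f]_{C^{0,\alpha}} \;\leq\; 1 + C R^{-\alpha}.
\end{equation*}

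Now fix any bounded open set $\W \subset \R^n$ with $|\W| = \om_n$. Since both $\W$ and the unit ball $B(x_\W)$ are bounded, they are contained in $B_R$ once $R$ is large enough (depending on $\W$). For such $R$, we have $f \chi_R \equiv f$ on $\W \cup B(x_\W)$, so uniqueness for the Dirichlet Poisson problem \eqref{eqn: PDE with f RHS} gives $u_\W^f = u_\W^{f \chi_R}$ and $u_{B(x_\W)}^f = u_{B(x_\W)}^{f \chi_R}$; in particular the right-hand sides of \eqref{eqn: quant stability main} for $f$ and for $f\chi_R$ coincide. Applying the assumed compactly-supported version of Theorem~\ref{thm: main} to the function $g := f \chi_R / (1 + CR^{-\alpha})$, which is compactly supported and satisfies $\|g\|_{C^{0,\alpha}(\R^n)} \leq 1$, and invoking the linearity $u_\W^{cg} = c \, u_\W^g$, we obtain
\begin{equation*}
\int |u_\W^f - u_{B(x_\W)}^f|^2 \;=\; \int |u_\W^{f\chi_R} - u_{B(x_\W)}^{f\chi_R}|^2 \;\leq\; (1 + C R^{-\alpha})^2 \, c_{n,\alpha}^{-1}\bigl(\lambda_1(\W) - \lambda_1(B)\bigr).
\end{equation*}
Since the left-hand side is independent of $R$, letting $R \to \infty$ yields the inequality \eqref{eqn: quant stability main} with the same constant $c_{n,\alpha}$.

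This argument is essentially routine; the only quantitative point is that the $C^{0,\alpha}$ norm of the truncation stays within $1 + o(1)$ of the norm of $f$, which is precisely what the scaling $\chi_R(x) = \psi(|x|/R)$ provides through the $R^{-\alpha}$ decay of its H\"older seminorm. Note crucially that the truncated barycenter $x_\W$ depends only on $\W$ and not on $f$, so the same point $x_\W$ is used on both sides when passing from $f \chi_R$ back to $f$.
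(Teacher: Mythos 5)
Your proof is correct and takes essentially the same route as the paper: cut off $f$ with a smooth radial cutoff that equals $f$ on $\W\cup B(x_\W)$, observe that the resolvent terms on the right of \eqref{eqn: quant stability main} are unchanged by the cutoff, and invoke the compactly-supported case for the normalized truncation. The only difference is cosmetic: the paper rescales by a fixed factor of $\tfrac12$ (and therefore ends with a slightly worse constant $c_{n,\alpha}/2$), whereas you normalize by $1+CR^{-\alpha}$ and send $R\to\infty$ to recover the original constant $c_{n,\alpha}$. Since the constant in \eqref{eqn: quant stability main} is only claimed up to dimensional factors, this gain is immaterial; both arguments are fine. Your observation that $[\chi_R]_{C^{0,\alpha}}\lesssim R^{-\alpha}$, and that $x_\W$ depends only on $\W$ and not on $f$, are the two points that make the reduction work, and you have them both.
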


 \begin{proof}
 Fix a smooth cutoff function $\psi:\R_+\to \R$ that is identically equal to $1$ in $(0,1)$ and vanishes on $(2,\infty)$ with $|\psi'|\leq 5$. 
Let $\W$ be an open bounded set with $|\W| = 1$.
  Choose any $f$ with $\| f\|_{L^{\infty}} \leq 1$.   
  
  Now, choose $R\geq 10$ such that $\W \subset B_{R/2}(0)$. Now, let 
 \[
{g(x)  := \psi\Big(\tfrac{|x|}{R}\Big) \,f(x)}
 \]
 By construction, $g$ is compactly supported and agrees with {$f$} in $B_R(0)$.
 
 Finally, by the choice of $R$ and definition of $x_\W$ (to be precise, by Proposition~\ref{lemma: qual stability} and \eqref{eqn: bary lip}), we see that $x_\W \subset B_{R/2}(0)$ and thus $B(x_{{\W}}) \subset B(0, R)$. 
 In particular, $g=f$ on $B({x_\W})\cup \W$, and thus by linearity of the Poisson equation \eqref{eqn: PDE with f RHS}, 
 \[
 \big\| u_\W^f - u_{B(x_\W)}^f \big\|_{L^2} =  \big\| u_\W^g - u_{B(x_\W)}^g \big\|_{L^2}\,.
 \]
 So, if the estimate \eqref{eqn: quant stability main} of Theorem~\ref{thm: main} holds for compactly supported right-hand sides, then the estimate applied with right-hand side $g$ immediately yields the estimate with right-hand side $f$. 
 \end{proof}

\section{Resolvent Estimates for Nearly Spherical Sets}\label{sec: linear stability}
A set $\Omega\subset \R^n$ is called a {\it nearly spherical set} parametrized by $\phi \in C^{2,\gamma}(\partial B, \mathbb{R})$
 if $|\W|=\om_n$,  $x_\W =0$, and 
\[
\partial \Omega :=  \{ (1+\phi(x))x :  x \in \partial B\}\,
\]
for a function $\phi : \partial B\to \R$ with $\|\phi\|_{C^0}< 1.$ The truncated barycenter $x_\W$ of a nearly spherical set $\W$ agrees with the classical barycenter. We show Theorem~\ref{thm: main} holds if $\W$ is a nearly spherical set, provided $\|\phi\|_{C^{1,\alpha}}$ is small with $1/2< \alpha < 1$.
\begin{theorem}\label{cor: Fuglede}
	Fix {$\alpha \in (1/2,1)$}. There exist $\delta = \delta(n,\alpha)$  and $c=c(n,\alpha)$ such that if $f \in L^\infty(\R^n)$ with $\|f \|_{L^\infty}\leq 1$ and  $\W$ is a nearly spherical set of class $C^{2,\gamma}$ parametrized by $\phi$ with $\| \phi\|_{C^{1,\alpha}} \leq \delta$, then
\[
\tor(\W) - \tor(B) \geq c \int \big| u_\W^f - u_{B({x_\W})}^f\big|^2.  
\]
\end{theorem}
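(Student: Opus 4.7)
\emph{Proof plan.}
The plan is to combine two ingredients. The first is the Fuglede-type linearization bound of Brasco--De Philippis--Velichkov \cite{BDV15}: for nearly spherical $\Omega$ with $|\Omega|=\omega_n$, zero barycenter, and $C^{2,\gamma}$ parametrization $\phi$ of small norm,
\[
\tor(\Omega) - \tor(B) \;\geq\; c \, \|\phi\|_{H^{1/2}(\partial B)}^2 \;\geq\; c \, \|\phi\|_{L^2(\partial B)}^2.
\]
The second is the direct upper bound
\[
\int_{\mathbb{R}^n} \bigl| u_\Omega^f - u_B^f \bigr|^2 \;\leq\; C \, \|\phi\|_{L^2(\partial B)}^2,
\]
with $C=C(n,\gamma)$ uniform for $\|f\|_{L^\infty}\le1$. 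Since nearly spherical sets have truncated barycenter equal to the classical barycenter, which is $0$ by assumption, $B(x_\Omega)=B$, so combining the two inequalities yields the theorem.

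For the upper bound, I would split the integral over $\mathbb{R}^n$ into $\Omega\setminus B$, $B\setminus\Omega$, and $\Omega\cap B$. Standard Schauder estimates, using $\|f\|_{L^\infty}\leq 1$ and uniformity of the $C^{2,\gamma}$ norm of $\partial\Omega$ for $\|\phi\|_{C^{2,\gamma}}\leq \delta$, yield $\|u_\Omega^f\|_{C^{1,\alpha}(\overline\Omega)}\le C$ and hence the Hopf-type bound $|u_\Omega^f(y)|\le C\,\dist(y,\partial\Omega)$, and similarly for $u_B^f$. The region $\Omega\Delta B$ is a tubular annulus over $\partial B$ of signed ``thickness'' $\phi$: parametrizing $y=(1+t)x$ with $x\in\partial B$ and $t$ between $0$ and $\phi(x)$, the contribution of the symmetric difference is at most
\[
C \int_{\partial B} |\phi|^3 \, d\sigma \;\leq\; C \, \|\phi\|_{L^\infty} \, \|\phi\|_{L^2(\partial B)}^2 \;\leq\; C\delta \, \|\phi\|_{L^2(\partial B)}^2.
\]

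On the interior piece $\Omega\cap B$, the difference $w:=u_\Omega^f-u_B^f$ is harmonic, as both summands solve $-\Delta u=f$ there. Its boundary data on $\partial(\Omega\cap B)=(\partial B\cap\overline\Omega)\cup(\partial\Omega\cap\overline B)$ is bounded pointwise by $C|\phi|$: on each of the two boundary pieces, one of $u_\Omega^f$ or $u_B^f$ vanishes while the other is controlled by its distance to its own boundary, which is bounded by $|\phi|$. Hence $\|w\|_{L^2(\partial(\Omega\cap B))}\le C\|\phi\|_{L^2(\partial B)}$. The main technical step is to propagate this into an interior $L^2$ estimate $\|w\|_{L^2(\Omega\cap B)}\le C\|w\|_{L^2(\partial(\Omega\cap B))}$, and I expect this to be the main obstacle, because $\Omega\cap B$ has corners along $\{\phi=0\}$. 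I would handle it by invoking Dahlberg's $L^2$-solvability of the Dirichlet problem on Lipschitz domains, whose constants depend only on the Lipschitz character; for $\|\phi\|_{C^{1,\gamma}}$ small, $\Omega\cap B$ has uniformly controlled Lipschitz character with near-flat wedges along $\{\phi=0\}$. A cleaner alternative, should these Lipschitz-domain constants prove inconvenient, is to pull back $u_\Omega^f$ by a diffeomorphism $\Phi:\overline B\to\overline\Omega$ extending $x\mapsto(1+\phi(x))x$ with $\|\Phi-\mathrm{Id}\|_{C^{2,\gamma}}\le C\|\phi\|_{C^{2,\gamma}}$, transforming everything into a perturbed PDE on the fixed domain $B$ and then identifying the leading-order correction via a Hadamard-type shape derivative, whose boundary trace $-\phi\,\partial_\nu u_B^f$ has $L^2(\partial B)$ norm controlled by $\|\phi\|_{L^2(\partial B)}$.
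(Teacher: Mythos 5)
Your proposal is correct and would establish the theorem, but it takes a genuinely different route from the paper's, and it proves something slightly sharper than what is needed. The paper proves Theorem~\ref{cor: Fuglede} by combining the BDV spectral gap (Lemma~\ref{thm: spectral gap}) with the intermediate bound of Lemma~\ref{lem: beta on NSS}, which reads $\|\phi\|_{H^{1/2}(\partial B)}^2 \geq c \int |u_\W^f - u_B^f|^2$; the proof of that lemma pulls $u_\W^f$ back to $B$ via the diffeomorphism $\Psi(x)=(1+h(x))x$ with $h$ the harmonic extension of $\phi$, and estimates $v = u_B^f - \hat u_\W^f$ using the $H^{-1}$ bound of Lemma~\ref{l:LNbound} together with $|A-\mathrm{Id}|\lesssim |h|+|\nabla h|$. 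Since $\|h\|_{H^1(B)} = \|\phi\|_{H^{1/2}(\partial B)}$, that route is intrinsically of $H^{1/2}$ order because of the $\nabla h$ contributions. Your primary route—decomposing over $\W\cap B$, $\W\setminus B$, $B\setminus\W$ and invoking Dahlberg's $L^2$-Dirichlet theory on the Lipschitz domain $\W\cap B$—aims at the sharper bound in terms of $\|\phi\|_{L^2(\partial B)}^2$, which of course implies the $H^{1/2}$ bound. The decomposition is sound (the corner set $\{\phi=0\}$ gives wedges with Lipschitz constant $\lesssim\|\nabla\phi\|_\infty\le\delta$, so Dahlberg's constants are uniform, and the boundary data of the harmonic function $u_\W^f-u_B^f$ on $\partial(\W\cap B)$ is pointwise $\lesssim |\phi|$ by the $C^1$ bounds and hence $L^2$-controlled by $\|\phi\|_{L^2(\partial B)}$). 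Two small points to adjust: (i) you should invoke $W^{2,p}$/Calder\'on--Zygmund estimates rather than Schauder, since $f$ is only assumed $L^\infty$ here (the paper does the same, cf.\ \eqref{e:c1est}); and (ii) your ``cleaner alternative'' via the pullback diffeomorphism is precisely the paper's method, but that method does not yield the $L^2$ bound you claim—it gives $H^{1/2}$—so if the sharper $L^2$ version is your goal, you should stick with the direct Dahlberg argument and not present the pullback as an equivalent alternative for that sharper claim.
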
  
A main tool in the proof is a spectral gap estimate.   
\begin{lemma} \label{thm: spectral gap}
Fix  $\alpha \in (1/2,1)$. There exists $\delta = \delta(n,\alpha)$ such that if $\W$ is a nearly spherical set of class $C^{2,\gamma}$ parametrized by $\phi$ with {$\| \phi\|_{C^{1,\alpha}} \leq \delta$}, 
\[
\tor(\W) - \tor(B) \geq \frac{1}{32n^2} \| \phi \|_{H^{1/2}(\partial B)}^2. 
\]
\end{lemma}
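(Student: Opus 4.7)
The plan is to reduce Theorem~\ref{cor: Fuglede} to Lemma~\ref{thm: spectral gap} by establishing the auxiliary $L^2$ estimate
\begin{equation}\label{eqn: plan aux}
\int_{\R^n} |u_\W^f - u_B^f|^2 \leq C_{n,\gamma}\, \|\phi\|_{L^2(\partial B)}^2,
\end{equation}
with $C_{n,\gamma}$ independent of $\phi$ and $f$ under the stated smallness hypotheses. Note that $x_\W = 0$ by the very definition of nearly spherical set, so $B(x_\W) = B$. Given \eqref{eqn: plan aux}, Lemma~\ref{thm: spectral gap} yields $\tor(\W) - \tor(B) \geq \tfrac{1}{32 n^2}\|\phi\|_{H^{1/2}(\partial B)}^2 \geq c_n\|\phi\|_{L^2(\partial B)}^2$ (since the $H^{1/2}$ norm dominates the $L^2$ norm), and the conclusion follows immediately.

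To prove \eqref{eqn: plan aux}, set $v = u_\W^f - u_B^f$ (both extended by zero) and split
\[
\int v^2 = \int_{\W \cap B} v^2 + \int_{\W \Delta B} v^2.
\]
On the symmetric difference, $v = -u_B^f$ on $B \setminus \W$ and $v = u_\W^f$ on $\W \setminus B$. The $C^{2,\gamma}$ control on $\partial \W$ with $\|\phi\|_{C^{2,\gamma}}\leq \delta$, combined with $\|f\|_{L^\infty}\leq 1$, gives uniform $C^1$ bounds up to the boundary; since both solutions vanish on their respective boundaries,
\[
|u_\W^f(x)| \leq C \dist(x,\partial\W), \qquad |u_B^f(x)| \leq C\dist(x,\partial B).
\]
Parametrizing the symmetric difference as a radial slab over $\partial B$ of width $|\phi|$ gives
\[
\int_{\W\Delta B} v^2 \leq C\int_{\partial B}|\phi|^3\, d\mathcal{H}^{n-1} \leq C\delta\, \|\phi\|_{L^2(\partial B)}^2.
\]

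For the intersection piece, the key observation is that $v$ is harmonic on $\W \cap B$, since both $u_\W^f$ and $u_B^f$ solve $-\Delta u = f$ there. The boundary $\partial(\W \cap B)$ decomposes as $(\partial \W \cap \overline B) \cup (\partial B \cap \overline \W)$: on the first piece $v = -u_B^f$, and since each point is at distance at most $|\phi|$ from $\partial B$ by construction, the distance bound above gives $|v| \leq C|\phi|$ pointwise; symmetrically on the second piece. Thus $\|v\|_{L^2(\partial(\W \cap B))}^2 \leq C\|\phi\|_{L^2(\partial B)}^2$, and a standard $L^2$ boundary-to-interior estimate for harmonic functions on a Lipschitz domain yields $\int_{\W \cap B} v^2 \leq C\|\phi\|_{L^2(\partial B)}^2$, which together with the symmetric-difference bound establishes \eqref{eqn: plan aux}.

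The main obstacle I expect is making this last harmonic estimate on $\W \cap B$ rigorous with a constant uniform in $\phi$: the domain may fail to be $C^1$ along the ``crease'' $\{\phi = 0\} \subset \partial B$ where $\partial \W$ and $\partial B$ meet. However, $\W \cap B$ is uniformly Lipschitz (indeed uniformly NTA) with constants depending only on $\delta$ and $\gamma$, so Dahlberg / Jerison--Kenig harmonic analysis on Lipschitz domains applies. As an alternative that bypasses the crease geometry altogether, one may construct an $H^1(\W \cap B)$ extension $\tilde v$ of the boundary data controlled by $\|\phi\|_{L^2(\partial B)}$ (e.g.\ by harmonic extension in a smooth enclosing domain) and then apply Poincar\'e's inequality to $v - \tilde v \in H^1_0(\W \cap B)$.
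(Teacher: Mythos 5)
Your proposal does not prove the statement in question. The statement is the Fuglede-type spectral gap for the torsional rigidity itself, namely $\tor(\W) - \tor(B) \geq \tfrac{1}{32n^2}\|\phi\|_{H^{1/2}(\partial B)}^2$ for nearly spherical sets, and your very first sentence invokes exactly this inequality as a black box (``Lemma~\ref{thm: spectral gap} yields \dots''). What you then actually prove is the companion estimate $\int |u_\W^f - u_B^f|^2 \leq C\|\phi\|_{L^2(\partial B)}^2$, which is (a slightly stronger, $L^2$-level form of) Lemma~\ref{lem: beta on NSS}; combined with the spectral gap it gives Theorem~\ref{cor: Fuglede}, but it contributes nothing toward the spectral gap inequality itself. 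So the attempt is circular with respect to the assigned statement: no step in it bounds the torsion deficit from below by any norm of $\phi$. In the paper this lemma is not reproved either --- it is quoted from \cite{BDV15} (Theorem 3.3 there) --- and its proof is of a genuinely different nature: one writes the torsion deficit as a quadratic form in $\phi$ by a second-variation expansion of the torsion energy around the ball, expands the boundary datum in spherical harmonics, and uses the constraints $|\W| = \omega_n$ and $x_\W = 0$ to eliminate the zeroth- and first-order harmonics, which is precisely where the coercivity in $H^{1/2}(\partial B)$ (and the explicit constant $\tfrac{1}{32n^2}$) comes from. None of that machinery appears in your sketch, and it cannot be recovered from estimates on $u_\W^f - u_B^f$ alone.

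As a side remark, viewed as an argument for Lemma~\ref{lem: beta on NSS}, your sketch is a plausible alternative to the paper's route (the paper transplants $u_\W^f$ to the ball via the diffeomorphism $\Psi(x) = (1+h(x))x$ with $h$ the harmonic extension of $\phi$, and estimates the resulting perturbed equation in $H^1$); your decomposition over $\W \cap B$ and $\W \Delta B$, using harmonicity of $u_\W^f - u_B^f$ on the intersection and boundary-to-interior control on a uniformly NTA domain, could be made rigorous along the lines you indicate, with some care about uniformity of constants near the crease. But that is a different lemma, and the statement you were asked to prove remains unaddressed.
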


  {  If we assume that $\| \phi\|_{C^{2,\gamma}} \leq \delta$, then the above result is Theorem 3.3 in \cite{BDV15}. Since we only assume $f \in L^{\infty}$ and not necessarily H\"older continuous, we will need the weaker assumption that $\| \phi\|_{C^{1,\alpha}} \leq \delta$ for some $\alpha>1/2$. The proof with this weaker assumption is obtained using the ideas in the recent paper \cite{p24}, and we provide the details in the appendix. }

Theorem~\ref{cor: Fuglede} follows directly by combining 
Lemma~\ref{thm: spectral gap} and the following lemma.
\begin{lemma}\label{lem: beta on NSS}
{Fix $\alpha \in (1/2,1)$. There exist $\delta = \delta(n,\alpha)$ and $c=c(n,\alpha)$ such that if $f \in L^\infty(\R^n)$ with $\| f\|_{L^\infty}\leq 1$ and $\W$ is a nearly spherical set of class $C^{2,\gamma}$ parametrized by $\phi$ with $\| \phi\|_{C^{1,\alpha}} \leq \delta$,} then
	\[
	\| \phi \|_{H^{1/2}(\partial B)}^2 \geq c \int \big| u_\W^f - u_{B}^f\big|^2 .
	\]
\end{lemma}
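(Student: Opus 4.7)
The plan is to pull back $u_\W^f$ to the reference ball $B$ by a diffeomorphism, compare it with $u_B^f$ on $B$ via the resulting elliptic PDE, and push the estimate back to $\W$. Let $\xi\colon \overline B\to\R^n$ be the componentwise harmonic extension to $B$ of the boundary vector field $y\mapsto\phi(y)y$ on $\partial B$. Standard Schauder theory gives $\|\xi\|_{C^{2,\gamma}(\overline B)}\leq C\|\phi\|_{C^{2,\gamma}(\partial B)}\leq C\delta$, so $\Psi:=\mathrm{id}+\xi$ is a $C^{2,\gamma}$-diffeomorphism from $\overline B$ onto $\overline\W$ for $\delta$ sufficiently small, while the same harmonic extension also yields $\|\xi\|_{H^1(B)}\leq C\|\phi\|_{H^{1/2}(\partial B)}$. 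Setting $\tilde u:=u_\W^f\circ\Psi\in H_0^1(B)$, a change of variables shows that $\tilde u$ solves $-\mathrm{div}(A\nabla\tilde u)=\bar f$ in $B$, where $A:=D\Psi^{-1}D\Psi^{-T}\det D\Psi$ and $\bar f:=(f\circ\Psi)\det D\Psi$; subtracting $-\Delta u_B^f=f$ from this, the function $v:=\tilde u-u_B^f\in H_0^1(B)$ solves
\[
-\mathrm{div}(A\nabla v)=(\bar f-f)+\mathrm{div}\bigl((A-I)\nabla u_B^f\bigr)\quad\text{in }B.
\]

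Since $A$ is uniformly elliptic with $\|A-I\|_{L^\infty}\leq C\delta$, Lax--Milgram gives $\|v\|_{H^1(B)}\leq C\bigl(\|\bar f-f\|_{H^{-1}(B)}+\|(A-I)\nabla u_B^f\|_{L^2(B)}\bigr)$, and the core of the argument is to bound both summands by $C\|\phi\|_{H^{1/2}(\partial B)}$. The second is controlled by $C\|D\xi\|_{L^2}\|\nabla u_B^f\|_{L^\infty}\leq C\|\phi\|_{H^{1/2}}$, using the pointwise estimate $|A-I|\leq C|D\xi|$ together with the uniform $W^{2,p}$-bound $\|\nabla u_B^f\|_{L^\infty(B)}\leq C$ valid for all $p<\infty$ when $\|f\|_{L^\infty}\leq 1$. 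For the first summand, decomposing $\bar f-f=(f\circ\Psi-f)\det D\Psi+f(\det D\Psi-1)$ handles the second piece directly in $L^2$. For $f\circ\Psi-f$, I would mollify $f$ to $f_\varepsilon\in C^\infty$, use $f_\varepsilon\circ\Psi-f_\varepsilon=\int_0^1\nabla f_\varepsilon(\cdot+s\xi)\cdot\xi\,ds$, pair against a test function $\varphi\in H_0^1(B)$, change variables $z=y+s\xi(y)$, and integrate by parts in $z$; the boundary term vanishes because $\varphi$ has zero trace on $\partial B$, and the remaining integrand is pointwise dominated by $(|\xi|+|D\xi|)(|\varphi|+|\nabla\varphi|)$. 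Undoing the change of variables gives $|\langle f_\varepsilon\circ\Psi-f_\varepsilon,\varphi\rangle|\leq C\|f\|_{L^\infty}\|\xi\|_{H^1(B)}\|\varphi\|_{H^1(B)}$ uniformly in $\varepsilon$, so passing to the limit yields $\|f\circ\Psi-f\|_{H^{-1}(B)}\leq C\|\xi\|_{H^1(B)}\leq C\|\phi\|_{H^{1/2}}$. Consequently $\|v\|_{L^2(B)}\leq\|v\|_{H^1(B)}\leq C\|\phi\|_{H^{1/2}(\partial B)}$.

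To convert this into the estimate claimed by the lemma, I would change variables back to $\W$:
\[
\int_\W|u_\W^f-u_B^f|^2\leq C\int_B|v|^2+C\int_B|u_B^f-u_B^f\circ\Psi|^2\leq C\|\phi\|_{H^{1/2}(\partial B)}^2,
\]
where the second integral is bounded by $C\|\xi\|_{L^2(B)}^2\leq C\|\phi\|_{H^{1/2}}^2$ using that the extension of $u_B^f$ by $0$ to $\R^n$ is globally Lipschitz (which follows from $C^{1,\beta}$-regularity of $u_B^f$ up to $\partial B$ together with the vanishing $u_B^f|_{\partial B}=0$). The remaining exterior contribution $\int_{\R^n\setminus\W}|u_\W^f-u_B^f|^2=\int_{B\setminus\W}|u_B^f|^2$ is bounded by a Fubini argument using $|u_B^f(x)|\leq C\,\mathrm{dist}(x,\partial B)$ and the fact that $B\setminus\W$ lies in a radial layer of width $|\phi^-|$, giving the crude bound $C\delta\|\phi\|_{H^{1/2}}^2$. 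The principal technical obstacle is the $H^{-1}$-estimate for $f\circ\Psi-f$: since $f$ is only in $L^\infty$, no Taylor expansion of $f$ is directly available, and the naive $L^\infty$-bound gives only $O(\|\xi\|_{L^\infty})=O(\delta)$ rather than $O(\|\phi\|_{H^{1/2}})$. The mollification plus integration-by-parts trick above transfers the missing derivative from $f$ onto the test function $\varphi$, whose vanishing trace on $\partial B$ kills the otherwise-problematic boundary term, producing the sharper $\|\xi\|_{H^1}$-scale bound.
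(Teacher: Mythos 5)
Your proof is correct and follows essentially the same strategy as the paper's: build a diffeomorphism $\Psi:\overline B\to\overline\W$ from a harmonic extension of the boundary data, pull back $u_\W^f$ to $B$, compare with $u_B^f$ via the resulting uniformly elliptic PDE, and bound $\|f\circ\Psi-f\|_{H^{-1}(B)}$ by $C\|\phi\|_{H^{1/2}}$ via an integration-by-parts argument that transfers the missing derivative from the merely-$L^\infty$ data $f$ onto the test function (this is the paper's Lemma~\ref{l:LNbound}). The minor differences---your vector harmonic extension $\xi$ with $\Psi=\mathrm{id}+\xi$ versus the paper's scalar $h$ with $\Psi(x)=(1+h(x))x$, and your mollification-plus-change-of-variables implementation of the integration by parts---are cosmetic; if anything your treatment of the chain-rule bookkeeping in the $H^{-1}$ estimate is slightly more carefully written out than in Lemma~\ref{l:LNbound}.
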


Toward proving Lemma~\ref{lem: beta on NSS},  for $\phi: \partial B\to \R$ with {$\| \phi\|_{C^{1,\alpha}} \leq \delta$}, we consider the solution $h$ to 
\begin{equation} \label{e:h}
 \begin{cases}
  &\Delta h=0 \quad \text{ in } B \\
  &h=\phi \quad \text{ on } \partial B. 
 \end{cases}
\end{equation}
Recall that $\| h\|_{H^1(B_1)}=\| \phi\|_{H^{1/2}(\partial B_1)}$. 
From the maximum principle,  $\|h \|_{C^1(\overline{B})} \leq \| \phi \|_{C^1(\partial B)} 
\leq \| \phi\|_{C^{1,\alpha}(\partial B)}\leq \delta$.

We define a diffeomorphism  $\Psi: B\to \Omega$  by  $\Psi(x) = (1 + h(x) ) x$. If $\delta>0$ is chosen to be small, then $D\Psi(x)$ is nowhere vanishing and $\Psi$ is indeed a diffeomorphism from $B$ onto its image. Moreover, $\Psi$ maps $\partial B$ to $\partial \W$ and thus the image of $B$ under $\Psi$ is $\W$. Under these assumptions, we have the following:
\begin{lemma} \label{l:LNbound}
 Let $f \in L^{\infty}(\mathbb{R}^n)$ with $\| f\|_{L^\infty}\leq 1$. Then  
 \[
 \| f- f\circ \Psi \|_{H^{-1}(B)} \leq 2n\| \phi\|_{H^{1/2}(\partial B)}\,.
 \]
\end{lemma}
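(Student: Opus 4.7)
The plan is to reduce to smooth $f$ by mollification and then estimate $\int_B (f-f\circ\Psi)\psi\,dx$ for $\psi \in H^1_0(B)$ via a homotopy argument. Mollification is legitimate because $g \mapsto g\circ\Psi$ is continuous on $L^2(\W)$ (change of variables, $J_\Psi$ uniformly comparable to $1$) and $L^2(B) \hookrightarrow H^{-1}(B)$, so the bound passes to the limit. Since $\|\phi\|_{H^{1/2}(\partial B)} = \|h\|_{H^1(B)}$ by the harmonic extension, the inequality to be proved is equivalent to $|\int_B(f-f\circ\Psi)\psi\,dx| \leq 2n\, \|h\|_{H^1}\,\|\nabla\psi\|_{L^2}$.

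For smooth $f$, interpolate along $\Psi_t(x) := (1+t h(x))x$, $t \in [0,1]$, so that $\Psi_0=\mathrm{id}$ and $\Psi_1=\Psi$. The fundamental theorem of calculus gives
\[
\int_B (f-f\circ\Psi)\psi\,dx \;=\; -\int_0^1\!\int_B \nabla f(\Psi_t(x))\cdot h(x)x\,\psi(x)\,dx\,dt.
\]
To remove the gradient on $f$, use the chain rule $\nabla f(\Psi_t x) = D\Psi_t(x)^{-T}\nabla_x(f\circ\Psi_t)(x)$, so the integrand becomes $\nabla_x(f\circ\Psi_t)\cdot V_t\,\psi$ with $V_t := D\Psi_t^{-1}h\,x$. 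Since $D\Psi_t = (1+th)I + t\,x\otimes\nabla h$, the Sherman--Morrison formula yields the clean expression $V_t(x) = h(x)\,x/M_t(x)$, where $M_t(x) = 1 + t(h + x\cdot \nabla h)$. Integrating by parts in $x$---no boundary contribution, since $\psi|_{\partial B}=0$---produces
\[
\int_B (f-f\circ\Psi)\psi\,dx \;=\; \int_0^1\!\int_B (f\circ\Psi_t)\bigl[V_t\cdot\nabla\psi + \psi\,\mathrm{div}\,V_t\bigr]\,dx\,dt,
\]
and Cauchy--Schwarz combined with $|f\circ\Psi_t|\leq 1$ reduces matters to controlling $\|V_t\|_{L^2}$ and the pairing $\int\psi\,\mathrm{div}\,V_t\,dx$.

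The bound $\|V_t\|_{L^2} \leq 2\|h\|_{L^2}\leq 2\|h\|_{H^1}$ is immediate, since $|V_t|\leq 2|hx|$ once $\delta$ is small enough that $M_t\geq 1/2$. The main obstacle is the term $\psi\,\mathrm{div}\,V_t$: a direct expansion of $\mathrm{div}\,V_t$ produces $\nabla M_t = 2t\nabla h + tD^2h\cdot x$, and hence a factor of $x^T D^2 h\, x$, which cannot be controlled by $\|h\|_{H^1}$ alone. The key algebraic identity
\[
x^T D^2 h\, x \;=\; \mathrm{div}\bigl((x\cdot\nabla h)\,x\bigr) - (n+1)\,x\cdot\nabla h,
\]
combined with harmonicity $\Delta h = 0$ for traces that arise, allows one further integration by parts against $\psi$ that trades this second-order term for a pairing involving only $h$, $\nabla h$, $\psi$, and $\nabla\psi$---quantities controlled by $\|h\|_{H^1}$ and $\|\psi\|_{H^1}$. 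Finally, tracking constants carefully (the leading contribution is $\|\mathrm{div}(hx)\|_{L^2} = \|nh + x\cdot\nabla h\|_{L^2}$, giving the factor $n$) and applying Poincaré's inequality $\|\psi\|_{L^2} \leq \lambda_1(B)^{-1/2}\|\nabla\psi\|_{L^2}$ yields the sharp constant $2n$, with $O(\delta)$ perturbative corrections absorbed for $\delta$ sufficiently small. The main technical difficulty is genuinely the handling of this $D^2 h$ term, which is where harmonicity of the extension $h$ is essential.
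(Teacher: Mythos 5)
Your interpolation-plus-integration-by-parts skeleton is the same as the paper's (the paper uses the mean value theorem to pick a single $t_0\in(0,1)$ rather than integrating over $t$, but this is cosmetic). The substantive divergence is that you apply the chain rule scrupulously, obtaining $V_t=(D\Psi_t)^{-1}(hx)=hx/M_t$ via Sherman--Morrison; your computation of $V_t$ is correct. The paper instead integrates by parts treating $\nabla f(\Psi_{t_0}x)$ as though it were $\nabla_x[f(\Psi_{t_0}x)]$, which silently drops the $(D\Psi_{t_0})^{-T}$ factor from the chain rule---formally illegitimate, and justified only because $D\Psi_{t_0}-\mathrm{Id}=O(\delta)$ in $C^1$. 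By tracking this factor you encounter the $\operatorname{div}V_t$ term and hence $x^TD^2h\,x$, which is exactly the piece the paper's shortcut hides.

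The gap is that your proposed way of disposing of the $x^TD^2h\,x$ term does not close. Integrating $\int_B(f\circ\Psi_t)\,\psi\,\tfrac{th}{M_t^2}\operatorname{div}\bigl((x\cdot\nabla h)x\bigr)\,dx$ by parts against the product $(f\circ\Psi_t)\,\psi\,\tfrac{th}{M_t^2}$ produces (i) a term with $\nabla(f\circ\Psi_t)$, which scales like $\nabla f$ and cannot be bounded by $\|f\|_{L^\infty}\leq1$---undoing exactly what the first integration by parts achieved---and (ii) a term with $\nabla(M_t^{-2})$, hence $\nabla M_t=t(2\nabla h+D^2h\,x)$, so $D^2h$ reappears and you are in an infinite regress. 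Harmonicity does not rescue you either: $x^TD^2h\,x=r^2\partial_{rr}h$ is not the trace of $D^2h$, and $\Delta h=0$ only trades $\partial_{rr}h$ for tangential second derivatives, which are still second order. The fix that actually works, and which quietly underlies the paper's informal integration by parts, is to invoke the standing assumption $\|\phi\|_{C^{2,\gamma}(\partial B)}\leq\delta$: Schauder estimates for the harmonic extension give $\|D^2h\|_{L^\infty(\overline B)}\leq C_n\delta$, so the offending term is bounded by $C\delta\|h\|_{L^2}\|\psi\|_{L^2}\leq C\delta\|h\|_{H^1}\|\nabla\psi\|_{L^2}$ and is absorbed into the constant $2n$ once $\delta$ is small.
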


\begin{proof}
 Let $u \in C_0^1(B_1)$. From Lemma \ref{l:fsmooth} we may assume $f \in C^1(\mathbb{R}^n)$. 
 For $0 \leq t \leq \epsilon$, define
 \[
 \mathcal{F}(t):= \int_{B} f(x+th(x)x) \,u(x) \ dx. 
 \]
 From the mean value theorem, there exists $t_0 \in (0,\epsilon)$ such that 
 \[
 \int_{B}[f(x+h(x)x)-f(x)]\, u(x) \ dx =\mathcal{F}(1)-\mathcal{F}(0)=\mathcal{F}'(t_0)= \int_{B}\sum_{i=1}^n f_{x_i}(x+t_0h(x)x)x_i h(x)u(x). 
 \]
 Since $u h \in C_0^1(B)$, we may use integration by parts to obtain 
 \[
 \int_{B}[f(x+h(x)x)-f(x)]u(x) \ dx =- \int_{B} \sum_{i=1}^n f(x+t_0h(x)x)(u_{x_i}(x)h(x)x_i+ u(x)h_{x_i}(x)x_i + h(x)u(x)). 
 \]
 Using now that $x_i,f \in L^{\infty}$ and H\"older's inequality we have 
 \[
 \left| \int_{B}[f(x+h(x)x)-f(x)]u(x) \ dx \right| \leq 2n \| f \|_{L^{\infty}} \| u \|_{H_0^1(B_1)} \| h\|_{H^1(B_1)} = 2n \| f \|_{L^\infty}  \| u \|_{H_0^1(B_1)}\| \phi\|_{H^{1/2}(\partial B_1)}. 
 \]
\end{proof}
Using Lemma~\ref{l:LNbound}, we can now prove Lemma~\ref{lem: beta on NSS}.
\begin{proof}[Proof of Lemma \ref{lem: beta on NSS}]	
	We define $\hat{u}^f_\W : B\to \R$ by $\hat{u}^f_\W (x) = u^f_\W(\Psi(x))$, and we will prove that 
	\begin{align}
	\label{est 1}	\int |u^f_{\W}(x) - \hat{u}^f_\W (x)|^2 \,dx \leq C \| \phi \|_{H^{1/2}(\partial B)}^2,\\
	\label{est 2}	\int | \hat{u}^f_\W(x) - u^f_B(x)|^2 \, dx \leq C \| \phi \|_{H^{1/2} (\partial B)}^2
	\end{align}
	from which the lemma follows. We note that since $\phi \in C^{1,\alpha}(\partial B)$ and $\| f\|_{L^\infty(\R^n)} \leq 1$, by \cite[Theorem 8.33]{GT}, we have 
    \begin{equation} \label{e:c1est}
       \| u^f_{\W} \|_{C^1(\overline{\W})} , \, \| \hat{u}^f_{\W} \|_{C^1(\overline{B})}
       \leq C(n,\alpha). 
    \end{equation}

	To prove \eqref{est 1}, we observe that for any $x \in B\cap \W$, we have
	\[
	|u^f_\W(x) - \hat{u}^f_\W(x)| =|u^f_\W(x) - u^f_\W(\Psi(x))|\leq \| u^f_\W\|_{C^1(\W)} | x- \Psi(x)| \le  \| u^f_\W\|_{C^1(\W)} |h(x)|.
	\]
	So, 
	\[
	\int_{\W \cap B}|u^f_\W(x) - \hat{u}^f_\W(x)|^2 \,dx \leq C\| u^f_\W\|^2_{C^1(\W)}  \int_{\W \cap B} h^2(x) \leq  C\| u^f_\W\|_{C^1(\W)}  \| \phi \|_{L^2(\partial B)}^2\,.
	\]
	 Next, for $x \in \W\setminus B$ we have $|u^f_\W(x) - \hat{u}^f_\W(x)| = u^f_\W(x) \leq C\| u^f_\W\|_{C^1(\W)} d(x, \partial \W)$, so that 
	\begin{align*}
		\int_{\W \setminus B} |u^f_\W(x) - \hat{u}^f_\W(x)|^2 \,dx  & \leq C\| u^f_\W\|^2_{C^1(\W)}  \int_{\W \setminus B} d(x, \partial \W)^2 \, dx\\
		 &= C\| u^f_\W\|^2_{C^1(\W)}  \int_{S^{n-1} \cap \{ \phi>0\}} \int_1^{1+ \phi} (1+\phi -r)^2 \,r^{n-1} dr \, d\theta \\
		&=C\| u^f_\W\|^2_{C^1(\W)}  \int_{S^{n-1} \cap \{ \phi>0\}} \int_{0}^\phi s^2 \, ds \, d\theta  \leq \e \| \phi\|_{L^2(\partial B)}^2.
	\end{align*}
	An analogous argument shows the same bound for the integral over $B\setminus \W$. Summing up, we arrive at \eqref{est 1}.

	Next we prove \eqref{est 2}. We begin by noting that the pulled back function $\hat{u}^f_\W$ solves the equation
	\[
	\begin{cases}
		-\mathcal{L} \hat{u}^f_\W = \text{det} (d\Psi(x))f(\Psi(x))  & \text{ in } B\\
		\hat{u}^f_\W =0 & \text{ on } \partial B, 
	\end{cases}
	\]
	where $\mathcal{L}u = \text{div}(A \nabla u)$ for $A = m\mathcal{A}$, $\mathcal{A}(x) = d\Phi(\Psi (x) ) (d\Phi (\Psi(x))^*$ and $m = 1/\sqrt{\det \mathcal A}=\det(d \Psi(x))$ and $\Phi = \Psi^{-1}$ with the standard estimates 
	\begin{equation} \label{e:estimates}
	|A -\text{Id}| \leq C(h +|\nabla h|) , \qquad |\det(d\Psi(x))-1| \leq C(h+ |\nabla h|). 
	\end{equation}
	Let $v = u^f_B - \hat{u}^f_\W$; note that $v=0$ on $\partial B$ and that $-\Delta v = (f-mf(\Psi(x))) - (\mathcal{L}-\Delta)\hat{u}^f_\W. $ By the Poincar\'{e} inequality on the ball, it suffices to estimate $\int |\nabla v|^2 \leq C \| \phi \|_{H^{1/2}(\partial B)}^2.$ We have
	\begin{align*}
		 \int_B |\nabla v|^2\,dx& = \int_B \nabla v \cdot ( \nabla u^f_B - A\nabla \hat{u}^f_\W ) + \int_B \nabla v \cdot (A- \text{Id} ) \nabla \hat{u}^f_\W \\ 
   &=\int_B v \left( f(x)-\det(d \Psi(x))f(\Psi(x))\right) + \int_B \nabla v \cdot (A- \text{Id} ) \nabla \hat{u}^f_\W = (I) + (II). 
	\end{align*}
 Now 
 \[
 (I) = \int_{B} v(x) (f(x)-f(\Psi(x))) \ dx + \int_{B}v(x)f(\Psi(x))(1-\text{det} (d \Psi(x)))\ dx= (Ia)+ (Ib). 
 \]
 From Lemma \ref{l:LNbound} we have 
 \[
 |(Ia)|\leq n \| f \|_{L^{\infty}} \| v \|_{H_0^1(B)} \| \phi \|_{H^{1/2}(\partial B)}
 \leq \epsilon \int_{B} |\nabla v|^2 + \frac{C}{\epsilon} \| f \|^2_{L^{\infty}} \| \phi \|^2_{H^{1/2}(\partial B)}. 
 \]
 Using the estimates in \eqref{e:estimates} we also have 
 \[
 |(Ib)| \leq C \int_{B} |v(x)| |f(\Psi(x))| (|h| + |\nabla h|) \ dx 
 \leq \epsilon \int_{B} v^2 + \frac{C}{\epsilon} \| f \|^2_{L^{\infty}} \| \phi \|^2_{H^{1/2}(\partial B)}. 
 \]
 Again using \eqref{e:estimates} as well as estimates on $\hat{u}^f_{\Omega}$ we obtain 
 \[
 |(II)| \leq C \| \hat{u}^f_{\Omega} \|_{C^1(B)} \int_{B} |\nabla v| (|h|+ |\nabla h|) \ dx 
 \leq \epsilon \int_{B} |\nabla v|^2 + \frac{C}{\epsilon}  \| \hat{u}^f_{\Omega} \|^2_{C^1(B)} \| \phi \|^2_{H^{1/2}(\partial B)}. 
 \]
 Using \eqref{e:c1est} this proves \eqref{est 2} and concludes the proof. 
 \end{proof}

\section{Proofs of the Main Theorems}\label{sec: proofs of main theorems}
This section is dedicated to showing how the two main theorems, Theorem~\ref{thm: main} and Theorem~\ref{thm: eigenfunction estimates}, follow from Theorem~\ref{thm: summary for minimizers} and Theorem~\ref{cor: Fuglede}. For the time being, we assume Theorem~\ref{thm: summary for minimizers} holds; this theorem will be established in sections~\ref{sec: a priori and existence} and \ref{sec: FB2}. 

\subsection{Proof of Theorem~\ref{thm: main}}\label{ssec: proof of main}
We begin with Theorem~\ref{thm: main}, which, as explained in the introduction, follows from Theorem~\ref{thm: summary for minimizers} and Theorem~\ref{cor: Fuglede} using a classical selection principle argument.
\begin{proof}[Proof of Theorem~\ref{thm: main}]

 {\it Step 1: Reductions and setup.}
Together Lemmas~\ref{lem: KJ consequence}, \ref{l:fnonneg}, \ref{l:fsmooth}, and \ref{lem:fcptspt} show that to prove Theorem~\ref{thm: main}, it suffices to show that there is a positive constant $c_{n}>0$ depending on $n$ such that for any bounded open set $\W \subset \R^n$ with $|\W| = \omega_n$ and for any nonnegative function $f \in C^\infty_c(\R^n)$ with $\|f\|_{L^{\infty}(\R^n)}\leq 1$.
\begin{align}
\label{eqn: quant stability main 2}
\tor(\W ) - \tor(B)  & \geq c_{n}\int_{\R^n} \big|u_{\W}^f - u_{B(x_\W)}^f\big|^2 \,
\end{align}
Here $x_\W$ is the truncated  barycenter defined in \eqref{eqn: barycenter definition}. Since the integral on the right-hand side is bounded above by $2\om_n\|w_B\|_{L^\infty}^2 \leq \om_n/2n^2 \leq 1,$ the estimate \eqref{eqn: quant stability main 2} holds trivially when $\tor(\W ) - \tor(B)\geq \e$ for any $\e>0$ by choosing the constant $c_{n}$ to be small enough depending on $\e$. So, it suffices to show the theorem when  is  $\tor(\W ) - \tor(B)$ is small.

We use the short-hand notation $\beta_f(\W) = \| u_\W^f - u_{B(x_\W)}^f\|_{L^2(\R^n)}$. 	Suppose by way of contradiction that there are open bounded sets $\{\W_j\}$ in $\R^n$ with $|\W_j| = \omega_n$ and  smooth, nonnegative, compactly supported functions  $\{f_j\}$ with $\| f_j\|_{L^{\infty}}\leq 1$ such that $0< \tor(\W_j)-\tor(B_1)\to 0$ but 
	\begin{equation}\label{e: sp contra}
		\tor(\W_j ) - \tor(B_1) \leq \frac{1}{j}\, \beta_{f_j}^2(\W_j). 
	\end{equation}

 {\it Step 2: Replacement of $\W_j$.}
Fix $\delta = \delta(n, \alpha)$ according to Theorem~\ref{cor: Fuglede} and let $\tau \leq  \bar{\tau}_0(n,\alpha,\delta) = \bar{\tau}_0(n,\alpha)$ be chosen according to Theorem~\ref{thm: summary for minimizers}.  Let $a_j = \beta_{f_j}(\W_j)^2 \in (0, 1]$ and consider the functional
	\[
\mathcal{F}_j(U) =	\tor(U) + \tau \sqrt{a_j^2 + (a_j -\beta^2_{f_j}(U))^2}\,.
	\]
By Theorem~\ref{summary ex and reg theorem}, there is a minimizer  $U_j$ of the problem
    	\begin{equation}
		\inf \{ \mathcal{F}_j(U) \ : \ U \subset \R^n \text{ open bounded set}, \ |U| = \omega_n\},
	\end{equation} 
and $U_j -x_{U_j}$ is a nearly spherical set parametrized by a function $\phi_j \in C^{2,\alpha}( \partial B_1, \R)$ with $\|\phi_j\|_{C^{1,\alpha}}\leq \delta.$ So, by Theorem~\ref{thm: spectral gap} applied with $f = f_j(\cdot -x_{U_j})$, 
	\begin{equation}\label{eqn: sp fuglede}
		\tor(U_j ) - \tor(B_1) \geq c\, \beta_{f_j}^2(U_j). 
	\end{equation}

 {\it Step 3: Reaching a contradiction.}   
Taking $\W_j$ as a competitor for the minimality of $\mathcal{F}_j$ and applying \eqref{e: sp contra}, 
\begin{align*}
	\tor(U_j)  + \tau  \Big( \sqrt{a_j^2 + (a_j -\beta^2_{f_j}(U))^2} -a_j\Big)  \leq \tor(\W_j) \leq  \tor(B_1) + \frac{2}{j} a_j
\end{align*} 
The first inequality implies that $\tor(U_j) \leq \tor(\W_j)$, while the far left- and right-hand sides of the inequality and the Saint-Venant inequality guarantee that 
\[
\sqrt{1 + (1 -\beta^2_{f_j}(U_j)/a_j)^2} -1  \leq \frac{2}{j \,\tau } 
\]  
and thus $\beta_{f_j}(U_j)\geq 2 a_j= 2\beta_{f_j}(\W_j)^2$ for $j$ sufficiently large. Together these two observations and \eqref{e: sp contra} imply
	\begin{equation}
		\tor(U_j ) - \tor(B_1) \leq \frac{4}{j}\, \beta_{f_j}^2(U_j). 
	\end{equation}
	This contradicts \eqref{eqn: sp fuglede} when $j$ is large enough, completing the proof.
\end{proof}

\subsection{Proof of Theorem~\ref{thm: eigenfunction estimates}} Next we prove Theorem~\ref{thm: eigenfunction estimates} as an application of Theorem~\ref{thm: main}. Recall that by Lemma~\ref{lem: KJ consequence}, it suffices to replace the Faber-Krahn deficit $\lambda_1(\W)-\lambda_1(B)$ in Theorem~\ref{thm: eigenfunction estimates} with the Saint-Venant deficit $\tor(\W) -\tor(B)$, and that in section~\ref{ssec: proof of main} we proved that Theorem~\ref{thm: main} holds with $\tor(\W) -\tor(B)$ in place of $\lambda_1(\W)-\lambda_1(B)$.
Throughout this section we assume by translation that without loss of generality the truncated barycenter $x_\W$ of $\W$ is the origin.

Recall from the introduction that we denote $u_{\W,k}$ and $\lambda_{\W,k}$ as the $k$-th eigenfunctions and eigenvalues of $\Omega$, with the eigenfunctions extended globally to be defined on $\R^n$ and  normalized so that 
\begin{equation} \label{e:one}
 \| u_{\W,k} \|_{L^2(\W)} = 1 =  \| u_{B,k} \|_{L^2(\W)}. 
\end{equation}
If $k=1$, then we simply write $u_{\W}$ and $\lambda_{\W}$.

We will utilize the following qualitative control on the eigenvalues whose proof is accomplished with standard geometric measure theory and concentration compactness techniques. 
\begin{proposition} \label{p:standard}
 Let $\W\subset\R^n$ be an open bounded set with $|\W|=\om_n$. For any $k \in \mathbb{N}$ and $\epsilon >0$, there exists $\delta_k >0$ such that if 
 \[
 \tor(\W)-\tor(B) < \delta_k, 
 \]
 then 
 \[
  |\lambda_k(\W) -\lambda_k(B)| \leq \epsilon.  
 \]
\end{proposition}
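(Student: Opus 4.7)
The plan is to argue by contradiction and reduce the statement to spectral continuity of the Dirichlet Laplacian along sequences of domains converging in $L^1$ to the unit ball. If Proposition~\ref{p:standard} fails, there exist $k\in\mathbb{N}$, $\e>0$ and open bounded sets $\W_j$ with $|\W_j|=\om_n$ and $\tor(\W_j)-\tor(B)\to 0$ but $|\lambda_k(\W_j)-\lambda_k(B)|\geq\e$ for every $j$. Since $|\W_j|=\om_n$ the penalization $\fv(|\W_j|)$ vanishes, so Proposition~\ref{lemma: qual stability}, applied with arbitrarily small parameters, supplies points $y_j\in\R^n$ with $|\W_j\Delta B(y_j)|\to 0$. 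After translating, we may assume $y_j=0$, so $\chi_{\W_j}\to\chi_B$ in $L^1(\R^n)$.

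The main step is to upgrade this $L^1$ closeness, together with $\tor(\W_j)\to\tor(B)$, to $H^1$ convergence of the torsion functions $w_{\W_j}\to w_B$. By \eqref{e: torsion and dir energy} and \eqref{eqn: bound for tor fn}, the sequence $\{w_{\W_j}\}$ is bounded in $H^1(\R^n)\cap L^\infty(\R^n)$, so a subsequence converges weakly in $H^1$, strongly in $L^2_{loc}$, and a.e.\ to some $\bar w\in H^1(\R^n)$. Because $w_{\W_j}$ vanishes off $\W_j$ and $|\W_j\Delta B|\to 0$, the limit $\bar w$ vanishes a.e.\ outside $B$; combined with the Lipschitz regularity of $\partial B$, this forces $\bar w\in H^1_0(B)$. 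Moreover $\int w_{\W_j}\to \int \bar w$ (since the integrand is $L^\infty$-bounded and supported in $\W_j$, which is $L^1$-close to $B$), while weak lower semicontinuity of the Dirichlet energy gives
\begin{equation*}
 \tfrac{1}{2}\int|\n\bar w|^2 - \int \bar w \;\leq\; \liminf_j\Big[\tfrac{1}{2}\int|\n w_{\W_j}|^2 - \int w_{\W_j}\Big] \;=\; \lim_j \tor(\W_j) \;=\; \tor(B).
\end{equation*}
Since $\bar w\in H^1_0(B)$ is admissible for $\tor(B)$ and $w_B$ is the unique minimizer of this strictly convex functional, we deduce $\bar w=w_B$. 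Equality in the display then upgrades weak to strong $H^1(\R^n)$ convergence.

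With $w_{\W_j}\to w_B$ in $H^1$ in hand, the conclusion follows from standard shape optimization theory. By \v{S}ver\'ak's theorem \cite[Theorem 3.2.5]{hp18} and the Dal Maso-Mosco framework, $L^1$ convergence of torsion functions to $w_B$ is equivalent to $\gamma$-convergence of $\W_j$ to $B$ for the Dirichlet Laplacian, which in turn implies Mosco convergence of form domains $H^1_0(\W_j)\to H^1_0(B)$ and strong resolvent convergence $(-\Delta)_{\W_j}^{-1}\to (-\Delta)_B^{-1}$ on $L^2(\R^n)$. The uniform bounds $|\W_j|=\om_n$ and Faber-Krahn give joint compactness of these resolvents, so every eigenvalue converges: $\lambda_k(\W_j)\to\lambda_k(B)$, contradicting the standing hypothesis.

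The main obstacle is the rigorous identification $\bar w=w_B$: a priori the domains $\W_j$ could have thin capacitary defects inside $B$ that force $w_{\W_j}$ to vanish quasi-everywhere on sets of positive capacity but zero Lebesgue measure, and such defects would not be visible in the a.e.\ limit. The argument above handles this automatically, because any such defect strictly lowers the torsional rigidity by a definite amount, contradicting the energy convergence $\tor(\W_j)\to\tor(B)$. The capacity-theoretic details underlying the equivalence between $L^1$ torsion convergence and $\gamma$-convergence are classical and carried out in the references above.
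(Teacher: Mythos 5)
The paper does not actually prove Proposition~\ref{p:standard}: it simply says that the proof ``is accomplished with standard geometric measure theory and concentration compactness techniques'' and refers to Bucur--Lamboley--Nahon--Prunier for the sharp quantitative counterpart. So there is no proof in the paper to compare against; you are supplying one.

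Your argument fills in the standard machinery correctly and is, in spirit, exactly what the authors have in mind. The reduction via Proposition~\ref{lemma: qual stability} to $|\W_j\Delta B|\to 0$, the identification of the $H^1$-weak limit $\bar w$ with $w_B$ using the energy convergence $\tor(\W_j)\to\tor(B)$ together with $H^1_0(B)$-admissibility and strict convexity, and the upgrade from weak to strong $H^1$ convergence are all clean. The one place that is underspecified is the final step. The statement ``$\gamma$-convergence (resp.\ Mosco convergence of $H^1_0(\W_j)$ to $H^1_0(B)$, strong resolvent convergence) implies eigenvalue convergence'' in the Dal Maso--Mosco / Bucur--Buttazzo framework is normally proved for domains contained in a fixed bounded set, which you do not have a priori: the $\W_j$ may carry small components escaping to infinity, and your phrase ``joint compactness of the resolvents'' is not a citable theorem in that generality. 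What makes the argument go through is precisely the concentration-compactness input the paper alludes to: Mosco convergence gives $\limsup_j\lambda_k(\W_j)\leq\lambda_k(B)$; for the reverse inequality one takes the $k$-th eigenfunctions $u_{\W_j,k}$ (bounded in $H^1$) and must rule out a positive fraction of their $L^2$ mass concentrating in $B_R^c$. But any such mass would be supported in $\W_j\cap B_R^c$, whose measure tends to zero, and by Faber--Krahn the Rayleigh quotient on sets of vanishing volume blows up, contradicting the uniform bound $\lambda_k(\W_j)\leq C$. If you make this escape-of-mass estimate explicit, the proof is complete and matches the route the paper implicitly expects.
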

The recent result of Bucur-Lamboley-Nahon-Prunier described in the introduction is a sharp quantitative version of the qualitative Proposition~\ref{p:standard}.

To illustrate the method, we first prove Theorem~\ref{thm: eigenfunction estimates} for the first eigenfunctions. 
\begin{theorem} \label{t:first}
 Let $\W\subset \mathbb{R}^n$ be an open bounded set with $|\W|=|B|$ and $x_\W = 0$. There is a constant $c>0$ depending on dimension such that 
 \[
 \tor(\W)-\tor(B) \geq c \int_{\mathbb{R}^n} |u_{\W}-u_{B}|^2. 
 \]
\end{theorem}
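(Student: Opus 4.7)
The plan is to apply the Saint-Venant form of Theorem~\ref{thm: main} with a specific right-hand side tailored to detect the first eigenfunction, then extract the eigenfunction estimate by projecting onto the Dirichlet eigenbasis of $\W$. Fix a dimensional constant $c_\ast = c_\ast(n) > 0$ such that $\|c_\ast \lambda_1(B) u_B\|_{C^{0,\alpha}(\R^n)} \leq 1$; this is possible because $u_B$, extended by zero outside $B$, is Lipschitz and hence $C^{0,\alpha}$ for any $\alpha \in (0,1)$. Set $f := c_\ast \lambda_1(B) u_B$. A direct verification gives $u_B^f = c_\ast u_B$, since $c_\ast u_B$ solves the Poisson equation $-\Delta u = f$ in $B$ with zero boundary data. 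Using Lemma~\ref{lem: KJ consequence} to pass from the Faber-Krahn to the Saint-Venant deficit, and using $x_\W = 0$ so that $B(x_\W) = B$, Theorem~\ref{thm: main} yields
\[
\tor(\W) - \tor(B) \;\geq\; c_{n,\alpha}\int_{\R^n}\bigl|u_\W^f - c_\ast u_B\bigr|^2.
\]

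Next I would expand $u_B|_\W$ in the orthonormal eigenbasis $\{u_{\W,j}\}$ of the Dirichlet Laplacian on $\W$, writing $u_B|_\W = \sum_j b_j u_{\W,j}$ with $b_j := \int_\W u_B u_{\W,j}$. Testing the equation $-\Delta u_\W^f = c_\ast \lambda_1(B) u_B$ against $u_{\W,j}$ shows that the Fourier coefficients of $u_\W^f$ in this basis are $a_j = (c_\ast \lambda_1(B)/\lambda_j(\W)) b_j$. Splitting the $L^2(\R^n)$ norm into its contribution on $\W$ (handled by Parseval) and on $B \setminus \W$ (where $u_\W^f = 0$ while $u_B$ has positive mass) gives the clean identity
\[
\int_{\R^n}\bigl|u_\W^f - c_\ast u_B\bigr|^2 \;=\; c_\ast^2 \sum_{j\geq 1} b_j^2\Bigl(\tfrac{\lambda_1(B)}{\lambda_j(\W)} - 1\Bigr)^2 + c_\ast^2\int_{B\setminus \W} u_B^2.
\]

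To convert this into control on $\int|u_\W - u_B|^2 = 2 - 2b_1$, I would run a spectral gap argument. By the qualitative stability Proposition~\ref{p:standard}, if the deficit is below some dimensional threshold $\delta_0(n)$, then $\lambda_2(\W)$ is close to $\lambda_2(B)$, so it stays bounded above $\lambda_1(B)$ by a dimensional amount coming from the spectral gap of $B$. Hence $|\lambda_1(B)/\lambda_j(\W) - 1|^2 \geq c_0(n) > 0$ for every $j \geq 2$, which combined with the previous display produces
\[
\sum_{j\geq 2} b_j^2 + \int_{B\setminus \W} u_B^2 \;\leq\; C_n\bigl(\tor(\W) - \tor(B)\bigr).
\]
Since $\|u_B\|_{L^2(\R^n)}^2 = 1$ decomposes as $\sum_j b_j^2 + \int_{B\setminus \W} u_B^2 = 1$, this gives $b_1^2 \geq 1 - C_n(\tor(\W)-\tor(B))$. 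Positivity of first eigenfunctions forces $b_1 = \int_\W u_\W u_B \geq 0$, upgrading this to $b_1 \geq 1 - C_n(\tor(\W)-\tor(B))$, and the identity $\int|u_\W - u_B|^2 = 2 - 2b_1$ then closes the estimate.

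The main obstacle is the spectral-gap step, which requires the deficit to be small enough to invoke Proposition~\ref{p:standard}. In the complementary regime $\tor(\W) - \tor(B) \geq \delta_0(n)$, the inequality is automatic because the $L^2$ difference is uniformly bounded by $4$, so the theorem follows by adjusting the constant $c$. A smaller technical point will be tracking the sign of $b_1$, which is handled at once by positivity of first eigenfunctions, and checking that $u_B$ extended by zero really does belong to $C^{0,\alpha}$ with a dimensional norm; both are straightforward.
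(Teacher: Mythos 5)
Your argument mirrors the paper's proof of this theorem: apply the resolvent estimate with $f$ proportional to $\lambda_1(B)u_B$, expand $u_\Omega^f$ and $u_B$ in the Dirichlet eigenbasis of $\Omega$, use the Fourier coefficient relation $\lambda_j(\Omega)a_j = c_\ast\lambda_1(B)b_j$, isolate the $j=1$ mode via Proposition~\ref{p:standard} and the spectral gap, and close using positivity of $b_1$. Writing the exact identity for $\int|u_\Omega^f - c_\ast u_B|^2$ and using $\int|u_\Omega - u_B|^2 = 2 - 2b_1$ is a small aesthetic improvement over the paper's inequality chain, but the content is the same.

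One correction to the framing of your first step: Lemma~\ref{lem: KJ consequence} gives $C_n(\lambda_1(\Omega)-\lambda_1(B)) \geq \tor(\Omega)-\tor(B)$, i.e.\ it bounds the Saint-Venant deficit \emph{above} by the Faber-Krahn deficit, so it cannot convert the Faber-Krahn form of Theorem~\ref{thm: main} into the Saint-Venant estimate you actually invoke. What saves the argument is that the proof of Theorem~\ref{thm: main} in section~\ref{ssec: proof of main} establishes $\tor(\Omega)-\tor(B) \geq c_{n,\alpha}\int|u_\Omega^f - u_{B(x_\Omega)}^f|^2$ directly, and only afterwards passes to the Faber-Krahn form via Lemma~\ref{lem: KJ consequence}. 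The ingredient you want is therefore available, just not for the reason you cite; with that reference corrected, the rest of the argument goes through as you outline it.
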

\begin{proof}
We use the notation $\delta(\W)=\tor(\W)-\tor(B)$. 
As in the proof of Theorem~\ref{thm: main} above, it suffices to consider the case when $\delta(\W)$ is small.
From Theorem~\ref{thm: main} we have 
\[
 \| u_{\W}^f - u_{B}^f \|^2_{L^2(\mathbb{R}^n)} \leq C \delta(\W)
\]
 for any $f \in {L^\infty}(\R^n)$ with $\| f\|_{{L^\infty}(\R^n)} \leq 1.$
The eigenfunction $\lambda_B u_B$ is {bounded} (indeed Lipschitz continuous), and we will consider $f=\lambda_B u_{B}$, so the inequality above becomes 
\begin{equation} \label{e:quant}
 \| u_{\W}^f - u_{B}^f \|^2_{L^2(\mathbb{R}^n)} \leq C \|\lambda_B u_B \|_{L^{\infty}(\mathbb{R}^n)} \delta(\W)\,.
\end{equation}
We now write 
\[
u_{\W}^f = \sum_{k=1}^{\infty} a_k u_{\W,k}
\quad \text{ and } \quad  
u_{B}\big|_{\W} = \sum_{k=1}^{\infty} b_k u_{\W,k}. 
\]
Then from \eqref{e:quant} we have 
\begin{equation} \label{e:separate}
  \|u_B \|^2_{L^2(B \setminus \W)} +  \| u_{\W}^f - u_{B}\big|_{\W} \|^2_{L^2(\mathbb{R}^n)} =   \| u_{\W}^f - u_{B} \|^2_{L^2(\mathbb{R}^n)} \leq C \delta(\W). 
\end{equation}
 In particular, 
\begin{equation} \label{e:epsab}
    \sum_{k=1}^{\infty} (a_k - b_k)^2 \leq C \delta(\W). 
\end{equation}
Using that $-\Delta u_{\W}^f = \lambda_B u_B \chi_{\W}$ we obtain that for each $k \in \mathbb{N}$,
\[
\lambda_B b_k = \lambda_{\W,k} a_k.  
\]
Then from \eqref{e:epsab} we have 
\[
\sum_{k=1}^{\infty} b_k^2 \left(1- \frac{\lambda_B}{\lambda_{\W,k}} \right)^2 \leq C \delta(\W). 
\]
The spectral gap for the eigenvalues on the ball gives $\lambda_{B}/\lambda_{B,2} \leq c_n <1$. Combining with Proposition~\ref{p:standard}, if $\delta(\W)$ is small enough, then 
$\lambda_{B}/\lambda_{\Omega,2} \leq \tilde{c}_n <1$, and so 
\[
  \frac{\lambda_{B}}{\lambda_{\Omega,k}} \leq \tilde{c}_n <1 \quad \text{ for all } k \in \mathbb{N}. 
\]
Then for a new constant $C_1$ we have 
\[
\sum_{k=2}^{\infty} b_k^2 \leq C_1 \delta(\W). 
\]
Recalling that $\|u_B\|_{L^2(\mathbb{R}^n)}=1$, we have from \eqref{e:separate} that
\[
\sum_{k=1}^{\infty} b_k^2 = \| u_B \big|_\W \|^2_{L^2(\W)} \geq 1 - C\delta(\W). 
\]
Then 
\[
b_1^2 \geq 1 - (C+C_1) \delta(\W),
\]
and so 
\[
\begin{aligned}
\| u_{\W} - u_B \|_{L^2(\mathbb{R}^n)}^2 
&=\| u_{\W} - u_B \big|_{\W} \|_{L^2(\W)}^2 + \|u_B \|^2_{L^2(B \setminus \W)} \\
&=(1-b_1)^2 + \sum_{k=2}^{\infty} b_k^2 + \|u_B \|^2_{L^2(B \setminus \W)} \leq C_2 \delta(\W). 
\end{aligned}
\]
\end{proof}

We now show how the above argument works for any simple eigenvalue $\lambda_{B,j}$. 
\begin{theorem} \label{t:second}
 Let $\W\subset \mathbb{R}^n$ be an open bounded set with $|\W|=|B|$ and $x_\W =0$ and assume that $\lambda_{B,j}$ is a simple eigenvalue for the ball. There is a constant $c$ depending on dimension and $j$ such that 
 \[
 \tor(\W)-\tor(B) \geq c \int_{\mathbb{R}^n} |u_{\W, j}-u_{B, j}|^2. 
 \]
\end{theorem}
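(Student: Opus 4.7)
The plan is to mimic the scheme of Theorem~\ref{t:first} almost verbatim, with the one genuinely new ingredient being the spectral-gap argument, which now must exclude a single index $k=j$ in the middle of the spectrum rather than just $k=1$. Write $\delta(\W) = \tor(\W) - \tor(B)$ and assume (as in Theorem~\ref{t:first}) that $\delta(\W)$ is small, the opposite case being trivial by possibly shrinking the constant $c$.

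First, apply Theorem~\ref{thm: main} (in its Saint-Venant form, as in Step~1 of Section~\ref{ssec: proof of main}) to $f = \lambda_{B,j}\, u_{B,j}$, which is smooth on $B$ and vanishes outside $\overline B$, and hence satisfies $\|f\|_{C^{0,\alpha}(\R^n)} \leq C_{n,j}$. Since $-\Delta u_{B,j} = \lambda_{B,j}\, u_{B,j} = f$ in $B$ with zero boundary values, $u_B^f = u_{B,j}$. This yields
\[
\|u_\W^f - u_{B,j}\|_{L^2(\R^n)}^2 \leq C_{n,j}\,\delta(\W).
\]

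Next, expand in the $\W$-eigenbasis: write $u_\W^f = \sum_k a_k u_{\W,k}$ and $u_{B,j}\big|_\W = \sum_k b_k u_{\W,k}$. Splitting the $L^2$ integral into the parts on $\W$ and $B\setminus \W$, exactly as in \eqref{e:separate}, gives
\[
\|u_{B,j}\|_{L^2(B\setminus\W)}^2 + \sum_k (a_k - b_k)^2 \leq C_{n,j}\,\delta(\W).
\]
Since $-\Delta u_\W^f = f\chi_\W = \lambda_{B,j} u_{B,j}\chi_\W$, taking the $L^2(\W)$-inner product with $u_{\W,k}$ on both sides gives $\lambda_{\W,k}\, a_k = \lambda_{B,j}\, b_k$. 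Consequently
\[
\sum_k b_k^2 \left(1 - \frac{\lambda_{B,j}}{\lambda_{\W,k}}\right)^2 \leq C_{n,j}\,\delta(\W).
\]

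The step that differs from the proof of Theorem~\ref{t:first} is the following spectral-gap argument, which will be the main (though still routine) point: since $\lambda_{B,j}$ is simple, one has $\lambda_{B,j-1} < \lambda_{B,j} < \lambda_{B,j+1}$ strictly on $B$. Combining this with Proposition~\ref{p:standard} applied to indices $1,\ldots,j+1$, there exists $\delta_0 = \delta_0(n,j) > 0$ such that whenever $\delta(\W) \leq \delta_0$,
\[
\lambda_{\W,k} \leq \tfrac{1}{2}(\lambda_{B,j-1}+\lambda_{B,j}) \ \ \text{for } k\leq j-1, \qquad \lambda_{\W,k} \geq \lambda_{\W,j+1} \geq \tfrac{1}{2}(\lambda_{B,j}+\lambda_{B,j+1}) \ \ \text{for } k \geq j+1.
\]
In both regimes, $|1 - \lambda_{B,j}/\lambda_{\W,k}| \geq c_{n,j} > 0$, so
\[
\sum_{k \neq j} b_k^2 \leq C_{n,j}\,\delta(\W).
\]

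Finally, $\sum_k b_k^2 = \|u_{B,j}\|_{L^2(\W)}^2 = 1 - \|u_{B,j}\|_{L^2(B\setminus\W)}^2 \geq 1 - C_{n,j}\,\delta(\W)$, so $b_j^2 \geq 1 - C_{n,j}\,\delta(\W)$. Choosing the sign of $u_{\W,j}$ so that $b_j \geq 0$ gives $(1 - b_j)^2 \leq C_{n,j}\,\delta(\W)$ (for $\delta(\W)$ small). Then
\[
\|u_{\W,j} - u_{B,j}\|_{L^2(\R^n)}^2 = (1-b_j)^2 + \sum_{k \neq j} b_k^2 + \|u_{B,j}\|_{L^2(B\setminus\W)}^2 \leq C_{n,j}\,\delta(\W),
\]
which is the desired estimate. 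The sign choice corresponds precisely to the freedom noted in the statement of Theorem~\ref{thm: eigenfunction estimates} of picking a unit-norm representative in the eigenspace; here it simply selects $\pm u_{B,j}$.
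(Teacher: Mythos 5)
Your proof is correct and follows essentially the same approach as the paper's: apply Theorem~\ref{thm: main} with $f = \lambda_{B,j}u_{B,j}$, expand $u_\W^f$ and $u_{B,j}|_\W$ in the $\W$-eigenbasis, use the relation $\lambda_{\W,k}a_k = \lambda_{B,j}b_k$ together with the spectral gap and Proposition~\ref{p:standard} to show all coefficients $b_k$ with $k\neq j$ are small, and conclude. Your explicit observation that one must fix the sign of $u_{\W,j}$ so that $b_j\geq 0$ is a useful clarification of a point the paper leaves implicit.
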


\begin{proof}
Once again we let $\delta(\W)=\tor(\W)-\tor(B)$ and note that it suffices to prove the theorem when $\delta(\W)$ is small.
As before, we let $f= \lambda_{B,j} u_{B,j} \in L^\infty(\mathbb{R}^n)$, and by applying Theorem~\ref{thm: main}, we obtain 
\begin{equation}
    \label{a}
\|u_{B,j} \|^2_{L^2(B \setminus \W)} +  \| u_{\W}^f - u_{B,j} \big|_{\W}
\|^2_{L^2(\mathbb{R}^n)} =   \| u_{\W}^f - u_{B,j} \|^2_{L^2(\mathbb{R}^n)} \leq C (\tor(\W)-\tor(B)) 
\end{equation}
where the constant $C$ depends on $n$ and $j$ via the $L^\infty$ norm of $u_{B,j}$. We then write
\[
u_{\W}^f = \sum_{k=1}^{\infty} a_k u_{\W,k}
\quad \text{ and } \quad  
u_{B, j} \big|_{\W} = \sum_{k=1}^{\infty} b_k u_{\W,k}. 
\]
So, \eqref{a} implies
\begin{equation} \label{e:jtransfer}
\sum_{k=1}^{\infty} (a_k - b_k)^2 \leq C \delta(\W). 
\end{equation}
Using that $-\Delta u_{\W}^f = \lambda_{B,j} u_{B,j} \big|_{\W}$ we find that $\lambda_{B,j} b_k = \lambda_{\W,k} a_k$ for each $k \in \mathbb{N}$, so \eqref{e:jtransfer} becomes
\begin{equation}\label{b}
\sum_{k=1}^{\infty} b_k^2 \left(1 - \frac{\lambda_{B,j}}{\lambda_{\W,k}} \right)^2 \leq C \delta(\W). 
\end{equation}
Now, \eqref{a} also guarantees that $\sum_{k=1}^{\infty} b_k^2 = \|u_{B,j}\big|_{\W} \|_{L^2}^2\geq 1 - C \delta(\W)$.
Then using the spectral gap on the ball combined with Proposition~\ref{p:standard} we have that
\[
\left(1 - \frac{\lambda_{B,j}}{\lambda_{\W,k}} \right)^2 \geq C(n,j) \quad \text{ whenever } j \neq k,
\]
as long as $\delta(\W)$ is small depending only on $n$ and $k$.
Then from \eqref{b} we have $
b_j^2 \geq 1 - C_2 \delta(\W)$, and so in particular $(1-b_j) \leq C \delta(\W)$.
In conclusion, we obtain 
\[
\begin{aligned}
\| u_{\W,j} - u_{B,j} \|_{L^2(\mathbb{R}^n)}^2 
&=\| u_{\W,j} - u_{B,j} \big|_{\W} \|_{L^2(\W)}^2 + \|u_{B,j} \|^2_{L^2(B \setminus \W)} \\
&=(1-b_j)^2 + \sum_{k\neq j}^{\infty} b_k^2 + \|u_B \|^2_{L^2(B \setminus \W)} \leq C \delta(\W) 
\end{aligned}
\]
for a constant $C>0$ depending on $n$ and $j$.
\end{proof}
We now consider the situation in which $\lambda_j$ is not a simple eigenvalue on the ball. 
\begin{theorem} \label{t:third}
 Let $\W\subset \mathbb{R}^n$ be an open bounded set with $|\W|=|B|$ and assume that $\lambda_{B,j}$ is an  eigenvalue for the ball with multiplicity $N+1$ so that $\lambda_{B,j}=\lambda_{B,j+N}$. 
 Then there exists a constant $c$ depending on dimension and $j$, and coefficients $d_{kl}$ satisfying $\sum_{k=j}^{j+N} d_{kl}^2=1$ such that 
 \[
 \tor(\W)-\tor(B) \geq c \int_{\mathbb{R}^n} \Big(\sum_{l=j}^{N+j}d_{kl} u_{B,l}- u_{\W,l}\Big)^2. 
 \]
\end{theorem}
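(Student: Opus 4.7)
The plan is to mimic the scheme of Theorems~\ref{t:first} and \ref{t:second}, but applied simultaneously to each element of the $(N+1)$-dimensional $\lambda_{B,j}$-eigenspace, and then to perform a single linear-algebra argument on the resulting matrix of Fourier coefficients. As before, we write $\delta(\W) = \tor(\W) - \tor(B)$ and may assume $\delta(\W)$ is small depending on $n$ and $j$.

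First, for each $l \in \{j,\ldots, j+N\}$ we set $f_l = \lambda_{B,j}\, u_{B,l}$, which is uniformly Lipschitz with bound depending only on $n$ and $j$. Theorem~\ref{thm: main} (in the Saint-Venant form justified in section~\ref{ssec: proof of main}) then gives
\[
\| u_\W^{f_l} - u_{B,l}\|_{L^2(\R^n)}^2 \leq C(n,j)\, \delta(\W).
\]
Expanding $u_\W^{f_l} = \sum_k a_k^{(l)} u_{\W,k}$ and $u_{B,l}|_\W = \sum_k b_k^{(l)} u_{\W,k}$, and using $-\Delta u_\W^{f_l} = \lambda_{B,j}\, u_{B,l}|_\W$ to obtain $\lambda_{B,j}\, b_k^{(l)} = \lambda_{\W,k}\, a_k^{(l)}$, one gets (exactly as in Theorem~\ref{t:second})
\[
\sum_{k=1}^\infty (b_k^{(l)})^2 \Big(1 - \tfrac{\lambda_{B,j}}{\lambda_{\W,k}}\Big)^2 \leq C(n,j)\, \delta(\W).
\]
The spectral gap on the ball together with Proposition~\ref{p:standard} implies that $(1-\lambda_{B,j}/\lambda_{\W,k})^2$ is bounded below by a positive constant for every $k \notin \{j,\ldots, j+N\}$ once $\delta(\W)$ is small. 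Hence $\sum_{k \notin [j, j+N]} (b_k^{(l)})^2 \leq C \delta(\W)$, and combined with $\|u_{B,l}|_\W\|_{L^2}^2 \geq 1 - C\delta(\W)$, the $(N+1)\times (N+1)$ submatrix $B = (b_k^{(l)})_{j \leq k,l \leq j+N}$ captures all but $O(\delta(\W))$ of the mass.

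The next step is to show $B$ is close to an orthogonal matrix. The diagonal entries of $B^T B$ lie in $[1-C\delta(\W), 1]$, while for $l\neq l'$ one writes
\[
(B^T B)_{l,l'} = \int_\W u_{B,l} u_{B,l'} - \sum_{k\notin[j,j+N]} b_k^{(l)} b_k^{(l')},
\]
and both contributions are $O(\delta(\W))$ by orthonormality of $\{u_{B,l}\}$ on $B$ together with $\|u_{B,l}\|_{L^2(B\setminus \W)}^2 \leq C \delta(\W)$ and Cauchy--Schwarz on the tail. This yields $\|B^TB - I\|_F \leq C\sqrt{\delta(\W)}$. Taking $D = UV^T$ via the SVD $B = U\Sigma V^T$ defines the nearest orthogonal matrix to $B$; by construction the columns of $D=(d_{kl})$ are orthonormal (so $\sum_k d_{kl}^2 = 1$) and $DB^T = U\Sigma U^T$ differs from $I_{N+1}$ by $O(\sqrt{\delta(\W)})$.

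Finally, set $v_k = \sum_{l=j}^{j+N} d_{kl}\, u_{B,l}$, an element of the $\lambda_{B,j}$-eigenspace on $B$. Splitting
\[
\| v_k - u_{\W,k}\|_{L^2(\R^n)}^2 = \|v_k - u_{\W,k}\|_{L^2(\W)}^2 + \|v_k\|_{L^2(B\setminus \W)}^2,
\]
the second term is controlled by $(N+1) \sum_l d_{kl}^2 \|u_{B,l}\|_{L^2(B\setminus \W)}^2 \leq C\delta(\W)$, and expanding $v_k|_\W = \sum_m (DB^T)_{km}\, u_{\W,m}$ (where the block $m \in [j,j+N]$ contributes via $\|DB^T-I\|_F^2$ and the tail $m \notin [j,j+N]$ contributes via the decay estimate for $b_m^{(l)}$) bounds the first term by $C(n,j)\,\delta(\W)$ as well. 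Combining these gives the desired inequality. The main technical obstacle is the simultaneous handling of orthonormality on $\W$ versus on $B$ in establishing the near-orthogonality of $B$; once that is in hand, the SVD/polar-decomposition step delivers the coefficients $d_{kl}$ without further difficulty.
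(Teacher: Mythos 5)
Your proof is correct and follows the same overall strategy as the paper: apply Theorem~\ref{thm: main} with $f = \lambda_{B,j}\, u_{B,l}$ to each element of the $\lambda_{B,j}$-eigenspace, expand in the $\W$-eigenbasis, use the Fourier-coefficient relation $\lambda_{B,j}\, b_k^{(l)} = \lambda_{\W,k}\, a_k^{(l)}$ together with Proposition~\ref{p:standard} and the spectral gap to concentrate the mass of each $u_{B,l}\big|_\W$ on the modes $k\in[j,j+N]$, and then do linear algebra on the resulting $(N+1)\times(N+1)$ block of Fourier coefficients.

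The one place you genuinely diverge from the paper is the final linear-algebra step. The paper constructs the coefficient matrix row by row (rescaling each row of $b$'s to unit norm), then observes that this row-normalized matrix is ``almost orthonormal'' and multiplies by its transpose, with some hand-waving at the end about rescaling again. You instead take the raw submatrix $B = (b_k^{(l)})$, prove $\|B^TB - I\|_F = O(\delta(\W))$ directly (using orthonormality of $\{u_{B,l}\}$ on $B$ for the off-diagonal terms, and the tail estimates for the diagonal), and define $D$ as the polar factor $UV^T$ of $B$ via SVD. This is cleaner: the matrix $D$ is genuinely orthogonal by construction rather than merely close to orthogonal, so the constraint $\sum_k d_{kl}^2 = 1$ is automatic, and the error $\|DB^T - I\|_F$ is controlled by a single clean perturbation bound $|\sigma_i - 1| \leq \|B^TB - I\|$ on singular values. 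Two small remarks: (i) your intermediate bound $\|B^TB - I\|_F \leq C\sqrt{\delta(\W)}$ is more conservative than necessary --- your own diagonal and off-diagonal estimates actually give $\|B^TB-I\|_F = O(\delta(\W))$, hence $\|DB^T - I\|_F = O(\delta(\W))$ as well; the final conclusion is unaffected since $\|DB^T-I\|_F^2 = O(\delta(\W))$ suffices. (ii) The paper invokes a ``symmetry'' observation to reduce Step 1 to $k=0$; your approach simply runs the identical estimate for each $l$, which is equivalent in content and arguably more transparent.
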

\begin{proof}
Once again set $\delta(\W)=\tor(\W)-\tor(B)$ and note that it suffices to prove the theorem when $\delta(\W)$ is small.

{\it Step 1.} We first show that an eigenfunction on $B$ is quantitatively close, in terms of $\delta(\W)$, to a linear combination of the eigenfunctions on $\W$. More precisely, we claim that for each for $k=0,\dots, N$, there are coefficients $d_{k0}, \dots, d_{kN}$ with $\sum_{l=0}^N d_{kl}^2 =1$ such that  
\begin{equation}
    \label{e}
\int |\phi_{k} - u_{B,j+k}|^2 \leq C \,\delta(\W ) \qquad \text{ where } \qquad \phi_k = \sum_{l=0}^N d_{kl}\,  u_{\W,l+j}
\end{equation}

By symmetry considerations, it suffices to prove the claim for $k=0.$
To this end, let $f = \lambda_{B,j} u_{B,j}$ 
and write
\[
u_{\W}^f = \sum_{k=1}^{\infty} a_k u_{\W,k}
\quad \text{ and } \quad  
u_{B} \big|_{\W} = \sum_{k=1}^{\infty} b_k u_{\W,k}. 
\]
Then, applying Theorem~\ref{thm: main} with this choice of $f$, we obtain
\begin{equation}
    \label{c}
\big\|u_{B,j} \big\|^2_{L^2(B \setminus \W)} +  \big\| u_{\W}^f - u_{B,j} \big|_{\W}
\big\|^2_{L^2(\mathbb{R}^n)} =  \big\| u_{\W}^f - u_{B,j} \big\|^2_{L^2(\mathbb{R}^n)} \leq C\, \delta(\W), 
\end{equation}
where $C$ depends on $n$ and $\|u_{B.j}\|_{L^\infty(\R^n)}$ and thus on $n$ and $j$. Repeating the argument of the last two proofs, using  \eqref{c} together with the fact that $\lambda_{B,j} b_k = \lambda_{\W,k} a_k$ for each $k \in \mathbb{N}$, we get
\begin{equation}
    \label{d}
\sum_{k=1}^{\infty} b_k^2 \left(1 - \frac{\lambda_{B,j}}{\lambda_{\W,k}} \right)^2 \leq C \delta(\W). 
\end{equation}
Since \eqref{c} implies that $\big\|u_{B,j}\big|_{\W} \big\|^2\geq 1 - C \delta(\W)$ we have 
$
\sum_{k=1}^{\infty} b_k^2 \geq 1 - C \delta(\W). 
$
Then using the spectral gap on the ball, Proposition~\ref{p:standard}, and \eqref{d}, we see that as long as $\delta(\W)$ is small, then 
\[
\sum_{l=j}^{j+N} b_l^2 \geq 1 - C_2 \delta(\W) \qquad \text{ and }\qquad \sum_{k=1}^{j-1} b_k^2 \ \  +\! \! \sum_{k=N+j+1}^{\infty} \! \! \! \! \! b_k^2 \leq C \delta(\W)
\] 
So, if we let $u = \sum_{l=j}^N b_l u_{\W,l}$, then
\[
\begin{aligned}
\big\| u - u_{B,j} \big\|_{L^2(\mathbb{R}^n)}^2 
&=\big\| u - u_{B,j} \big|_{\W} \big\|_{L^2(\W)}^2 + \big\|u_{B,j} \big\|^2_{L^2(B \setminus \W)} \\
&= \sum_{k=1}^{j-1} b_k^2 \ \  +\! \! \sum_{k=N+j+1}^{\infty} \! \! \! \! \! b_k^2 + \big\|u_{B,j} \big\|^2_{L^2(B \setminus \W)} \leq C\, \delta(\W). 
\end{aligned}
\]
This almost proves the claim, except we note that $\| u \|_{L^2} \leq 1$. But since $\| u_{B,j} \|_{L^2}\geq 1 - C \delta(\W)$, if we choose $\eta$ so that $\|(1+\eta)u\|_{L^2}=1$, then $\eta \leq C \delta(\W) $. Therefore $\| (1+\eta)u - u_{B,j}\|_{L^2(\mathbb{R}^n)}^2 \leq C \delta(\W)$. Then by the triangle inequality, we have proven the claim by choosing the coefficients $d_{jl} = d_l=b_l(1+\eta)$.\\

{\it Step 2:}
The $(N+1)\times (N+1)$ matrix $[d_{kl}]$ is almost orthonormal in the sense that the matrix product $[d_{kl}][d_{kl}]^T$ is equal to 1 along the diagonal, and all other entries have absolute value less than $3C\delta(\W)$. Letting $\phi_k$ be as in \eqref{e} for $k=0,\dots, N$,
then multiplying by the transpose matrix we obtain from \eqref{e} that
\[
\left\|\sum_{k=0}^N d_{lk} \phi_k - u_{\W,l+j} \right\|_{L^2(\R^n)}^2 \leq C \delta(\W) 
\]
for each $l=0, \dots, N$. Since  $\| \phi_k - u_{B,k+j}\|_{L^2(\R^n)}^2 \leq C\, \delta(\W)$ by \eqref{e}, and  $
\left|\sum_{k=1}^N d_{lk}^2\right| \geq  1- C\delta(\W),$  the result follows up to slightly rescaling the coefficients 
 $d_{lk}$ as in step 1. 
\end{proof}

\subsection{Sharpness of the Main Theorems: Examples}\label{ssec: examples}
Theorem~\ref{thm: main} is a (consequence of the Kohler Jobin inequality and the) resolvent estimate of the form
\begin{equation}
    \label{eqn: general resolvent est}
    \tor(\W ) - \tor (B) \geq c \left\| (-\Delta)_{\W}^{-1} -(-\Delta)_{B_1(x_\W)}^{-1}\right\|^2_{X\to Y}\
\end{equation}
with the function spaces $X = L^{\infty}(\R^n)$ and $Y = L^2(\R^n)$.
A natural question is whether this estimate is sharp.
Typically, the sharpness question refers to the exponent on the right-hand side of \eqref{eqn: general resolvent est}, i.e.
\begin{align*}
\text{Q1:} & \text{ Can the exponent $2$ in \eqref{eqn: general resolvent est} be replaced with a smaller one?}  \\
\intertext{It is also interesting to ask if \eqref{eqn: general resolvent est} is optimal with respect to the how we quantify the distance of the resolvent operator $(-\Delta_\W)^{-1}$ to  the family of resolvent operators on unit balls. That is,
}
\text{Q2:} & \text{ Can function space $Y = L^2(\R^n)$ in \eqref{eqn: general resolvent est} be replaced with a stronger norm/smaller function space $Y$?}\\
\text{Q3:} & \text{ Can function space $X= L^{\infty}(\R^n)$ in \eqref{eqn: general resolvent est} be replaced with a weaker norm/larger function space $X$?}
\end{align*}
In the three examples that follow, we address these questions of sharpness (and their counterparts for Theorem~\ref{thm: eigenfunction estimates}), starting with Q1:
\begin{example}[The exponent $2$ in \eqref{eqn: general resolvent est} is sharp] \label{Ex: 1}
{\rm Let $n=2$ and consider the family of ellipses $E_\e = \{(x,y) \in \R^2 : (1+\e)^2 x^2 + \frac{y}{(1+\e)^2} < 1 \}$ with $|E_\e | = |B|  =\pi$, which have (truncated) barycenter $x_\W$ at the origin.  Direct computation verifies  that $\tor(E_\e) - \tor(B) = c_1 \e^2$ for a universal constant $c_1$ and 
\[
w_{E_\e}(x,y) = C_\e \Big(1- (1+\e)^2 x^2 - \frac{1}{(1+\e)^2}y^2 \Big)
\]
where  $C_\e = \frac{1}{4} - \frac{1}{2}\e^2 + o(\e^2)$. On $B_{1/2} \subset B_1 \cap E_\e$ (for $\e$ small), a calculation shows
\[
|(w_{E_\e} - w_B)|(x,y) \geq \frac{\e}{2} |x^2 - y^2 | + o(\e),
\]
and thus $\int |(w_{E_\e} - w_B)|^2 \geq c_2\e^2$ for a universal constant $c_2.$ Thus, taking $f=1$ and letting $\e$ tend to $0$ shows that the exponent $2$ is \eqref{eqn: general resolvent est} is optimal. Clearly the analogous example can be used in any dimension.
}
\end{example}
Similarly, we may ask Q1 in the context of Theorem~\ref{thm: eigenfunction estimates}: can the exponent $2$ in \eqref{eqn: simple efn stability} be improved? The answer is again no. 
The next example is essentially similar to Example~\ref{Ex: 1}, though less hands-on since eigenfunctions do not have a clean closed form like the torsion functions of Example~\ref{Ex: 1}.
 
\begin{example}[The exponent $2$ in \eqref{eqn: simple efn stability} is sharp]\label{Ex: 1.5}
{\rm
Let $k=1$ and fix $\phi \in C^{\infty}(\partial B)$ be a nontrivial function with $\int_{\partial B} \phi = \int_{\partial B} \phi (x\cdot e_i)  = 0$ for $i=1,\dots, n$. Let $\{\W_t\}_{t \in (-\e,\e)}\}$ be a one-parameter family of open sets with smooth boundary such that $|\W_t| = \omega_n$ and $x_{\W_t}=0$ whose boundaries have normal velocity at $t=0$ equal to $\phi \nu_B$ on $\partial B$.
Then expansion of $\lambda_1(\W) - \lambda_1(B)$ in \cite[Theorem 1]{d02} togther with the spectral gap of \cite[Theorem 3.7]{AKN1} (see also \cite[Theorem 3.3]{BDV15}) show that $\lambda_1(\W) - \lambda_1(B) \approx t^2 \|\phi\|_{H^{1/2}}^2$. For $t$ small,  $B_{1/2} \Subset \Omega_t \cap B_1$ and, writing $u_\W = u_{\W,1}$,   we have $\sup_{x \in B_{1/2}} |u_{\Omega_t}(x) -u_{B}(x) - t \dot{u}(x)| \leq C t^2$ for a constant $C$ depending on $\phi$, where $\dot{u} \in H^1(B)$ solves 
\[
\begin{cases}
   -\Delta \dot{u} =  \lambda_1(B) \dot{u}& \text{ on } B\\
 \ \ \ \ \ \dot{u} = |\nabla u_B| \phi & \text{ on }\partial B\\
    \int_B u_B \dot{u} = 0.
\end{cases}
\]
see \cite[Section 5.7]{hp18}. Such a solution exists by the Fredholm alternative and it is not identically zero in $B_{1/2}$ by the maximum principle (as $\phi$ is not identically zero)
So, $\liminf_{t\to 0}\frac{1}{t^2} \int |u_{\W_t} - u_B|^2 \geq \int_{B_{1/2}} |\dot{u}|^2>0$, and thus $\int |u_{\W_t} - u_B|^2 \gtrsim \lambda_1(\W_t) - \lambda_1(B)$ for $t$ small.
}
    
\end{example}

Next we address Q2.  The analogue of the following example was originally given in our earlier paper \cite{AKN2} to demonstrate that the $L^2$ norm in \eqref{eqn: simple efn stability} of Theorem~\ref{thm: eigenfunction estimates} cannot be replaced by the $H^1$ norm. 
\begin{example}[$Y=L^2(\R^n)$ cannot be replaced by $Y = H^1(\R^n)$]
   \label{Ex: 2} {\rm
   Suppose by way of contradiction that the resolvent estimate \eqref{eqn: general resolvent est} holds with $H^1(\R^n)$ in place of $L^2(\R^n)$, so that in particular, for the family of ellipses $\{E_\e\}$ in Example~\ref{Ex: 1} above, 
    \[
    \tor(E_\e )- \tor(B) \geq c \int | \nabla w_{E_\e} - \nabla w_{B}|^2.
    \]
    Since $|\nabla w_B| \approx 1$ on $B\setminus E_\e$, this would imply $\tor(E_\e )- \tor(B) \geq c |B\setminus E_\e|$. On the other hand, direct computation shows that $\tor(E_\e )- \tor(B) \approx \e^2$ (as in Example~\ref{Ex: 1} above) and $|B\setminus E_\e| \approx \e$, a contradiction for $\e$ sufficiently small.
    }
\end{example}
\begin{remark}
    {\rm 
    Example~\ref{Ex: 2} gives a partial answer to Q2 by showing that the estimate \eqref{eqn: general resolvent est} {\it fails} with $Y = H^1(\R^n)$. However, the picture is still incomplete, as there are many norms that are stronger than $L^2(\R^n)$ but weaker than $H^1(\R^n)$. A very interesting open problem would be to understand if \eqref{eqn: general resolvent est} holds with $Y = H^s(\R^n)$ for some $s \in (0,1)$. 
    }
\end{remark}

The next example addresses Q3. 

\begin{example}[$X=L^{\infty}(\R^n)$ cannot be replaced by $X= L^p (\R^n)$ for $p<n/2$]\label{Ex: 3}
    {\rm
    Fix $p \in [1,\infty)$, and for $r\in (0,1)$ small, define the set
\[
\Omega:= B_{1-\rho}(0) \cup B_r(2e_1) \cup B_r(-2e_1),
\]
with $\rho$ chosen such that $|\Omega|=|B|$. The (truncated) barycenter of $\Omega$ is the origin and agrees with the barycenter of $B$. Let  
$$f:=M\chi_{B_r(2e_1)\cup B_r(-2e_1)}$$
with $M=2|B_r|^{-1/p}$ chosen so that $\|f \|_{L^p(\mathbb{R}^n)}=1$. Now 
\[
w_{\W}=
\begin{cases}
\ \  \frac{\rho^2-|x|^2}{2n} \quad &\text{ if } x \in B_{1-\rho} \\
\ \  \frac{r^2-|x\mp 2e_1|^2}{2n} \quad &\text{ if } x \in B_{\pm 2e_1} \\
\ \ 0 \quad &\text{ otherwise } 
\end{cases}
\qquad \text{ and } \qquad 
u_{\W}^f=
\begin{cases}
 \ \ 
 0 \quad &\text{ if } x \in B_{1-\rho} \\
 \ \ M\frac{r^2-|x\mp 2e_1|^2}{2n} \quad & \text{ if } x \in B_{\pm 2e_1} \\
  \ \ 0 \quad  &\text{ otherwise }. 
\end{cases}
\]
For small $r$ we have $\rho \approx r^n$, so that 
\[
\tor({\W})-\tor(B_1) \approx r^n. 
\]
Now $u_{B_1(x_{\W})}^f \equiv 0$ and so 
\[
 \|u_{\W}^f - u_{B_1}^f \|_{L^2(\mathbb{R}^n)}^2 \approx r^{n+4-2n/p}. 
\]
If $p< n/2$, then we cannot bound  $\|u_{\W}^f - u_{B_1}^f \|_{L^2(\mathbb{R}^n)}^2$ by a constant multiple of   $\tor(\W)-\tor(B_1)$. 

 }
\end{example}

\begin{remark}
    {\rm Example~\ref{Ex: 3} shows that the estimate \eqref{eqn: general resolvent est} fails with $X= L^p(\R^n)$ (even for the problem restricted to $B_R$) for $p<n/2$, and thus, for instance, rules out $X= L^2(\R^n)$ in dimension $4$ and higher. However,  as with Q2 and Example~\ref{Ex: 2} above, this provides only a partial  answer to Q3, since there are many other possible choices of the function space $Y$. In particular, linear stability arguments of section~\ref{sec: linear stability} go through if $f \in L^p$ for $p >n$.   
    }
\end{remark}

\section{Free Boundary Estimates, Part 1: Existence and basic regularity of minimizers}\label{sec: a priori and existence}

The remainder of the paper is dedicated to proving Theorem~\ref{thm: summary for minimizers}. 
As discussed in section~\ref{ssec: qual stab}, it is more convenient to study a relaxed version of the variational problem \eqref{eqn: main min prob} in which the volume constraint is replaced by a volume penalization term. 

The following  parameters will remain unchanged throughout the remainder of the paper.
\begin{equation}\label{eqn: parameter fix}
\begin{split}
&	\mbox{Let $\baryEps= \baryEps(n) >0$ be fixed according to Lemma~\ref{lem: lip bary}. } \\
&	\mbox{Let $\ccdel= \ccdel(n) \in (0, |\tor(B)|/4]$ be fixed according to Proposition~\ref{lemma: qual stability} applied with $\e = \baryEps/8$. } 
\end{split}
\end{equation}
Next, let $\eta_1(n)$ be the constant from Proposition~\ref{lemma: qual stability} and let
\begin{equation}
    \label{eqn: eta bar fix}
\bar{\eta}= \bar{\eta}(n) = \min\left\{ \eta_1(n),  \frac{1}{13|\tor(B)|}, \frac{ 2\bar{\delta}}{\om_n },  \frac{|\tor(B)|-2\bar{\delta}}{8\om_n}, 1\right\} \in(0,1].
\end{equation}
Let $\fvbar =\mathscr{V}_{\bar{\eta}} $ be the volume penalization term defined in \eqref{eqn: def vol penalization fixed param} with $\eta = \bar{\eta}$

We additionally fix a nonnegative function $f \in L^\infty(\R^n)$ with $\| f \|_{L^\infty(\R^n)}\leq 1$ and number $a\in (0,1]$, on which none of the estimates in this section will depend, and let 
\begin{equation}\label{eqn: SP nl}
\nl(\W) = \sqrt{a^2 +  \Big(a - \int_{\R^n} \big|u_\W^f - u_{B(x_\W)}^f\big|^2\Big)^2 }\,
\end{equation}
where $x_\W$ is the truncated barycenter of $\W$ defined in \eqref{eqn: barycenter definition}. For a parameter $\tau>0$,  define the functional $\Ep$ by
\begin{equation}\label{eqn: main functional}
\Ep(\W)   = \tor(\W) + \fvbar(|\W|)+ \err \nl(\W) \,.
\end{equation}
We consider the unconstrained minimization problem 
\begin{equation}
	\label{eqn: MAIN min prob}
	\inf \{{\Ep}(\W) : \W \subset \R^n \text{ open, bounded, } |\W| \leq 2\omega_n\}
\end{equation}
and its counterpart for sets contained in a large ball $B_R$.
The following statement summarizes the main result of the section.

\begin{theorem}\label{thm: existence 2} \label{prop: basic properties}
 Suppose  $f$ is compactly supported. There is a dimensional constant $\bar{\tau}>0$  such that for any $\tau \in (0,\bar{\tau}],$  a  minimizer $\W$ of the variational problem 
	exists, has $\text{diam}(\W) \leq 2$, and satisfies 
     \begin{equation}\label{eqn: low base energy main 2}
    \tor(\W) + \fvbar(|\W|) \leq \tor(B_1)+\ccdel.
\end{equation}
Moreover, there are positive dimensional constants $r_0$, $c_*$, $C_*$, $c$, and $K$ such that the following holds.
For all $x \in \partial \W$ and $r \in (0,r_0]$, the torsion function $w_\W$ of $\W$ satisfies 
    \begin{equation}\label{eqn: NDandLip}
      c_*r \leq \sup_{B_r(x)} w_\W \leq C_* r  \,.
    \end{equation}
Moreover, $\W$ is a set of finite perimeter in $\R^n$ with $\mathcal{H}^{n-1}(\partial \W \setminus \partial^*\W) =0$, satisfies the density estimates 
\begin{align}\label{eqn: volume density}
    c\, \om_nr^n & \leq |B_r(x) \cap \W| \leq (1-c)\om_n r^n\\
    \label{eqn: perim density}
    c\, r^{n-1} & \leq |B_r(x) \cap \pa \W| \leq \frac{1}{c}r^{n-1} 
\end{align}
for all $x \in \partial \W$ and $r \in (0,r_0],$ and is an NTA domain with NTA constant $K$.

\end{theorem}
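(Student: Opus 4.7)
The first task is to control any minimizing sequence using the reservoir of qualitative stability results collected in Section~\ref{sec: prelim}. Since $B$ is a competitor with $\Ep(B) = \tor(B) + \fvbar(\om_n) + \err \sqrt{2}\, a \leq \tor(B) + \sqrt{2}\,\bar\err$, a minimizing sequence $\{\W_j\}$ will satisfy $\Ep(\W_j) \leq \tor(B) + 2\bar\err$ for $j$ large. With $\bar\err$ chosen small relative to $\ccdel$, Lemma~\ref{prop: base summary} forces $\omega_n/2 \leq |\W_j| \leq 2\om_n$ and Proposition~\ref{lemma: qual stability} gives a point $x_j \in \R^n$ with $|\W_j \Delta B(x_j)| \leq \baryEps \om_n / 8$. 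Translating by $x_j$ (which leaves the energy invariant, as $\fvbar$ and $\nl$ only depend on $|\W|$ and the relative position between $\W$ and its truncated barycenter), we may assume $x_j$ lies in $B_3$. Lemma~\ref{lem: lip bary} then guarantees that the truncated barycenter $x_{\W_j}$ is well-defined, lies in $B_3$, and that $\nl$ is $L^1$-Lipschitz on the class of sets we are considering. This is the mechanism by which the nonlocal term $\nl$ — which a priori lacks lower semicontinuity — becomes tractable.

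The main obstacle is that $\nl(\W)$ scales at the same order as $\tor(\W)$ but is not monotone under set inclusion, so standard techniques (Buttazzo–Dal Maso, or direct $L^1$-lsc arguments) do not directly produce a minimizer. Following \cite{AKN2}, I would replace each $\W_j$ with an \emph{outward minimizer} $\tilde\W_j$: fix a large ball $B_R$ containing every $\W_j$ (possible because $|\W_j| \leq 2\om_n$ and $x_{\W_j} \in B_3$, so concentration-compactness together with the $L^1$-closeness to $B(x_{\W_j})$ confines mass to a neighborhood), and set
\[
\tilde\W_j \in \arg\min\{\Ep(\W) : \W_j \ss \W \ss B_R,\ \W \text{ open}\}.
\]
Existence of $\tilde\W_j$ follows by a direct method: the superset constraint makes $\tor$ monotone-compatible, $\fvbar$ is continuous in $L^1$, and $\nl$ is $L^1$-Lipschitz by the preceding paragraph. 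Moreover, $\Ep(\tilde\W_j) \leq \Ep(\W_j)$, so $\{\tilde\W_j\}$ remains minimizing.

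Outward minimizers satisfy the standard one-sided Alt–Caffarelli estimates: testing the torsion function against its harmonic replacement in a ball $B_r(x) \subset B_R$ (on whichever side one removes mass) and using that the energy can only go up gives, after absorbing the $\err\nl$ perturbation via its $L^1$-Lipschitz bound (which has lower-order effect at scale $r$), the upper bound $\sup_{B_r(x)} w_{\tilde\W_j} \leq C_\star r$ for $x \in \partial \tilde\W_j$. The matching non-degeneracy $\sup_{B_r(x)} w_{\tilde\W_j} \geq c_\star r$ comes from the obstacle-like inequality $-\Delta w \leq 1$ together with a standard barrier comparison on $B_r \setminus \{w \geq c_\star r\}$. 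These two bounds imply uniform volume and perimeter density at every boundary point, finite perimeter with $\mathcal{H}^{n-1}(\partial\tilde\W_j \setminus \partial^*\tilde\W_j)=0$, and a uniform diameter bound (the density estimates rule out long thin tentacles, and $|\tilde\W_j| \leq 2\om_n$ combined with containment of a ball around the truncated barycenter forces $\mathrm{diam}(\tilde\W_j) \leq 2$ for $\tau$ sufficiently small).

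Uniform Lipschitz bounds on $w_{\tilde\W_j}$ together with the density estimates provide compactness in $L^1$ (up to subsequence) of the sets $\tilde\W_j$ to some open $U$, and Hausdorff compactness of the boundaries. Lower semicontinuity of $\tor$ under $L^1$-convergence, continuity of $\fvbar$, and the $L^1$-Lipschitz bound on $\nl$ (Lemma~\ref{lem: lip bary}) yield $\Ep(U) \leq \liminf \Ep(\tilde\W_j)$, so $U$ is the sought minimizer and inherits all the density and Lipschitz estimates. The energy bound \eqref{eqn: low base energy main 2} follows from comparison with $B$ exactly as at the start, and the NTA property with constant $K$ is a consequence of the two-sided density estimates and uniform Lipschitz/non-degeneracy bounds on $w_U$, via the classical construction of interior and exterior corkscrews and Harnack chains (see, e.g., the arguments in \cite{AC81} adapted as in \cite{AKN2}). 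The step most likely to require care is the derivation of the one-sided Lipschitz bound for outward minimizers, because one must show that the $\err\nl$ perturbation, being same-order, does not destroy the Alt–Caffarelli comparison argument — this is precisely where smallness of $\err$ (enforced by the choice $\err \leq \bar\err$) enters.
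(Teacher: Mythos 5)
Your high-level plan — replace a minimizing sequence with a sequence of outward minimizers, extract Lipschitz/nondegeneracy estimates, pass to the limit — is the right one and matches the paper's architecture. However, there are three genuine gaps in the middle of the argument.

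First, your claim that the outward minimization problem
$\tilde\W_j \in \arg\min\{\Ep(\W) : \W_j \ss \W \ss B_R\}$
``admits a minimizer by a direct method'' begs the question. The entire difficulty is that $\nl$ lacks lower semicontinuity, and the superset constraint does not fix this: competitors in this subproblem are not nested with each other, so lower semicontinuity must still be addressed. The paper instead constructs an \emph{explicit increasing nested sequence} $U_0 \ss U_1 \ss \cdots$ whose energies decrease to the infimum and whose union $U$ is shown to attain it, using the crucial comparison $\int |u_\W^f - u_{\W'}^f| \leq \tor(\W') - \tor(\W)$ for $\W' \ss \W$ (which follows from the maximum principle). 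Your proposal never invokes this comparison, yet it is the technical heart of the existence argument: it converts the opaque term $\int|u_\W^f - u_{\W'}^f|$ into a difference of torsional rigidities, which is monotone under set inclusion.

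Second, your claim that nondegeneracy $\sup_{B_r(x)} w_{\tilde\W_j} \geq c_\star r$ for outward minimizers ``comes from the obstacle-like inequality $-\Delta w \leq 1$ together with a standard barrier comparison'' is incorrect. The inequality $-\Delta w \leq 1$ by itself gives at best quadratic growth $w(y)\geq \tfrac{d(y,\partial\W)^2}{2n}$ from an interior barrier, not the linear bound $c_\star r$; and there is no interior corkscrew to use the barrier without already knowing a density estimate. Linear nondegeneracy in one-phase Bernoulli-type problems genuinely requires the ability to \emph{remove} mass near the free boundary and compare energies — i.e., \emph{inward} minimality — and an outward minimizer need not be an inward minimizer. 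The paper therefore proves nondegeneracy only for inward minimizers (via a David–Toro iteration) and applies it to the \emph{limit} set, which is a full minimizer and hence inward-minimizing. For the outward minimizers in the sequence, it uses only the Lipschitz bound (which does follow from outward minimality) to extract uniform convergence of the torsion functions via Arzel\`a–Ascoli.

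Third, your lower-semicontinuity step rests on ``$\nl$ is $L^1$-Lipschitz,'' but the Lipschitz estimate \eqref{eqn: beta lip} of Lemma~\ref{lem: lip bary} controls $|\nl(\W)-\nl(\W')|$ by $|\W\Delta\W'| + \int|u_\W^f - u_{\W'}^f|$, and $L^1$-convergence of sets does \emph{not} control $\int|u_\W^f - u_{\W'}^f|$ in general (Poisson solutions are notoriously unstable under $L^1$ set perturbations). The paper requires the much stronger uniform convergence of the torsion functions, uses \v{S}ver\'{a}k's theorem to deduce $u_{U_j}^f \to u_\W^f$ in $L^2$, and then exploits the precise structure of the volume penalization ($\fvbar$ having slope exactly $\bar\eta$ below $\omega_n$) to offset the potential drop in $\nl$ caused by mass escaping in the limit. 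This quantitative bookkeeping is what Lemma~\ref{lem: LSC lemma} does, and it is not a consequence of continuity of $\fvbar$ together with an $L^1$-Lipschitz bound on $\nl$.

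To repair the proposal: replace the outward-minimizer existence step with the explicit nested construction using the maximum-principle comparison above; drop the nondegeneracy claim for the sequence $\tilde\W_j$ and instead pass to the limit using equi-Lipschitz bounds on $w_{\tilde\W_j}$ and Arzel\`a–Ascoli; and state the lower-semicontinuity step in terms of uniform convergence of torsion functions rather than $L^1$-convergence of sets.
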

 When $f$ is not compactly supported, analogous results hold for \eqref{eqn: MAIN min prob} for sets contained in a large ball $B_R$; see Theorem~\ref{theorem: existence} below.
The constant $ \bar{\tau}$ is defined in \eqref{eqn: tau fix}.

\subsection{Initial estimates for outward and inward minimizers}
Given $R>0$, an open bounded set $\W\subset B_R$ is called an inward (resp., outward) minimizer of $\Ep$ in $B_R$ if $|\W| \leq 2 \omega_n $
and $\Ep(\W) \leq \Ep(\W')$ for all $\W'\subset \W$ (resp., $\W\subset \W'\subset B_R$ with $|\W'|\leq 2 \om_n$).
We begin by proving some fundamental estimates that will be used to prove nondegeneracy and Lipschitz estimates for inward and outward minimizers, respectively. 

Since  $\nl(\cdot )$ is a $1$-Lipschitz function  of $\int_{\R^n} |u_\W^f - u_{B(x_\W)}^f|^2$, together Proposition~\ref{lemma: qual stability} and Lemma~\ref{lem: lip bary} imply there is a dimensional constant $\bar{C}_n>0$
 for any  $\W, \W'\subset\R^n$ open bounded sets with $|\W|\leq 2\omega_n$, 
\begin{equation}\label{eqn: nl Lip}
\begin{cases}
\tor(\W) +\vo(|\W|) \leq \tor(B_1) + \ccdel & \\
 |\W\Delta \W'|\leq \omega_n  \baryEps /{2}  &\end{cases} \quad
\implies \quad \ \ |\nl(\W) - \nl(\W')|  \leq \bar{C}_n\,\Big( |\W\Delta \W'| +\int |u_{\W}^f - u_{\W'}^f|\Big)\,.
	 	\end{equation}
Here the parameters are chosen as in \eqref{eqn: parameter fix}.
The following lemma, whose elementary proof is based on the maximum principle, will be key to controlling the right-hand side of \eqref{eqn: nl Lip}.

\begin{lemma}\label{lemma: key estimate}
	Let $\W, \W' \subset \R^n$ be open bounded sets with $\W' \subset \W$.
    Then 
	\begin{equation}\label{eqn: key estimate}
	\int |u_{\W}^f - u_{\W'}^f| \,dx \leq \tor(\W') - \tor(\W).
		\end{equation}
\end{lemma}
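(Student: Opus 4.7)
The plan is to remove the absolute value via the weak maximum principle, and then use the $H^1_0$-duality identity $\int u_\W^f = \int f\,w_\W$ to convert $\int(u_\W^f - u_{\W'}^f)$ into an integral against $w_\W - w_{\W'}$, which is directly the torsion deficit.

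First, I claim $v := u_\W^f - u_{\W'}^f \geq 0$ pointwise on $\R^n$. Since $f \geq 0$ and $\W' \subset \W$, after extending $u_{\W'}^f$ by zero, $v \in H^1_0(\W)$ satisfies $-\Delta v = 0$ weakly on $\W'$ with boundary trace $u_\W^f|_{\partial \W'} \geq 0$ (by the max principle applied to $u_\W^f$), and equals $u_\W^f \geq 0$ on $\W \setminus \W'$. The weak maximum principle on $\W'$ then gives $v \geq 0$, so $|u_\W^f - u_{\W'}^f| = v$.

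For the main estimate, I test $-\Delta w_\W = 1$ against $u_\W^f \in H^1_0(\W)$ and $-\Delta u_\W^f = f$ against $w_\W \in H^1_0(\W)$; both integrations by parts produce $\int \nabla w_\W \cdot \nabla u_\W^f$, and hence $\int u_\W^f = \int f\, w_\W$. Since $\W' \subset \W$, the functions $u_{\W'}^f$ and $w_{\W'}$ extended by zero lie in $H^1_0(\W)$, and the analogous identity $\int u_{\W'}^f = \int f\, w_{\W'}$ holds. Subtracting, and using $0 \leq f \leq 1$ together with $w_\W \geq w_{\W'} \geq 0$ (again by the max principle),
\[
\int (u_\W^f - u_{\W'}^f)\, dx = \int f\,(w_\W - w_{\W'})\, dx \leq \int (w_\W - w_{\W'})\, dx = 2\bigl(\tor(\W') - \tor(\W)\bigr),
\]
where the final equality uses $\tor(\W) = -\tfrac{1}{2}\int w_\W$ from \eqref{e: torsion and dir energy}.

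The main obstacle is essentially just bookkeeping: one must verify that all the integrations by parts are justified in $H^1_0(\W)$, which holds because $\W' \subset \W$ means zero extensions of $H^1_0(\W')$ functions lie in $H^1_0(\W)$. No regularity of $\partial \W$ or $\partial \W'$ is required, which is exactly what makes this estimate so useful in subsequent lower-semicontinuity arguments for the functional $\Ep$.
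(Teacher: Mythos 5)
Your proof is correct and takes a genuinely different route from the paper's. The paper establishes the pointwise sandwich $0 \leq u_{\W}^f - u_{\W'}^f \leq w_{\W} - w_{\W'}$ on all of $\R^n$: the lower bound is obtained essentially as you argue; for the upper bound one notes $u_{\W}^f \leq w_{\W}$ (since $f \leq 1$), hence $u_{\W}^f - u_{\W'}^f \leq w_{\W} - w_{\W'}$ on $\R^n\setminus \W'$ where the primed functions vanish, and both sides of the inequality are harmonic in $\W'$, so the maximum principle propagates it inward. Integrating the pointwise inequality then gives the conclusion. Your proof bypasses the upper pointwise bound entirely: after removing the absolute value you invoke the duality identity $\int u_{\W}^f = \int f\, w_{\W}$ (and its analogue on $\W'$), and then $0 \leq f \leq 1$ together with $w_{\W} \geq w_{\W'} \geq 0$ carry the integral estimate directly. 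Both routes rest on the same ingredients ($\W'\subset\W$, $0 \leq f \leq 1$, the maximum principle), so the difference is mainly stylistic: the paper's version yields the slightly stronger pointwise information, while yours is arguably more streamlined and makes the role of the hypothesis $f \leq 1$ most transparent.

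One small remark worth being aware of: your computation ends with $\int(u_{\W}^f - u_{\W'}^f) \leq 2(\tor(\W') - \tor(\W))$, not $\leq \tor(\W') - \tor(\W)$ as the lemma states. The paper's own proof produces the same factor of $2$ -- there is an arithmetic slip in the paper where $\int(w_{\W} - w_{\W'}) = 2(\tor(\W')-\tor(\W))$ is mistranscribed. Indeed the stated inequality with constant $1$ cannot hold in general (take $f \equiv 1$, so that $u_{\W}^f = w_{\W}$, and the left side becomes exactly $2(\tor(\W')-\tor(\W))$). This does not affect the paper, since downstream the lemma is only ever used with absorbed constants, but your write-up should flag the discrepancy between the constant you obtain and the one claimed rather than leaving it silent.
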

\begin{proof}
	By the maximum principle, $u_{\W}^f(x)\leq w_{\W}(x)$ for all $x \in \R^n$, and in particular for all $x \in \pa \W'$. Since $u_{\W'}^f , w_{\W'}$ vanish on $\R^n \setminus\W'$, this implies $u_{\W}^f - u_{\W'}^f \leq w_{\W} - w_{\W'}$ on $\R^n\setminus \W'$. In particular, this ordering holds on  $\partial \W'$. Since the left- and right-hand sides are harmonic in $\W'$,  by the maximum principle, $u_{\W}^f - u_{\W'}^f \leq w_{\W} - w_{\W'}$ in $\W'$ as well. Thus 
	\[
	0 \leq u_{\W}^f - u_{\W'}^f \leq w_{\W} - w_{\W'} \text{ on } \R^n.
	\] 
	Integrating both sides and recalling \eqref{e: torsion and dir energy} gives us 
	\[
	\int |u_{\W}^f - u_{\W'}^f| \leq \int w_{\W} - w_{\W'} = -\frac{1}{2} \left(\tor(\W) - \tor(\W')\right),
	\]
	completing the proof.
\end{proof}
 Lemma~\ref{lemma: key estimate} and the key Lipschitz estimate \eqref{eqn: nl Lip} together allow us to prove the following estimates comparing the torsional rigidity of  inward and outward minimizers satisfying the energy bound 
\begin{equation}\label{eqn: low base energy main}
    \tor(\W) + \fvbar(|\W|) \leq \tor(B_1)+\ccdel.
\end{equation}
to that of suitable competitors, provided the parameter $\tau$ in \eqref{eqn: main functional} is small enough.  Take $\bar{C}_n$ as in \eqref{eqn: nl Lip} and let 
 \begin{equation}\label{eqn: tau 0}
     \tau_0= \frac{\bar\eta}{2 \bar{C}_n} .
 \end{equation}
%
%
\begin{lemma}\label{lem: basic est outward min} For any $\tau \in (0, \tau_0],$ $R>0$, and open bounded set $\W \subset B_R$ satisfying the energy bound \eqref{eqn: low base energy main}, the following holds.
	\begin{enumerate}
		\item If $\W$ is an inward minimizer of $\Ep$ in $B_R$ then for any  competitor $\W'\subset \W$ with $|\W'\setminus\W| \leq \baryEps\om_n/2$,
		\begin{equation}
	\label{eqn: nondegen est 22}
 \, |\W \setminus \W'| \leq \frac{4}{\bar\eta} \left( \tor(\W') - \tor(\W)\right).
\end{equation}
		\item If $\W$ is an outward minimizer of $\Ep$ in $B_R$, then for any  competitor $\W'$ with $\W \subset \W' \subset B_R$ and   $|\W'\setminus\W| \leq \baryEps \om_n/2$,
	\begin{equation}
		\label{eqn: sym diff bounds tor}
\tor(\W) - \tor(\W') \leq \frac{4}{\bar\eta} \, |\W' \setminus \W|.
	\end{equation}
	\end{enumerate}
\end{lemma}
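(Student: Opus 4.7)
The plan is to prove both parts by combining the minimality inequality for $\Ep$ with the Lipschitz property \eqref{eqn: nl Lip} of $\nl$ and the maximum principle estimate of Lemma~\ref{lemma: key estimate}. The threshold $\tau_0 = \bar\eta/(2\bar{C}_n)$ is designed precisely so that the $\tau\nl$ perturbation can be absorbed into the dominant volume penalization term.

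For part (1), I start from inward minimality $\Ep(\W) \leq \Ep(\W')$ and rearrange to
\[
\fvbar(|\W|) - \fvbar(|\W'|) \leq \tor(\W') - \tor(\W) + \tau\bigl(\nl(\W') - \nl(\W)\bigr).
\]
A case-by-case check on where $|\W|, |\W'| \leq 2\om_n$ sit relative to $\om_n$, using that the right slope $1/\bar\eta$ of $\fvbar$ exceeds the left slope $\bar\eta$, gives the piecewise-linear lower bound $\fvbar(|\W|) - \fvbar(|\W'|) \geq \bar\eta\,|\W \setminus \W'|$. The energy bound \eqref{eqn: low base energy main} on $\W$ together with the hypothesis $|\W \Delta \W'| \leq \baryEps\,\om_n/2$ lets me invoke \eqref{eqn: nl Lip}, producing $\nl(\W') - \nl(\W) \leq \bar{C}_n\bigl(|\W \setminus \W'| + \int |u_\W^f - u_{\W'}^f|\bigr)$, and Lemma~\ref{lemma: key estimate} controls the remaining integral by $\tor(\W') - \tor(\W)$. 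Since $\tau\bar{C}_n \leq \bar\eta/2$, the extra $|\W \setminus \W'|$ term on the right absorbs into the left, leaving $(\bar\eta/2)|\W \setminus \W'| \leq (1+\bar\eta/2)(\tor(\W') - \tor(\W))$, and $\bar\eta \leq 1$ yields \eqref{eqn: nondegen est 22}.

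Part (2) follows the analogous pattern, with the inclusion reversed. Outward minimality gives
\[
\tor(\W) - \tor(\W') \leq \fvbar(|\W'|) - \fvbar(|\W|) + \tau\bigl(\nl(\W') - \nl(\W)\bigr),
\]
and the piecewise-linear structure of $\fvbar$ now produces the upper bound $\fvbar(|\W'|) - \fvbar(|\W|) \leq \bar\eta^{-1}|\W'\setminus\W|$ (checked in the same three cases, using $\bar\eta \leq 1$). I apply \eqref{eqn: nl Lip} and Lemma~\ref{lemma: key estimate}---this time with $\W \subset \W'$, so that $\int |u_{\W'}^f - u_\W^f| \leq \tor(\W) - \tor(\W')$---and absorb the resulting $(\tau\bar{C}_n)(\tor(\W) - \tor(\W'))$ contribution using $\tau \leq \tau_0$, which gives \eqref{eqn: sym diff bounds tor}.

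There is no real obstacle here beyond the bookkeeping for $\fvbar$ across the break at $\om_n$; the essential point is that the choice of $\tau_0$ makes the $\tau\nl$ perturbation strictly subcritical relative to the volume penalization, which is precisely what allows the absorption to go through with a clean universal constant $4/\bar\eta$.
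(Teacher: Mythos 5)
Your proof is correct and follows essentially the same approach as the paper's: starting from inward/outward minimality, extracting the piecewise-linear bound on $\fvbar$, invoking \eqref{eqn: nl Lip} and Lemma~\ref{lemma: key estimate}, and absorbing the $\tau$-term via $\tau\bar{C}_n \leq \bar\eta/2$. The only cosmetic difference is that you isolate the volume-penalization term rather than the torsion difference, which leads to the same cancellation and the same final bound with constant $\leq 3/\bar\eta < 4/\bar\eta$.
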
 

\begin{proof}
We begin with the first estimate. By inward minimality,
	 \begin{equation}
	 	\label{eqn: nondegen est 1 v2}
	 	\begin{split}
	 			\tor(\W) - \tor(\W') &   \leq   \vo(|\W'|)-\vo(|\W|) + \tau (\nl(\W') - \nl(\W)) \\
	 	& \overset{\eqref{eqn: nl Lip}}{\leq} \vo(|\W'|)-\vo(|\W|) +  \bar{C}_n\tau \big( \,|\W \setminus \W'| + \int |u_\W^f - u_{\W'}^f|\big) \\
	 	& \overset{\eqref{eqn: key estimate}}{\leq}   \vo(|\W'|)-\vo(|\W|) +  \bar{C}_n\tau  |\W\setminus \W'| +  \bar{C}_n\tau(\tor(\W) - \tor(\W')).
	 	\end{split}
	 \end{equation}
	 Since $|\W| \leq  |\W'|$, we have 
	 \[
	 \vo(|\W'|)-\vo(|\W|) \leq - \bar\eta  \,(|\W|-|\W'|)\,,
	 \]
     so that \eqref{eqn: nondegen est 1 v2} becomes
     \[
     (1-\bar{C}_n\tau)(\tor(\W) -\tor(\W')) \leq (-\eta + \bar{C}_n\tau)|\W \setminus \W'|.
     \]
Since we have chosen $2\bar{C}_n \tau\leq \bar\eta \leq 1$, the coefficient on the left-hand side is at least $1/2$, while the coefficient on the right-hand side is at most $-\bar\eta/2$.
This yields $\frac{1}{2}(\tor(\W) - \tor(\W') )\leq -\frac{\bar\eta}{2} |\W \setminus \W' |,$ or equivalently, \eqref{eqn: nondegen est 22}.

Now to prove the second estimate, since $|\W'| \geq |\W|$, the outward minimality of $\W$ gives
\begin{equation}\label{eqn: basic outward competitor}	
\begin{split}
0 \leq 	\tor(\W) - \tor(\W')& \leq \vo(|\W'|) - \vo(|\W|) + \tau(\nl(\W') - \nl(\W))\leq \frac{1}{\bar\eta}|\W'\setminus \W| + \tau(\nl(\W') - \nl(\W)).
\end{split}
\end{equation}
Applying \eqref{eqn: nl Lip} followed by \eqref{eqn: key estimate}  to the right-hand side of \eqref{eqn: basic outward competitor} yields
\begin{align*}
	\tor(\W) - \tor(\W') \leq \Big( \frac{1}{{\bar\vpar}} + \bar{C}_n \tau\Big) |\W'\setminus \W| +  \bar{C}_n \tau \big( \tor(\W) - \tor(\W')\big). 
\end{align*}
Since we have taken $2\bar{C}_n \tau \leq \bar\eta\leq  1$, we can absorb the last term on the right to arrive at \eqref{eqn: sym diff bounds tor}.
\end{proof}

\subsection{Nondegeneracy for inward minimizers}
The next theorem shows that torsion function of an inward minimizer satisfying \eqref{eqn: low base energy main} grows at least linearly from the boundary. In this statement, $\tau_0>0$ is as in  \eqref{eqn: tau 0}.

\begin{theorem} \label{t:lb}  
There is a dimensional constant $c_* >0$ such that for any 
$\err\in ( 0,\err_0]$ and $R>0$, an inward minimizer $\W$ of $\Ep$ in $B_R$ satisfying the energy bound \eqref{eqn: low base energy main} has
\begin{equation}\label{def: DO}
	  \sup_{x \in B_r(y)}  w_\W(x)  \geq c_* r
\end{equation}
for all $r \in(0,1)$ and $y \in \overline{\Omega}$.
\end{theorem}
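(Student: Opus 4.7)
The plan is to argue by contradiction using a cutoff inner competitor, exploiting the inward minimality bound of Lemma~\ref{lem: basic est outward min}(1). First I would reduce to the case $y\in\partial\W$: for interior $y$ with $B_{r/2}(y)\subset\W$, pointwise comparison with the inscribed torsion function of $B_{r/2}(y)$ gives $w_\W(y)\geq r^2/(8n)$, handling $r$ above a small universal threshold; for other interior $y$, we pick a nearest boundary point $y^\ast\in\partial\W$ with $|y-y^\ast|\leq r/2$ and apply the result at $y^\ast$ on scale $r/2$.

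Assuming $\sup_{B_r(y)}w_\W\leq c_\ast r$ for some $y\in\partial\W$ and a small $c_\ast$ to be chosen, I would take the inward competitor $\W':=\W\setminus\overline{B_{r/2}(y)}$ together with the cutoff test function $\tilde w:=(1-\psi)w_\W\in H^1_0(\W')$, where $\psi\in C_c^\infty(B_r(y);[0,1])$ satisfies $\psi\equiv 1$ on $B_{r/2}(y)$ and $|\nabla\psi|\leq 4/r$. The variational characterization of torsion gives $\tor(\W')\leq J[\tilde w]$. Testing the equation $-\Delta w_\W=1$ against $\psi w_\W$ and $\psi^2w_\W$ and combining the two resulting identities cancels the cross-terms, producing the clean formula
\[
J[\tilde w]-J[w_\W]\;=\;\tfrac12\int\psi^2 w_\W\;+\;\tfrac12\int w_\W^2|\nabla\psi|^2.
\]
Since $\supp\psi\subset B_r(y)$ and $\sup_{B_r(y)}w_\W\leq c_\ast r$, combined with $r\leq 1$, this bounds $\tor(\W')-\tor(\W)\leq 9c_\ast|\W\cap B_r(y)|$.

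Inserting this into Lemma~\ref{lem: basic est outward min}(1) (admissible for $r$ below a universal scale, so that $|\W\setminus\W'|\leq \baryEps\om_n/2$) yields the relative density bound
\[
|\W\cap B_{r/2}(y)|\;\leq\;\frac{4}{\bar\eta}\bigl(\tor(\W')-\tor(\W)\bigr)\;\leq\;\frac{36c_\ast}{\bar\eta}\,|\W\cap B_r(y)|.
\]
Choosing $c_\ast$ so small that $36c_\ast/\bar\eta<2^{-n-1}$ gives a strict density loss passing from scale $r$ to $r/2$. I would then close the argument by iterating this estimate at dyadic sub-scales, propagating the supremum hypothesis $\sup_{B_{r/2^k}(y)}w_\W\leq c_\ast(r/2^k)$ using pointwise comparison with inscribed torsion functions to keep the effective constant controlled. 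The iterated density decay ultimately forces $|\W\cap B_\rho(y)|=0$ for some $\rho>0$, which together with the strong maximum principle $w_\W>0$ in $\W$ contradicts $y\in\overline\W$.

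The main obstacle is carrying out the iteration cleanly: the hypothesis $\sup_{B_{r/2^k}(y)}w_\W\leq c_\ast(r/2^k)$ is not immediate at smaller scales from the single sup bound at scale $r$, and propagating it requires combining the density decay with either a Lipschitz/interior gradient estimate on $w_\W$ or a cone-subsolution comparison to avoid the exponential blow-up of the effective constant. Once this propagation is in place, the contradiction follows cleanly from the geometric decay.
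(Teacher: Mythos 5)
Your strategy — inward competitor, cutoff energy identity, density estimate via Lemma~\ref{lem: basic est outward min}(1), iterate to a contradiction — is the same broad plan as the paper, and your clean double-testing identity $J[\tilde w]-J[w_\W]=\tfrac12\int\psi^2 w_\W+\tfrac12\int w_\W^2|\nabla\psi|^2$ is a nice, correct rewriting of the competitor estimate. But the proposal has a real gap, which you partially identify: your argument produces a \emph{measure} decay at scale $r/2$, while the iteration requires a \emph{supremum} decay, and these are not the same thing.

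Concretely, from $\sup_{B_r(y)}w_\W\leq c_*r$ you get $|\W\cap B_{r/2}(y)|\leq\tfrac{36c_*}{\bar\eta}|\W\cap B_r(y)|$, but you have no supremum bound at scale $r/2$ beyond the trivial $\sup_{B_{r/2}(y)}w_\W\leq c_*r=2c_*\cdot(r/2)$, which has the wrong constant. So Lemma~\ref{lem: basic est outward min}(1) cannot be re-invoked at scale $r/2$ with the same $c_*$, and the iteration stalls after one step. Moreover, even if one could push the density estimate through all dyadic scales, density $\to 0$ at the single point $y$ does not give $|\W\cap B_\rho(y)|=0$ for some $\rho>0$; you would still need to convert a density statement to a positivity statement.

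The paper closes exactly this gap by inserting a local maximum principle for subsolutions after the density estimate: $\sup_{B_{r/2}(y)}w_\W\leq C_n\bigl(r^{-n/2}\|w_\W\|_{L^2(B_{3r/4}(y))}+r^2\bigr)$. Two features make this work. First, their Caccioppoli-refined cutoff estimate (together with the restriction $r\leq c_*^2$) yields $\tor(\W')-\tor(\W)\leq C_nc_*^2 r^n$ with a \emph{quadratic} dependence on $c_*$ (your version is linear in $c_*$, which would not be enough). Second, feeding $|\W\cap B_{3r/4}(y)|\leq C_nc_*^2 r^n/\bar\eta$ and $\sup_{B_r(y)}w_\W\leq c_*r$ into the local max principle gives $\sup_{B_{r/2}(y)}w_\W\leq (C_n/\bar\eta)c_*^2 r$, which for $c_*$ small is $\leq c_*r/4=(c_*/2)(r/2)$. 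This strict improvement of the constant at each halving is what makes the dyadic iteration run and ultimately forces $w_\W\equiv 0$ on a small ball around $y$, contradicting $y\in\overline\W$ (via the strong maximum principle). If you add the local maximum principle step and strengthen the cutoff estimate to get the quadratic $c_*^2$ factor (using Caccioppoli and $r\leq c_*^2$), your argument lands on the paper's proof.
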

\begin{proof}
Let $\W$ be a inward minimizer $\W$ of $\Ep$ in $B_R$ satisfying the energy bound \eqref{eqn: low base energy main}. 
Following David and Toro \cite{DT15}, we prove the following main decay estimate: there is a constant  $c_*=c_*(n)>0$ such that for any $y \in \R^n$ and  $r \leq \min\{c_*^2, (\baryEps /2)^{1/n}\}$,
\begin{equation}\label{eqn: decay estimate}
\sup_{B_r(y)} w_\W \leq c_* r\qquad \implies \qquad \sup_{B_{r/2}(y)} w_\W \leq \frac{c_* r}{4}\,.
\end{equation}

Before proving \eqref{eqn: decay estimate}, let us see how iterating it will lead to the desired conclusion. 
Up to replacing $c_*$ with a smaller constant $c_*'$ in the final estimate \eqref{def: DO}, it suffices to prove the lower bound for $r \in (0, r_0)$ where $r_0 = \min\{c_*^2, (\baryEps /2)^{1/n}\}.$
Suppose the conclusion fails, so for some $y\in \overline{\Omega}$ and $r \in(0,r_0)$ we have $\sup_{B_{r}(y)} w_\W  \leq c_* r.$ Thus, by \eqref{eqn: decay estimate}, $\sup_{B_{r/2}(y)} w_\W< c_*r/4$, and in particular, 
\[
\sup_{B_{r/4}(x)} w_\W \leq c_*\left(\frac{r}{4} \right)\qquad \text{ for all } x \in B_{r/4}(y).
\]
Iteratively applying \eqref{eqn: decay estimate} at any such $x$ shows that 
\[
\sup_{{B_{r/4\cdot 2^k}(x)}} w_\W \leq\frac{c_* r}{4^{k+1}} \qquad \text{ for all } x \in B_{r/4}(y).
\]
So, each point in $B_{r/4}(y)$ is a Lebesgue point for $w_\W$ and $w_\W(x)= 0$ for every $x\in B_{r/4}(y)$. On the other hand $B_{r/4}(y)\cap \W$ is nonempty since $y \in \overline{\W}$. Since $w_\W >0$ in $\Omega$, we reach a contradiction and prove the theorem.

\smallskip

We now show \eqref{eqn: decay estimate}. In what follows,  $C_n>0$ denotes a dimensional constant whose value can change from line to line. 
Let $c_* \in (0,1]$ be a fixed number to be specified later.
	Fix $y \in \R^n$ and $r \in (0,\min\{c_*^2, (\baryEps /2)^{1/n}\})$ and assume $\sup_{{B_r(y)}}w_\W \leq c_* r$. Towards showing \eqref{eqn: decay estimate}, we first  claim that 
	\begin{equation}
\label{eqn: nondegen density estimate} 
\frac{| \W \cap B_{3r/4}(y)|}{r^n} \leq \frac{C_n}{\tau}\, c_*^2		
	\end{equation}
	Indeed, take $\W' = \W\setminus B_{3r/4}(y)$ as a competitor for the inward minimality of $\W$. 
Since $|\W \setminus \W| =|\W \cap B_{3r/4}(y)| \leq r^n \om_n  \leq \baryEps \om_n/2$, Lemma~\ref{lem: basic est outward min}(1) implies that 
\begin{equation}
	\label{eqn: nondegen est 2}
 |\W \cap B_{3r/4}(y)| \leq \frac{4}{\bar\eta}\big(\tor(\W') - \tor(\W)\big).
\end{equation}
We estimate the right-hand side of \eqref{eqn: nondegen est 2} as follows. Take a smooth cutoff function $\phi$ with $0 \leq \phi \leq 1$, $\phi =0$ in $B_{3r/4}(y)$, and $\phi =1$ in $\mathbb{R}^n \setminus B_{7r/8}(y)$ and $|\nabla \phi | \leq C_n/r$. Then we have $w_\W \phi \in H^1_0(\W')$ and 
\begin{equation}\label{eqn: 1}
\begin{split}
\int_{\W'} |\na (w_\W \phi) |^2 &= \int_{\W'} |\na w_\W|^2\phi^2 + \int_{B_{7r/8}(y)}  |\nabla\phi |^2 w_\W^2 + 2 \int_{B_{7r/8}(y)} (\nabla\phi \cdot \nabla w_\W )w_\W \phi \\
& \leq \int_{\W}  |\na w_\W|^2 + \frac{C_n}{r^2} \int_{B_{7r/8}(y)} w_\W^2  +  \int_{B_{7r/8}(y)} | \nabla w_\W|^2.
\end{split}
\end{equation}
In the inequality, we used  Young's inequality and the properties of $\phi$ and its gradient. Next, the Caccippoli inequality and the assumed $L^\infty(B_r(y))$ bound applied to the final term on the right-hand side imply that 
\begin{equation}
    \label{eqn: 2}
\int_{B_{7r/8}(y)} |\na w_{\W}|^2 \leq \int_{B_r(y)}  w_{\W} +  \frac{C_n}{r^2} \int_{B_r(y)} w_\W^2 \leq  c_* r^{n+1}  + C_n c_*^2  r^n \leq C_n c_*^2 r^n.
\end{equation}
In the final inequality we use $r \leq c_*^2 \leq c_*$. Combining  \eqref{eqn: 1} and \eqref{eqn: 2} and once more using that  $\sup_{{B_r(y)}}w_\W \leq c_* r$, we have
$\int_{\W'} |\na (w_\W \phi) |^2 \leq \int_{\W}  |\na w_\W|^2 + C_n  c_*^2 r^{n}.
$
A similar argument shows $\int_{\W'} w_\W \phi \geq \int_{\W} w_\W - C_n c_*^2 r^{n}$. Therefore, choosing $w_\W \phi$ as a test function for the torsional rigidity of $\W'$ shows that 
$\tor(\W') -\tor(\W)\leq C_n c_*^2 r^{n}.$
Combined with \eqref{eqn: nondegen est 2}, this shows \eqref{eqn: nondegen density estimate}.

Now, applying the local maximum principle for subsolutions, followed by the assumed $L^\infty(B_r(y))$ bound in \eqref{eqn: decay estimate} and then \eqref{eqn: nondegen density estimate}  implies
	\begin{align*}
		\label{eqn: loc max p}
	\sup_{B_{r/2}(y)} w_\W  & \leq C_n \big(r^{-n/2} \| w_\W\|_{L^2(B_{3r/4}(y))} +r^2\big)
 \leq C_n \Big( c_* r \left( \frac{|\W\cap B_{3r/4}(y)|}{r^n}\right)^{1/2}   + r^2\Big) \leq \frac{C_n}{\bar\eta} c_*^2 r,
	\end{align*}
and choosing $c_*$ small enough depending on $n$ and $\bar\eta$, thus only on $n$ so that $C_n c_*/\bar\eta \leq 1/4$ yields \eqref{eqn: decay estimate}. 
\end{proof}
Theorem~\ref{t:lb} implies that, up to decreasing the error $\bar{\delta}>0$ in the energy bound \eqref{eqn: low base energy main}, inward minimizers satisfying the assumptions of Theorem~\ref{t:lb} have diameter bounded independent of $R$. In the following statement, $\tau_0>0$ is defined in \eqref{eqn: tau 0}.
\begin{corollary}[Diameter bound for inward minimizers]\label{cor: diameter bound}
There  is a  dimensional constant  $\delta_{D} \in (0, \ccdel]$
 such that for any $\tau \in (0,\tau_0]$ and 
$R>0$, an inward minimizer $\W$  of $\Ep$ in $B_R$ satisfying the energy bound
\begin{equation}\label{eqn: low energy for diam bound}
    \tor(\W) + \vo(|\W|) \leq \tor(B) + \delta_D\,
 \end{equation}   
has  $\text{diam}(\W) \leq 2.$
\end{corollary}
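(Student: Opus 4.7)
The plan is to argue by contradiction, combining qualitative stability (Proposition~\ref{lemma: qual stability}) with the nondegeneracy of Theorem~\ref{t:lb}. First I would fix a small dimensional $\e \in (0,\baryEps/8]$, to be specified, and apply Proposition~\ref{lemma: qual stability} to obtain a dimensional constant $\delta_D \in (0,\ccdel]$ with the property that \eqref{eqn: low energy for diam bound} forces $|\W\Delta B(x_1)| \leq \om_n\e/2$ for some $x_1 \in \R^n$. Suppose for contradiction that $\diam(\W) > 2$. Since $\diam(B(x_1)) = 2$, there exists $y \in \overline{\W}$ with $|y-x_1| > 1$; setting $\alpha := |y-x_1| - 1 > 0$, the ball $B_{\alpha/2}(y)$ is disjoint from $B(x_1)$ and so
\[
|\W \cap B_{\alpha/2}(y)| \leq |\W \setminus B(x_1)| \leq \om_n\e/2.
\]

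The central step is upgrading the pointwise nondegeneracy at $y$ to a volume density estimate at $y$. Set $r := \min(\alpha/2, r_0)$ for a small dimensional threshold $r_0$. Theorem~\ref{t:lb} at $y$ at scale $r$ yields $z \in B_r(y) \cap \W$ with $w_\W(z) \geq c_* r$; using the local maximum principle for the subsolution $w_\W$ of $-\Delta u \leq 1$---precisely the estimate
\[
c_* r \leq \sup_{B_{r/2}(y)} w_\W \leq C_n \bigl( r^{-n/2}\|w_\W\|_{L^2(B_r(y))} + r^2\bigr)
\]
appearing inside the proof of Theorem~\ref{t:lb}, with $r_0$ chosen small to absorb the $r^2$ term---one obtains $\|w_\W\|_{L^2(B_r(y))}^2 \gtrsim r^{n+2}$. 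Combined with the uniform bound $\|w_\W\|_{L^\infty} \leq 2/n$ from \eqref{eqn: bound for tor fn}, this gives the volume density $|\W \cap B_r(y)| \geq c_1 r^{n+2}$ for a dimensional constant $c_1 > 0$. Together with the previous display this forces $c_1 r^{n+2} \leq \om_n\e/2$, which, upon choosing $\e$ small enough depending only on $n$ and $c_1$, yields the contradiction that rules out the hypothesis $\diam(\W) > 2$.

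The main obstacle is the density step: with no a priori Lipschitz bound on $w_\W$ at $\partial \W$, the classical Alt--Caffarelli implication ``nondegeneracy $\Rightarrow$ density with power $r^n$'' is not available, and we must instead settle for the weaker $r^{n+2}$ density extracted from the Caccioppoli/local-maximum estimate in the proof of Theorem~\ref{t:lb}. Fortunately this weaker power is still enough to contradict the $L^1$-smallness produced by qualitative stability. A secondary subtlety is that the naive argument above yields $\diam(\W) \leq 2 + O(\e^{1/(n+2)})$ rather than the exact bound $\diam(\W) \leq 2$; extracting the sharp statement requires fixing $\e$ (and hence $\delta_D$) dimensionally small enough that the resulting excess sits strictly below the diameter-$2$ threshold, which is where the flexibility in choosing $\delta_D \in (0,\ccdel]$ is essential.
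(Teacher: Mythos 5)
Your core ingredients match the paper's: the volume density bound $|\W\cap B_r(x)| \geq c_0 r^{n+2}$ for $x\in\overline{\W}$ and $r\leq r_0$ (obtained from Theorem~\ref{t:lb} plus the local maximum principle, exactly as you describe), combined with the $L^1$-closeness to a unit ball supplied by Proposition~\ref{lemma: qual stability}. The gap is the variable scale $r = \min(\alpha/2, r_0)$. When $\alpha < 2r_0$ you are forced to run the density estimate at the small scale $\alpha/2$, and the comparison of $c_1(\alpha/2)^{n+2}$ with $\om_n\e/2$ yields a contradiction only when $\alpha\gtrsim\e^{1/(n+2)}$. There is no choice of $\e>0$ that makes this residual threshold vanish, so your proposed remedy of taking $\e$ dimensionally small does not actually close the gap: you are left with $\diam(\W)\leq 2 + O(\e^{1/(n+2)})$, strictly above $2$ for every $\e>0$.

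The paper sidesteps this by avoiding the contradiction framing and never shrinking the scale. It establishes the density estimate at every $x\in\overline{\W}$ for all $r\leq r_0$, and then simply observes that whenever $|x - x_1| \geq 1+r_0$ the ball $B_{r_0}(x)$ is already disjoint from $B_1(x_1)$, so the density bound at the single fixed scale $r_0$ rules out $x\in\overline{\W}$, once $\e$ is chosen so that $\om_n\e/2 < c_0 r_0^{n+2}$. You should revise your proof along these lines, concluding $\overline{\W}\subset B_{1+r_0}(x_1)$ directly, without rescaling below $r_0$ and without the contradiction hypothesis. (In fairness, this route gives $\diam(\W) \leq 2+2r_0$ rather than literally $\leq 2$, and the paper's own concluding line ``$\W \subset B_2(x_1)$'' is likewise loose on the constant; but the slack is harmless, since what is used downstream is only a uniform dimensional diameter bound---any bound $\leq 100$ suffices to make the truncated barycenter agree with the ordinary one.)
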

\begin{proof} Let $\delta_D \le \ccdel$ be a fixed constant to be specified later in the proof, fix $\tau \in(0,\tau_0]$ $R>0$, and let $\W$ be an inward minimizer in $B_R$ satisfying \eqref{eqn: low energy for diam bound}.  We claim there are dimensional constants $r_0\in (0,1/2]$ and $c_0>0$ such that, for any $x \in \overline{\W}$, and $0< r \leq r_0$,
\begin{equation}
	\label{eqn: lower density}
c_0 r^{n+2} \leq  |B_{r}(x)\cap \W|\,.
\end{equation}
Indeed, for any $r \in (0,1)$ and $x \in \overline{\W}$, apply Theorem~\ref{t:lb} followed by the local maximum principle for subsolutions to find
	\begin{align*}
		\frac{r}{2}\, c_* \leq \sup_{B_{r/2}(x)} w_\W \leq C_n r^{-n/2} \| w_\W\|_{L^2(B_{3/4}(x))} +C_nr^2 \,.
	\end{align*}
    Since $ \sup_{\R^n} w_\W\leq C_n$, 
 \eqref{eqn: lower density} follows by choosing $r_0$ small enough depending on $n$ and $c_*$, thus only on $n$,  to absorb the final term into the left-hand side.

Now, let $\e = c_0 r_0^{n+2}/\om_n$. By  Proposition~\ref{lemma: qual stability}, there exists $\delta_D$ sufficiently small depending on $\e$ and thus on $n$ such that \eqref{eqn: low energy for diam bound} guarantees that $|\W \Delta B_1(x_1)| \leq \frac{1}{2}c_0 r_0^{2+n}$ for some $x_1 \in \R^n$. In particular, for any $x \in \R^n$ with  $|x-x_1| \geq 2,$ since  $B_{r_0}(x)$  and $B_1(x_1)$ are disjoint, we deduce from \eqref{eqn: lower density} that $x \not\in \overline{\W}$. That is, $\W \subset B_2(x_1)$. 
\end{proof}

\medskip 

\subsection{Lipschitz estimate for outward minimizers} The next theorem shows that, for an outward minimizer of $\Ep$ in $B_R$ satisfying \eqref{eqn: low base energy main}, the torsion function is Lipschitz continuous, with uniform bounds on the Lipschitz constant. Again, $\tau_0>0$ is defined in \eqref{eqn: tau 0}.
\begin{theorem}\label{thm: Lip} Fix  $R>0$.
	There is a constant $C_*=C_*(n,  R)$ such that for any $\tau \in (0,\tau_0]$ and any outward minimizer $\W$ of $\Ep$ in $B_R$ satisfying the energy bound \eqref{eqn: low base energy main}, we have 
	\begin{equation}\label{def: UP}
	 { w_\W(x)} \leq C_*d(x, \partial \W).
\end{equation}
for all $x \in \W$ with $d(x, \partial \W) \in (0, 1)$.
In particular, $w_\W$ is globally Lipschitz with $\sup \frac{|w_\W(x)- w_\W(y)|}{|x-y|} \leq \tilde{C}^*$ for a constant $\tilde{C}_*=\tilde{C}_*(R, n)$.

If $\text{dist}(\W , \partial B_R) \geq 2,$ the constants $C_*$ and $\tilde{C}_*$ depend only on $n$.
	\end{theorem}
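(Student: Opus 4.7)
My plan is to reduce the theorem to an Alt--Caffarelli-style boundary growth bound: for every $y_0 \in \partial\W$ and every $r \in (0,r_0]$ (with $r_0$ and $C$ dimensional), $\sup_{B_r(y_0)} w_\W \leq C r$. Given this bound, the pointwise inequality \eqref{def: UP} is immediate for $d := d(x,\partial\W) \in (0, r_0]$ by taking the nearest boundary point; for $d \in (r_0, 1]$ the bound follows trivially from an $L^\infty$ estimate on $w_\W$, which is where the $R$-dependence enters $C_*$. The global Lipschitz bound $\sup |w_\W(x) - w_\W(y)|/|x-y| \leq \tilde{C}_*$ then follows by combining the boundary growth estimate with classical interior $C^{1,\alpha}$ gradient estimates for the Poisson equation $-\Delta w_\W = 1$, which require only the $L^\infty$ bound on $w_\W$. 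Since $\W \subset B_R$, comparison gives $\|w_\W\|_{L^\infty} \leq R^2/(2n)$, producing the $R$-dependent constant; when $\mathrm{dist}(\W,\partial B_R) \geq 2$, Talenti's symmetrization theorem instead bounds $\|w_\W\|_{L^\infty}$ by $\|w_{B_{r_*}}\|_{L^\infty}$ with $r_*$ determined by $|\W| \leq 2\omega_n$, yielding the claimed $R$-independent constants.

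For the boundary growth bound, I will combine outward minimality with a Poisson replacement on $B_r(y_0)$. Let $h \in H^1(B_r(y_0))$ solve $-\Delta h = 1$ in $B_r(y_0)$ with $h = w_\W$ on $\partial B_r(y_0)$; the maximum principle gives $h \geq 0$, and, applied to the harmonic function $h - w_\W$ on $B_r(y_0) \cap \W$ (with boundary values $0$ on $\partial B_r(y_0)\cap\W$ and $h \geq 0$ on $\partial \W \cap B_r(y_0)$), also gives $h \geq w_\W$ on $B_r(y_0)$. The function $\tilde w := h\chi_{B_r(y_0)} + w_\W \chi_{\R^n \setminus B_r(y_0)}$ then lies in $H^1_0(\tilde\W)$ for $\tilde \W := \W \cup B_r(y_0)$ and is an admissible competitor for $\tor(\tilde\W)$. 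A standard integration by parts (using $-\Delta h = 1$ and $h - w_\W = 0$ on $\partial B_r(y_0)$) yields the convexity identity
\[
\tor(\W) - \tor(\tilde\W) \;\geq\; \tfrac{1}{2} \int_{B_r(y_0)} |\nabla(h - w_\W)|^2.
\]
Applying Lemma~\ref{lem: basic est outward min}(2), which is valid once $r$ is small enough that $|B_r(y_0) \setminus \W| \leq \omega_n r^n \leq \baryEps\omega_n/2$, gives $\tor(\W) - \tor(\tilde\W) \leq (4/\bar\eta) |B_r(y_0) \setminus \W| \leq C_n r^n$, producing the key Dirichlet energy bound $\int_{B_r(y_0)}|\nabla(h - w_\W)|^2 \leq C_n r^n$.

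The remaining step, converting this integrated estimate into the pointwise supremum bound, is what I expect to be the main obstacle. My plan is a dichotomy argument in the spirit of the Alt--Caffarelli Lipschitz regularity proof \cite{AC81}. If $|B_{r/2}(y_0) \setminus \W| \geq c r^n$, then the Poincar\'e inequality applied to $\phi := h - w_\W \in H^1_0(B_r(y_0))$ gives $\int_{B_r(y_0)}\phi^2 \leq C r^{n+2}$; since $\phi = h$ on $B_{r/2}(y_0)\setminus \W$, an averaging argument locates a point $z \in B_{r/2}(y_0) \setminus \W$ with $h(z) \leq C r$. Writing $h = h_0 + (r^2 - |\cdot - y_0|^2)/(2n)$ with $h_0$ a nonnegative harmonic function, Harnack's inequality applied to $h_0$ on $B_{3r/4}(y_0)$ then yields $\sup_{B_{r/4}(y_0)} h_0 \leq C h_0(z) + C r \leq C r$, hence $\sup_{B_{r/4}(y_0)} w_\W \leq \sup_{B_{r/4}(y_0)} h \leq C r$. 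If instead $|B_{r/2}(y_0) \setminus \W| < c r^n$, then $\W$ fills nearly all of $B_{r/2}(y_0)$, and standard perturbation of the Poisson equation from $\W \cap B_{r/2}(y_0)$ to $B_{r/2}(y_0)$ gives $\sup_{B_{r/4}(y_0)} w_\W \leq C r^2 \ll r$ directly. Iterating this dichotomy on a dyadic sequence of scales produces the boundary growth bound with a dimensional constant, completing the proof.
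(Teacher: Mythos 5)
The overall strategy—reduce to a linear boundary growth estimate $\sup_{B_r(y_0)} w_\W \leq Cr$ for $y_0 \in \partial\W$, build the torsion replacement $h$ on $B_r(y_0)$, and use outward minimality via Lemma~\ref{lem: basic est outward min}(2) and the convexity identity to get $\int_{B_r(y_0)}|\nabla(h-w_\W)|^2 \leq C_n r^n$—is the same one the paper uses, and your Case 1 (large complement) is a correct Poincar\'e + Harnack implementation of that idea.

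The gap is in Case 2. From $|B_{r/2}(y_0)\setminus\W| < c r^n$ alone you cannot conclude $\sup_{B_{r/4}(y_0)} w_\W \leq C r^2$: the equation $-\Delta w_\W = 1$ on $\W \cap B_{r/2}(y_0)$ determines $w_\W$ only up to its (unknown) boundary data on $\partial B_{r/2}(y_0)$, and the Poisson integral of that data is not controlled by the volume of the complement. Concretely, if $\W$ is a ball $B_1$ with a tiny spherical cavity $\overline{B_\rho(p)}$ removed from its interior and $y_0 \in \partial B_\rho(p)$, then $|B_{r/2}(y_0)\setminus\W| \lesssim \rho^n \ll r^n$ while $\sup_{B_{r/4}(y_0)} w_\W$ is of order $1$, not $r^2$. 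Minimality is what excludes such configurations, but your Case 2 never invokes it. The closing sentence about iterating on a dyadic sequence of scales does not repair this: even if Case 1 held at some smaller scale $r_k < r$, it would give $\sup_{B_{r_k/4}(y_0)} w_\W \leq Cr_k$, which says nothing about $\sup_{B_{r/4}(y_0)} w_\W$.

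The paper closes this gap with a single argument that replaces the dichotomy. It proves (Lemma~\ref{lem: integral est for lip}) that if $\fint_{\partial B_r(x_0)} w_\W \, d\mathcal{H}^{n-1} \geq M_0 r$ for $M_0$ large, then $B_r(x_0) \subset \W$. The proof uses the same torsion replacement and the same Dirichlet energy bound you derived, but then combines a Hardy-type inequality $\int_{B_r} \frac{(w_\W - h)^2}{(r-|x-x_0|)^2} \leq 4\int_{B_r}|\nabla(w_\W - h)|^2$ with the lower bound $h(x) \geq c_n M_0 (r-|x-x_0|)$ coming from the Poisson kernel representation (valid because the boundary average is large). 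Together these yield $|B_r(x_0)\setminus\W| \leq \frac{C_n}{M_0\bar\eta}|B_r(x_0)\setminus\W|$, which forces $|B_r(x_0)\setminus\W| = 0$ once $M_0$ is chosen large. Taking the contrapositive at a boundary point $x$ (where $B_r(x)\not\subset\W$) shows the boundary average is always $\leq M_0 r$, and the Harnack step—essentially your Case 1 machinery—then gives the linear growth. Your Poincar\'e argument would be exactly the right tool if you had already ruled out Case 2; the Hardy/Poisson lemma is what does the ruling out, so you should replace your volume dichotomy with a dichotomy on the boundary average and prove the "large average $\Rightarrow B_r \subset \W$" implication.

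Two smaller remarks. First, when $y_0$ is close to $\partial B_R$ the competitor must be $\W \cup (B_r(y_0)\cap B_R)$, not $\W \cup B_r(y_0)$, and the Poisson/Harnack step is not available; the paper handles this separately by comparing with $w_{B_R}$, which is where the $R$-dependence enters (not only through the $L^\infty$ bound as you suggest). Second, when $\mathrm{dist}(\W,\partial B_R)\geq 2$ this boundary case never arises and, combined with the Talenti bound $\|w_\W\|_{L^\infty}\leq 2/n$, all constants become dimensional—as you correctly note.
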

\begin{remark}\label{remark: Lip poisson equation solution}
    {\rm
    It follows from Theorem~\ref{thm: Lip} and the maximum principle that $u_\W^f$ is also Lipschitz with the same bound on the Lipschitz constant.
    }
\end{remark}
Before proving Theorem~\ref{thm: Lip}, it is useful to first show that the torsion function $w_\W$ of an outward minimizer as in Theorem~\ref{thm: Lip} is  (H\"{o}lder) continuous on $\R^n$.
Let $\bar{\e}$ be as in \eqref{eqn: parameter fix}. Choose $r_0$ small enough so that $|\W| + \omega_n r_0^n \leq \om_n \bar{\e} /2$. 
Let $\W' = \W \cup (B_r(x) \cap B_R)$, which satisfies $|\W' \setminus \W| \leq \omega_n \e/2$, and let $u$ be the harmonic replacement of $w_\W$ in $B_r(x) \cap B_R,$ that is, let  $u$ be the unique harmonic function in $B_r(x) \cap B_R$ with $u -w_\W \in H^1_0(B_r(x) \cap B_R)$, and extend $u$ so  $u=w_\W$ on $\R^n \setminus (B_r(x) \cap B_R)$. We claim that 
\begin{equation}
    \label{eqn: main claim}
\int_{B_R\cap B_r(x)} |\nabla u - \nabla w_\W|^2 \leq C |B_r(x) \setminus \W| \leq  C r^n
\end{equation}
for a constant $C$ depending on $n$.
From \eqref{eqn: main claim}, a classical argument using Campanato's criterion shows that $w_\W \in C^{0,\alpha}(B_R)$ for each $\alpha<1$.
To show \eqref{eqn: main claim}, choose $u$ as a competitor for the torsional rigidity of $\W$. Applying Lemma~\ref{lem: basic est outward min}(2), this implies 
\begin{equation}\label{eqn: 3}
\frac{1}{2}\int_{B_r(x) \cap B_R} (|\nabla w_\W|^2 - |\nabla u|^2) - \int_{B_r(x) \cap B_R}  (w_\W - u) \leq \tor(\W) - \tor (\W') \leq \frac{4}{\bar\eta} \, |\W'\setminus\W | \leq \frac{4\om_n}{\bar\eta} r^n.
\end{equation}
Since $u$ is harmonic, and integration by parts shows that the first term on the left-hand side is equal to $\frac{1}{2} \int_{B_r(x) \cap B_R}  |\nabla w_\W - \na u|^2$. Applying the maximum principle to $u$ in $B_r(x) \cap B_R$, shows that the second term on the left-hand side of \eqref{eqn: 3} is bounded by $C_n r^n$. These two observations applied to 
\eqref{eqn: 3} establish \eqref{eqn: main claim}.

Next, following \cite{AC81},  we first prove the following lemma as the main step toward proving Theorem~\ref{thm: Lip}. 
\begin{lemma}\label{lem: integral est for lip} 
 There are  constants $M_0= M_0(n)>0$ and $r_0=r_0(n)>0$  such that the following holds for any  $\tau\in (0, \tau_0]$, $R>0$,  and outward minimizer  $\W$ of $\Ep$ in $B_R$ satisfying \eqref{eqn: low base energy main}. For   $x_0 \in \R^n$ and $r \in (0,r_0]$ with $\text{dist}(x, \partial B_R) \geq  2r,$ if 
    \begin{equation}
      \label{eqn: hp}
    \fint_{\partial B_r(x_0)} w_\W \,d \mathcal{H}^{n-1} \geq M_0 r,
    \end{equation}
    then $w_\W >0 $ in $B_r(x_0)$.
\end{lemma}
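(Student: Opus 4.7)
I will argue by contradiction: assume $\fint_{\partial B_r(x_0)}w_\W\geq M_0 r$ but $w_\W(y_0)=0$ for some $y_0\in B_r(x_0)$, and derive a contradiction for $M_0$ large and $r$ small (both dimensional). Throughout the sketch I write $\W':=\W\cup B_r(x_0)$. If $r_0$ is chosen so that $\omega_n r_0^n\le \bar\e\omega_n/2$, then $|\W'\setminus\W|\le \bar\e\omega_n/2$, so Lemma~\ref{lem: basic est outward min}(2) applies to give the upper bound
\[
\tor(\W)-\tor(\W')\le \tfrac{4}{\bar\eta}|B_r(x_0)\setminus\W|. \tag{U}
\]

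For the matching lower bound I would construct the natural torsion-type competitor for $\tor(\W')$: let $v$ be the harmonic function in $B_r(x_0)$ with $v=w_\W$ on $\partial B_r(x_0)$, let $\psi(x)=(r^2-|x-x_0|^2)/(2n)$ be the torsion function of $B_r(x_0)$, and set $\tilde w=w_\W$ outside $B_r(x_0)$ and $\tilde w=v+\psi$ inside. Then $\tilde w\in H^1_0(\W')$ is an admissible test function for $\tor(\W')$. Subtracting $\tor(\W)=\tfrac12\int|\nabla w_\W|^2-\int w_\W$ from the resulting inequality and using integration by parts (with $\tilde w-w_\W=0$ on $\partial B_r(x_0)$, and $-\Delta w_\W=\chi_\W$ in the distributional sense, justified via mollification of $w_\W$), the cross terms collapse to yield the clean identity
\[
\tor(\W)-\tor(\W')\ge \tfrac12\int_{B_r(x_0)\setminus\W}(v+\psi). \tag{L}
\]

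Since $v\ge 0$ is harmonic and $v(x_0)=\fint_{\partial B_r(x_0)}w_\W\ge M_0 r$ by the mean value property, the Harnack inequality gives $v\ge c_H M_0 r$ on $B_{r/2}(x_0)$. Combining (U) and (L) and discarding $\psi\ge 0$ produces
\[
c_H M_0 r\,|B_{r/2}(x_0)\setminus\W|\le \tfrac{8}{\bar\eta}|B_r(x_0)\setminus\W|. \tag{$\star$}
\]

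The main obstacle is converting $(\star)$ into a contradiction, since a priori $|B_{r/2}(x_0)\setminus\W|$ could vanish while $|B_r(x_0)\setminus\W|>0$ if $y_0$ lies near $\partial B_r(x_0)$. To overcome this I would first prove the slightly weaker statement that $w_\W>0$ on $B_{r/2}(x_0)$: indeed, after translating so that the offending zero lies in $B_{r/2}(x_0)$ (via a small decrease of $r$ to a radius $\rho$ and shift of center to $y_0$, which preserves the hypothesis up to changing dimensional constants by the continuity of averages), the Hölder continuity of $w_\W$ with dimensional bound established immediately before the lemma---combined with the lower bound $v\ge c_H M_0 r$ on $B_{r/2}(x_0)$---forces $|B_{r/2}(x_0)\setminus\W|$ to be a definite fraction of $|B_r(x_0)\setminus\W|$ when the latter is nonzero, reducing $(\star)$ to $c_H M_0 r\le C_n/\bar\eta$. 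Choosing $M_0=M_0(n)$ large enough relative to $1/\bar\eta$ and $r_0=r_0(n)$ small enough then closes the argument. The passage from ``positivity on $B_{r/2}$'' to ``positivity on $B_r$'' is the cleanup step, obtained by a standard Harnack chain / covering argument applied along radii of $B_r(x_0)$ starting from the interior.
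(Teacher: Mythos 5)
Your competitor $\tilde w = v + \psi$ is exactly the paper's ``torsion replacement'' (solving $-\Delta\tilde w = 1$ in $B_r(x_0)$ with $\tilde w = w_\W$ on $\partial B_r(x_0)$), so the setup is the same. However, you then derive a weaker intermediate estimate than the paper does, and the resulting $(\star)$ genuinely cannot be closed.

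The paper integrates by parts using only $-\Delta\tilde w = 1$ and $\tilde w - w_\W \in H^1_0(B_r(x_0))$ to obtain the clean $L^2$ identity
\[
\tor(\W) - \tor(\W') \;\geq\; \tfrac12\int_{B_r(x_0)}|\nabla(w_\W - \tilde w)|^2,
\]
and then applies a Hardy-type inequality
\[
\int_{B_r(x_0)}\frac{g^2}{(r-|x-x_0|)^2}\,dx \;\leq\; 4\int_{B_r(x_0)}|\nabla g|^2\,dx, \qquad g\in H^1_0(B_r(x_0)),
\]
to $g=w_\W - \tilde w$. On $B_r(x_0)\setminus\W$ we have $g=-\tilde w$, and the Poisson-kernel lower bound $K(x,y)\geq c_n(r-|x-x_0|)/r^n$ together with the hypothesis gives $\tilde w(x)\geq v(x)\geq c_n M_0(r-|x-x_0|)$. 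The factor $(r-|x-x_0|)$ cancels against the Hardy weight, producing a \emph{uniform} lower bound $c\,M_0|B_r(x_0)\setminus\W|$ on the gradient integral, which beats the upper bound $\frac{4}{\bar\eta}|B_r(x_0)\setminus\W|$ when $M_0$ is large. This is precisely why the Hardy inequality appears: it converts the linear degeneration of $v$ near $\partial B_r(x_0)$ into a weight that is exactly compensated.

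Your estimate (L) is correct as a further consequence of the $L^2$ identity (note that $-\Delta w_\W = \chi_\W - \nu$ for a nonnegative singular measure $\nu$ on $\partial\W$, not $\chi_\W$, but $\nu$ contributes with a favorable sign so (L) survives). The trouble is that (L) is an unweighted $L^1$ bound, and after plugging in Harnack it leaves you with $(\star)$, which relates $|B_{r/2}(x_0)\setminus\W|$ to $|B_r(x_0)\setminus\W|$. That gap cannot be bridged by the argument you sketch. First, the claim that Hölder continuity forces $|B_{r/2}(x_0)\setminus\W|$ to be a definite fraction of $|B_r(x_0)\setminus\W|$ is simply false in general: $B_r(x_0)\setminus\W$ can live entirely in the annulus $B_r(x_0)\setminus B_{r/2}(x_0)$ (a small hole near $\partial B_r(x_0)$), and the exterior density estimates that would rule this out are proved later in the paper and rely on the very Lipschitz regularity that this lemma is used to establish. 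Second, the ``shrink and recenter'' maneuver does not preserve the hypothesis: the average $\fint_{\partial B_\rho(y_0)}w_\W$ need bear no quantitative relation to $\fint_{\partial B_r(x_0)}w_\W$ since $w_\W$ is not harmonic and may vanish on a large part of $\partial B_\rho(y_0)$, and ``continuity of averages'' does not give a lower bound proportional to $\rho$ when $w_\W(y_0)=0$ and one only has a sub-Lipschitz ($C^{0,\alpha}$, $\alpha<1$) modulus at that stage. The missing ingredient is the weighted (Hardy) inequality applied to the gradient term, which is why the paper keeps $\frac12\int|\nabla(w_\W-\tilde w)|^2$ rather than passing to the unweighted $L^1$ bound.
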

\begin{proof} 
Let $\bar{\e}$ be as in \eqref{eqn: parameter fix}. Choose $r_0$ small enough so that $|\W| + \omega_n r_0^n \leq \om_n \bar{\e} /2$, so that $\W' := \W \cup B_r(x_0)$ satisfies $|\W' \setminus \W| \leq \omega_n \e/2$. Note also that $\W'\subset B_R.$ Let $M_0$ be a fixed dimensional constant to be specified later in the proof and assume that $x_0 \in \R^n$ and $0<r<r_0$ are such that \eqref{eqn: hp} holds.
Let $v$ be the ``torsion replacement''  of $w_\W$ defined by
	\begin{equation}
	    \label{eqn: lip lem replacement}
	\begin{cases}
		-\Delta v =1 & \text{ in }B_r(x_0)\\
		v=w_\W & \text{ in } \R^n \setminus B_r(x_0), 
	\end{cases}
    \end{equation}
Taking $v$ as a competitor for the torsional rigidity of $\W'$ and applying Lemma~\ref{lem: basic est outward min}(2), we have 
\begin{align}\label{eqn: lip lem a}
\int	\frac{1}{2}( |\na w_\W|^2 - | \na v|^2 ) - \int (w_\W - v) &
\leq \tor(\W) - \tor(\W')\leq  \frac{4}{\bar\eta} |\W' \setminus \W| =\frac{4}{\bar\eta} |B_r(x_0) \setminus\W|.
\end{align}
On the other hand, since  $\int \na (w_\W - v) \cdot \na v = \int (w_\W -v)$  by \eqref{eqn: lip lem replacement}, the left-hand side of \eqref{eqn: lip lem a} is equl to
\begin{align*}
	\int	\frac{1}{2}( |\na w_\W|^2- | \na v|^2 ) - \int (w_\W - v)  & =\int  \frac{1}{2} \nabla(w_\W - v) \cdot \nabla(w_\W + v) - \int(w_\W - v) \\
	& = \int \frac{1}{2} |\na (w_\W - v)|^2 + \int \na (w_\W- v) \cdot \na v - \int (w_\W -v) =  \int \frac{1}{2} |\na (w_\W - v)|^2 \,.
\end{align*}
Combining this with \eqref{eqn: lip lem a} gives
\begin{align}\label{eqn: lip lem b}
	\int  |\nabla( w_\W - v)|^2 \leq \frac{4}{\bar\eta}|B_r(x_0) \setminus\W|.
\end{align}
Now, as in \cite[Lemma 5.10]{DPMMCapacity}, apply the Hardy-type inequality\footnote{ This inequality can be proven using integration by parts and H\"{o}lder's inequality.} $\int_{B_r(x_0)}\frac{g^2}{(r- |x-x_0|)^2 } \leq 4 \int_{B_r(x_0)} |\nabla g|^2$ for $g \in H^1_0(B_r(x_0))$ to the function $g = w_\W -v$. This gives 
\begin{align}\label{eqn: hardy app}
	C_n \int_{B_r(x_0)} |\nabla (w_\W - v)|^2 & \geq 	\int_{B_r(x_0)}\frac{(w_\W(x) - v(x)) ^2}{(r- |x-x_0|^2) } \geq \int_{B_r(x_0)\setminus \W}\frac{v(x)^2}{(r- |x-x_0|)^2}\,.
\end{align}
Since $v$ is superharmonic on $B_r(x_0)$, it lies above the harmonic function $\int_{\pa B_r(x_0)}K(x,y) w_\W(y) \, d\mathcal{H}^{n-1}$ on $B_r(x_0)$ with the same boundary data. Here $K(x,y) = \frac{r^2- |x-x_0|^2}{r |x-y|^n}$ is the Poisson kernel on $B_r(x_0)$. Combining this observation with the assumption \eqref{eqn: hp} and noting that $\inf_{y \in \pa B_r(x_0)}K(x,y)\geq \frac{c_n (r-|x- x_0|)}{r^n}$, we have
\begin{align}\label{eqn: poisson formula app}
{v(x)} &  \geq \int_{\pa B_r(x_0)}K(x,y) w_\W(y) \, d\mathcal{H}^{n-1} \geq
	 c_n M_0 (r-|x-x_0|)\,
\end{align}
for any $x \in 
 B_r(x_0).$
Applying \eqref{eqn: poisson formula app} to the integrand on the right-hand side of \eqref{eqn: hardy app} we get 
\begin{equation}\label{eqn: upper denisity in terms of harm replacement}
C_n \int_{B_r(x_0)} |\nabla (w_\W - v)(x)|^2 
\geq   M_0 |B_r(x_0) \setminus \Omega|.
\end{equation}
Putting this together with \eqref{eqn: lip lem b}, we see that $|B_r(x_0)\setminus \W | \leq \frac{C_n}{M_0\bar\eta}|B_r(x_0)\setminus \W |$. Taking $M_0>0$ large enough depending on $n$ and $\bar\eta$, and thus only on $n$, so that $\frac{C_n}{M_0\bar\eta} <1$, we deduce that $|B_r(x_0)\setminus \W |=0$, i.e. $B_r(x_0)\subset \W$. By the strong maximum principle, the lemma holds.
\end{proof}

We now prove Theorem~\ref{thm: Lip}. 
\begin{proof}[Proof of Theorem~\ref{thm: Lip}]
Fix $x \in \partial \W$ and $r\in(0,1)$ and let 
$$S(x,r) = \sup\{ w_\W(y)\,  : \, y \in \partial B_r(x),  \, d(y,\partial \W) = r\}.$$
We claim that $S(x,r) \leq C_*r$, which directly implies \eqref{def: UP}. 
Let $r_0>0$ is the dimensional constant from Lemma~\ref{lem: integral est for lip}. Since $\|w_\W\|_{L^\infty(\R^n)}$ is bounded by a dimensional constant, we trivially have $S(x,r) \leq C_0 r$ for $r\geq \min\{1/8, r_0\}$ and for a dimensional constant $C_0$. We assume henceforth that $r \leq \min\{1/8, r_0\}$.

{\it Case 1:} First suppose $\text{dist}(x, \partial B_R) > 2r.$ 
In this case, we show that $S(x,r) \leq C_1 r$ where $C_1$ depends only on $n$. 
Choose $y \in \partial B_r(x)$ achieving the supremum in the definition of $S(x,r)$. 
By Harnack inequality applied to $w_\W$ on $B_r(y) \subset \W$, 
\begin{equation}\label{eqn: lip intermed}
S(x,r) \leq C_2 \inf_{B_{r/2}(y)} w_\W + C_2r^2
\end{equation}
for a dimensional constant $C_2$.
If $S(x,r) \leq \tfrac{C_2r}{2}$,  the claim holds (with $C_1 =C_2/2$. If $S(x,r) \geq \tfrac{C_2r}{2}$,  we can absorb the final term in \eqref{eqn: lip intermed} into the left-hand side to find that $S(x,r) \leq 2C_2 \inf_{B_{r/2}(y)} w_\W$. 
This estimate, together with the easily verified geometric fact $\mathcal{H}^{n-1}(\partial B_r(x) \cap B_{r/2}(y) ) \geq c_n r^{n-1}$, implies
\[
S(x,r) \leq 2C_2 \fint_{\partial B_r(x)\cap B_{r/2}(y) } w_\W \, d\mathcal{H}^{n-1} \leq C_3\fint_{\partial B_r(x)} w_\W \, d\mathcal{H}^{n-1}
\]
where $C_3$ is another dimensional constant.
Since $x \in \partial \W$, we have $w_\W (x)=0$ and so by (the contrapositive of) Lemma~\ref{lem: integral est for lip}, the left-hand side is bounded above by $C_3 M_0 r$.  This shows the claim in this case. Thus, $S(x,r) \leq C_1r$ where $C_1 = \max\{ \tfrac{C_2}{2}, C_3 \, M_0\}$ is a constant depending on $n$.
\\

{\it Case 2: } Next, suppose  $\text{dist}(x, \partial B_R) \leq 2r.$ By the maximum principle, $w_\W(y) \leq w_{B_R}(y) \leq C(n,R) r$ for any $y \in B_r(x)$, so $S(x,r) \leq C(n,R)r$ in this case.

Thus \eqref{def: UP} holds with $C_* = \max \{ C_0, C_1, C(n,R)\}$. In the case when $\text{dist}(\W , \partial B_R) \geq 2$, case 2 above never occurs do \eqref{def: UP} holds for the dimensional constant  $C_* = \max \{ C_0, C_1\}$.
\end{proof}

\medskip

\subsection{The lower semicontinuity and replacement lemmas}
As discussed in the introduction, direct approaches do not suffice to  establish the existence of minimizers of our main variational problem \eqref{eqn: MAIN min prob}. 
The main challenge is that the nonlinear term $\nl(\cdot)$ in the energy $\Ep$ has  poor (lower semi)continuity properties, and an arbitrary minimizing sequence for $\Ep$ (in $B_R$, say) will not be compact in a sufficiently strong topology to make the energy lower semicontinuous. 

Lemma~\ref{lem: LSC lemma} below shows that under rather strong convergence assumptions, $\Ep$ is lower semicontinuous. Here, the nonlinear term alone may {\it not} be lower semicontinuous, but by choosing the parameter $\tau$ in $\Ep$ to be sufficiently small and estimating the failure of lower semi-continuity in a quantitative way in \eqref{eqn: existence quant non LSC nl}, we can obtain lower semi-continuity of the full energy $\Ep$.

An arbitrary minimizing sequence will not subsequentially converge in the sense needed to apply  Lemma~\ref{lem: LSC lemma}. 
We replace an arbitrary minimizing sequence with a more regular one comprised of outward minimizers using Lemma~\ref{lemma: replace outer} below. Combining this with the Lipschitz bound for outward minimizers shown in Theorem~\ref{thm: Lip}, we can prove existence of minimizers for the problem in $B_R$; see Theorem~\ref{theorem: existence} below.
To then establish the existence of minimizers in all of $\R^n$ in Theorem~\ref{thm: existence 2}, the diameter bound of Corollary~\ref{cor: diameter bound} plays a key role.

 \begin{lemma}[Lower semicontunity lemma]
 \label{lem: LSC lemma}
   Fix  $\tau \in (0, \tau_0]$, and $R>0$. Let $\{U_j\}$ be a sequence of open sets in $B_R$ with $|U_j| \leq 2 \om_n$ satisfying \eqref{eqn: low base energy main}, and let  $\W \subset B_R$ be an open set. 
     Assume $\{ w_{U_j}\}$ and $w_\W$ are continuous on $\R^n$ and that $w_{U_j} \to  w_\W$ and uniformly. Then $|\W| \leq 2\om_n$ and 
     \begin{equation}\label{eqn: lower semicontinuity}
         \Ep(\W) \leq \liminf_j \Ep(U_j).
     \end{equation}
 \end{lemma}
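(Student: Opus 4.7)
The plan is to pass to a subsequence realizing $\liminf_j \Ep(U_j)$ with $|U_j|\to L$ and $\nl(U_j)\to\nl^\ast$, verify that $\tor(U_j)\to\tor(\W)$ and that the volume penalization is lower semicontinuous in the quantitative form $\fvbar(L)-\fvbar(|\W|)\ge\bar\eta(L-|\W|)$, and bound the possible failure of lower semicontinuity of $\nl$ linearly in $L-|\W|$. For $\tau$ below a dimensional threshold, the volume gain then absorbs the $\nl$ defect, yielding \eqref{eqn: lower semicontinuity}. The first two terms are straightforward: uniform convergence $w_{U_j}\to w_\W$ together with support in $\overline{B_R}$ gives $\tor(U_j)=-\tfrac{1}{2}\int w_{U_j}\to\tor(\W)$ via dominated convergence; for any $x\in\W$ the strong maximum principle gives $w_\W(x)>0$, so eventually $w_{U_j}(x)>0$ and hence $x\in U_j$, yielding $\chi_\W\le\liminf_j\chi_{U_j}$ pointwise, and Fatou gives $|\W|\le L\le 2\om_n$; and $\fvbar$ is piecewise linear and nondecreasing with minimum slope $\bar\eta$, so $\fvbar(L)-\fvbar(|\W|)\ge\bar\eta(L-|\W|)$.

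The heart of the argument is the estimate on $\nl$. Setting $\delta_j:=\|w_{U_j}-w_\W\|_{L^\infty(\R^n)}\to 0$, I would first show uniform convergence $\|u_{U_j}^f-u_\W^f\|_{L^\infty(\R^n)}\le\delta_j$: the pointwise bound $|u^f_\cdot|\le w_\cdot$ together with $w_{U_j}=0$ on $\partial U_j$ forcing $w_\W\le\delta_j$ there (and symmetrically on $\partial\W$) gives this bound off $U_j\cap\W$ and on $\partial(U_j\cap\W)$, and the maximum principle propagates it across the open set $U_j\cap\W$, where the difference is harmonic. For the truncated barycenters, letting $J_j(x):=\int_{U_j}q(|x-y|)\,dy$ and $J(x):=\int_\W q(|x-y|)\,dy$, the first-order optimality conditions $DJ_j(x_{U_j})=0=DJ(x_\W)$ combined with $|q'|<250$ (so $|DJ(x_{U_j})-DJ_j(x_{U_j})|\le 250\,|U_j\triangle\W|$) and the strong convexity $D^2J\gtrsim\om_n I$ on $B_3(x_\W)$ (since $q(t)=t^2$ for $t\le 100$ and $\W$ is qualitatively close to a unit ball via \eqref{eqn: low base energy main} and Proposition~\ref{lemma: qual stability}), exactly as in Step~2 of Lemma~\ref{lem: lip bary}, yield the Lipschitz bound $|x_\W-x_{U_j}|\le C_n|U_j\triangle\W|$ once $|U_j\triangle\W|$ is small. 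Since $\nl=h(\beta_f^2)$ with $h$ $1$-Lipschitz and all the solutions involved are uniformly bounded, the identity $a^2-b^2=(a-b)(a+b)$ combined with the above two estimates gives
\[
|\nl(\W)-\nl(U_j)|\le C_n\bigl(\delta_j+|U_j\triangle\W|\bigr)\;\longrightarrow\;C_n(L-|\W|),
\]
where the limit uses $|\W\setminus U_j|\to 0$ and $|U_j\setminus\W|=|U_j|-|U_j\cap\W|\to L-|\W|$.

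Combining the three terms gives $\Ep(\W)-\liminf_j\Ep(U_j)\le(\tau C_n-\bar\eta)(L-|\W|)\le 0$ as soon as $\tau\le\bar\eta/C_n$, which fixes the dimensional threshold $\tau_0$; the complementary case in which $|U_j\triangle\W|$ is not small in the limit is handled by the uniform boundedness of $\nl$ combined with the (then uniformly positive) lower bound $\bar\eta(L-|\W|)$ on the volume gain, so the same smallness of $\tau$ suffices. The main obstacle is that uniform convergence of the torsion functions does \emph{not} imply $L^1$-convergence of the characteristic functions---the $U_j$ may carry ``thin'' excess mass on which $w_{U_j}$ is small---so $\nl$ by itself is not lower semicontinuous. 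The content of the lemma is precisely that this failure of lower semicontinuity is controlled linearly by the excess mass $L-|\W|$, which is exactly what the strictly increasing volume penalization can absorb when $\tau$ is small.
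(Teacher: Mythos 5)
Your proof is correct and follows the same overall skeleton as the paper's: (i) $\tor(U_j)\to\tor(\W)$ from uniform convergence; (ii) $|\W|\leq\liminf_j|U_j|$ and $|\W\setminus U_j|\to 0$ via Fatou; (iii) the failure of lower semicontinuity of $\nl$ is bounded linearly by the excess mass and absorbed by the volume penalization for $\tau$ small. The one place where you take a genuinely different (and more elementary) route is in getting $u_{U_j}^f\to u_\W^f$: the paper invokes \v{S}ver\'{a}k's theorem to deduce $L^2$ convergence, whereas you prove the sharper bound $\|u_{U_j}^f-u_\W^f\|_{L^\infty}\leq\|w_{U_j}-w_\W\|_{L^\infty}$ directly, using the comparison $0\leq u^f_\cdot\leq w_\cdot$ on the boundary together with harmonicity of the difference on $U_j\cap\W$. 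This is a cleaner self-contained argument (especially apt here since the hypothesis already hands you uniform torsion-function convergence) and could simplify the paper's exposition. You also re-derive the barycenter Lipschitz bound rather than citing the packaged estimate \eqref{eqn: nl Lip}, which is fine since you correctly identify that it is the same argument as Step~2 of Lemma~\ref{lem: lip bary}. Two small points worth making explicit: you should verify that $\W$ itself satisfies the energy bound \eqref{eqn: low base energy main} (needed to apply Proposition~\ref{lemma: qual stability} to $\W$); this follows because $\fvbar$ is nondecreasing, $|\W|\leq\liminf_j|U_j|$, and $\tor(U_j)\to\tor(\W)$. Also, the ``complementary case'' you address at the end does not actually arise: once both $\W$ and $U_j$ are close to unit balls (Proposition~\ref{lemma: qual stability}), the limit $L-|\W|$ is automatically below the smallness threshold, so the Lipschitz regime is the only one; the fallback you sketch (boundedness of $\nl$ vs.\ a uniformly positive volume gain) would require pinning down a dimensional lower bound on $L-|\W|$ which you have not established, but since the case is vacuous this is harmless.
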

 \begin{proof}
By continuity of the torsion functions, we may write $\W = \{w_\W >0\}$ and $U_j = \{ w_{U_j} >0\}$, and from the uniform convergence of the torsion functions, $ \liminf_j 1_{U_j\cap \W} \to  1_\W$ pointwise. So, $|\W| \leq \liminf_j |\W\cap U_j|$ by Fatou's lemma. Thus, setting 
\[
\gamma = \liminf_j |U_j|,
\]
we find that $ \liminf_j  \, |\W\Delta U_j |   = \gamma - |\W|\geq 0$. In particular, $|\W| \leq \gamma \leq    2 \om_n$.

Note that $w_{U_j} \to w_\W$ in $L^1(\R^n)$ and $L^2(\R^n)$. Since  $\tor(E) = -\frac{1}{2}\int_E w_E$ by \eqref{e: torsion and dir energy}, this guarantees that
 \begin{equation}
     \label{eqn: existence torsion continuity} \lim_{j}\tor(U_j) = \tor(\W).
 \end{equation}
Moreover, thanks to a theorem of \v{S}ver\'{a}k (see \cite[Theorem 3.2.5]{hp18}) we have  $ u_{U_j}^f \to u_{\W}^f$ in $L^2(B_R)$. 

To make use of this latter fact, we aim to apply \eqref{eqn: nl Lip}.
Since $t\mapsto \fv(t)$ is nondecreasing and $|\W| \leq \gamma$, we know $\tor(\W) + \fv(|\W|) \leq \liminf_j (\tor(U_j) + \fv(|U_j|)$. Thus
$\W$ satisfies the energy bound \eqref{eqn: low base energy main} as well.
Applying Lemma~\ref{lemma: qual stability} to $U_j$ and to $\W$ shows that 
$$
\Big(1-\frac{\baryEps}{2}\Big)\, \om_n \leq |U_j|, \, |\W| \leq \Big(1+\frac{\baryEps}{2}\Big)\, \omega_n,
$$
so in particular, $\gamma - |\W| \leq \baryEps.$ Since $ \liminf_j  \, |\W\Delta U_j |   = \gamma - |\W|$ as observed above, this means
  $\liminf_j |\W\Delta U_j |  \leq \baryEps\om_n.$ 
  So, by \eqref{eqn: nl Lip} and $ u_{U_j}^f \to u_{\W}^f$ in $L^2(B_R)$,
 	\begin{align} \label{eqn: existence quant non LSC nl}
 	\liminf_j \nl(U_j) - \nl(\W)  \geq   -\bar{C}_n\liminf_j|\W\Delta U_j| = -\bar{C}_n(\gamma - |\W|) \,
 	\end{align}
where $\bar{C}_n$ is the dimensional constant from \eqref{eqn: nl Lip}.
Next, from the definition of $\fv$ we have
\begin{align}
 \label{eqn: existence V term quant  LSC}
&  \liminf_j\, \vo(|U_j|) - \vo(|\W|)  \geq {\bar\eta}\, (\gamma - |\W|)\,.
 \end{align}

 Combining \eqref{eqn: existence torsion continuity}, \eqref{eqn: existence V term quant  LSC} and \eqref{eqn: existence quant non LSC nl}, we find that 
 	\begin{align*}
 		\liminf_j \Ep(U_j)- \Ep(\W)
 		& \geq (\bar\eta- \bar{C}_n\tau)(\gamma - |\W|) \geq 0,
 	\end{align*}
  where the final inequality follows from \eqref{eqn: tau 0} and the fact that we have chosen $\tau \leq \tau_0= {\bar\eta}/2\bar{C}_n$.
This completes the proof.     
 \end{proof}

The following lemma, which is similar to \cite[Lemma 6.1]{AKN1}, allows us to replace any minimizing sequence by a minimizing sequence of outward minimizers, all of whose torsion functions satisfy a uniform Lipschitz estimate.
Recall that the dimensional constant $\bar{\delta}$ was fixed in \eqref{eqn: parameter fix}.
\begin{lemma}\label{lemma: replace outer}
 For any $\tau \in (0, \min\{ \tau_0, \ccdel/2\}]$  and $R> 0$, the following holds. 
	 Let $\Omega\subset B_R$ be an open set  satisfying $|\Omega|\leq 2 \omega_n$ and
	\begin{equation}
		\label{eqn: small base energy existence section}
	\tor(\W) + \fv(|\W|) \leq \tor(B_1) + \bar{\delta}/2,
		\end{equation}
	there is an open set $U$ with $\W\subset U \subset B_R$ and $\Ep(U) \leq \Ep(\W)$ such that $U$ is an outward minimizer of $\Ep$ in $B_R$.
\end{lemma}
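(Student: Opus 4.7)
The plan is to apply Zorn's lemma to the partially ordered collection
\[
\mathcal{B} := \{V \subset B_R \text{ open}: \W \subset V,\ |V| \leq 2\omega_n,\ \Ep(V) \leq \Ep(\W)\},
\]
equipped with the order $V_1 \preceq V_2$ iff $V_1 \subset V_2$ and $\Ep(V_1) \geq \Ep(V_2)$. Any $\preceq$-maximal element $U \in \mathcal{B}$ is automatically the desired outward minimizer: if some admissible competitor $U' \supsetneq U$ with $|U'|\leq 2\om_n$ had $\Ep(U') < \Ep(U)$, then since $\Ep(U') < \Ep(\W)$ we would have $U' \in \mathcal{B}$ and $U \prec U'$, contradicting maximality. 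The collection $\mathcal{B}$ is nonempty since $\W \in \mathcal{B}$; moreover, for $\tau$ as in the hypothesis, every $V \in \mathcal{B}$ satisfies the energy bound $\tor(V)+\fv(|V|)\leq \Ep(V) \leq \Ep(\W) \leq \tor(B_1) + \bar\delta$, i.e.\ \eqref{eqn: low base energy main} (using that $\nl$ is dimensionally bounded by \eqref{eqn: bound for tor fn} to absorb the extra $\tau\nl(\W)$ term).

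The heart of the argument is verifying that every chain $\{V_\alpha\}_{\alpha\in I} \subset \mathcal{B}$ has an upper bound, which we claim is $V_\infty := \bigcup_\alpha V_\alpha$. Since the open set $V_\infty \subset B_R$ is a countable union of balls, each of which is contained in some element of the (totally ordered) chain, one extracts a cofinal countable subchain, which after reindexing can be taken increasing to $V_\infty$ and with $\Ep(V_{\alpha_j}) \to \inf_\alpha \Ep(V_\alpha)$. Along such an increasing sequence, monotone convergence gives $|V_{\alpha_j}| \to |V_\infty|$, $\fv(|V_{\alpha_j}|) \to \fv(|V_\infty|)$, and $|V_{\alpha_j}\Delta V_\infty| \to 0$. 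Standard $H^1_0$ arguments for increasing domains yield $w_{V_{\alpha_j}} \to w_{V_\infty}$ strongly in $H^1_0(V_\infty)$ (hence $\tor(V_{\alpha_j}) \to \tor(V_\infty)$), and $u^f_{V_{\alpha_j}} \to u^f_{V_\infty}$ in $L^2(\R^n)$.

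The main obstacle is controlling the nonlocal term $\nl$ in the limit, since it has poor lower semicontinuity properties in general; this is precisely where the quantitative Lipschitz property \eqref{eqn: nl Lip} enters. Because each $V_{\alpha_j}$ satisfies \eqref{eqn: low base energy main} and $|V_{\alpha_j} \Delta V_\infty| \leq \baryEps\, \om_n/2$ for $j$ large, \eqref{eqn: nl Lip} implies $\nl(V_{\alpha_j}) \to \nl(V_\infty)$. Combining the three continuity statements gives $\Ep(V_\infty) = \lim_j \Ep(V_{\alpha_j}) = \inf_\alpha \Ep(V_\alpha) \leq \Ep(V_\alpha)$ for every $\alpha$, while $|V_\infty|\leq 2\om_n$ and $\Ep(V_\infty) \leq \Ep(\W)$ confirm $V_\infty\in \mathcal{B}$. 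Thus $V_\infty$ is an upper bound of the chain; Zorn's lemma then supplies a $\preceq$-maximal element $U \in \mathcal{B}$, which by the opening observation is the desired outward minimizer.
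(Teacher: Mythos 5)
Your proposal is correct but takes a genuinely different route from the paper. The paper avoids Zorn's lemma by running an explicit greedy iteration: starting from $U_0=\W$, it chooses $U_{k+1}\supset U_k$ with $\Ep(U_{k+1})\leq\tfrac12\bigl(\Ep(U_k)+m_{U_k}\bigr)$, where $m_E$ is the infimum of $\Ep$ over competitors containing $E$; the halving is rigged so that $\lim_k m_{U_k}=\lim_k\Ep(U_k)=:\bar m$, and together with $m_U\geq m_{U_k}$ and $\Ep(U)\leq\liminf_k\Ep(U_k)$ this pins down $\Ep(U)=m_U$ for $U=\bigcup_kU_k$, i.e.\ outward minimality. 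You instead order $\mathcal B$ by inclusion-plus-$\Ep$-decrease and let maximality do that bookkeeping automatically. Both approaches then reduce to the same analytic core — passing $\Ep$ to the limit along an increasing sequence of sets — and both use the same ingredients there: decay of $\tor$ along increasing domains, the $L^1$ control $\int|u^f_{\W'}-u^f_\W|\leq\tor(\W)-\tor(\W')$ of Lemma~\ref{lemma: key estimate} (or the equivalent $H^1_0$ compactness you invoke), and the Lipschitz property \eqref{eqn: nl Lip} with $\tau$ small. You in fact establish full continuity of $\Ep$ along the chain, slightly more than the lower semicontinuity the paper settles for. The paper's route is more constructive (countable choice only); yours is slicker once Zorn is granted.

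One step in your chain verification deserves to be spelled out more carefully: reducing an arbitrary, possibly uncountable chain to a countable increasing subsequence that simultaneously exhausts $V_\infty$ and realizes $\inf_\alpha\Ep(V_\alpha)$. Writing $V_\infty$ as a countable union of balls with compact closure in $V_\infty$, each such closed ball lies in a single chain element by compactness and total ordering, giving a countable subfamily whose union is $V_\infty$; one must then merge this with a countable $\Ep$-infimizing subfamily and extract from the merged countable chain a $\preceq$-cofinal increasing sequence (which exists because any countable totally ordered set admits one). This all works, but as written it is a gloss; the paper's countable construction avoids the issue entirely.
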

\begin{proof}
Given an open bounded set $E\subset B_R$ with $|E|\leq 2\omega_n$, define the quantity
	$$m_E = \inf \big\{\Ep(V) \ : \ V \text{ open}, E\subset V\subset B_R, \ |V|\leq 2 \omega_n \big\}.
	$$
Note that $m_E \in [\tor(B_1), \Ep(E)]$.  If $\Ep(\W)= m_\W$, then $\W=U$ is an outward minimizer satisfying the conclusions of the lemma. 

Otherwise, define a (possibly finite) sequence of nested open sets $\W=:U_0 \subset U_1 \subset \dots \subset U_{k-1} \subset U_{k}\subset \dots$ inductively as follows. 
For $k\geq 0$,  if $m_{U_k} = \Ep(U_k)$, terminate the sequence and take $U=U_k$ to conclude the proof of the lemma. If instead $m_{U_k} < \Ep(U_k)$,  choose an open set $U_{k+1}$ with $U_k \subset U_{k+1} \subset B(x_0,R)$ satisfying $|U_{k+1}|\leq 2\omega_n$ and $\Ep(U_{k+1} ) \leq (\Ep(U_k)+m_{U_k})/2.$ If the resulting sequence is finite, the proof is complete.

 Otherwise, let $U =\bigcup_{k=1}^\infty U_k$. Then $\W\subset U\subset B(x_0,R)$, and  $|U| = \lim |U_k|$ by monotone convergence so that $|U|\leq 2\om_n$ and $|U\Delta U_k|\to 0$. Since $m_U \geq m_{U_k}$  for each $k$ by containment, $m_U \geq \bar{m}:=\lim_{k\to \infty} \Ep(U_k)$, and $\bar{m} \leq \Ep(\W)$ since $\Ep(U_k)$ is decreasing.
We claim that $\Ep(U) =\bar{m}$,  which implies $U$ is an outer minimizer satisfying $\Ep(U)  \leq \Ep(\W)$, thus completing the proof. 
 
Since $\tau_0\leq\bar{\delta}/2$, by \eqref{eqn: small base energy existence section} we have $\Ep(\W) \leq \tor(B_1)+\bar{\delta}$, and thus
 $\tor(U_k) + \fv(|U_k|) \leq \Ep(U_k) \leq  \tor(B_1) + \bar{\delta}$
for all $k$. Since additionally  $|U_k\Delta U|= |U\setminus U_k| \leq \omega_n  \baryEps /{2}$ for $k$ large enough, by \eqref{eqn: nl Lip} we have
$$
\ |\nl(U_k) - \nl(U)|  \leq \bar{C}_n\,\Big( |U\setminus U_k| +\int |u_{U}^f - u_{U_k}^f|\Big)\,.
$$
 Then by Lemma~\ref{lemma: key estimate}, 
 $$\ |\nl(U_k) - \nl(U)| \leq \bar{C}_n\big(|U_k\Delta U| + \tor(U_k) - \tor(U)\big)
 $$
and in particular $\nl(U) \leq \liminf_k \nl(U_k) + \bar{C}_n\big(\liminf_k \tor(U_k) - \tor(U)\big)$.

Taking $w_{U_k}\in H^1_0(U)$ as competitors for $\tor(U)$ shows that $\liminf_k \tor(U_k) \geq \tor(U)$. Thus, 
\begin{align*}
\liminf_k \Ep(U_k) &-	\Ep(U) = \liminf_k \tor(U_k) -\tor(U) + \tau \big( \liminf_k \nl(U_k)  -\nl(U)\big) \\
	& \geq (1- \bar{C}_n\tau)\big(\liminf_k \tor(U_k) - \tor(U)\big)\geq \frac{1}{2}\big(\liminf_k \tor(U_k) - \tor(U)\big) \geq 0
\end{align*}
 where the penultimate inequality follows since $\bar \eta \leq 1$ and thus in \eqref{eqn: tau 0} we take  $\tau_0 \leq \tfrac{\bar\eta}{2\bar{C}_n} \leq \tfrac{1}{2\bar{C}_n}$.
 \end{proof}

 \subsection{Existence and basic properties}  Let
 \begin{equation}
     \label{eqn: tau fix}
     \bar{\tau} = \min\{\tau_0, \delta_D, \bar{\delta}/16\}
 \end{equation}
with $\tau_0$ as in \eqref{eqn: tau 0}, $\ccdel$  as in \eqref{eqn: parameter fix}, and $\delta_D$ from Corollary~\ref{cor: diameter bound}. Next we establish the existence of minimizers of $\Ep$ in a ball $B_R.$

\begin{theorem}\label{theorem: existence} For any $\tau \in (0,\bar{\tau}]$ and  $R>0$,  a  minimizer $\W$ of the variational problem 
	\begin{equation}
		\label{eqn: min problem existence section}
	\inf\{ \Ep(E) : E \subset B_R \text{ open}, \  |E|\leq 2\omega_n\}
	\end{equation}
	admits a minimizer. Any minimizer $\W$  of \eqref{eqn: min problem existence section} has $\text{diam}(\W) \leq 2$ and satisfies \eqref{eqn: low base energy main}.
    \end{theorem}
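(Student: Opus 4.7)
The strategy follows the standard template for existence in functionals lacking direct lower semicontinuity: produce a minimizing sequence of outward minimizers via the replacement lemma, extract a subsequential limit using the uniform Lipschitz bound of Theorem~\ref{thm: Lip}, and verify minimality through Lemma~\ref{lem: LSC lemma}. Specifically, I first observe that $B_1$ is an admissible competitor whenever $R$ is large enough to contain it (the case $R \leq 1$ is trivial from the general setup in Theorem~\ref{thm: existence 2}). Since $|B_1|=\omega_n$ gives $\fvbar(|B_1|)=0$ and $\nl(B_1)=\sqrt{a^2+a^2}=a\sqrt{2}\le \sqrt{2}$, one has $\Ep(B_1) \le \tor(B_1) + \bar\tau\sqrt{2}$, so $\inf\Ep \leq \tor(B_1) + \bar\tau\sqrt{2}$. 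Pick any minimizing sequence $\{V_j\}\subset B_R$ with $|V_j|\le 2\omega_n$. For $j$ large enough, $\tor(V_j)+\fvbar(|V_j|)\le \Ep(V_j) \le \tor(B_1)+\bar\delta/2$, so Lemma~\ref{lemma: replace outer} produces $U_j\supset V_j$ with $U_j\subset B_R$, $|U_j|\le 2\omega_n$, and $\Ep(U_j)\le \Ep(V_j)$, with each $U_j$ an outward minimizer of $\Ep$ in $B_R$.

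\textbf{Compactness and identification of the limit.} Each $U_j$ is an outward minimizer satisfying \eqref{eqn: low base energy main}, so Theorem~\ref{thm: Lip} provides $\tilde{C}_*=\tilde{C}_*(n,R)$ with $\operatorname{Lip}(w_{U_j})\le \tilde{C}_*$ uniformly in $j$. Together with the uniform $L^\infty$ bound from \eqref{eqn: bound for tor fn} and the fact that $w_{U_j}$ vanishes outside $B_R$, Arzel\`a--Ascoli yields a subsequence (not relabeled) and a nonnegative Lipschitz function $w$ on $\R^n$ with $w_{U_j}\to w$ uniformly. Set $\W:=\{w>0\}$, which is open and contained in $B_R$. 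For any $\varphi\in C_c^\infty(\W)$, uniform convergence forces $\operatorname{supp}\varphi \subset U_j$ for large $j$, so $\int \nabla w_{U_j}\cdot\nabla\varphi = \int \varphi$; passing to the limit (justified by the weak-$\ast$ convergence of $\nabla w_{U_j}$ in $L^\infty$) gives $-\Delta w = 1$ in $\W$. Since $w$ is continuous and vanishes on $\partial\W$, $w=w_\W$, so $w_{U_j}\to w_\W$ uniformly.

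\textbf{Minimality, energy bound, and diameter.} The hypotheses of Lemma~\ref{lem: LSC lemma} are now met by $\{U_j\}$ and $\W$, yielding $|\W|\le 2\omega_n$ and
\[
\Ep(\W) \le \liminf_{j} \Ep(U_j) = \inf \Ep,
\]
so $\W$ is a minimizer. For the energy bound, $\Ep(\W) \le \tor(B_1)+\bar\tau\sqrt{2}$ and $\nl(\W)\ge 0$ give $\tor(\W)+\fvbar(|\W|)\le \tor(B_1)+\bar\tau\sqrt{2} \le \tor(B_1)+\bar\delta$, which is \eqref{eqn: low base energy main}. Finally, any minimizer is in particular an inward minimizer of $\Ep$ in $B_R$, and (using that $\bar\tau$ has been chosen to satisfy $\bar\tau\sqrt{2}\le \delta_D$ via \eqref{eqn: tau fix}) the same bound meets the hypothesis \eqref{eqn: low energy for diam bound} of Corollary~\ref{cor: diameter bound}, giving $\operatorname{diam}(\W)\le 2$.

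\textbf{Main obstacle.} The central difficulty is that $\nl$ is not lower semicontinuous under $L^1$-convergence of sets, so a direct calculus-of-variations argument fails. The resolution has two ingredients already in place: Lemma~\ref{lemma: replace outer} upgrades an arbitrary minimizing sequence to one of outward minimizers, for which Theorem~\ref{thm: Lip} gives uniform Lipschitz regularity of torsion functions; Lemma~\ref{lem: LSC lemma} then exploits that the failure of lower semicontinuity of $\nl$ is quantitatively controlled by $|U_j\Delta \W|$ and can be absorbed by the positive gain in the volume penalty, provided $\tau\le \tau_0$. With these tools the argument becomes routine; what takes care is tracking the various dimensional thresholds ($\ccdel,\bar\eta,\delta_D,\bar\tau$) so that the competitor $B_1$ puts every relevant object below the regimes where those lemmas apply.
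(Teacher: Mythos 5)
Your proof is correct and follows essentially the same route as the paper's: bound the infimum by taking $B_1$ as a competitor, upgrade the minimizing sequence to outward minimizers via Lemma~\ref{lemma: replace outer}, use the uniform Lipschitz bound from Theorem~\ref{thm: Lip} with Arzelà--Ascoli to extract a limit and identify it as a torsion function, invoke the lower semicontinuity Lemma~\ref{lem: LSC lemma}, and apply Corollary~\ref{cor: diameter bound}. The only cosmetic differences are that you use the exact value $\nl(B_1)=a\sqrt{2}$ where the paper uses the coarser bound $\nl\le 4$, and you identify the limit via weak-$*$ convergence of gradients rather than $C^2_{loc}$ interior estimates; both are fine.
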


 \begin{proof}
 Let $\alpha \in \R$ denote the infimum in \eqref{eqn: min problem existence section} and a minimizing sequence $\{\W_j\}$ for \eqref{eqn: min problem existence section}. 
 . By Lemma~\ref{prop: base summary}, $\alpha \geq \tor(B)$. On the other hand,  Note that $\nl(\W) \leq \sqrt{2}+ \int_{\R^n} |u_\W^f - u_{B(x_\W)}^f|^2 \leq 4$ by \eqref{eqn: bound for tor fn}, so taking $B_1$ as a competitor in \eqref{eqn: min problem existence section} shows $\alpha \leq \tor(B) +4\bar{\tau}.$
Then, since we chose $4\bar{\tau} \leq \bar{\delta}/4$ in \eqref{eqn: tau fix}, 
  we see that $\W_j$ satisfies the energy bound \eqref{eqn: small base energy existence section} assumed in the statement of Lemma~\ref{lemma: replace outer} as soon as $j$ is large enough that $\Ep(\W_j) \leq \alpha + \bar{\delta}/4$.

 Apply Lemma~\ref{lemma: replace outer} to obtain a new minimizing sequence $\{U_j\}$ for \eqref{eqn: min problem existence section} where $U_j$ is an outward minimizer of $\Ep$ in $B_R$ and $U_j$  satisfies the energy bound \eqref{eqn: low base energy main} (since $\Ep(U_j)\leq \Ep(\W_j)\leq \tor(B)+\ccdel/2$).
 By Theorem~\ref{thm: Lip}, the torsion functions $w_{U_j}$ are equi-Lipschitz in $B_R$ and thus subsequentially converge uniformly to a Lipschitz function $w$. Let 
 \[
 \W = \{ w>0\} \subset B_R.
 \]
 The set $\W$ is open since $w$ is continuous.
By uniform convergence, for any compact subset $K\subset \W$, we have $K \subset U_j$ for large $j$, so standard elliptic estimates show  $w_j \to w$ in $C^2_{loc}(\W)$ and thus $w=w_\W$. 
By the lower semicontinuity lemma, Lemma~\ref{lem: LSC lemma}, 
\[
\liminf_{j}\Ep(U_j) \geq \Ep(\W)
\]
and so  $\W$ is a minimizer of $\Ep$ in $B_R$. 

Finally, $\W$ is, in particular, an inward minimizer in $B_R$, and $\Ep(\W) = \alpha \leq \tor(B) +  \delta_D$ thanks to  \eqref{eqn: tau fix}. Thus, $\text{diam}(\W)\leq 2$ by  Corollary~\ref{cor: diameter bound}. This completes the proof.
    \end{proof}
Now, focusing on functions $f$ that are compactly supported, we prove existence of minimizers on $\R^n$ by taking a sequence of minimizers from Theorem~\ref{theorem: existence} on balls with radii tending to infinity. 
    \begin{proof}[Proof of Theorem~\ref{thm: existence 2}]
For each $j \in \mathbb{N}$, let $U_j$ be a minimizer of \eqref{eqn: min problem existence section} with $R =j.$ Theorem~\ref{theorem: existence} guarantees that such a minimizer exists, has  $\text{diam}(U_j)\leq 2$, and satisfies \eqref{eqn: low base energy main}. Choose $x_j$ so that $U_j \subset B_2(x_j).$

{\it Case 1:} Suppose $\limsup_j |x_j| <\infty$. Then there is  $R_0>0$ such that $U_j \subset B_{R_0}$ for all $j \in \mathbb{N}.$ Thus for $j$ large, $\text{dist}(U_j, \partial B_{R_j}) \geq 2$, and so by Theorem~\ref{thm: Lip}, the torsion functions $w_{U_j}$ are equi-Lipschitz. By Arzel\`{a}-Ascoli, up to a subsequence, the torsion functions $w_{U_j}$ converge uniformly to a Lipschitz function $w$ supported in $B_{R_0}$. Let 
\[
\W  =\{w>0\} \subset B_{R_0}
\]
 As in the proof of Theorem~\ref{theorem: existence}, the uniform convergence together with elliptic estimates show that $w= w_\W.$ By Lemma~\ref{lem: LSC lemma},  $|\W| \leq 2 \om_n$ and 
 \[
 \Ep(\W) \leq \liminf_j \Ep(U_j).
 \]
 This shows that $\W$ is a minimizer of \eqref{eqn: MAIN min prob}, since for any open, bounded competitor $U$ with $|U|\leq 2\om_n,$ there is some $j_0$ large enough so that $U \subset B_{R_j}$ for all $j \geq j_0$, and thus 
 \[
 \Ep(U_j) \leq \Ep(U)\,.
 \]
 Taking the liminf over $j$ completes the proof of existence.

 {\it Case 2:} Suppose $\limsup_j |x_j| = +\infty$, so  in particular $U_j \subset B_2(x_j) \subset \R^n \setminus B_\rho$  for $j$ large. The functional $\Ep$ is translation invariant outside of $B_\rho$, in the sense that if $\W$ and $\W + z$ are both contained in $\R^n \setminus B_\rho$, then $\Ep(\W) = \Ep(\W + z).$ Therefore, the sets $\W_j  = U_j + z_j$ where $z_j = x_j + (\rho + 2)e_1$ have $\W_j \subset B_2((\rho+2) e_1))\subset \R^n \setminus B_\rho,$ and thus $\Ep(\W_j ) = \Ep(U_j)$. So, each $\W_j$ is another minimizer of \eqref{eqn: min problem existence section} with $R =j$. Applying the argument of Case 1 to $\{ \W_j\}$ completes the proof of existence.

 In summary, we have shown the existence of a minimizer $\W$ of \eqref{eqn: MAIN min prob} which, by taking $B$ as a competitor, satisfies the energy bounds \eqref{eqn: low base energy main} and \eqref{eqn: low energy for diam bound} thanks to the choice of $\tau$ in \eqref{eqn: tau fix}. Since $\W$ is an inward and outward minimizer in any $B_R$, by Theorems~\ref{t:lb}, Corollary~\ref{cor: diameter bound}, and Theorem~\ref{thm: Lip}, we see that $\W$ has $\text{diam}(\W)\leq 2$ and the torsion function enjoys the nondegeneracy and Lipschitz bounds \eqref{eqn: NDandLip}.

 Finally, the measure theoretic estimates of Theorem~\ref{thm: existence 2} follow from  now-classical arguments for minimizers of Bernoulli-type energies that satisfy nondegeneracy (Theorem~\ref{t:lb}) and Lipschitz estimates (Theorem~\ref{thm: Lip}). In particular, the proof is a nearly verbatim repetition of the proofs of Lemma 7.1, Corollary 7.3, and Corollary 7.6 in \cite{AKN1}, and thus we omit the details here.
    \end{proof}

 \section{Free Boundary Estimates, Part 2: Stationarity Conditions and Free Boundary Regularity}
\label{sec: FB2}
Throughout this section, let  $\tau \in (0,\bar{\tau}],$ where $\bar{\tau}$ is the dimensional constant from Theorem~\ref{thm: existence 2}.  Let $f \in C^\infty_c(\R^n)$ be a nonnegative function with $\|f \|_{L^{\infty}}\leq 1$.
Let $\W$ be a minimizer of the variational problem \eqref{eqn: MAIN min prob}, whose existence was shown in Theorem~\ref{thm: existence 2}. Recall from Theorem~\ref{thm: existence 2} that $\text{diam}(\W) \leq 2$, so in particular the truncated barycenter $x_\W$ agrees with the classical barycenter $\frac{1}{|\W|}\int_\W x\,dx.$ Moreover, $\W$ is a set of finite perimeter and an NTA domain with NTA constant $K$ depending only on $n$,  and there exist dimensional constants $c_*,C_*$ and $r_0$ depending on $n$ such that if $x \in \partial \W$ and  $r\in (0,r_0]$, then 
\[
 cr \leq \sup_{B_r(x)} w_{\W} \leq Cr\,.
\]
{These facts allow us to employ the inhomogeneous boundary Harnack principle \cite[Theorem 2.2]{aks23} which we will utilize throughout the remainder of the paper. We emphasize that the inhomogeneous boundary Harnack principle will not hold for arbitrary NTA domains; however, the growth condition for the torsion function is a sufficient condition for the inhomogeneous boundary Harnack principle to hold, see Lemma 7.7 in \cite{AKN2}. For convenience, we restate the result here in the context of our setting. }
\begin{theorem} \label{t:bhpf} Fix  $x \in \partial \W$ and  $0<r\leq r_0$.
Assume that $|\Delta v|\leq 1$ in $\W \cap B_r(x)$ and $v=0$ on $\partial \W \cap B_r(x)$. Then there exists a constant $C$ and $0<\gamma<1$ depending on $n$ such that 
    \[
    \left\|\frac{v}{w_{\W}} \right\|_{C^{0,\gamma}(\overline{B_{r/2}(x)\cap \W)}}
    \leq C \sup_{B_{r/2}(x) \cap \W} |v|. 
    \]
\end{theorem}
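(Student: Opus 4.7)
The plan is to realize Theorem~\ref{t:bhpf} as a direct specialization of \cite[Theorem~1.3]{aks23}. That work establishes an inhomogeneous boundary Harnack principle for pairs of functions with bounded Laplacian on an NTA domain, under the quantitative hypothesis that the denominator function grows linearly and nondegenerately at the boundary. Since our statement is already identified in the text as a restatement ``in the context of our setting,'' the task reduces entirely to checking that the quantitative hypotheses of the cited theorem are met and then invoking it.

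First, I would record the geometric inputs supplied by Theorem~\ref{thm: existence 2}: the minimizer $\W$ is an NTA domain whose NTA constant $K$ depends only on $n$, and its torsion function $w_\W$ solves $-\Delta w_\W = 1$ in $\W$, vanishes on $\partial \W$, and satisfies the two-sided estimate $c_* r \leq \sup_{B_r(x)} w_\W \leq C_* r$ for every $x \in \partial\W$ and every $r \in (0, r_0]$, with $c_*, C_*, r_0$ dimensional. The lower bound is precisely the nondegenerate boundary growth required for $w_\W$ to serve as a valid denominator in \cite[Theorem~1.3]{aks23}; the upper bound (coming from Theorem~\ref{thm: Lip}) ensures the associated Lipschitz-type scaling. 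Both constants, together with the NTA constant, depend only on $n$, which is why the eventual $C$ and $\alpha$ will too.

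Second, I would check the analytic hypotheses on the numerator $v$: by assumption $|\Delta v| \leq 1$ in $\W \cap B_r(x)$ and $v = 0$ on $\partial \W \cap B_r(x)$, which is exactly the regularity and vanishing required by \cite[Theorem~1.3]{aks23}. With both the geometric and analytic hypotheses in place, the cited theorem applies and gives a H\"older bound on $v/w_\W$ on $\overline{B_{r/2}(x) \cap \W}$ with modulus controlled by $\sup_{B_{r/2}(x)\cap \W} |v|$, delivering exactly the stated inequality.

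The only potentially subtle point — and the one I would be careful about when transcribing — is matching the precise form of the nondegeneracy assumption in \cite{aks23} to what Theorem~\ref{thm: existence 2} provides. In some formulations the required hypothesis is phrased in terms of an interior corkscrew value $w_\W(y_r) \gtrsim r$ rather than a supremum bound; these are equivalent on NTA domains by the Harnack chain condition, so the reduction is routine. Beyond that, the proof is simply a citation.
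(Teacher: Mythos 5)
Your approach — specialize \cite[Theorem~1.3]{aks23} by verifying its hypotheses via Theorem~\ref{thm: existence 2} — is indeed the paper's approach, and the verification you carry out is correct. However, the claim that "beyond that, the proof is simply a citation" overlooks two bridging steps that the paper itself flags in Remark~\ref{r:bhpf}, which is where the paper actually explains how Theorem~\ref{t:bhpf} is obtained from the cited result.

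Specifically, \cite[Theorem~1.3]{aks23} as stated (i) assumes the numerator $v$ is nonnegative, whereas Theorem~\ref{t:bhpf} permits signed $v$, and (ii) gives only an $L^\infty$ bound on the quotient $v/w_\W$, not the $C^{0,\alpha}$ estimate asserted in the conclusion. Neither is automatic from the citation. The sign restriction is removed using linearity of the Laplacian — one decomposes into suitably shifted nonnegative pieces so each satisfies the hypotheses of \cite{aks23}. The upgrade from a sup bound to a Hölder bound on the quotient requires an additional oscillation-decay argument; the paper points to the standard technique in \cite{ac85} for this. These are routine, but they are genuine steps: without them your proposal would only yield an $L^\infty$ estimate for nonnegative $v$, which is strictly weaker than what Theorem~\ref{t:bhpf} claims and would not suffice for the applications later in the paper (e.g., Lemma~\ref{l:gradhigher} and Corollary~\ref{c:pointeulerlagrange} rely on Hölder continuity of quotients of signed functions). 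You should state both bridging steps explicitly rather than absorbing them into "simply a citation."
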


{\begin{remark} \label{r:bhpf}
\rm{
    By applying the above result to $u_{\W}^f/w_{\W}$, it is clear we may replace  the torsion function $w_{\W}$ with $u_{\W}^f$ in the above theorem (provided we divide the constant $C$ by $\sup_{B_{r/2}\cap \W} u$) since $0\leq f\leq 1$. Also, Theorem 2.2 in \cite{aks23} assumes $v \geq 0$ and only gives the bound on the quotient. However, since the Laplace operator is linear, standard techniques, see for instance \cite{ac85}, give the H\"older continuity of the quotient and do not require the numerator to have a sign. 
    }
\end{remark}}

In view of Remark~\ref{r:bhpf} and Remark~\ref{remark: Lip poisson equation solution}, there are 
dimensional constants $C, c>0$ such that
for $x \in \partial \W$ and  $r\in (0,r_0]$,
\begin{equation}
    \label{eqn: lin growth uf}
     cr \leq \sup_{B_r(x)} u_{\W}^f \leq Cr\,.
\end{equation}
\begin{remark}
    {\rm For functions $f$ that are not constantly equal to $1$ outside a large ball, the estimates shown in this section will also hold away from $\partial B_R$ for a minimizer $\W_R$ of the variational problem \eqref{eqn: min problem existence section}.
    }
\end{remark}

\subsection{Euler-Lagrange equation}
Our first task is to compute the first variation of the energy functional $\Ep$, which we carry out in several steps.

For a vector field $T \in C^1_c(\mathbb{R}^n; \R^n)$, let $\Phi_t(x) = \Phi(t,x): (-\e, \e) \times \R^n \to \R^n$ is the flow generated by $T$, i.e. the solution to $\frac{d}{dt} \Phi_t(x) = T(\Phi_t(x))$
with $\Phi_0(x) = x.$ Let $\Omega_t = \Phi_t(\Omega)$.

Since $f$ is smooth, in particular $f \in H^1(\W)$. Thus  by \cite[Theorem 5.3.1]{hp18}, 
$t \mapsto u_{\W_t}^f$ is differentiable in $L^2$ and the derivative  $\dot{u}_{\W}^f$ at $t=0$ satisfies
\begin{equation}
    \label{eqn: equation for dot u}
\begin{cases}
 -\Delta \dot{u}_{\W}^f =0 \quad \text{ in } \W \\
 \dot{u}_{\W}^f + T \cdot \nabla u_{\W}^f \in H_0^1(\W),
\end{cases}
\end{equation}
and   $t \mapsto \int ( u_{\Omega_t}^f)^2$ is differentiable at $t=0$ with
\begin{equation}\label{eqn: deriv of L2 norm}
\frac{d}{dt}\Big|_{t=0}\, \int \left(u_{\Omega_t}^f\right)^2 =2 \int  u_{\Omega}^f\,  \dot{u}_{\W}^f.
\end{equation}
To express \eqref{eqn: deriv of L2 norm} as a boundary integral with a clearer $T$-dependence,  consider the auxiliary equation 
\begin{equation} \label{e:aux}
 \begin{cases}
   -\Delta p = u_{\W}^f \quad &\text{ in } \W, \\
   p=0 \quad &\text{ on } \partial \W. 
 \end{cases}
\end{equation}
If $\W$ were smooth, then integrating by parts twice on the right-hand side of \eqref{eqn: deriv of L2 norm} using \eqref{eqn: equation for dot u} would show that $\int u_{\Omega}^f\,  \dot{u}_{\W}^f =  \int_{\partial \W} |\na p| \,| \nabla u_{\W}^f| \, T\cdot\nu_\W \,d \mathcal{H}^{n-1}$. In Lemma~\ref{lem: square variation} below, we show that this formal computation can be made rigorous, interpreting the terms $|\nabla p|$ and $T\cdot \nabla u_\W^f$ as non-tangential limits. As a first step in this direction, we prove the following lemma.
\begin{lemma} \label{l:NTlimits}
 For $\mathcal{H}^{n-1}\llcorner{\partial \W}$ a.e. point, the nontangential maximal function $(\na u_\W^f)^*$ is in $L^\infty(\partial \W, d\mathcal{H}^{n-1})$ and the non-tangential limit of $\nabla u_{\W}^f$ exists and is equal to $-|\na u_\W^f| \, \nu_\W$ with  $|\na u_\W^f| \in L^\infty(\partial\W, \mathcal{H}^{n-1})$.
\end{lemma}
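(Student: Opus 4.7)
The first assertion is essentially free: by Theorem~\ref{thm: Lip} and Remark~\ref{remark: Lip poisson equation solution} (applied with $R$ large enough that $\dist(\W,\partial B_R)\geq 2$, which is permitted since $\diam(\W)\leq 2$), $u_\W^f$ is globally Lipschitz with a dimensional constant $\tilde C_*$. Hence $|\nabla u_\W^f|\leq \tilde C_*$ pointwise on $\W$, so $(\nabla u_\W^f)^*(x)\leq \tilde C_*$ for every $x\in\partial\W$, and the nontangential maximal function lies in $L^\infty(\partial\W,d\mathcal{H}^{n-1})$.

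For the existence of the nontangential limit, I would restrict to reduced-boundary points $x_0\in \partial^*\W$ (a set of full $\mathcal{H}^{n-1}$-measure by Theorem~\ref{thm: existence 2}) and carry out a blow-up analysis. The rescalings $u_r(y):=u_\W^f(x_0+ry)/r$ on $\W_r:=(\W-x_0)/r$ are uniformly Lipschitz and solve $-\Delta u_r = r f(x_0+ry)\to 0$, while the NTA property together with \eqref{eqn: cone} ensures $\W_r$ converges locally (in $L^1$ and in Hausdorff distance) to the half-space $H_{x_0}:=\{y\cdot\nu_\W(x_0)\leq 0\}$. Any subsequential limit $u_0$ is therefore a nonnegative Lipschitz function vanishing outside $H_{x_0}$, harmonic in $H_{x_0}$, and satisfying the linear growth bounds inherited from \eqref{eqn: lin growth uf}; a Liouville-type classification forces $u_0(y)=\b\,(-y\cdot\nu_\W(x_0))_+$ with $\b\in[c,C]$. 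Interior elliptic regularity upgrades the convergence of $u_{r}$ to $u_0$ to $C^1_{loc}(H_{x_0})$, so for any nontangential sequence $y_k=x_0+r_k z_k\to x_0$ the cone condition keeps $z_k$ in a compact subset of the interior of $H_{x_0}$, and passing to the limit yields $\nabla u_\W^f(y_k)=\nabla u_{r_k}(z_k)\to \nabla u_0(z_\infty)=-\b\,\nu_\W(x_0)$.

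The main obstacle is upgrading this subsequential convergence to a genuine nontangential limit by showing that $\b$ does not depend on the subsequence. To do so I would invoke Theorem~\ref{t:bhpf} and Remark~\ref{r:bhpf}: the quotient $u_\W^f/w_\W$ is H\"older continuous up to $\partial\W$ near $x_0$, so $q(x_0):=\lim_{y\to x_0,\,y\in\W} u_\W^f(y)/w_\W(y)$ exists. Running the same blow-up argument for $w_\W$ produces subsequential limits $w_0(y)=\a\,(-y\cdot\nu_\W(x_0))_+$ with $\a\in[c,C]$, and passing the H\"older identity $u_\W^f/w_\W\to q(x_0)$ through the blow-up forces $u_0=q(x_0)\,w_0$, so $\b=q(x_0)\,\a$. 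It thus suffices to prove uniqueness of the blow-up slope $\a$ of $w_\W$ at $x_0$, which I would establish by a second application of boundary Harnack, comparing $w_\W$ against a fixed reference solution built from the interior/exterior corkscrew balls at $x_0$ so that the ratio extends H\"older continuously up to $\partial\W$ and pins down $\a$ as its value at $x_0$. With $\b(x_0):=q(x_0)\,\a(x_0)$ now well-defined, the nontangential limit of $\nabla u_\W^f$ at $x_0$ equals $-\b(x_0)\,\nu_\W(x_0)$, and setting $|\nabla u_\W^f|(x_0):=\b(x_0)$ gives both the claimed form of the limit and the $L^\infty$ bound $|\nabla u_\W^f|\leq \tilde C_*$ on $\partial\W$.
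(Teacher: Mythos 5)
Your first two steps (the $L^\infty$ bound on $(\nabla u_\W^f)^*$ via the Lipschitz estimate, and the blow-up at $x_0\in\partial^*\W$ to a half-plane solution $u_0(y)=\beta\,(-y\cdot\nu_\W(x_0))_+$ via Liouville) match the paper. You also correctly identify the crux: showing the slope $\beta$ is subsequence-independent. But the mechanism you propose to resolve it does not work. Reducing to uniqueness of the blow-up slope $\alpha$ of $w_\W$ via the boundary Harnack ratio $u_\W^f/w_\W$ is fine, but your ``second application of boundary Harnack, comparing $w_\W$ against a fixed reference solution built from the interior/exterior corkscrew balls'' is not a valid construction. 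Boundary Harnack only gives H\"older continuity of a \emph{ratio} of two solutions; to pin down $\alpha$ you would need a reference solution whose own blow-up slope at $x_0$ is already known to be unique, and the corkscrew balls furnish only geometric information, not such a function. Any candidate reference (harmonic replacement, Green's function, etc.) faces the identical uniqueness problem, so the argument is circular.

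The paper's resolution is a different, measure-theoretic idea which you are missing. One shows that $\mu:=-\Delta u_\W^f\llcorner\partial\W$ is a positive Radon measure satisfying $c\rho^{n-1}\leq \mu(B_\rho(x))\leq C\rho^{n-1}$ (inherited from the same estimates for the harmonic replacement $v$, using \cite[Remark 4.2, Theorem 4.3]{AC81} and the boundary Harnack H\"older control on $v/u_\W^f$). Combined with the perimeter density estimates \eqref{eqn: perim density}, this gives mutual absolute continuity of $\mu$ and $\mathcal{H}^{n-1}\llcorner\partial\W$, hence a Radon--Nikodym density $g$ defined $\mathcal{H}^{n-1}$-a.e. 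At an $\mathcal{H}^{n-1}$-a.e. point $x_0\in\partial^*\W$ where $g$ is defined, one tests the rescaled Laplacians against test functions: the blow-up limit satisfies $-\Delta u_\infty=g(x_0)\,\mathcal{H}^{n-1}\llcorner\{x_n=0\}$, which forces $\beta=g(x_0)$. Since $g(x_0)$ is an intrinsically defined density independent of the blow-up subsequence, this is what pins down the slope, upgrades the convergence to $C^1$ on nontangential regions, and identifies the nontangential limit of $\nabla u_\W^f$ as $-g(x_0)\,\nu_\W(x_0)$. Without this step your proof has a genuine gap.
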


\begin{proof}
 We recall from Theorem~\ref{thm: Lip} and Remark~\ref{remark: Lip poisson equation solution} that $\| \nabla u_{\W}^f \|_{L^{\infty}(\mathbb{R}^n)} \leq C$. From this it follows that $(\na u_\W^f)^*\in L^\infty(\partial \W, d\mathcal{H}^{n-1})$ at any point on $\partial \W$ for which the nontangential limit of $\nabla u_{\W}^f$ exists, its modulus is bounded by the same constant. 
 
 Let $x \in \partial \Omega$, and let $v$ be the harmonic replacement of $u_{\W}^f$ in $B_r(x)\cap \Omega$ and extended to be zero on $B_r(x) \setminus \W$. From Theorem \ref{t:bhpf}  the ratio $v/u_{\W}^f \in C^{0,\alpha}(\overline{B_{r/2}(x)\cap \Omega})$. Then $v$ has the same linear growth \eqref{eqn: lin growth uf} from $\partial \Omega$. Consequently, we may utilize the results in \cite{AC81} for nonnegative harmonic functions which vanish on the boundary and have linear growth from the boundary. Specifically, by  \cite[Remark 4.2 and Theorem 4.3]{AC81},
 \[
 c\rho^{n-1} \leq \int_{B_{\rho}(x_0)} d \lambda \leq C \rho^{n-1},
 \]
 where $\lambda := -\Delta v$
 is a positive Radon measure with support in $\partial \W \cap B_{r}(x)$. If $B_{\rho}(x_0)\cap \partial \W \subset B_{r/2}(x) \cap \partial \W$, then again from the H\"older continuity of the ratio $v/u_{\W}^f$, the distributional Laplacian 
of $ u_{\W}^f$ satisfies 
 \begin{equation}\label{eqn: estimates for laplacian}
  c\rho^{n-1}\leq -\Delta u_{\W}^f |_{\partial \W} \leq C\rho^{n-1}.
  \end{equation}
 So, by the perimeter density estimates \eqref{eqn: perim density}, the measures $-\Delta u_\W^f\llcorner{\partial \Omega}$ and $\mathcal{H}^{n-1}\llcorner{\partial \W}$ are mutually absolutely continuous, and by the Radon-Nikodym theorem, 
 \begin{equation}\label{eqn: denisty of laplacian}
 -\Delta u_{\Omega}^f \llcorner{\partial \Omega} = g \mathcal{H}^{n-1}\llcorner{\partial \W},
  \end{equation}
for a function  $g$ defined $\mathcal{H}^{n-1}$ a.e on $\partial \W$.

Recall $\mathcal{H}^{n-1}(\partial \W \setminus \partial^*\W) =0$ from Proposition~\ref{prop: basic properties}, so for $\mathcal{H}^{n-1}$-a.e. $x$ in $\partial \W$, $g$ is defined and $x \in \partial^*\W$. Let  $x_0$ be such a point.
By translation and rotation we may assume that $x_0=0$ the origin and  the inward unit normal of $\partial \W$ at $x_0$ is $e_n$, i.e. $\nu_\W(x_0) =-e_n$. Consider the rescaling 
 \[
 u_r(x):= \frac{u_{\Omega}^f(rx)}{r}. 
 \]
From the linear growth estimates, we have that $\| u_r \|_{C^{0,1}(B_R)} \leq C$. Thus, for a subsequence $r_j \to 0$ we obtain a limiting function $u_{r_j} \to u_{\infty}$ in $C^{0,\beta}$ on uniformly on compact subsets of $\mathbb{R}^n$ with $\Delta u_{\infty}=0$ on $\{u_{\infty}>0\}$. Since $x_0 \in \partial^* \W$, we have that $\{u_{\infty}>0\}=\{x_n>0\}$. From the Liouville theorem, $u_\infty(x) = a x_n^+$ for a positive constant $a$. 
By integrating against test functions along the sequence, we find that $ -\Delta u_\infty= g(x_0) \mathcal{H}^{n-1}|_{\{x_n=0\}}$. This means $a = g(x_0)$, and in particular,  
this limit $u_\infty$ is independent of the sequence $r_j$ chosen. Thus, the convergence is uniform in $C^1$ on $\{1/2<|x|<2\}\cap \{x \cdot e_n > \delta\}$ for any small $\delta >0$. Thus, $\nabla u_{\Omega}^f$ has a non-tangential limit equal to $g(x_0) e_n$ at $x_0$. This completes the proof with $g= |\na u_\W^f|.$
\end{proof}

Next we use Lemma~\ref{l:NTlimits} to establish non-tangential limits of the function $\dot{u}_\W^f$ from \eqref{eqn: equation for dot u}. 
\begin{lemma} \label{l:NTlimits2} For $\mathcal{H}^{n-1}\llcorner{\partial \W}$ a.e. point, nontangential limits of $\dot{u}_{\W}^f$ exist and are equal to  $-\nabla u_\W^f\cdot T =|\nabla u_\W^f|\nu_\W \cdot T \in L^\infty(\partial \W, \mathcal{H}^{n-1})$,   and the non-tangential maximal function $(\dot{u}_{\W}^f)^*$ of $\dot{u}_{\W}^f$ is in $L^\infty(\partial \W, \mathcal{H}^{n-1})$.
\end{lemma}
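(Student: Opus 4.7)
The plan is to decompose $\dot u_\W^f = U - T \cdot \na u_\W^f$ where $U := \dot u_\W^f + T \cdot \na u_\W^f \in H^1_0(\W)$ is the material derivative singled out by \eqref{eqn: equation for dot u}, and treat each summand separately. The summand $T \cdot \na u_\W^f$ is handled directly by Lemma~\ref{l:NTlimits}: since $T \in C^1_c$ is bounded on $\R^n$ and $(\na u_\W^f)^* \in L^\infty(\pa \W, \mathcal{H}^{n-1})$, the nontangential limit of $T \cdot \na u_\W^f$ equals $-T \cdot |\na u_\W^f|\,\nu_\W$ for $\mathcal{H}^{n-1}$-a.e.\ point of $\pa\W$ and its nontangential maximal function lies in $L^\infty(\pa\W, \mathcal{H}^{n-1})$. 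The entire content of the lemma thus reduces to showing that $U$ extends continuously to zero on $\pa\W$; granting this, $\dot u_\W^f$ inherits the nontangential limit $|\na u_\W^f|\,\nu_\W \cdot T = -\na u_\W^f \cdot T$ and the bound $(\dot u_\W^f)^* \in L^\infty(\pa\W, \mathcal{H}^{n-1})$.

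To derive a workable PDE for $U$, I would first rewrite $T \cdot \na u_\W^f = \dvg(u_\W^f T) - u_\W^f \dvg T$, so that distributionally
\begin{equation*}
-\Delta U \;=\; -\Delta(T \cdot \na u_\W^f) \;=\; -\dvg\, \Delta(u_\W^f T) \;+\; \Delta(u_\W^f \dvg T) \;=\; \dvg G_1 + G_0 \quad \text{in } \W,
\end{equation*}
with $G_1 := -\Delta(u_\W^f T)$ and $G_0 := \Delta(u_\W^f \dvg T)$. Expanding the Laplacians and using $-\Delta u_\W^f = f$, each of $G_0$ and $G_1$ is a polynomial expression in $u_\W^f$, $\na u_\W^f$, $f$, and smooth compactly supported derivatives of $T$. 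Since $u_\W^f$ and $\na u_\W^f$ are bounded on $\W$ by Theorem~\ref{thm: Lip} and Remark~\ref{remark: Lip poisson equation solution}, this yields $G_0 \in L^\infty(\W)$ and $G_1 \in L^\infty(\W;\R^n)$. Crucially, after the rewrite no uncancelled second derivatives of $u_\W^f$ remain, so we never appeal to regularity beyond Lipschitz up to $\pa\W$.

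With $U \in H^1_0(\W)$ satisfying $-\Delta U = G_0 + \dvg G_1$ with bounded data, Stampacchia's estimate gives $U \in L^\infty(\W)$, and hence $U^* \in L^\infty(\pa\W, \mathcal{H}^{n-1})$. For continuity up to $\pa\W$, the key input is that $\W$ is an NTA domain by Theorem~\ref{thm: existence 2} and in particular satisfies the exterior corkscrew condition, so every point of $\pa\W$ is Wiener-regular. I would subtract off a bounded continuous particular solution $V$ of $-\Delta V = G_0 + \dvg G_1$ on $\R^n$, built as the Newtonian potential of (the zero-extension of) $G_0$ plus a convolution of $G_1$ against a first derivative of the fundamental solution; both are continuous and bounded on $\R^n$ since $G_0, G_1$ are compactly supported and $L^\infty$. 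The remainder $U - V$ is then a bounded harmonic function on $\W$ with continuous boundary trace $-V\big|_{\pa\W}$, and Wiener regularity at every boundary point provides the Perron solution continuously on $\overline \W$ with precisely those boundary values. Thus $U \in C(\overline \W)$ with $U \equiv 0$ on $\pa\W$, so the nontangential (in fact pointwise) limit of $U$ vanishes at every point of $\pa\W$, closing the proof.

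The main obstacle will be this boundary-continuity step for $U$. A tempting shortcut would be to define $\dot u_\W^f$ by applying the Poisson-kernel representation of Lemma~\ref{lem: poisson kernel} to the function $-T \cdot \na u_\W^f\big|_{\pa\W}$; however, the uniqueness clause in that lemma presupposes that the harmonic function already possesses bounded nontangential maximal function and nontangential boundary values, which is exactly what we are trying to establish for $\dot u_\W^f$. Routing the argument instead through the $H^1_0$ material derivative $U$, together with the NTA structure of $\W$ secured in Theorem~\ref{thm: existence 2} and the classical Wiener regularity criterion, converts this circular situation into a direct one.
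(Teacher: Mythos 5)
The proposal is correct and shares the opening move with the paper --- decompose $\dot u_\W^f = U - T\cdot\nabla u_\W^f$ with $U := \dot u_\W^f + T\cdot\nabla u_\W^f \in H^1_0(\W)$, and dispose of the $T\cdot\nabla u_\W^f$ part via Lemma~\ref{l:NTlimits} --- but the treatment of $U$ is a genuinely different route. The paper shows the $2$-quasicontinuous representative of the zero-extended $v = U$ vanishes $\mathcal{H}^{n-1}$-a.e.\ on $\partial^*\W$ (using the fine mean-value property of Sobolev functions and that $\partial^*\W$ has half-density), then restricts the $L^2$ Lebesgue-point estimate to interior cones and upgrades to $L^\infty$ smallness via interior estimates for the \emph{harmonic} function $\dot u_\W^f + T\cdot\nabla u_\W^f(x)$; this is entirely soft and doesn't need a PDE for $U$. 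You instead derive $-\Delta U = G_0 + \dvg G_1$ in $\W$ with $G_0, G_1 \in L^\infty_c$ --- the rewrite $T\cdot\nabla u_\W^f = \dvg(u_\W^f T) - u_\W^f\dvg T$ together with $-\Delta u_\W^f = f$ nicely keeps the right-hand side at the level of $u_\W^f$ and $\nabla u_\W^f$, both of which are bounded --- and then invoke boundary regularity for $H^1_0$ solutions on domains with a thick exterior. This buys a stronger conclusion: $U$ is continuous on $\overline\W$ with $U\equiv 0$ on $\partial\W$, so the nontangential limit statement holds at every boundary point, not just $\mathcal{H}^{n-1}$-a.e.

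One step is glossed over: the jump from ``$U - V$ is bounded harmonic and $\W$ is Wiener-regular'' to ``$U - V$ equals the Perron solution and hence is continuous up to $\partial\W$'' silently uses that the $H^1$-variational solution with trace $-V|_{\partial\W}$ coincides with the Perron solution, which is not free in general (one needs, say, that the Perron solution is itself in $H^1(\W)$, or an approximation argument). This is fixable, but simpler and cleaner is to skip the particular solution $V$ and Perron theory altogether: since $\W$ satisfies the exterior corkscrew condition at every boundary point (Theorem~\ref{thm: existence 2}), it satisfies the uniform exterior measure-density condition, and then the De Giorgi boundary estimate (e.g.\ Gilbarg--Trudinger Thm.\ 8.29 / 8.30) applied directly to the $H^1_0(\W)$ solution of $-\Delta U = G_0 + \dvg G_1$ with $G_0, G_1 \in L^\infty$ gives $U \in C^{0,\gamma}(\overline\W)$ with $U|_{\partial\W} = 0$. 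With that substitution your argument is complete and is a valid, arguably more standard, alternative to the paper's quasi-continuity route.
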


\begin{proof}
Let $v=\dot{u}_{\W}^f + T \cdot \nabla u_{\W}^f \in H_0^1(\Omega)$ and extend $v$ by zero so that $ v \in H^1(\mathbb{R}^n)$.
 From Theorem 1 in Chapter 4.8 in \cite{eg15}, 
$v$ has a $2$-quasicontinuous representative $v^*$ such that for $\mathcal{H}^{n-1}$ a.e. $x \in \partial \W$, 
\[
 \lim_{r \to 0}  \fint_{B_r(x)} |v-v^*(x)|^2=0. 
\]
Since $v \in H_0^1(\W)$, for $x \in \partial^* \W$, we have $\lim_{r\to 0} \frac{|B_r(x)\setminus \W|}{|B_r(x)|} = 1/2$ and therefore 
\[
0\geq \lim_{r \to 0}  \fint_{B_r(x)\setminus\W} |v-v^*(x)|^2
= \lim_{r \to 0} \fint_{B_r(x)\setminus \W} |v^*(x)|^2
\geq c |v^*(x)|^2\,.
\]
Then $v^*=0$ for $\mathcal{H}^{n-1}$ a.e. $x \in \partial^* \W$.

Since $x\in \partial^*\W$ and thanks to \eqref{eqn: cone}, for each $\e>0$ we can choose $r>0$ small enough so that $\mathcal{C} \cap  B_r(x) \subset \W$ for the cone $\mathcal{C} = \{ y : y \cdot \nu_\W(x)<-\e|y|\}$  touching $x$.
By integrating on such an interior cone $\mathcal{C} \subset \W$ we have 
\[
0 = \lim_{r \to 0}\fint_{B_r(x)} |v-v^*(x)|^2 \geq c \fint_{\mathcal{C}\cap B_r(x)} |v-v^*(x)|^2=c\fint_{\mathcal{C}} |v|^2 . 
\]
Now fix $\epsilon >0$, and choose $r$ small enough so that 
\[
\fint_{\mathcal{C}\cap B_r(x)} |v|^2 < \epsilon, 
\]
as well as 
\[
|  T \cdot \nabla u_{\W}^f (y) -  T\cdot \nabla u_{\W}^f(x) |  < \epsilon \quad \text{ for any } y \in \mathcal{C} \cap B_{r}(x),
\]
which we may do thanks to Lemma \ref{l:NTlimits}. Then 
\[
\begin{aligned}
&\fint_{\mathcal{C}\cap B_{r}(x)} |\dot{u}_{\W}^f(y) +T\cdot \nabla u_{\W}^f(x)|^2 \ dy \\
&\leq 
2\fint_{\mathcal{C}\cap B_r(x)} |\dot{u}_{\W}^f(y)+T\cdot \nabla u_{\W}^f(y)|^2 \ dy
+2 \fint_{\mathcal{C}\cap B_r(x)} |T\cdot \nabla u_{\W}^f(x)-T\cdot\nabla  u_{\W}^f(y)|^2 \ dy\\
&= \fint_{\mathcal{C}\cap B_r(x)} |v(y)|^2 \ dy + \fint_{\mathcal{C}\cap B_r(x)} |T\cdot \na u_{\W}^f(y)-T\cdot \na u_{\W}^f(x)|^2 \ dy\\
&\leq 2 \epsilon.
\end{aligned}
\]
Now let $\mathcal{C}'$ be an interior cone to $\mathcal{C}$. From the interior $L^2-L^{\infty}$ estimate for harmonic functions, we conclude 
\begin{equation}
    \label{eqn uniform bound}
\| \dot{u}_{\W}^f + T \cdot\na u_{\W}^f(x) \|^2_{L^{\infty}(\mathcal{C'}\cap (B_r(x)\setminus B_{r/2}(x))} \leq C(\mathcal{C},\mathcal{C}')\fint_{\mathcal{C}\cap B_r(x)} |\dot{u}_{\W}^f(y) +T\cdot \na u_{\W}^f(x)|^2 \ dy \leq 2 \epsilon. 
\end{equation}
It is not hard to show that using \eqref{eqn: cone} that for any $\beta>0$, $\Gamma_\beta(x) \cap B_r(x)$ is contained in $\mathcal{C}'\cap B_r(x)$  for some interior cone $\mathcal{C}'$ touching $x$, so \eqref{eqn uniform bound} shows that  $\dot{u}_{\W}^f$ has a nontangential limit  at $x$ equal to $- T\cdot \nabla_\W^f(x)$, which by Lemma~\ref{l:NTlimits} is equal to $| \nabla u_\W^f(x)| \nu_\W(x)\cdot T(x)$. Note that \eqref{eqn uniform bound} together with the Lipschitz bound $\| \na u_\W^f\|_{L^\infty}$ guarantee that $(\dot{u}_{\W}^f)^* \in L^\infty(\partial \W, \mathcal{H}^{n-1}).$ This completes the proof.
\end{proof}

Since $u_{\W}^f, w_{\W}$ and $p$ from \eqref{e:aux} are zero on $\partial \W$, then we may apply the boundary Harnack principle and gain regularity for the quotients of the derivatives as well. 

\begin{lemma} \label{l:gradhigher}
 The quotients $|\nabla p|/|\nabla w_{\W}|$ and $|\nabla p|/|\nabla u_{\W}^f|$ are H\"older continuous up to the boundary. Consequently, both $p$ and $\nabla p$ have nontangential limits $\mathcal{H}^{n-1}\llcorner\partial \W$ a.e., and also $|\nabla p|$ is bounded on $\overline{\W}$. 
\end{lemma}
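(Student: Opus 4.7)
The plan is to deduce this lemma as a direct consequence of the inhomogeneous boundary Harnack (Theorem~\ref{t:bhpf}) applied to $p$, followed by a blow-up argument in the spirit of Lemma~\ref{l:NTlimits} to transfer regularity of the function quotients to regularity of the gradient quotients on $\partial \W$.

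First, I would apply Theorem~\ref{t:bhpf} together with Remark~\ref{r:bhpf} to the function $v=p$: since $-\Delta p = u_\W^f$ has bounded right-hand side and $p$ vanishes on $\partial \W$, both quotients $\phi_w := p/w_\W$ and $\phi_u := p/u_\W^f$ lie in $C^{0,\alpha}(\overline{B_{r/2}(x)\cap\W})$ for every $x \in \partial \W$ and $r \in (0,r_0]$, with $\alpha$ dimensional. Since $\diam(\W)\leq 2$, a finite cover extends $\phi_w$ and $\phi_u$ to H\"older continuous functions up to $\partial \W$. In particular, $p$ has linear growth from $\partial \W$ and is bounded on $\overline{\W}$.

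Second, I would establish nontangential limits of $\nabla p$ by essentially repeating the blow-up argument from the proof of Lemma~\ref{l:NTlimits}. Standard interior gradient estimates applied to $-\Delta p = u_\W^f$ on a ball $B_{d/2}(x)\subset \W$ with $d=d(x,\partial\W)$ give $|\nabla p(x)|\leq C(p/d+d)$; together with the linear growth $p\leq C\,d(\cdot,\partial \W)$ coming from $p=\phi_w w_\W$, this yields the global bound $\|\nabla p\|_{L^\infty(\W)}\leq C$. Consequently the blow-ups $p_r(y):=p(x_0+ry)/r$ are uniformly Lipschitz. For $x_0\in \partial^*\W$ with inward normal $e_n$, up to a subsequence $p_r\to p_\infty$ in $C^{0,\beta}_{loc}$ on $\R^n$, with $p_\infty$ harmonic on $\{y_n>0\}$ and vanishing on $\{y_n=0\}$, so $p_\infty(y)=b\,y_n^+$. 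Using the Lipschitz continuity of $\phi_w$ up to $\partial \W$ and the identification $p_r(y)=\phi_w(x_0+ry)\,w_\W(x_0+ry)/r$, passing to the limit and invoking Lemma~\ref{l:NTlimits} for $w_\W$ identifies $b=\phi_w(x_0)\,a(x_0)$, where $a(x_0)$ is the nontangential value of $|\nabla w_\W|$ at $x_0$. Standard elliptic regularity upgrades the convergence to $C^1_{loc}$ on $\{y_n>\delta\}$, so $\nabla p$ has nontangential limit $b\,e_n=\phi_w(x_0)\,a(x_0)\,e_n$ at $\mathcal{H}^{n-1}$-a.e.\ $x_0\in\partial^*\W$, and $(\nabla p)^*\in L^\infty(\partial\W,\mathcal{H}^{n-1})$.

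Third, the quotient identity $|\nabla p|^{nt}(x_0)=\phi_w(x_0)\,|\nabla w_\W|^{nt}(x_0)$ shows that the boundary trace of $|\nabla p|/|\nabla w_\W|$ agrees with $\phi_w|_{\partial \W}$, which is H\"older continuous by Step 1; the argument with $u_\W^f$ replacing $w_\W$ is identical. Combined with the interior H\"older continuity of $\nabla p$ in $\W$ (by Schauder estimates, since $u_\W^f\in C^{0,\alpha}$) and the nondegeneracy $|\nabla w_\W|,|\nabla u_\W^f|\geq c>0$ in nontangential approach regions near $\partial^*\W$, this yields H\"older continuity of the quotients up to the boundary. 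The main technical point is ensuring that the blow-up limit $p_\infty$ is genuinely $\phi_w(x_0)\,a(x_0)\,y_n^+$ rather than merely some linear function of $y_n$; this is where the continuity of $\phi_w$ at $x_0$ (from the boundary Harnack) does the essential work, since it prevents oscillation of the proportionality constant as $r\to 0$.
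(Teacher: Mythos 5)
Your proposal is correct and follows essentially the same route as the paper: apply the boundary Harnack principle (Theorem~\ref{t:bhpf}) to get H\"older continuity of $p/w_\W$ and $p/u_\W^f$ up to $\partial\W$, then transfer this to the gradient quotients at $\mathcal{H}^{n-1}$-a.e.\ boundary point using the vanishing boundary values together with the nontangential-limit machinery of Lemma~\ref{l:NTlimits}, which also gives the boundedness of $|\nabla p|$ and its nontangential limits. The only cosmetic difference is that the paper identifies the boundary gradient ratio via one-dimensional difference quotients along the inward normal rather than your explicit blow-up (and note $\phi_w$ is only H\"older, not Lipschitz, but mere continuity at $x_0$ is all your blow-up identification needs, as you yourself observe).
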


\begin{proof}
 By translation and rotation, we may assume that $x \in \partial^* \W$ is the origin with inward unit normal $e_n$. Since both $u_{\W}^f$ and $p$ are zero on $\partial \W$, then 
 \[
 \frac{p(te_n)}{u_{\W}^f(t e_n)} = \frac{p(te_n)-p(0)}{u_{\W}^f(t e_n) - u_{\W}^f(0)}. 
 \]
 Since the limit of the left-hand side exists by the boundary Harnack principle Theorem \ref{t:bhpf}, and the denominator of the right hand side exists. Then the numerator of the right hand side exists. Moreover, $|p/u_{\W}^f|=|\nabla p |/ |\nabla u_{\W}^f|$ on $\partial \W$. From Theorem \ref{t:bhpf} it follows that $|\nabla p |/ |\nabla u_{\W}^f|$ is H\"older continuous up to and along $\partial \W$. The same argument holds true with $w_\W$ replacing $u_{\W}^f$. 
\end{proof}

The preceding lemmas allow us to compute the first variations of the terms in the energy functional $\Ep$ at $\W$, starting with the following.
\begin{lemma}\label{lem: square variation} Let $p$ be as in \eqref{e:aux}. 
    The derivative in \eqref{eqn: deriv of L2 norm} is equal to
    \begin{equation}
        \label{eqn: first var f}
\frac{d}{dt}\Big|_{t=0}\, \int \big( u_{\Omega_t}^f\big)^2 = 2 \int_{\partial \W} |\na p|\, |\nabla u_\W^f| \, T\cdot \nu_\W \,d \mathcal{H}^{n-1}\,.
    \end{equation}
\end{lemma}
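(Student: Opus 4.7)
The plan is to express the derivative via an integration-by-parts identity, using the PDEs defining $p$ and $\dot u_\Omega^f$ to obtain the boundary formula. Combining \eqref{eqn: deriv of L2 norm} with the PDE $-\Delta p=u_\Omega^f$ from \eqref{e:aux}, the target quantity is $2\int_\Omega u_\Omega^f\,\dot u_\Omega^f=-2\int_\Omega (\Delta p)\,\dot u_\Omega^f$. My plan is to apply the generalized divergence theorem of Lemma \ref{l:fancy div thm} to the vector field
\[
Z := \dot u_\Omega^f\,\nabla p - p\,\nabla \dot u_\Omega^f,
\]
whose divergence, using that $\dot u_\Omega^f$ is harmonic in $\Omega$ by \eqref{eqn: equation for dot u} together with $-\Delta p = u_\Omega^f$, simplifies to $\operatorname{div} Z=-u_\Omega^f\,\dot u_\Omega^f$. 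Since both $p$ and $\dot u_\Omega^f$ are smooth in $\Omega$, $Z\in C(\Omega)$; since $u_\Omega^f$ is bounded and $\dot u_\Omega^f\in L^2(\Omega)$, $\operatorname{div} Z\in L^1(\Omega)$; and the measure-theoretic hypotheses on $\partial\Omega$ required by Lemma \ref{l:fancy div thm} hold by Proposition \ref{prop: basic properties}.

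Next I would verify the two nontangential hypotheses on $Z$. For the first summand $\dot u_\Omega^f\nabla p$, the bound $(\dot u_\Omega^f\nabla p)^*\in L^\infty(\partial\Omega,\mathcal{H}^{n-1})$ and the $\mathcal{H}^{n-1}$-a.e.\ existence of its nontangential limit follow directly from Lemma \ref{l:gradhigher} (which gives $|\nabla p|\in L^\infty(\overline\Omega)$ and nontangential limits for $\nabla p$) and Lemma \ref{l:NTlimits2} (which gives the analogous control on $\dot u_\Omega^f$). The second summand is more delicate, since $|\nabla\dot u_\Omega^f|$ generally blows up near $\partial\Omega$. To control it, I would combine two ingredients: the estimate $|p(x)|\leq C\,\operatorname{dist}(x,\partial\Omega)$, which follows from $|\nabla p|\in L^\infty(\overline\Omega)$ and $p|_{\partial\Omega}=0$; and the classical interior gradient estimate for bounded harmonic functions, $|\nabla\dot u_\Omega^f(x)|\leq C\|\dot u_\Omega^f\|_{L^\infty}/\operatorname{dist}(x,\partial\Omega)$. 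Their product is uniformly bounded in $\Omega$, yielding $(p\nabla\dot u_\Omega^f)^*\in L^\infty(\partial\Omega,\mathcal{H}^{n-1})$.

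The main obstacle will be showing that the nontangential limit of $p\nabla\dot u_\Omega^f$ vanishes $\mathcal{H}^{n-1}$-a.e. My plan is to argue as follows. Fix $x_0\in\partial^*\Omega$ at which $\dot u_\Omega^f$ has nontangential limit $\alpha$; by Lemma \ref{l:NTlimits2} this holds $\mathcal{H}^{n-1}$-a.e.\ on $\partial\Omega$. Applying the interior gradient estimate to the harmonic function $\dot u_\Omega^f - \alpha$ on interior balls $B_{d(x,\partial\Omega)/2}(x)$, and observing that any such ball is contained in a slightly wider nontangential approach region at $x_0$, I would obtain $|\nabla\dot u_\Omega^f(x)|\leq C\,\varepsilon(x)/\operatorname{dist}(x,\partial\Omega)$ with $\varepsilon(x)\to 0$ along nontangential approaches to $x_0$. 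Combined with $|p(x)|\leq C\,\operatorname{dist}(x,\partial\Omega)$, this will give $p\nabla\dot u_\Omega^f\to 0$ nontangentially at $x_0$.

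With all hypotheses verified, Lemma \ref{l:fancy div thm} will yield $\int_\Omega u_\Omega^f\,\dot u_\Omega^f\,dx = -\int_{\partial\Omega} Z\cdot\nu_\Omega\,d\mathcal{H}^{n-1}$, and only $\dot u_\Omega^f\nabla p$ contributes to the boundary integral. At $\mathcal{H}^{n-1}$-a.e.\ $x_0\in\partial^*\Omega$, the nontangential identifications $\nabla p = -|\nabla p|\nu_\Omega$ and $\nabla u_\Omega^f = -|\nabla u_\Omega^f|\nu_\Omega$ (from the fact that $p,u_\Omega^f\geq 0$ in $\Omega$ with zero boundary values, made rigorous via Lemmas \ref{l:NTlimits} and \ref{l:gradhigher}), together with $\dot u_\Omega^f = -T\cdot\nabla u_\Omega^f = |\nabla u_\Omega^f|(T\cdot\nu_\Omega)$ from Lemma \ref{l:NTlimits2}, will give $Z\cdot\nu_\Omega = -|\nabla p|\,|\nabla u_\Omega^f|\,(T\cdot\nu_\Omega)$ on $\partial\Omega$. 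Substituting into \eqref{eqn: deriv of L2 norm} and multiplying by $2$ produces the claimed identity \eqref{eqn: first var f}.
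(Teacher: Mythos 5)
Your proposal is correct and shares the core strategy of the paper's proof (apply the generalized divergence theorem of Lemma~\ref{l:fancy div thm} to a vector field built from $p$ and $\dot u_\Omega^f$), but it takes a noticeably more direct route. The paper first replaces $\dot u_\Omega^f$ by a regularized family $\dot u_\epsilon$ obtained from the Poisson kernel with smooth boundary data $v_\epsilon$ approximating $|\nabla u_\Omega^f|\,\nu_\Omega\cdot T$ in $L^q$, applies the divergence theorem to $Z=p\nabla\dot u_\epsilon-\dot u_\epsilon\nabla p$, and then passes to the limit $\epsilon\to0$ using dominated convergence. Your version bypasses the regularization entirely and applies the divergence theorem once, to $Z=\dot u_\Omega^f\nabla p - p\nabla\dot u_\Omega^f$.

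The genuine added value in your argument is that you make explicit the key point that the paper leaves implicit even for the regularized $\dot u_\epsilon$: the nontangential maximal function of $p\nabla\dot u$ is bounded (via $|p|\lesssim \operatorname{dist}(\cdot,\partial\Omega)$ against the interior gradient estimate for the bounded harmonic function $\dot u$), and its nontangential limit vanishes $\mathcal{H}^{n-1}$-a.e.\ by applying the same gradient estimate to $\dot u_\Omega^f-\alpha$ near a point where $\alpha$ is the nontangential limit. In the paper's computation of $\int_{\partial\Omega}Z\cdot\nu_\Omega$, only the $\dot u_\epsilon\nabla p$ term appears, which presupposes exactly this vanishing, but that step is not justified there. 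Your approach trades the $L^q$-approximation and double-limit bookkeeping for a short, self-contained verification of the divergence-theorem hypotheses. One small point worth making explicit: you invoke the interior gradient estimate $|\nabla\dot u_\Omega^f|\lesssim\|\dot u_\Omega^f\|_{L^\infty}/\operatorname{dist}(\cdot,\partial\Omega)$, which requires $\dot u_\Omega^f\in L^\infty(\Omega)$ rather than just $L^2$; this follows from the Poisson-kernel representation \eqref{eqn: PK rep u dot} together with the bounded boundary data $|\nabla u_\Omega^f|\,\nu_\Omega\cdot T$ from the Lipschitz estimate, so it is true but should be stated rather than inferred from the $L^2$ membership you cite.
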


\begin{proof}[Proof of Lemma~\ref{lem: square variation}]
In view of \eqref{eqn: deriv of L2 norm}, we need to show 
\begin{equation}\label{eqn: intermediate deriv step}
    \int_\W  u_{\Omega}^f \, \dot{u}_{\W}^f = \int_{\partial \W} |\na p| \, |\na u_\W^f| \, T\cdot \nu_\W \, d\mathcal{H}^{n-1}
\end{equation}
where $\dot{u}_{\W}^f$ is defined in \eqref{eqn: equation for dot u}. As mentioned above, this formally follows by integrating by parts twice, but to carry out this integration by parts rigorously, we first  regularize $\dot{u}_\W^f$.
 Let $v_{\epsilon}$ be a smooth approximation, so that 
 \begin{equation}\label{eqn: approx converge}
 \|v_{\epsilon} - |\nabla u_{\W}^f|\, \nu_\W\cdot T) \|_{L^q(\partial \W)} \to 0, 
  \end{equation}
 for any $1\leq q < \infty$, with $\| v_{\epsilon} \|_{L^{\infty}(\partial \W)} \leq C$. By Lemma~\ref{lem: poisson kernel}, for each $\e>0$, the function 
 $$\dot{u}_{\epsilon}(x) := \int_{\partial \W} K(x,y) v_\e(y) \, d\mathcal{H}^{n-1}(y)$$
 is harmonic in $\W$ with $ \dot{u}_{\epsilon} = v_{\epsilon}$ on $\partial \W$ in the sense of nontangential limits. 
Similarly, it follows from Lemma~\ref{l:NTlimits2} and Lemma~\ref{lem: poisson kernel} that the solution $\dot{u}_\W^f$ of \eqref{eqn: equation for dot u} has the representation 
\begin{equation}\label{eqn: PK rep u dot}
\dot{u}_\W^f = \int K(x,y) (|\na u_\W^f|\, \nu_\W \cdot T)(y) \, d \mathcal{H}^{n-1}(y)\,.
\end{equation}
 From \eqref{eqn: PK rep u dot} and the dominated convergence theorem, we see that $\dot{u}^\e \to \dot{u}_\W^f$ pointwise as $\e \to 0$. 
 It then follows, again by  the dominated convergence theorem, that 
 \begin{equation}
     \label{eqn: approx converge 2}
 \lim_{\epsilon \to 0}\int_{\W} \dot{u}_{\epsilon}\,  u_{\W}^f 
 = \int_{\W} \dot{u}_{\W}^f\,  u_{\W}^f. 
  \end{equation}
 Now recalling $p$ from \eqref{e:aux}, we let  
  $Z = p \nabla \dot{u}_{\epsilon}-\dot{u}_{\epsilon}\nabla p$, then we note that $Z$ has nontangential limits $\mathcal{H}^{n-1}\llcorner \partial \W$ a.e. from Lemmas \ref{l:NTlimits2} and \ref{l:gradhigher}, with bounded nontangential maximal function, and $\text{div}(Z) \in L^2(\W)$. Then 
  we applying the divergence theorem of Lemma~\ref{l:fancy div thm} to $Z$ we obtain 
 \[
 \int_{\W} \dot{u}_{\epsilon}\,  u_{\Omega}^f 
 = \int_{\W} \dot{u}_{\epsilon} (-\Delta p)
 = \int_{\W} \dot{u}_{\epsilon} (-\Delta p) + p \Delta \dot{u}_{\epsilon} 
 = \int_{\W} \text{div} Z  
 = \int_{\partial \W} Z \cdot \nu_\W \, d\mathcal{H}^{n-1}
 = \int_{\partial \W} v_{\epsilon} \,|\nabla p| \,d\mathcal{H}^{n-1}\,.
 \]
 Then \eqref{eqn: intermediate deriv step} follows from letting $\epsilon \to 0$ and applying \eqref{eqn: approx converge} and \eqref{eqn: approx converge 2}.
\end{proof}

A similar proof gives the following result. 
\begin{lemma} \label{l:diff torsion}
    Then $t \mapsto \tor(\W_t)$ is differentiable at $t=0$ and 
    \begin{equation}
        \label{eqn: first var torsion}
\frac{d}{dt}\Big|_{t=0}\, \tor(\W_t) = -\frac{1}{2} \int_{\partial \W} |\na w_\W|^2 \, T\cdot \nu_\W \, d\mathcal{H}^{n-1}.
    \end{equation}
Here  $|\na w_\W|^2$ is understood in the sense of nontangential limits.
\end{lemma}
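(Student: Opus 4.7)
The plan is to adapt the approach of Lemma~\ref{lem: square variation}, with the torsion function playing the role of both the state function $u_\W^f$ and of the auxiliary potential $p$ (when $f\equiv 1$, the equation \eqref{e:aux} no longer appears as a separate auxiliary problem, and everything can be expressed in terms of $w_\W$). First, I would apply \cite[Theorem 5.3.1]{hp18} with right-hand side $f\equiv 1$ to conclude that $t\mapsto w_{\W_t}$ is differentiable in $L^2$ at $t=0$, with derivative $\dot{w}_\W$ satisfying $-\Delta \dot{w}_\W = 0$ in $\W$ and $\dot{w}_\W + T\cdot \nabla w_\W \in H^1_0(\W)$. Because $w_{\W_t}$ is supported in $\W_t$ and coincides with $w_\W$ outside the support of $T$ for small $t$, we have $\dot{w}_\W \equiv 0$ on $\R^n\setminus \W$. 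Differentiating the identity $\tor(\W_t) = -\frac{1}{2}\int w_{\W_t}\,dx$ from \eqref{e: torsion and dir energy} therefore yields $\frac{d}{dt}\big|_{t=0}\tor(\W_t) = -\frac{1}{2}\int_\W \dot{w}_\W\,dx$, so it suffices to prove the analog of \eqref{eqn: intermediate deriv step}:
\[
\int_{\W} \dot{w}_\W \, dx = \int_{\pa \W} |\nabla w_\W|^2 \, T\cdot \nu_\W \, d\mathcal{H}^{n-1}.
\]

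Next I would invoke Lemmas~\ref{l:NTlimits} and~\ref{l:NTlimits2} with $w_\W$ and $\dot{w}_\W$ in place of $u_\W^f$ and $\dot{u}_\W^f$. The arguments go through verbatim (the hypothesis $|\Delta w_\W|\leq 1$ holds trivially), producing: (i) a.e.\ $\mathcal{H}^{n-1}\llcorner \pa\W$, $\nabla w_\W$ has nontangential limit $-|\nabla w_\W|\nu_\W$ with $|\nabla w_\W|\in L^\infty(\pa\W)$; (ii) $\dot{w}_\W$ has nontangential limit $-T\cdot \nabla w_\W = |\nabla w_\W|T\cdot \nu_\W$ a.e.\ on $\pa\W$ with $(\dot{w}_\W)^*\in L^\infty(\pa\W)$; and (iii) a Poisson kernel representation of $\dot{w}_\W$ analogous to \eqref{eqn: PK rep u dot}.

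I would then regularize exactly as in the proof of Lemma~\ref{lem: square variation}: choose smooth $v_\e$ on $\pa\W$ with $\|v_\e\|_{L^\infty(\pa \W)}\leq C$ and $v_\e \to |\nabla w_\W| T\cdot \nu_\W$ in $L^q(\pa\W)$ for every $1\leq q<\infty$, and set $\dot{w}_\e(x) := \int_{\pa\W} K(x,y) v_\e(y) \, d\mathcal{H}^{n-1}(y)$. Combining (iii) with the dominated convergence theorem yields $\dot{w}_\e \to \dot{w}_\W$ pointwise in $\W$ and $\int_\W \dot{w}_\e \, dx \to \int_\W \dot{w}_\W \, dx$. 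Applying Lemma~\ref{l:fancy div thm} to the vector field $Z = w_\W \nabla \dot{w}_\e - \dot{w}_\e \nabla w_\W$, whose divergence simplifies (using $\Delta \dot{w}_\e = 0$ and $-\Delta w_\W = 1$) to $\text{div}(Z) = \dot{w}_\e \in L^\infty(\W)$, and computing the nontangential boundary value
\[
Z\cdot \nu_\W = w_\W\, \pa_\nu \dot{w}_\e - \dot{w}_\e\, \pa_\nu w_\W = 0 - v_\e(-|\nabla w_\W|) = v_\e|\nabla w_\W|
\]
on $\pa\W$, one obtains $\int_\W \dot{w}_\e \, dx = \int_{\pa\W} v_\e |\nabla w_\W| \, d\mathcal{H}^{n-1}$. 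Sending $\e\to 0$ and using the $L^1(\pa \W)$ convergence of $v_\e$ together with the $L^\infty$ bound on $|\nabla w_\W|$ delivers the desired identity.

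The main technical obstacle, inherited directly from Lemma~\ref{lem: square variation}, is verifying the hypotheses of Lemma~\ref{l:fancy div thm} for $Z$: uniform boundedness of the nontangential maximal function $Z^*$ and existence of its nontangential pointwise limits $\mathcal{H}^{n-1}\llcorner\pa\W$-a.e. For the term $\dot{w}_\e\nabla w_\W$ this follows from (i) together with the bound $\|\dot{w}_\e\|_{L^\infty(\W)}\leq \|v_\e\|_{L^\infty(\pa \W)}\leq C$ coming from the maximum principle. For $w_\W\nabla \dot{w}_\e$, the interior gradient estimate $|\nabla \dot{w}_\e(x)|\lesssim \|v_\e\|_{L^\infty}/\text{dist}(x,\pa\W)$ for a harmonic function with bounded boundary data on an NTA domain is exactly absorbed by the Lipschitz bound $w_\W(x)\lesssim \text{dist}(x,\pa\W)$ from Theorem~\ref{thm: Lip}, yielding both the required $L^\infty$ bound near the boundary and the vanishing $w_\W \,\pa_\nu \dot{w}_\e \to 0$ in the nontangential sense on $\pa\W$ a.e.
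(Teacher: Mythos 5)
Your proposal is correct and takes essentially the same route as the paper: apply \cite[Theorem 5.3.1]{hp18} with $f\equiv 1$, invoke Lemmas~\ref{l:NTlimits} and~\ref{l:NTlimits2} for the nontangential boundary values of $\nabla w_\W$ and $\dot w_\W$, and run the regularization/divergence-theorem argument of Lemma~\ref{lem: square variation} with $Z = w_\W\nabla\dot w_\e - \dot w_\e\nabla w_\W$. The paper dispatches the integration by parts with a single sentence (``using the same ideas as in Lemma~\ref{lem: square variation}''), while you correctly fill in the verification of Lemma~\ref{l:fancy div thm}'s hypotheses, including the observation that the linear vanishing of $w_\W$ absorbs the interior blow-up of $\nabla\dot w_\e$ so that $w_\W\nabla\dot w_\e$ has nontangential limit zero.
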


\begin{proof}
    From \cite[Theorem 5.3.1]{hp18}, $t \mapsto w_{\W_t}$ is differentiable in $L^2$ at $t=0$ with derivative $\dot{w}_\W$ satisfying 
    \[
     \begin{cases}
      &\Delta \dot{w}_{\W}=0 \quad \text{ in } \W, \\
      &\dot{w}_{\W} + T \cdot \nabla w_{\W} \in H_0^1(\W)
     \end{cases}
    \]
    and $\frac{d}{dt}|_{t=0} \int w_{\W_t} =\int \dot{w}_{\W_t} $
    Utilizing Lemmas \ref{l:NTlimits} and \ref{l:NTlimits2} with $f \equiv 1$, we have that the boundary values are defined in the sense of nontangential limits. Using the same ideas as in Lemma \ref{lem: square variation}, the following equalities are rigorously justified:
    \[
    \int_{\W} \dot{w}_{\W} 
    = \int_{\W} \dot{w}_{\W} (- \Delta w_{\W})
    = \int_{\W} \dot{w}_{\W} (- \Delta w_{\W}) + w_{\W} \Delta \dot{w}_{\W}
    = \int_{\partial^* \W} (-T \cdot \nabla w_{\W})(- \nabla w_{\W} \cdot \nu_{\W})
    = \int_{\partial^* \W} |\nabla w_{\W}|^2 \, T \cdot \nu_{\W}. 
    \]
    Recalling that $\text{tor}(\W_t)= -\frac{1}{2}\int_{\W_t} w_t$, we obtain the result. 
\end{proof}

Next, as a step toward computing the first variation of the term $\nl(\cdot)$ in $\Ep(\cdot)$ at $\W$, we differentiate the truncated barycenter along a variation. Recall that since $\text{diam}(\W) \leq 2 \leq 100$, the truncated barycenter $x_\W$ agrees with the classical barycenter, and the same is true for $x_{\W_t}$ for $t$ sufficiently small.
\begin{lemma}\label{lemma: Barycenter Derivative}
	 The function $t \mapsto x_{\Omega_{t}}$ defines a $C^1$ curve and 
\[
\dot{x}_\W :=\frac{d}{d t}\Big|_{t=0} x_{\Omega_t}=\int_{\partial^*\Omega} {A}_{\Omega} \, T \cdot \nu_{\Omega}\, d \mathcal{H}^{n-1}
\]
where 
\begin{equation}
    \label{eqn: A omega def}
{A}_{\Omega}(x):=\frac{1}{|\Omega|}[x- x_{\Omega}], \qquad x \in \R^n.
\end{equation}
\end{lemma}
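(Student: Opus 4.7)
The plan is as follows. First, since $\mathrm{diam}(\W)\leq 2$ (by Theorem~\ref{thm: existence 2}) and the flow $\Phi_t$ generated by $T\in C^1_c(\mathbb{R}^n;\R^n)$ satisfies $\|\Phi_t-\mathrm{id}\|_{C^0}\to 0$ as $t\to 0$, we have $\mathrm{diam}(\W_t)\leq 100$ for all $|t|$ sufficiently small. By the defining property \eqref{eqn: conditions on q} of $q$, the truncated barycenter therefore coincides with the classical barycenter for both $\W$ and $\W_t$:
$$x_\W=\frac{1}{|\W|}\int_\W y\,dy,\qquad x_{\W_t}=\frac{1}{|\W_t|}\int_{\W_t} y\,dy.$$
Thus it suffices to differentiate these two quantities and then apply the quotient rule.

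Next, I would pull back the integrals to the fixed domain $\W$ via the change of variables $y=\Phi_t(x)$:
$$|\W_t|=\int_\W J_t(x)\,dx,\qquad \int_{\W_t} y\,dy=\int_\W \Phi_t(x)\,J_t(x)\,dx,$$
where $J_t(x)=\det(D\Phi_t(x))$. Since $T$ is $C^1_c$, the flow $t\mapsto\Phi_t$ and the Jacobian $t\mapsto J_t$ are smooth in $t$ with $J_0\equiv 1$, $\dot J_0=\mathrm{div}\,T$, and $\dot\Phi_0=T$; differentiating under the integral sign (justified by compact support and smoothness) gives
$$\frac{d}{dt}\bigg|_{t=0}|\W_t|=\int_\W \mathrm{div}\,T\,dx,\qquad \frac{d}{dt}\bigg|_{t=0}\int_{\W_t} y\,dy=\int_\W \bigl(T(x)+x\,\mathrm{div}\,T(x)\bigr)\,dx.$$
Using the componentwise identity $T_i+x_i\,\mathrm{div}\,T=\mathrm{div}(x_i T)$ and the fact that $x_i T\in C^1_c(\mathbb{R}^n;\R^n)$, the classical divergence theorem for sets of finite perimeter---which applies since $\W$ is a set of finite perimeter with $\mathcal{H}^{n-1}(\partial\W\setminus\partial^*\W)=0$ by Theorem~\ref{thm: existence 2}---converts these to boundary integrals:
$$\frac{d}{dt}\bigg|_{t=0}|\W_t|=\int_{\partial^*\W} T\cdot\nu_\W\,d\mathcal{H}^{n-1},\qquad \frac{d}{dt}\bigg|_{t=0}\int_{\W_t} y\,dy=\int_{\partial^*\W} x\,(T\cdot\nu_\W)\,d\mathcal{H}^{n-1}.$$

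Finally, applying the quotient rule at $t=0$ yields
$$\dot x_\W=\int_{\partial^*\W}\frac{x-x_\W}{|\W|}\,T\cdot\nu_\W\,d\mathcal{H}^{n-1}=\int_{\partial^*\W} A_\W(x)\,T\cdot\nu_\W\,d\mathcal{H}^{n-1},$$
which is the claimed formula. The $C^1$ regularity of the curve $t\mapsto x_{\W_t}$ follows from the continuous dependence of $\Phi_t$ and $J_t$ on $t$ together with the uniform lower bound $|\W_t|\geq|\W|/2>0$ for $|t|$ small. I do not anticipate a serious obstacle here: this is a textbook shape-derivative computation, and the only conceptual subtlety---the truncation built into the definition of $x_\W$---is rendered inactive by the a priori diameter bound $\mathrm{diam}(\W)\leq 2$.
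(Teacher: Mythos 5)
Your proof is correct and takes essentially the same approach as the paper: the paper invokes the classical Jacobian computation and Hadamard's variational formula to get the two boundary-integral shape derivatives of $|\W_t|$ and $\int_{\W_t} y\,dy$, and then applies the quotient rule, which is exactly your derivation (you simply spell out the change of variables, differentiation under the integral, and the divergence theorem for sets of finite perimeter that the paper cites as ``classical''). Your preliminary remark that the diameter bound makes the truncated barycenter coincide with the classical one for $\W_t$ (small $t$) is also made by the paper, just before the lemma rather than inside its proof.
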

\begin{proof}
	By the classical Jacobian computation and Hadamard's
variational formula respectively, we know that
$\frac{d}{d t}|_{t=0}|\Omega_t|  =\int_{\partial^* \Omega} T \cdot \nu_{\Omega} \, d \mathcal{H}^{n-1}$  and
$\left.\frac{d}{d t}\right|_{t=0} \int_{\Omega_t} x  \,dx =\int_{\partial^* \Omega} x \ T \cdot \nu_{\Omega} d \mathcal{H}^{n-1}$.
Therefore,
\begin{align*}
\left. \frac{d}{d t}\right|_{t=0} \frac{1}{\left|\Omega_{t}\right|} \int_{\Omega_{t}} x\, d x  =-\frac{1}{|\W|^2}\int_{\partial^* \Omega} &T \cdot \nu_{\Omega} \,d\mathcal{H}^{n-1} \cdot \int_{\Omega} x\,  d x+\frac{1}{|\Omega|} \int_{\partial^* \Omega} x\  T \cdot \nu_{\Omega} \, d \mathcal{H}^{n-1} \\
& =  \int_{\partial^* \Omega}  \frac{1}{|\W|}\left[x -x_\Omega \right]   T \cdot \nu_{\Omega}  \, d \mathcal{H}^{n-1}  =\int_{\partial^* \Omega} A_{\Omega}\,  T \cdot \nu_{\Omega} \, d \mathcal{H}^{n-1} .
\end{align*}
\end{proof}

Next we compute the first variation of the main sub-term of $\nl(\W)$.
\begin{lemma} \label{l:distterm}
 The map 
 \[
 t \mapsto \beta^2(\Omega_t):=\int \big(u_{\W_t}^f - u_{B(x_{\W_t})}^f \big)^2
 \]
 is differentiable at $t=0$ with  
 \[
 \frac{d}{dt} \beta^2(\W_t) \Bigr|_{t=0}=2\int_{\partial^* \W} \big( a_1(x) |\nabla w_{\W}(x)|^2 - V\cdot A_\W(x)\big) T\cdot \nu_\W \ d \mathcal{H}^{n-1}(x),
 \]
  where $A_\W$ is the function defined in \eqref{eqn: A omega def}, $V$ is a vector depending on $\W$ and $f$ with $|V|\leq C$ and $a_1: \partial^*\W \to \R$ is a function depending on $\W$ and $f$ with
 \[
 \|a_1 \|_{C^{0,\alpha}(\partial^* \W)} \leq C,
 \]
for a constant $C>0$ depending only on $n$ and  $\| f\|_{L^{\infty}} \leq 1$. 
\end{lemma}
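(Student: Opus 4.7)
I would expand the square and treat each piece separately:
\[
\beta^2(\W_t) = \int (u_{\W_t}^f)^2 - 2\int u_{\W_t}^f \, u_{B(x_{\W_t})}^f + \int (u_{B(x_{\W_t})}^f)^2.
\]
The derivative of the first term is exactly the content of Lemma~\ref{lem: square variation} and equals $2\int_{\partial^*\W} |\nabla p|\,|\nabla u_\W^f|\, T\cdot \nu_\W \, d\mathcal{H}^{n-1}$. For the cross term I would use the product rule to split the derivative as $\int \dot u_\W^f \, u_{B(x_\W)}^f + \int u_\W^f \, \dot v$, where $\dot v := \frac{d}{dt}|_{t=0} u_{B(x_{\W_t})}^f$. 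To handle the first piece, I introduce the auxiliary potential $q$ solving
\[
-\Delta q = u_{B(x_\W)}^f \text{ in } \W, \qquad q = 0 \text{ on } \partial\W,
\]
and repeat the regularization-and-divergence-theorem argument of Lemma~\ref{lem: square variation} verbatim: since $u_{B(x_\W)}^f$ is bounded, Lemmas~\ref{l:NTlimits}, \ref{l:NTlimits2}, and \ref{l:gradhigher} all apply to $q$ in place of $p$, yielding
\[
\int \dot u_\W^f \, u_{B(x_\W)}^f = \int_{\partial^*\W} |\nabla q|\,|\nabla u_\W^f|\, T\cdot \nu_\W \, d\mathcal{H}^{n-1}.
\]

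For the remaining contributions coming from $\dot v$ and from $\frac{d}{dt}\int(u_{B(x_{\W_t})}^f)^2$, I would use the identity $u_{B(x)}^f(y) = v_x(y-x)$, where $v_x \in H^1_0(B)$ solves $-\Delta v_x = f(\cdot+x)$ in $B$, to see that $x\mapsto u_{B(x)}^f$ is smooth as a map into $L^2(\R^n)$ with derivative bounded in terms of $\|f\|_{L^\infty}\leq 1$ alone. By the chain rule and Lemma~\ref{lemma: Barycenter Derivative}, these two pieces therefore combine to $V_0\cdot\dot x_\W$ for some explicit vector $V_0$ with $|V_0|\leq C_n$. Collecting everything,
\[
\frac{d}{dt}\beta^2(\W_t)\Bigr|_{t=0} = 2\int_{\partial^*\W} \bigl(|\nabla p|-|\nabla q|\bigr)\,|\nabla u_\W^f|\,T\cdot\nu_\W \, d\mathcal{H}^{n-1} + V_0\cdot\dot x_\W,
\]
and a second application of Lemma~\ref{lemma: Barycenter Derivative} rewrites $V_0\cdot \dot x_\W = \int_{\partial^*\W} V_0\cdot A_\W \, T\cdot\nu_\W \, d\mathcal{H}^{n-1}$, which gives the claimed structure with $V := -V_0/2$.

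It remains to set $a_1 := \bigl(|\nabla p|-|\nabla q|\bigr)|\nabla u_\W^f|/|\nabla w_\W|^2$ and check that $a_1 \in C^{0,\alpha}(\partial^*\W)$ with norm bounded by a constant depending only on $n$. This follows from Lemma~\ref{l:gradhigher}: the three quotients $|\nabla p|/|\nabla w_\W|$, $|\nabla q|/|\nabla w_\W|$, and $|\nabla u_\W^f|/|\nabla w_\W|$ are each H\"older continuous up to $\partial\W$ (using Theorem~\ref{t:bhpf} with right-hand sides $f$, $u_{B(x_\W)}^f$, and $1$, all bounded by a constant depending on $n$ and $\|f\|_{L^\infty}$), so products and differences preserve the class and the estimate. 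The main technical obstacle is the integration-by-parts identity with $q$ in place of $p$ and the differentiability of $t\mapsto u_{B(x_{\W_t})}^f$; however, the former is a direct adaptation of Lemma~\ref{lem: square variation} requiring no new ideas (Lemma~\ref{lem: poisson kernel} and Lemma~\ref{l:fancy div thm} being the key tools), and the latter reduces to smoothness of $x\mapsto v_x$ in $L^2(B)$, which follows from standard elliptic theory since $f \in C^\infty_c(\R^n)$.
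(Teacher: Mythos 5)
Your proposal is essentially correct and runs along the same lines as the paper's proof, differing mainly in how the terms are grouped. The paper differentiates $\beta^2(\W_t)$ directly to get $2\int(u_\W^f - u_{B(x_\W)}^f)(\dot u_\W^f - \dot u_{B(x_\W)}^f)$, handles the $\dot u_\W^f$ part with a single auxiliary potential $p_1$ solving $-\Delta p_1 = u_\W^f - u_{B(x_\W)}^f$ in $\W$, and handles the $\dot u_{B(x_\W)}^f$ part with an auxiliary potential $p_2$ solving the same equation in $B(x_\W)$. Your expansion of the square and use of $p$ (right-hand side $u_\W^f$) and $q$ (right-hand side $u_{B(x_\W)}^f$) separately is equivalent by linearity: $p_1 = p - q$, and since all three vanish on $\partial\W$, the paper's $|\nabla p_1|$ on $\partial\W$ is exactly the signed quantity $|\nabla p| - |\nabla q|$ appearing in your $a_1$. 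Your $V := -V_0/2$ agrees with the paper's $V = \int_{\partial B(x_\W)} |\nabla p_2|\,|\nabla u_{B(x_\W)}^f|\,\nu_{B(x_\W)}$.

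The one place where your argument glosses over a real subtlety is the claim that $x\mapsto u_{B(x)}^f$ has $L^2$-derivative bounded by a constant depending only on $\|f\|_{L^\infty}$. From the identity $u_{B(x)}^f(y) = v_x(y-x)$, the naive chain rule gives $\partial_{x_i}u_{B(x)}^f(y) = (\partial_{x_i}v_x)(y-x) - (\partial_{z_i}v_x)(y-x)$, and $\partial_{x_i}v_x$ solves a Poisson equation with right-hand side $(\partial_i f)(\cdot+x)$; each piece therefore a priori depends on $\|\nabla f\|$. The $\|f\|_{L^\infty}$-only bound requires noticing the cancellation: the combination $(\partial_{x_i}v_x) - (\partial_{z_i}v_x)$ is harmonic in $B$ with boundary data $|\nabla v_x|\,\nu_i$ (since $\partial_{x_i}v_x$ vanishes on $\partial B$ and $\nabla v_x = -|\nabla v_x|\nu$ there), and $\|\nabla v_x\|_{L^\infty}\leq C\|f\|_{L^\infty}$ by elliptic estimates on the smooth ball. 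This is precisely the Hadamard shape-derivative formula, which the paper invokes directly (it cites \cite[Theorem 5.3.1]{hp18} and records $\dot u_{B(x_\W)}^f$ as the harmonic function in $B(x_\W)$ with boundary value $|\nabla u_{B(x_\W)}^f|\,\dot x_\W\cdot\nu_{B(x_\W)}$). Your route via $v_x$ works, but you should add this cancellation step explicitly; without it, the stated bound on $|V|$ is not justified. Two smaller notational points: Lemmas~\ref{l:NTlimits} and~\ref{l:NTlimits2} are about $\nabla u_\W^f$ and $\dot u_\W^f$ respectively (not about the auxiliary potentials), so only the argument of Lemma~\ref{l:gradhigher} needs to be repeated ``with $q$ in place of $p$''; and the quantity $|\nabla p| - |\nabla q|$ is signed, which is fine since $C_\W$ in Theorem~\ref{t:vectoreulerlagrange} may have either sign anyway.
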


\begin{proof}
 Let $\dot{x}_\W = \int_{\partial^*\Omega} {A}_{\Omega} \ T \cdot \nu_{\Omega}\, d \mathcal{H}^{n-1}$ be the initial velocity of the barycenter computed in  Lemma \ref{lemma: Barycenter Derivative}. Then $t \mapsto u^f_{B(x_{\W_t})}$ is differentiable at $t=0$ with derivative $\dot u^f_{B(x_\W)}$ satisfying
     \[
     \begin{cases}
      -\Delta \dot{u}^f_{B(x_\W)}=0  &  \text{ in } B(x_\W), \\
     \ \ \ \ \  \dot{u}_{B(x_\W)}^f = |\nabla u_{B(x_\W)}^f|\,  \dot{x}_\W \cdot \nu_{B(x_\W)} & \text{ on }\partial B(x_\W),
     \end{cases}
    \] 
 and we have 
 \[
 \frac{d}{dt}\Bigr|_{t=0} \int \left(u_{\W_t}^f - u_{B(x_{\W_t})}^f \right)^2
 = 2\int \left(u_{\W}^f - u_{B(x_{\W})}^f \right)\left(\dot{u}_{\W}^f - \dot{u}_{B(x_{\W})}^f \right).  
 \]
Using the ideas in Lemma \ref{lem: square variation}, we have 
 \begin{equation}
     \label{eqn: first var term 1}
 \int \left(u_{\W}^f - u_{B(x_{\W})}^f \right)\dot{u}_{\W}^f
 = \int_{\W}\left(u_{\W}^f - u_{B(x_{\W})}^f \right)\dot{u}_{\W}^f
 = \int_{\partial^* \W} |\nabla p_1| \, |\nabla  u_{\W}^f| \, T \cdot \nu_\W \, d\mathcal{H}^{n-1} \,.
 \end{equation}
 where $p_1$ solves 
 \begin{equation}
     \label{eqn: aux PDE}
 \begin{cases}
      -\Delta p_1 = u_{\W}^f - u_{B(x_{\W})}^f & \text{ in } \W, \\
  \ \ \ \ \  p_1 = 0 &  \text{ on } \partial \W. 
 \end{cases}
 \end{equation}
 As usual, $|\nabla p_1|$ and $|\na u_\W^f|$ on the right-hand side of \eqref{eqn: first var term 1} are understood in the sense of non-tangential limits, with the former existing by proof of Lemma~\ref{l:gradhigher}.
 Next, letting $p_2$ solve 
 \[
 \begin{cases}
     -\Delta p_2 = u_{\W}^f - u_{B(x_{\W})}^f &  \text{ in } B({x_{\W}}), \\
  \ \ \ \ \  p_2 = 0 &  \text{ on } \partial B(x_{\W}), 
 \end{cases}
 \]
 we have that 
 \[
 \begin{aligned}
 \int \left(u_{\W}^f - u_{B(x_{\W})}^f \right)\dot{u}_{B(x_{\W})}^f 
 &= \int_{B(x_{\W})} \left(-\Delta p_2 \right)\dot{u}_{B(x_{\W})}^f \\
 &= \int_{\partial B(x_{\W})} |\nabla p_2| \, | \nabla u_{B(x_{\W})}^f| \, \dot{x}_\W \cdot \nu_{B(x_\W)} \, d\mathcal{H}^{n-1}
 =  \dot{x}_\W \cdot \int_{\partial B(x_{\W})} |\nabla p_2|\,  | \nabla u_{B(x_{\W})}^f| \, \nu_{B(x_\W)} d\mathcal{H}^{n-1}.
 \end{aligned}
 \]
So, by Lemma~\ref{lemma: Barycenter Derivative}, 
 \[
 \int \left(u_{\W}^f - u_{B(x_{\W})}^f \right)\dot{u}_{B(x_{\W})}^f 
 = \int_{\partial^* \W} V \cdot A_{\W} \, T \cdot \nu_{\W}\,  d \mathcal{H}^{n-1},
 \]
 where 
 \[
 V=\int_{\partial B(x_{\W})} |\nabla p_2| \, | \nabla u_{B(x_{\W})}^f| \, \nu_{B(x_\W)} \, d \mathcal{H}^{n-1}
 \]
 is a constant vector field that is independent of $T$ and has modulus bounded by a universal constant since  $\| f \|_{L^{\infty}} \leq 1$. We have thus shown
 \begin{equation}
     \label{eqn: nl term intermediate deriv comp}
  \frac{d}{dt}\Bigr|_{t=0} \int \left(u_{\W_t}^f - u_{B(x_{\W_t})}^f \right)^2 = 2\int_{\partial^* \W} \left(  |\nabla p_1| \, |\nabla  u_{\W}^f| - V\cdot A_\W \right)\, T \cdot \nu_\W \, d\mathcal{H}^{n-1} \,.
  \end{equation}
 We now employ the boundary Harnack principle Theorem \ref{t:bhpf} on $\partial \W$ to $|\na p_1|/|\na w_\W|$ and $|\na u^f_\W| /|\na w_\W|$ just as in Lemma~\ref{l:gradhigher} to find that $|\nabla p_1(x)|\, |\na u_\W^f(x)|=a_1(x)|\nabla w_{\W}(x)|^2$ where $a_1(x)$ H\"older continuous and $ \|a_1(x) \|_{C^{0,\alpha}(\partial^* \W)} \leq C$ for a constant $C$ depending only on $n$ and  $\| f\|_{L^{\infty}} \leq 1$.
\end{proof}

Putting the preceding lemmas together, we arrive at the following distributional form of the Euler-Lagrange equation:
\begin{theorem}  \label{t:vectoreulerlagrange}
  If $\Omega_t$ is measure preserving up to first order, i.e. if $\int_{\partial \W }T\cdot \nu_\W \, d\mathcal{H}^{n-1} = 0$, then 
  \begin{equation}
      \label{eqn: meas pres EL}
 -\frac{1}{2} \int_{\partial^* \W} |\nabla w_{\Omega}|^2 T \cdot \nu_{\W}
 +\tau \int_{\partial^* \W} C_{\Omega}(a_1(x)|\nabla w_{\W}|^2- V\cdot A_\Omega) \, T\cdot \nu_\W \, d \mathcal{H}^{n-1}(x)=0 
   \end{equation}
 for a constant $C_{\W} \in \R$ depending on $\W$ and on the parameter $a$ in the definition of $\nl(\, \cdot\, )$ with $|C_{\W}|\leq 2.$
Here $a_1$ and $V$ are, respectively, the H\"{o}lder continuous function and fixed vector from Lemma~\ref{l:distterm} and $A_\W$ is from Lemma~\ref{lemma: Barycenter Derivative}.

\end{theorem}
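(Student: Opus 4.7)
The statement is the vanishing of the first variation of $\Ep$ at the minimizer $\W$ along any measure-preserving vector field, so the plan is a direct assembly argument: invoke the three variational formulas already proved (Lemmas~\ref{l:diff torsion}, \ref{l:distterm}, and the differentiability behind them), differentiate $\Ep$ along the flow $\Omega_t = \Phi_t(\Omega)$, and set the derivative to zero by minimality.

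First I would check admissibility: since $\W$ satisfies the energy bound \eqref{eqn: low base energy main} and $\mathrm{diam}(\W)\le 2$ by Theorem~\ref{thm: existence 2}, Proposition~\ref{lemma: qual stability} gives $|\W|\le(1+\baryEps/2)\omega_n$, which is strictly less than $2\omega_n$. Hence for $|t|$ small enough $\Omega_t$ is open, bounded, and satisfies $|\Omega_t|\le 2\omega_n$, so it is a valid competitor for \eqref{eqn: MAIN min prob}. By minimality the map $t\mapsto \Ep(\Omega_t)$ has a local minimum at $t=0$.

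Next I would differentiate each term at $t=0$:
\begin{itemize}
\item[(a)] \emph{Torsion term:} Lemma~\ref{l:diff torsion} yields
\[
\tfrac{d}{dt}\big|_{t=0}\tor(\Omega_t)=-\tfrac12\int_{\partial^*\W}|\nabla w_\W|^2\,T\cdot\nu_\W\,d\mathcal H^{n-1}.
\]
\item[(b)] \emph{Volume penalization:} The Jacobian formula gives $|\Omega_t|=|\W|+t\int_{\partial^*\W}T\cdot\nu_\W\,d\mathcal H^{n-1}+o(t)=|\W|+o(t)$ by hypothesis. Since $\fvbar$ is Lipschitz with $\fvbar(\omega_n)=0$, the contribution of this term is $o(t)$ whether $|\W|$ is $<\omega_n$, $=\omega_n$, or $>\omega_n$; in particular its derivative at $t=0$ equals $0$.
\item[(c)] \emph{Nonlocal term:} Writing $\nl(\W)=h(\beta^2(\W))$ with $h(s)=\sqrt{a^2+(a-s)^2}$, we have $h'(s)=(s-a)/\sqrt{a^2+(a-s)^2}$, so $|h'|\le 1$. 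By the chain rule and Lemma~\ref{l:distterm},
\[
\tfrac{d}{dt}\big|_{t=0}\nl(\Omega_t)=h'(\beta^2(\W))\cdot 2\int_{\partial^*\W}\bigl(a_1|\nabla w_\W|^2-V\cdot A_\W\bigr)T\cdot\nu_\W\,d\mathcal H^{n-1}.
\]
\end{itemize}

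Combining these, setting $C_\W:=2h'(\beta^2(\W))$ (so $|C_\W|\le 2$), and using that the two-sided derivative of $\Ep(\Omega_t)$ at $t=0$ exists and vanishes (apply minimality both to $T$ and $-T$, both of which satisfy the measure-preservation hypothesis), I arrive at \eqref{eqn: meas pres EL}.

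I do not anticipate a real obstacle: each ingredient has been established earlier. The only moment requiring a little care is item (b), where the non-differentiability of $\fvbar$ at $\omega_n$ could cause concern, but the infinitesimal measure-preservation hypothesis reduces this to an $o(t)$ statement that does not need a one-sided analysis. The only other thing to double-check is that $C_\W$ is indeed a single scalar depending on $\W$ and on the parameter $a$ (since $\beta^2(\W)$ depends on $\W$ and $h$ depends on $a$), which is immediate from its definition.
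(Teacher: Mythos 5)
Your proof is correct and takes essentially the same route as the paper: differentiate the torsion term via Lemma~\ref{l:diff torsion}, differentiate $\nl$ via the chain rule and Lemma~\ref{l:distterm} to produce $C_\W = 2h'(\beta^2_f(\W))$, and set the derivative to zero by two-sided minimality. Your treatment of item~(b) is actually more explicit than the paper's (which silently drops the volume term): the Lipschitz bound on $\fvbar$ combined with the measure-preservation hypothesis is exactly the right way to handle the possible kink at $\omega_n$.
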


\begin{proof}
 From Lemma \ref{l:diff torsion} we have the derivative of the torsional rigidity term. To take the derivative of $\nl(\W)$ deinfed in \eqref{eqn: SP nl}, we utilize Lemma \ref{l:distterm} to obtain 
 \[
 \begin{aligned}
 \frac{d}{dt}\Bigr|_{t=0} \nl(\W_t)&= \frac{\beta^2_f(\W)-a}{\sqrt{(a^2+(a-\beta^2_f(\W))^2)}} \ \frac{d}{dt}\Bigr|_{t=0} \beta^2_f(\Omega_t) \\
 &=2 \frac{\beta^2_f(\W)-a}{\sqrt{(a^2+(a-\beta^2_f(\W))^2)}}
 \ \int_{\partial^* \W}  (a_1(x) |\nabla w_{\W}(x)|^2 - V \cdot A_{\W} )\, T \cdot \nu_{\W} \ d \mathcal{H}^{n-1}(x).
 \end{aligned}
 \]
  Letting $C_{\W} =2 \frac{\beta^2_f(\W)-a}{\sqrt{(a^2+(a-\beta^2_f(\W))^2)}}$, the proof  of \eqref{eqn: meas pres EL} follows. 
\end{proof}
We can use Theorem~\ref{t:vectoreulerlagrange} to get the following pointwise form of the Euler-Lagrange equation. In the following statement, $\bar{\tau}$ is the dimensional constant from Theorem~\ref{thm: existence 2}.
\begin{corollary} \label{c:pointeulerlagrange}
{There is a dimensional constant $\bar\tau_0\in (0, \bar{\tau}]$ such that, provided $\tau\in (0, \bar\tau_0]$,}  the torsion function $w_\W$ of $\W$ satisfies 
    \begin{equation} \label{e:rhsholder}
    |\nabla w_{\W}|^2=a_2(x),
    \end{equation}
    for $\mathcal{H}^{n-1}$ a.e. point of $\partial^* \W$,
    where $\|a_2\|_{C^{0,\alpha}(\partial \W)} \leq C$  with the constant $C$ depending on $n$ and $\| f\|_{L^{\infty}} \leq 1$, and with $a_2(x)$ satisfying $0< C_1^{-1} \leq a_2(x) \leq C_1$ with $C_1$ depending on $n$ through the Lipschitz and nondegeneracy estimates from Theorem~\ref{thm: existence 2}.   
\end{corollary}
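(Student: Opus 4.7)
The plan is to promote the distributional Euler--Lagrange equation of Theorem \ref{t:vectoreulerlagrange} to a pointwise one by a Lagrange--multiplier argument, then algebraically isolate $|\nabla w_\W|^2$ and read off the H\"older regularity. Define
\[
g(x) := \Bigl(-\tfrac{1}{2} + \tau C_\W a_1(x)\Bigr)|\nabla w_\W(x)|^2 - \tau C_\W\, V\cdot A_\W(x), \qquad x \in \partial^*\W,
\]
with $|\nabla w_\W|$ in the non-tangential sense of Lemma \ref{l:NTlimits} (applied to $f\equiv 1$). Equation \eqref{eqn: meas pres EL} reads
\[
\int_{\partial^*\W} g\,T\cdot\nu_\W\,d\mathcal{H}^{n-1} = 0 \qquad \text{whenever } T\in C^1_c(\R^n;\R^n),\ \int_{\partial^*\W} T\cdot\nu_\W\,d\mathcal{H}^{n-1} = 0.
\]
The Lipschitz bound on $w_\W$ from Theorem \ref{thm: Lip}, the diameter bound $\diam(\W)\leq 2$, the bound $|C_\W|\leq 2$, and $\|a_1\|_{C^{0,\alpha}}+|V|\leq C_n$ from Lemma \ref{l:distterm} together force $g\in L^\infty(\partial^*\W)$ with a dimensional bound.

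The next step is to show that $g$ is $\mathcal{H}^{n-1}$-a.e.\ constant on $\partial^*\W$. Let $x_0,x_1\in\partial^*\W$ be Lebesgue points of $g$ with respect to $\mathcal{H}^{n-1}\llcorner\partial^*\W$. Using the cone property \eqref{eqn: cone} together with the perimeter density bounds \eqref{eqn: perim density}, I construct smooth compactly supported vector fields $T_r$ approximately parallel to $\nu_\W(x_0)$ on $B_r(x_0)$ and to $-\nu_\W(x_1)$ on $B_r(x_1)$, normalized so that $\int_{\partial^*\W\cap B_r(x_0)}T_r\cdot\nu_\W\,d\mathcal{H}^{n-1}=1=-\int_{\partial^*\W\cap B_r(x_1)}T_r\cdot\nu_\W\,d\mathcal{H}^{n-1}$. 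The first-order volume preservation constraint holds for such $T_r$, and plugging them into the Euler--Lagrange identity and sending $r\to 0^+$ yields $g(x_0)=g(x_1)$. Hence $g\equiv\lambda$ a.e.\ on $\partial^*\W$ for some constant $\lambda\in\R$. This is the main delicate step: constructing $T_r$ so that the normalization passes stably to the limit relies on the Ahlfors regularity \eqref{eqn: perim density} and the cone property \eqref{eqn: cone}, but no structure beyond what Theorem \ref{thm: existence 2} provides.

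To finish, choose $\bar\tau_0\in(0,\bar\tau]$, depending only on $n$, so that $2\tau\|a_1\|_{L^\infty}|C_\W|\leq 1/4$ for $\tau\leq\bar\tau_0$; then $\tfrac12-\tau C_\W a_1(x)\in[\tfrac14,\tfrac34]$ uniformly in $x$, and solving $g\equiv\lambda$ gives
\[
|\nabla w_\W(x)|^2 \;=\; \frac{-\lambda-\tau C_\W\,V\cdot A_\W(x)}{\tfrac12-\tau C_\W a_1(x)}\;=:\;a_2(x).
\]
The H\"older bound $\|a_2\|_{C^{0,\alpha}(\partial\W)}\leq C$ follows from $a_1\in C^{0,\alpha}$, the Lipschitz character of $A_\W$ (recall $A_\W(x)=|\W|^{-1}(x-x_\W)$ and $\diam(\W)\leq 2$), and the uniform lower bound on the denominator. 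The two-sided bound $C_1^{-1}\leq a_2\leq C_1$ is then automatic from the identity $a_2=|\nabla w_\W|^2$ on $\partial^*\W$: the upper bound comes from Theorem \ref{thm: Lip}, while the positive lower bound follows from the blow-up analysis in the proof of Lemma \ref{l:NTlimits}, where the blow-up limit at $x\in\partial^*\W$ equals $|\nabla w_\W(x)|\,x_n^+$ and must be nontrivial by the linear nondegeneracy in \eqref{eqn: NDandLip}.
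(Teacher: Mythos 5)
Your proposal is correct and follows essentially the same approach as the paper's proof: starting from the measure-preserving distributional Euler--Lagrange identity of Theorem~\ref{t:vectoreulerlagrange}, testing against normalized vector fields concentrated near two reduced-boundary points, passing $r\to 0$ to conclude the integrand is constant $\mathcal{H}^{n-1}$-a.e., then choosing $\tau$ small and dividing to isolate $|\nabla w_\W|^2$ with its H\"older and two-sided bounds. Your treatment of the normalization via Ahlfors regularity and Lebesgue points is a slightly more explicit version of what the paper does with the Dirac approximations $\psi_\epsilon$ and $\epsilon$-dependent scalings of $T_1,T_2$, but it is the same argument.
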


\begin{proof}
 Fix $x_0,y_0 \in \partial^* \W$. Let $T_1$ be in the direction of the outward normal at $x_0$ and $T_2$ be in the direction of the inward unit normal at $y_0$. We take a standard approximation of Dirac by letting  $0\leq \psi \in C_0^{\infty}(B_1)$ with
 $\int \psi =1$ and defining $\psi_{\epsilon}(x):=\epsilon^{-n} \psi(x/\epsilon)$. We now define $\Omega_t$ as the image of $\Omega$ under the map
 \begin{equation}
     \label{eqn: variation VF}
 x \mapsto x + tT_1 \psi_{\epsilon}(x-x_0)+tT_2\psi_{\epsilon}(x-y_0). 
 \end{equation}
 We recall in the proof of Lemma \ref{lemma: Barycenter Derivative} that 
 \[
 \frac{d}{d t}\Big|_{t=0}|\Omega_t|  =\int_{\partial^* \Omega} T_1 \psi_{\epsilon}(x-x_0)+T_2\psi_{\epsilon}(x-y_0) \cdot \nu_{\Omega} \, d \mathcal{H}^{n-1}.
 \]
 We choose the lengths of $T_1,T_2$ depending on $\epsilon$ so that the above expression is zero and thus the vector field \eqref{eqn: variation VF} is measure preserving to first order. 
  Thus by Theorem \ref{t:vectoreulerlagrange} we obtain that 
  \[
  \begin{aligned}
 &-\frac{1}{2} \int_{\partial^* \W} |\nabla w_{\Omega}|^2 (\psi_{\epsilon}(x-x_0)T_1 +\psi_{\epsilon}(x-y_0)T_2)\cdot \nu_{\W} \ d \mathcal{H}^{n-1}(x) \\
 &+\tau \int_{\partial^* \W}  C_{\Omega} \, a_1(x)|\nabla w_{\W}|^2 (\psi_{\epsilon}(x-x_0)T_1 +\psi_{\epsilon}(x-y_0)T_2)\cdot \nu_{\W} \ d \mathcal{H}^{n-1}(x)\\
 &- \tau\int_{\partial^* \W}  C_{\Omega} V\cdot A_{\W} (\psi_{\epsilon}(x-x_0)T_1 +\psi_{\epsilon}(x-y_0)T_2)\cdot \nu_{\W} \ d \mathcal{H}^{n-1}(x) =0. 
 \end{aligned}
 \]
 Since $x_0,y_0 \in \partial^* \W$, then as $\epsilon \to 0$, we have that $T_1,T_2$ approach the outward and inward unit normals respectively at $x_0$ and $y_0$. Since $x_0,y_0$ are any two points on $\partial^* \W$, then we obtain for $\mathcal{H}^{n-1}$ a.e. point of $\partial^* \W$ that  
 \[
 \frac{1}{2}|\nabla w_{\W}|^2  -\tau C_\W a_1(x) |\nabla w_{\W}|^2+\tau C_\W \, V \cdot A_{\W}(x)  =L, 
 \]
 for some constant $L$. By choosing $\tau$ to be sufficiently small, then $L$ is bounded above and below by the Lipschitz and nondegeneracy estimates from Section \ref{sec: a priori and existence}. Factoring out $|\nabla w_{\W}|^2$ and moving everything to the right hand side we obtain \eqref{e:rhsholder}
\end{proof}

\subsection{Boundary regularity} \label{s:boundaryregularity}

If every free boundary point is a so-called ``flat" free boundary point, then  Corollary~\ref{c:pointeulerlagrange} can be used to prove that $\partial \W \in C^{1,\gamma}$. The next lemma will show that every free boundary point is ``flat". 

\begin{lemma} \label{l:flat}
 Fix $\epsilon>0$. There exists a constant $\bar{\tau}_1\in (0, \bar{\tau}_0]$ depending on $n$ and $\e$ such that if $\tau \leq \bar{\tau}_1$, then, up to a translation by $-x_\W$, the minimizer $\W$ satisfies 
 \[
 B_{1-\epsilon} \subset \W \subset B_{1+\epsilon}. 
 \]
\end{lemma}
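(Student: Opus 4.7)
The plan is to use $B$ as a competitor to obtain an energy bound of the form $\tor(\W)+\fvbar(|\W|)\le \tor(B)+O(\tau)$, then invoke qualitative stability together with the Lipschitz property of the truncated barycenter to conclude $\W$ is $L^1$-close to $B(x_\W)$, and finally upgrade this $L^1$ closeness to the two-sided inclusion using the uniform density estimates from Proposition~\ref{prop: basic properties}.

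First, since $\mathrm{diam}(B)\le 100$ gives $x_B=0$, one has $u_B^f = u_{B(x_B)}^f$ and therefore $\nl(B)=a\sqrt{2}\le \sqrt{2}$. Testing $\W$ against $B$ yields
\[
\tor(\W)+\fvbar(|\W|)\le \Ep(\W)\le \Ep(B)\le \tor(B)+\sqrt{2}\,\tau.
\]
Given a target parameter $\delta_0\in (0,\baryEps]$ to be fixed at the end depending on $\e$ and $n$, I would shrink $\bar\tau_1\le \bar\tau_0$ so that $\sqrt{2}\bar\tau_1$ lies below the threshold $\ccdel(n,\delta_0)$ provided by Proposition~\ref{lemma: qual stability} applied with that parameter. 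The proposition then produces $x_1\in\R^n$ with $|\W\Delta B(x_1)|\le \omega_n\delta_0/2$. Applying Lemma~\ref{lem: lip bary} to the translated set $\tilde\W:=\W-x_1$ and to $B$ itself (both within $\baryEps\omega_n$ of $B$), and using the translation equivariance $x_{\tilde\W}=x_\W-x_1$, gives $|x_\W-x_1|\le C_n|\W\Delta B(x_1)|$. A triangle inequality (using $|B(x_1)\Delta B(x_\W)|\le C_n|x_1-x_\W|$, since both centers lie in $B_3$) then yields
\[
|\W\Delta B(x_\W)|\le C_n'|\W\Delta B(x_1)|\le C_n'\omega_n\delta_0/2.
\]

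After translating by $-x_\W$, set $\W_0:=\W-x_\W$ so that $x_{\W_0}=0$ and $|\W_0\Delta B|\le C_n'\omega_n\delta_0/2$. I would upgrade this volume bound to $B_{1-\e}\subset \W_0\subset B_{1+\e}$ by a contradiction argument based on the density estimates of Proposition~\ref{prop: basic properties}. For the outer inclusion, suppose some $y\in\W_0$ has $|y|>1+\e$. Then either $B_{\e/4}(y)\subset \W_0$, which contributes $\omega_n(\e/4)^n$ to $|\W_0\setminus B|$ since $B_{\e/4}(y)\cap B=\emptyset$; or there is $z\in\partial\W_0\cap B_{\e/4}(y)$ with $|z|>1+3\e/4$, and the lower volume density \eqref{eqn: volume density} at $z$ on scale $r:=\min(\e/8,r_0)$ contributes $c_n r^n$ to $|\W_0\setminus B|$ since $B_r(z)\cap B=\emptyset$. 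The inner inclusion is entirely symmetric, using the upper density estimate at any boundary point inside $B$ to produce a portion of $B\setminus \W_0$. In every case $|\W_0\Delta B|\ge c(n)\min(\e,r_0)^n$, so fixing $\delta_0=\delta_0(n,\e)$ small enough that $C_n'\omega_n\delta_0/2<c(n)\min(\e,r_0)^n$ produces a contradiction and finishes the lemma.

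The technical heart is the last geometric step, but it is routine once the corkscrew and density estimates of Proposition~\ref{prop: basic properties} are in hand. The truly essential ingredient is the Lipschitz property of the truncated barycenter from Lemma~\ref{lem: lip bary}: without it, one cannot pass from the non-canonical center $x_1$ coming from qualitative stability to the specific center $x_\W$, a step that the main application to Theorem~\ref{thm: eigenfunction estimates} requires (since $x_\W$ must not depend on $f$).
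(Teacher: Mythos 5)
Your proof is correct and follows the same strategy as the paper's: test $B$ as a competitor to get a small energy excess, invoke Proposition~\ref{lemma: qual stability} to obtain $L^1$-closeness to a unit ball, then upgrade to the two-sided inclusion via the density estimates from Proposition~\ref{prop: basic properties}. You are, if anything, more careful than the paper's own proof on two points: the paper writes only ``after translation'' without explicitly passing from the non-canonical center $x_1$ supplied by qualitative stability to the specific center $x_\W$ (a step you handle via the Lipschitz property of the truncated barycenter, Lemma~\ref{lem: lip bary}, together with translation equivariance), and you cap the density-estimate scale at $\min(\e/8,r_0)$, respecting the $r\le r_0$ restriction in \eqref{eqn: volume density} which the paper's choice $r=\dist(y,B_1)$ does not explicitly address.
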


\begin{proof}
 By choosing $\tau$ small enough, we have from \eqref{eqn: qual stab} that after translation 
 \[
  |B_1 \Delta \W| \leq \epsilon_1. 
 \]
 We will now improve the $L^1$ estimate above to an $L^{\infty}$ estimate. Assume $y \in \partial \W \setminus B_1$, and let $r=\text{dist}(y,B_1)$. From the density estimates \eqref{eqn: volume density} we have
 \[
 \epsilon_1 \geq |\W \cap B_r(y)| \geq c \omega_n r^n.
 \]
 Thus, we choose $\epsilon_1$ so that $\epsilon \leq (\epsilon_1 /(c\omega_n))^{1/n}$. If $y \in B_1$, then from the density estimates we have 
 \[
 \epsilon_1 \geq |B_1 \setminus \W|\geq (1-c) \omega_n r^n, 
 \]
 and the conclusion follows as before. 
\end{proof}

\begin{theorem} \label{t:c1alpha}
 Fix $\epsilon>0$ and let $\gamma$ be the H\"older constant from Theorem \ref{t:bhpf}. There exists a constant $\bar{\tau}_2 \in (0, \bar{\tau}_0]$ depending on $n$ and $\epsilon$ and a universal constant $C>0$ such that for $\tau \in (0,\bar{\tau}_2]$, after a translation by $-x_\W$,   $\partial \W$ can be parametrized by a $C^{1,\gamma}$ function $\phi$ over $\partial B$ with 
 \[
 \|\phi \|_{C^{1,\gamma}(\partial B)} \leq C \epsilon. 
 \]
\end{theorem}

\begin{proof}
    Let $x \in \partial \W$, and let $u$ be the harmonic replacement of $w_{\W}$ in $B_r(x)\cap \W$. From Theorem \ref{t:bhpf}, the quotient $w_{\W}/u \in C^{0,\gamma}$ up to the boundary. Since $w_{\W}$ and $u$ both vanish on $\partial \W \cap B_{r}(x)$, we have on the reduced boundary $\partial^* \W$
    \[
    \frac{w_{\W}}{u} = \frac{|\nabla w_{\W}|}{|\nabla u|}.
    \]
    Then $|\nabla w_{\W}|/|\nabla u| \in C^{0,\gamma}(\overline{B_{r/2}(x) \cap \W})$. Then  from Corollary \ref{c:pointeulerlagrange} we conclude
    \[
    |\nabla u(x)|=b(x) \quad \text{ on } \partial^* \W \cap B_{r/2}(x),
    \]
    with $b(x) \in C^{0,\gamma}$ and $b(x)>c$. 
     From Theorem \ref{t:bhpf}, the harmonic replacement $u$ also inherits the Lipschitz growth estimates from $w$ near $\partial \W$. Consequently, $u$ satisfies conditions $(1-3)$ in Definition 5.1 of \cite{AC81}. Since $|\nabla u| \in C^{0,\gamma}$, and since $B_{1-\epsilon} \subset \Omega \subset B_{1+\epsilon}$, (so that $\W$ is ``flat" at all points) it follows from Theorem 8.1 in \cite{AC81} that locally near $x$, that $\W$ can be parametrized by a $C^{1,\gamma}$ function $\phi$ over $\partial B$ with 
\[
 \| \phi \|_{C^{1,\gamma}(\partial B \cap B_{r/4}(x_0))} \leq C \epsilon.
\]
Using a covering argument, we then obtain that $\W$ is parametrized by $\phi: \partial B \to \mathbb{R}^n$ and 
\[
\| \phi \|_{C^{1,\gamma}(\partial B)} \leq C \epsilon.
\]
\end{proof}

We now utilize the $C^{1,\gamma}$ boundary regularity to show that $\W$ satisfies the volume constraint. 

\begin{lemma}[Volume constraint lemma]\label{lem: volume constraint lemma}\label{p:volume} Let $\bar{\tau}_2$ be fixed according to Theorem~\ref{t:c1alpha} with $\e = 1/2$ and fix $\tau \leq \bar{\tau}_1$. Then $|\W| = \omega_n$.
\end{lemma}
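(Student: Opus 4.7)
The plan is to use a simple scaling argument: if $|\W| \neq \om_n$, I will exhibit a dilated competitor $\W_s$ centered at $x_\W$ that strictly decreases $\Ep$, contradicting minimality. I treat the two cases $|\W| < \om_n$ and $|\W| > \om_n$ symmetrically, using the $C^{1,\alpha}$ regularity and inclusion $B_{1/2}(x_\W) \subset \W \subset B_{3/2}(x_\W)$ from Theorem~\ref{t:c1alpha} (applied with $\e = 1/2$) only to ensure that $\W_s$ is admissible and that $x_{\W_s}=x_\W$.

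Suppose first $|\W| < \om_n$ and, for small $s > 0$, set $\W_s := (1+s)(\W - x_\W) + x_\W$. Then $\W \subset \W_s$, $\mathrm{diam}(\W_s) \leq 2(1+s) < 100$ so the truncated and classical barycenters agree and a direct change-of-variables gives $x_{\W_s} = x_\W$, while $|\W_s| = (1+s)^n |\W| \leq \om_n \leq 2\om_n$ for $s$ small, so $\W_s$ is admissible. The standard scaling identities give
\[
\tor(\W_s) - \tor(\W) = [(1+s)^{n+2}-1]\tor(\W), \qquad \fvbar(|\W_s|) - \fvbar(|\W|) = \bar\eta\bigl[(1+s)^n - 1\bigr]|\W|,
\]
while combining Lemma~\ref{lemma: key estimate} with \eqref{eqn: nl Lip} applied to $\W \subset \W_s$ (whose symmetric difference $|\W_s|-|\W|$ is $O(s)$, well below $\baryEps\om_n/2$) yields
\[
|\nl(\W_s) - \nl(\W)| \leq \bar{C}_n\bigl(|\W_s| - |\W| + \tor(\W) - \tor(\W_s)\bigr).
\]
Combining,
\[
\Ep(\W_s) - \Ep(\W) \leq (1 - \bar{C}_n\tau)\bigl[\tor(\W_s) - \tor(\W)\bigr] + (\bar\eta + \bar{C}_n\tau)\bigl[|\W_s| - |\W|\bigr].
\]

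The remaining task is to verify that the leading-order-in-$s$ coefficient $(1 - \bar{C}_n\tau)(n+2)\tor(\W) + (\bar\eta + \bar{C}_n\tau)n|\W|$ is strictly negative for $\tau \leq \bar\tau_2$ after possibly shrinking $\bar\tau_2$. Using $|\tor(\W)| \geq \tfrac{3}{4}|\tor(B)| = \tfrac{3\om_n}{8n(n+2)}$ (from \eqref{eqn: low base energy main} and $\bar\delta \leq |\tor(B)|/4$), $|\W| \leq 2\om_n$, and the explicit bound $\bar\eta \leq 1/(32n(n+2))$ from \eqref{eqn: eta bar fix}, the required inequality reduces to $16n^2(\bar\eta + \bar{C}_n\tau) < 3(1 - \bar{C}_n\tau)$; since $16n^2\bar\eta \leq 1/4$, this holds for all $\tau$ sufficiently small in terms of dimensional constants, yielding $\Ep(\W_s) < \Ep(\W)$ for small enough $s$ and thus a contradiction. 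The case $|\W| > \om_n$ is handled identically with $\W_s := (1-s)(\W - x_\W) + x_\W \subset \W$: here $|\W_s|$ stays above $\om_n$, so $\fvbar$ contributes with the steeper slope $1/\bar\eta$, and Lemma~\ref{lemma: key estimate} is applied with $\W_s \subset \W$; the analogue of the coefficient inequality is even easier to satisfy precisely because $1/\bar\eta$ is large.

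The main point requiring care is the bookkeeping of constants to ensure the $\bar{C}_n\tau$ perturbations do not exhaust the fixed margin coming from the explicit choice of $\bar\eta$ in \eqref{eqn: eta bar fix}. I do not expect any substantive analytical difficulty: the nondegeneracy and Lipschitz regularity of $w_\W$, together with the $C^{1,\alpha}$ control on $\partial\W$ from Theorem~\ref{t:c1alpha}, play only a supporting role in certifying admissibility of $\W_s$ and invariance of the truncated barycenter, in contrast to the more delicate Euler--Lagrange-based proof that would be needed on curved spaces as alluded to in the introduction.
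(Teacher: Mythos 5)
Your proof is correct and follows essentially the same route as the paper: both replace $\W$ by a dilate of prescribed (or infinitesimally perturbed) volume, use the $C^{1,\alpha}$ regularity to certify nestedness, and control the change in $\nl$ via Lemma~\ref{lemma: key estimate} combined with \eqref{eqn: nl Lip}. The principal difference in execution is that the paper dilates in one shot to $r\W$ with $|r\W| = \om_n$ and derives a contradiction from exact algebraic bounds on $r^n(1-r^{n+2})/(1-r^n)$, whereas you linearize an infinitesimal dilation and check that the first-order coefficient is strictly negative; the two computations are interchangeable.

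One point of imprecision in your constants: from \eqref{eqn: low base energy main}, $\bar\delta \leq |\tor(B)|/4$, and $\bar\eta\om_n/2 \leq \bar\delta$, one obtains $|\tor(\W)| \geq |\tor(B)| - 2\bar\delta \geq |\tor(B)|/2$, not $\tfrac{3}{4}|\tor(B)|$ as you state; and \eqref{eqn: eta bar fix} gives $\bar\eta \leq \tfrac{|\tor(B)|-2\bar\delta}{8\om_n} \leq \tfrac{1}{16n(n+2)}$ rather than the $\tfrac{1}{32n(n+2)}$ you cite (the latter is what one gets if $\bar\delta$ equals its maximal allowed value $|\tor(B)|/4$, but in general the numerator is larger). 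With these corrections the required inequality becomes $16n^2(\bar\eta + \bar{C}_n\tau) < 2(1-\bar{C}_n\tau)$, and $16n^2\bar\eta \leq 16n^2/(16n(n+2)) = n/(n+2) < 1$ still leaves room to absorb the $\bar{C}_n\tau$ terms for $\tau$ small, so the conclusion stands; the argument is just not as slack as $16n^2\bar\eta \leq 1/4$ would suggest.
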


\begin{proof} Recall the parameters $\baryEps, \bar{\delta}$, and $\bar{\eta}$ fixed in \eqref{eqn: parameter fix} and \eqref{eqn: eta bar fix}.  Let $r>0$ be chosen so that $\W':= r\W$ has $|\W'|=\om_n$.
Thanks to \eqref{eqn: low base energy main} and Lemma~\ref{lemma: qual stability}, we know $||\W|-\om_n| \leq \om_n \baryEps/2$.
Moreover,  Theorem~\ref{t:c1alpha} in particular implies that $\W$ is star-shaped and thus we have either $\W' \subset\W$ (in the case that $|\W| \geq \omega_n $) or $\W' \supset \W$ (if $|\W| \leq \omega_n$). Hence, we can apply Lemma~\ref{lemma: key estimate}.  
 So, applying \eqref{eqn: nl Lip} followed by Lemma~\ref{lemma: key estimate} yields
    \begin{equation}\label{eqn: rescale nl compare}
 |\nl(\W) - \nl(\W')|  \leq \bar{C}_n\,\Big( |\W\Delta \W'| +\int |u_{\W}^f - u_{\W'}^f|\Big)\leq  \bar{C}_n
        \begin{cases}
           |\W| - \om_n+  \tor (\W') - \tor(\W) & \text{ if } |\W| \geq \om_n\\
      \om_n-   |\W| + \tor (\W) - \tor(\W') & \text{ if } |\W| \leq \om_n.
        \end{cases}
    \end{equation}
    Here $\bar{C}_n$ is the dimensional constant  from \eqref{eqn: nl Lip}.

We take $\W'$ as a competitor in the minimization problem \eqref{eqn: MAIN min prob}, and use \eqref{eqn: rescale nl compare} together with the scaling of the torsional rigidity and volume to show that $\Ep(\W') \leq \Ep(\W),$ with strict inequality unless $r=1$.

{\it Case 1:} Assume $|\W| \geq \omega_n$, so that $r\leq 1$.
Since $\W$ is  minimizer, we use \eqref{eqn: rescale nl compare} to find 
\begin{align*}
0 \leq \Ep(\W') - \Ep(\W) & = \tor (\W') - \tor(\W) -\frac{1}{\eta}\left(|\Omega|-\omega_n\right)+\tau\left(\nl\left(\Omega^{\prime}\right)-\nl(\Omega)\right) \\
& \leq \tor (\W') - \tor(\W)-\frac{1}{\eta}\left(|\Omega|-\omega_n\right)+\bar{C}_n\tau  \left(  |\W| -\om_n+  \tor (\W') - \tor(\W)\right) \\
& =\left(1+\bar{C}_n\tau  \right) (\tor (\W') - \tor(\W) )-\left(\frac{1}{\eta}-\bar{C}_n\tau \right)\left(|\Omega|-\omega_n\right) \\
 & =\left(1+ \bar{C}_n \tau \right)\left(r^{n+2}-1\right) \operatorname{tor}(\Omega)+\left(\frac{1}{\eta}- \bar{C}_n \tau\right)\left(\frac{r^n-1}{r^n}\right) \\
& =\left(1+ \bar{C}_n \tau\right)\left(1-r^{n+2}\right)|\operatorname{tor}(\Omega)|-\left(\frac{1}{\eta}- \bar{C}_n\tau\right)\left(\frac{1-r^n}{r^n}\right)\,.
\end{align*}
Since $\fv(\W) \geq0$ in this case, Lemma~\ref{prop: base summary} tells us that $\tor(\W) \geq \tor(B)$, i.e. $|\tor(\W)|\leq |\tor(B)|.$ Since we have chosen $\tau$ such that $\bar{C}_n \tau \leq \frac{1}{2} \leq \frac{1}{2\eta}$, the chain of inequalities above that if $r\neq 1$ (i.e. if $|\W|>\om_n$) then
\begin{equation}\label{eqn: scaling contra case 1}
    \frac{1}{4|\tor(B)|\, \eta}  \leq  \frac{r^n(1-r^{n+2})}{1-r^n}
\end{equation}
Keeping in mind that $r<1$, for the right-hand side we have
\begin{align*}
& \frac{r^n\left(1-r^{n+2}\right)}{1-r^n}=\frac{r^n(1-r)\left(r^{n+1}+\ldots+r+1\right)}{(1-r)\left(r^{n-1}+\ldots+r+1\right)}=\frac{r^n\left(r^{n+1}+\ldots r+1\right)}{\left(r^{n-1}+\ldots+r+1\right)} \\
&=r^n\left(1+\frac{r^{n+1}+r^n}{r^{n-1}+\ldots+1}\right) \leq 3 r^n \leq 3 .
\end{align*}
{Since we have chosen $\eta \leq 1/13|\tor(B)|$,} we reach a 
contradiction to \eqref{eqn: scaling contra case 1} and thus have $|\W| = \omega_n$.

{\it Case 2: } 
Now assume $|\W|\leq \w_n$, so that $r \geq 1.$ Since $\W$ is  minimizer, we use \eqref{eqn: rescale nl compare} to find 
\begin{align*}
 0 \leq \Ep(\W') -\Ep(\W) &= \tor(\W') - \tor(\W)-\eta\left(|\Omega|-\omega_n\right)+\tau(\nl(\W')-\nl(\Omega))\\
& \leq\tor(\Omega')-{\tor}(\Omega)+\eta\left(\omega_n-|\Omega|\right)+ \bar{C}_n \tau \left(\left(\omega_n-|\Omega|\right)+{\tor}(\Omega)-\tor({\Omega'})\right) \\
& =\left(1-\bar{C}_n \tau\right)(\operatorname{tor}({\Omega}')-\operatorname{tor}(\Omega))+\left(\eta+ \bar{C}_n \tau\right)\left\{\omega_n-|\Omega|\right\}\\
& ={\left(1- \bar{C}_n \tau \right)\left(r^{n+2}-1\right)} \operatorname{tor}(\Omega)+\left(\eta+\bar{C}_n\tau\right)\left(\frac{r^n-1}{r^n}\right) \omega_n\,.
\end{align*}
Now, by Lemma~\ref{prop: base summary}, $|\W| \geq \om_n/2$ and thus, {since $\W$ satisfies \eqref{eqn: low base energy main},}
$\tor(\W) - \eta \om_n/2 \leq \tor(\W) + \fv(|\W|) \leq \tor(B) + \bar{\delta}$, and so {since we have chosen 
$\eta$ small enough so that $\eta \om_n/2\leq \bar{\delta}$}, we have $|\tor(\W)| \geq |\tor(B)|-2\bar{\delta}.$ Recall that we have also chosen $\tau$ small enough so that $\bar{C}_n\tau \leq \eta$ and $1-\bar{C}_n\tau \geq 1/2$. So, if $r> 1,$ i.e. if $|\W| <\om_n$, then the inequalities above yield
\begin{equation}
1 \leq  \left(\frac{r^n({r^{n+2}-1})}{r^n-1}\right)   \leq \frac{4\eta \om_n}{|\tor(B)|-2\bar{\delta}}\,.
\end{equation}
{Since we have chosen $\eta \leq \frac{|\tor(B)|-2\bar{\delta}}{8\om_n}$} we reach a contradiction and conclude that $|\W| = \omega_n$.
\end{proof}

Since we only assume quantitative bounds on the  $L^{\infty}$ norm of the right hand side $f$, we will not be able to obtain $C^{2,\alpha}$  regularity of the boundary with estimates. 
However, we will now upgrade the boundary regularity to $C^{1,\alpha}$ for any $0<\alpha<1$, with estimates.
\begin{theorem} \label{t:1/2alpha}
 {Fix $\epsilon>0$ and let $0<\alpha<1$. There exists a constant $\bar{\tau}_2 \in (0, \bar{\tau}_0]$ depending on $n$ and $\epsilon$ and a universal constant $C>0$ such that for $\tau \in (0,\bar{\tau}_2]$, after a translation by $-x_\W$,   $\partial \W$ can be parametrized by a $C^{1,\alpha}$ function $\phi$ over $\partial B$ with 
 \begin{equation} \label{e:holderhigher}
 \|\phi \|_{C^{1,\alpha}(\partial B)} \leq C \epsilon. 
 \end{equation}}
\end{theorem}

\begin{proof}
 We utilize the recent technique (see \cite{dp23,ttv24,z24,aks25}) of considering the quotient $v=u_{\W}^f/w_{\W}$ which is a solution to a highly degenerate elliptic equation. Specifically, pointwise inside $\W$ the quotient $v$ satisfies 
 \[
 \text{div}\left(w_{\Omega}^2 \nabla v\right)= -w_{\Omega}f+u_{\Omega}^f.
\]
Since both $w_{\Omega}$ and $u_{\Omega}^f$ have Lipschitz growth away from the boundary, 
\[
|w_{\Omega}^2(x) \nabla v(x)| \leq C\text{dist}(x,\partial \Omega)
\]
which gives a zero Neumann condition on $\partial \W$. 
Notice that $-w_{\Omega}f+u_{\Omega}^f=\text{dist}(x,\partial \W) g$ with $g \in L^{\infty}$. 
Using locally that $\partial \Omega \in C^{1,\gamma}$, we flatten the boundary to obtain under the transformation the equation 
\[
 \begin{cases}
 &\int_{B^+_r(x)} x_n^2 \langle A(x) \nabla \tilde{v}, \nabla \phi \rangle = -
 \int_{B^+_r(x)} x_n g_1(x) \phi(x), \\
 & \lim_{x_n \to 0} x_n^2 |\nabla \tilde{v}(x',x_n)|=0.
 \end{cases}
\]
 The matrix $A$ is symmetric, uniformly elliptic and 
\[
 \| A(x)-I \|_{C^{0,\gamma}(B_r)} \leq \epsilon. 
\]
Taking the primitive from $x_n=0$ and factoring out an $x_n^2$, we have that $\partial_{x_n} (x_n^2 g_2(x',x_n))=x_n g_1(x',x_n)$
with $g_2 \in L^{\infty}$. Letting $G=(0,\ldots,0,x_n^2 g_2(x))$ we have 
\[
 \begin{cases}
 &\int_{B^+_r(x)} x_n^2 \langle A(x) \nabla \tilde{v}, \nabla \phi \rangle = 
 \int_{B^+_r(x)} x_n^2 g_2(x) \phi_n(x), \\
 & \lim_{x_n \to 0} x_n^2 |\nabla \tilde{v}(x',x_n)|=0.
 \end{cases}
\]
We may now apply a lower-order Schauder estimate. For instance, Theorem 1.3 (i) in \cite{stv21} applies. Since $A$ is uniformly $\gamma$-H\"older continuous, the estimates from Theorem 1.3(i) in \cite{stv21} are uniform. Then $\tilde{v}\in C^{0,\alpha}$ for any $\alpha<1$, and so $u_{\W}^f/w_{\W} = v \in C^{0,\alpha}$. Then the argument in Theorem \ref{t:c1alpha} as before implies $\partial \W \in C^{1,\alpha}$ for any $\alpha<1$. Using interpolation of H\"older spaces, we obtain \eqref{e:holderhigher}. 
\end{proof}

{To apply Lemma \ref{thm: spectral gap} we will need $\phi \in C^{2,\gamma}$, but we will not need estimates on the $\|\phi\|_{C^{2,\gamma}}$. This is accomplished by assuming $f$ is smooth. Indeed, all we will need is for $f$ to be H\"older continuous.} 
\begin{theorem} \label{t:c2alpha}
Fix $\epsilon>0$. There exist $\bar{\tau}_3 \in (0, \bar{\tau}_0]$ depending on $n$ and $\epsilon$ and universal constants $\alpha,C>0$ depending such that for $\tau \in (0,\bar{\tau}_2]$, after a translation by $-x_\W$,   $\partial \W$ can be parametrized by a $C^{2,\alpha}$ function $\phi$ over $\partial B$.  
\end{theorem}

\begin{proof}
  From Theorem \ref{t:1/2alpha} we have  $\partial \W \in C^{1,\alpha}$ and $\W$ is parametrized by $\phi: \partial B \to \mathbb{\R}^n$ with $\| \phi \|_{C^{1,\alpha}(\partial B)}\leq C\epsilon$. We now utilize that $f \in C^{0,\alpha}$ and not just bounded. From the higher order boundary Harnack principle in \cite{ds15}, since the right hand side $f$ is H\"older continuous, we have that the ratio $u_{\W}^f/w_{\W} \in C^{1,\alpha}$. Then from the Euler-Lagrange equation we have that 
\begin{equation} \label{e:higher}
|\nabla w_{\W}|^2 = g \in C^{1,\alpha} \quad \text{ on } \partial \W. 
\end{equation}
From the Hodograph transform method of Kinderlehrer-Nirenber-Spruck we have that $\partial \W \in C^{2,\alpha}$ with estimates depending on the $C^{1,\alpha}$ estimates of $\partial \W$. This theorem is stated as Theorem 8.4 in \cite{AC81} in the case with zero right hand side. Alternatively, by taking a harmonic replacement $u$ of $w$ in a local neighborhood of $\partial \W$, one can apply the boundary Harnack principle in \cite{ds15} to the quotient $w/u$ and conclude that since $w$ satisfies \eqref{e:higher}, then $u$ satisfies the same estimate in \eqref{e:higher}. Then one can apply Theorem 8.4 in \cite{AC81} directly to $u$ and conclude that $\partial \W \in C^{2,\alpha}$. 
\end{proof}

\begin{remark}
    \rm{The proof of Theorem 8.4 in \cite{AC81} which relies on the Hodograph transform will give that $\| \phi \|_{C^{2,\alpha}(\partial B)}\leq C\epsilon$; however, this will depend on the H\"older norm of $f$, so we do not provide the details. }
\end{remark}

\begin{remark}
    \rm{We cannot obtain regularity for $\partial \W$ beyond $C^{2,\alpha}$ even if we assume $f$ is smooth. The reason can be seen in the proof of Lemma~\ref{l:distterm} where we compute the first variation of $\beta^2(\Omega_t)$. There, the solution $p_1$ to the auxiliary PDE \eqref{eqn: aux PDE} is at most $C^{2,\alpha}$, and thus its corresponding contribution to the Euler-Lagrange equation is at most $C^{1,\alpha}$.}  
\end{remark}

\appendix

\section{Explanation of the proof of Lemma \ref{thm: spectral gap}}
 We provide here the explanation of the proof of Lemma \ref{thm: spectral gap}. The estimates and computations are nearly all contained in \cite{p24} where the author considers the more difficult situation of the first eigenvalue of a domain. The other difference is that we assume $\phi \in C^{2,\alpha}(\partial B)$ with $\| \phi \|_{C^{1,\alpha}}(\partial B) \leq \delta$ with  $\alpha \in (1/2,1)$, while in \cite{p24} the author only assumes that $\phi \in C^{1,\alpha}$ rather than $\phi \in C^{2,\alpha}$. We make the stronger assumption in order to avoid using a lower-order second variation term to prove continuity (in domain variations) of the second derivative as in \cite{p24}. Following the notation in \cite{p24} we extend $\phi$ as a function on $\partial B$ to all of $\mathbb{R}^n$ by letting $\phi(x):=\theta(x)\phi(x/|x|)$ with $\theta \equiv 1$ near $\partial B$ and such that $\phi$ is compactly supported and which is constant (close to $\partial B$) in directions normal to the ball. We define $\xi_{\phi}:=\phi(x)x$ and define 
\[
 \Omega_t := (\text{Id}+t\xi_{\phi})(B)\,.
\]
We let $e(t):=\tor(\Omega_t)$.  
For a $C^3$ domain $\Omega$ we have from Theorem 2.1, Remark 2.2, and equation $(23)$ all in \cite{dl19} that 
\begin{equation} \label{e:2ndvar}
\begin{aligned}
e''(t)&=\int_{\Omega_t} |\nabla v_t|^2 + \int_{\partial \Omega_t} (\partial_{\nu_t} w_{\Omega_t} + \frac{1}{2}H_t (\partial_{\nu_t} w_{\Omega_t})^2) (\xi_{\phi} \cdot \nu_t)^2\\
&\quad -\frac{1}{2}\int_{\partial \Omega_t}(\partial_{\nu_t}{w_{\Omega_t}})^2[\textbf{B}_t((\xi_{\phi})_{\tau_t},(\xi_{\phi})_{\tau_t, }) -2 \nabla_{\tau_t}(\xi_{\phi} \cdot \nu_t)(\xi_{\phi})_{\tau_t}]. 
\end{aligned}
\end{equation}
In the above identity, $\nu_t$ is the outward unit normal, $w_{\W_t}$ is the torsion function, $\textbf{B}_t$ is the second fundamental form, $H_t$ is the mean curvature, $\nabla_\tau$ is the tangential gradient, and $(\xi_{\phi})_{\tau_t}$ are the tangential components. Finally, $v_t$ is the solution to 
\[
\begin{cases}
 \Delta v_t =0 \quad \text{ in } \Omega_t, \\
 v= (\partial_{\nu_t} w_{\Omega_t}) \xi_\phi \quad \text{ on } \partial \Omega_t. 
\end{cases}
\]
An alternative second variation formula for domains satisfiying a uniform outer ball condition, thus in particular for $C^{2,\alpha}$ domains, is proven in \cite[Proposition 11]{Laurain}, which necessarily agrees on $C^3$ domains with \eqref{e:2ndvar}. 
Since \eqref{e:2ndvar} and the expression for the second variation in \cite[Proposition 11]{Laurain} are both preserved under $C^{2,\alpha}$ approximations, we have that \eqref{e:2ndvar} also holds for $C^{2,\alpha}$ domains. In \cite{p24}, the author utilizes a lower-order 2nd variation formula and is therefore able to bypass assuming $\phi \in C^{2,\alpha}$. Since we may obtain our main result through approximations, we may assume $f$ is H\"older so that $\phi \in C^{2,\alpha}$, see Theorem \ref{t:c2alpha}. Therefore, we do not provide here the lower-order analogous 2nd approximation formula of the torsion. As a drawback, we make the additional assumption in Lemma \ref{thm: spectral gap} that $\phi \in C^{2,\gamma}$.

Lemma \ref{thm: spectral gap} is proven in the following steps. First, in the main step, we claim 
\begin{equation} \label{e:torcont}
 |e''(t)-e''(0)|\leq \omega(\| \phi \|_{C^{1,\alpha}(\partial B)}) \| \phi \|^2_{H^{1/2}(\partial B)}. 
\end{equation}
 The identity \eqref{e:2ndvar} contains five terms. The last three terms are (up to constants) identical to the last three terms in equation $(48)$ of \cite{p24} which is the second variation formula for the first eigenvalue of a domain. Consequently, those three terms are handled exactly as in \cite{p24}. The first term in \eqref{e:2ndvar} is different from the first term in $(48)$ of \cite{p24}. However, the torsion term is actually simpler. Indeed, in \cite{p24} the author handles the first term by decomposing into two pieces one of which is harmonic and the author labels as $\textbf{H}_{\xi,\theta}$. Although our harmonic function $v_t$ has different boundary data, the boundary data has the same regularity as $\textbf{H}_{\xi,\theta}$, and thus bounding the term 
\[
\int_{\Omega_t} |\nabla v_t|^2 - \int_{B}|\nabla v_0|^2,
\]
can be handled as in \cite{p24} using the estimates in Lemmas 3.7 and 3.8 in \cite{p24}. 
The final remaining term (which does not appear in $(48)$ in \cite{p24}) necessitates bounding 
\[
\int_{\partial \W_t} (\partial_{\nu_t} w_{\W_t})(\xi_{\phi}\cdot \nu_t)^2 - \int_{\partial B} \partial_{\nu} w_{B}.
\]
This term is actually a lower-order term and the easiest of all to handle. Using the pullback notation as in \cite{p24}, we have 
\[
\begin{aligned}
&\int_{\partial \W_t} (\partial_{\nu_t} w_{\W_t})(\xi_{\phi}\cdot \nu_t)^2 - \int_{\partial B} \partial_{\nu} w_{B} \\
&=\int_{\partial B} \langle A_{\xi} \nabla \hat{w}_t,\nu \rangle (\hat{\xi}_{\phi} \cdot \hat{\nu}_t) \tilde{J}_{\xi}- \langle \nabla w_{B}, \nu \rangle,
\end{aligned}
\]
where $\text{div}(A_\xi \nabla \hat{v}_t)=-J_{\xi}$ and $A_\xi, J_{\xi}$ are defined as in $(27)$ in \cite{p24}, and with $\tilde{J}_{\xi}$ defined as in $(22)$ in \cite{p24}. Following the proof of Lemma 3.6 in \cite{p24}, we obtain $\|\hat{w}_t - w_{B}\|\leq C \| \xi \|_{C^{1,\alpha}(\mathbb{R}^n, \mathbb{R}^n)}$. The estimates in Lemma 3.7 in \cite{p24} give the bounds for all the other terms, so that by using the triangle inequality we obtain 
\[
\left|\int_{\partial \W_t} (\partial_{\nu_t} w_{\W_t})(\xi_{\phi}\cdot \nu_t)^2 - \int_{\partial B} \partial_{\nu} w_{B} \right| \leq C \| \xi \|_{C^{1,\alpha}(\mathbb{R}^n, \mathbb{R}^n)}. 
\]
Then we are able to conclude \eqref{e:torcont}.

In the second step, we use \eqref{e:torcont} and the spectral gap of \cite[Theorem 3.3]{BDV15} to conclude the proof. By Taylor's theorem and \eqref{e:torcont},
\begin{align*}
 \tor(\W)-\tor(B) & = e'(0) + \frac{1}{2} e''(0) + \int_0^1 (1-s) (e''(s) - e''(0))\,ds\\
 &= e'(0) + \frac{1}{2} e''(0) + \omega(\| \phi \|_{C^{1,\alpha}(\partial B)}) \| \phi \|^2_{H^{1/2}(\partial B)}. 
\end{align*}
A Taylor expansion using the fact that $|\Omega|=|B|$ shows that 
\[
\int_{\partial B}  \phi   = - \frac{n-1}{2} \int_{\partial_B }\phi^2 + \| \phi \|_{L^2}^2\, o(\| \phi\|_{C^0}).
\]
Then, using the first variation formula for the torsional rigidity (see, e.g. \cite[Lemma 2.7]{dl19}) followed by the fact that $|\na w_B| = 1/n$ on $\partial B$, we find 
\begin{equation}
    \label{eqn: eprime}
e'(0) = -\frac{1}{2}  \int_{\partial B}  |\nabla w_\W|^2 \phi
= -\frac{1}{2n^2}  \int_{\partial B}  \phi = \frac{n-1}{4n^2} \int_{\partial B} \phi^2  + \| \phi \|_{L^2}^2\, o(\e).
\end{equation}
Next, writing $e''(0))$ using the expression \eqref{e:2ndvar} above at $t=0$ and again using the fact that $\partial_\nu w_B= -1/n$ on $\partial B,$ we see that $v_0 = -H(\phi)/n$ for $v_0$ as defined above, where $H(\phi)$ is the harmonic extension of $\phi$ in $B$. So, since the mean curvature is $H_0 = (n-1),$ combining \eqref{eqn: eprime} and \eqref{e:2ndvar} yields
\[
2e' + e''(0) = 
\frac{1}{n^2}\left( \int_{B_1} |\na H(\phi)|^2\ ,dx - \int_{\partial B_1} \phi^2 \, d\mathcal{H}^{n-1}\right) + \|\phi\|_{L^2(\partial B)} o(\| \phi\|_{C^1(\partial B})
\]
The remainder of the proof is identical to \cite[Theorem 3.3]{BDV15}.

\bibliographystyle{amsplain}
\bibliography{quantitativeACFref}

\end{document}